\newtheorem{theorem}{Theorem}[section]
\newtheorem{observation}[theorem]{Observation}
\newtheorem{lemma}[theorem]{Lemma}
\newtheorem{proposition}[theorem]{Proposition}
\newtheorem{corollary}[theorem]{Corollary}
\numberwithin{equation}{section}
\theoremstyle{definition}
\newtheorem{definition}[theorem]{Definition}
\theoremstyle{remark}
\newtheorem{remark}[theorem]{Remark}
\newtheorem{example}[theorem]{Example}
\newcommand\R{\mathbb{R}}
\newcommand\C{\mathbb{C}}
\newcommand\Q{\mathbb{Q}}
\newcommand\Z{\mathbb{Z}}
\newcommand\N{\mathbb{N}}
\DeclareMathOperator{\Lip}{Lip}
\DeclareMathOperator{\vol}{vol}
\DeclareMathOperator{\Tr}{Tr}
\DeclareMathOperator{\Span}{Span}
\DeclareMathOperator{\TrP}{TrP}
\DeclareMathOperator{\NCP}{NCP}
\DeclareMathOperator{\Jac}{Jac}
\DeclareMathOperator{\II}{II}
\DeclarePairedDelimiter{\norm}{\lVert}{\rVert}
\DeclarePairedDelimiter{\ip}{\langle}{\rangle}
\def\smallint{\begingroup\textstyle \int\endgroup}
\begin{document}

\title{An Elementary Approach to Free Entropy Theory for Convex Potentials}
\subjclass{Primary: 46L53, Secondary: 35K10, 37A35, 46L52, 46L54, 60B20}
\keywords{free entropy, free Fisher information, free Gibbs state, trace polynomials, invariant ensembles}

\author{David Jekel}
\address{Department of Mathematics, UCLA, Los Angeles, CA 90095}
\email{davidjekel@math.ucla.edu}
\urladdr{www.math.ucla.edu/{$\sim$}davidjekel/}

\begin{abstract}
We present an alternative approach to the theory of free Gibbs states with convex potentials.  Instead of solving SDE's, we combine PDE techniques with a notion of asymptotic approximability by trace polynomials for a sequence of functions on $M_N(\C)_{sa}^m$ to prove the following.  Suppose $\mu_N$ is a probability measure on on $M_N(\C)_{sa}^m$ given by uniformly convex and semi-concave potentials $V_N$, and suppose that the sequence $DV_N$ is asymptotically approximable by trace polynomials.  Then the moments of $\mu_N$ converge to a non-commutative law $\lambda$.  Moreover, the free entropies $\chi(\lambda)$, $\underline{\chi}(\lambda)$, and $\chi^*(\lambda)$ agree and equal the limit of the normalized classical entropies of $\mu_N$.
\end{abstract}

\maketitle

\section{Introduction}

\subsection{Motivation and Main Ideas}

Since Voiculescu introduced free entropy of a non-commutative law in \cite{VoiculescuFE1,VoiculescuFE2,VoiculescuFE5}, a number of open problems have prevented a satisfying unification of the theory (as explained in \cite{Voiculescu2002}).  The free entropy $\chi$ was defined by taking the $\limsup$ as $N \to \infty$ of the normalized log volume of the space of microstates, where the microstates are certain tuples of $N \times N$ self-adjoint matrices having approximately the correct distribution.  It is unclear whether using the $\liminf$ instead of $\limsup$ would yield the same quantity.  Voiculescu also defined a non-microstates free entropy $\chi^*$ by integrating the free Fisher information of $X + t^{1/2} S$ where $S$ is a free semicirulcar family free from $X$, and conjectured that $\chi = \chi^*$.

Biane, Capitaine, and Guionnet \cite{BCG2003} showed that $\chi \leq \chi^*$ as a consequence of their large deviation principle for the GUE (see also \cite{CDG2001}).  The proof relied on stochastic differential equations relative to Hermitian Brownian motion and analyzed exponential functionals of Brownian motion.  Recent work of Dabrowski \cite{Dabrowski2017} combined these ideas with stochastic control theory and ultraproduct analysis in order to show that $\chi = \chi^*$ for free Gibbs states defined by a convex and sufficiently regular potential.  This resolves this part of the unification problem for a significant class of non-commutative laws.

This paper will prove a similar result to Dabrowski's using deterministic rather than stochastic methods.  We want to argue as directly as possible that the classical entropy and Fisher's information of a sequence of random matrix models converge to their free counterparts.  Let us motivate and sketch the main ideas, beginning with the heuristics behind Voiculescu's non-microstates entropy $\chi^*$.

Consider a non-commutative law $\lambda$ of an $m$-tuple and suppose $\lambda$ is the limit of a sequence of random $N \times N$ matrix distributions $\mu_N$ given by convex, semi-concave potentials $V_N: M_N(\C)_{sa}^m \to \R$.  Let $\sigma_{t,N}$ be the distribution of $m$ independent GUE matrices which each have normalized variance $t$, and let $\sigma_t$ be the non-commutative law of $m$ free semicircular variables which each have variance $t$.  Let $V_{N,t}$ be the potential corresponding to the convolution $\mu_N * \sigma_{t,N}$.  The classical Fisher information $\mathcal{I}$ satisfies
\[
\frac{d}{dt} \frac{1}{N^2} h(\mu_N * \sigma_{t,N}) = \frac{1}{N^3} \mathcal{I}(\mu_N * \sigma_{t,N}) = \int \norm{DV_{N,t}(x)}_2^2\,d(\mu_N * \sigma_{t,N})(x),
\]
and from this we deduce that
\[
\frac{1}{N^2} h(\mu_N) + \frac{m}{2} \log N = \frac{1}{2} \int \left( \frac{m}{1 + t} - \frac{1}{N^3} \mathcal{I}(\mu_N * \sigma_{t,N}) \right)\,dt + \frac{m}{2} \log 2 \pi e.
\]
As $N \to \infty$, we expect the left hand side to converge to the microstates free entropy $\chi(\lambda)$ because the distribution $\mu_N$ should be concentrated on the microstate spaces of the law $\lambda$.  On the other hand, we expect the right hand side to converge to the Voiculescu's non-microstates free entropy $\chi^*(\lambda)$ defined by
\[
\chi^*(\lambda) = \frac{1}{2} \int \left( \frac{m}{1 + t} - \Phi^*(\lambda \boxplus \sigma_t) \right)\,dt + \frac{m}{2} \log 2 \pi e,
\]
where $\Phi^*$ is the free Fisher information and $\lambda \boxplus \sigma_t$ is the free convolution \cite{VoiculescuFE5}.

Under suitable assumptions on $V_N$, the microstates free entropy $\chi(\lambda)$ is the $\limsup$ of normalized classical entropies of $\mu_N$.  On the right hand side, we want to show that $N^{-3} \mathcal{I}(\mu_N * \sigma_{t,N}) \to \Phi^*(\lambda \boxplus \sigma_t)$ for all $t \geq 0$.  Since the Fisher information is the $L^2(\mu_N)$ norm squared of the score function or (classical) conjugate variable $DV_{N,t}(x)$, we want to prove that the classical conjugate variables $DV_{N,t}(x)$ behave asymptotically like the free conjugate variables for $\lambda \boxplus \sigma_t$ for all $t$.

This would not be surprising because classical objects associated to invariant random matrix ensembles often behave asymptotically like their free counterparts.  For instance, Biane showed that the entrywise Segal-Bargman transform of non-commutative functions evaluated on $N \times N$ matrices can be approximated by the free Segal-Bargman transform computed through analytic functional calculus \cite{Biane1997}.  Similarly, Guionnet and Shlyakhtenko showed that classical monotone transport maps for certain random matrix models approximate the free monotone transport \cite[Theorem 4.7]{GS2014}.  Moreover, Dabrowski's approach to prove $\chi = \chi^*$ involved constructing solutions to free SDE as ultraproducts of the solutions to classical SDE \cite{Dabrowski2017}.

In section \ref{subsec:defasymptoticapproximation}, we make precise the idea that a sequence of functions on $M_N(\C)_{sa}^m$ has a ``well-defined, non-commutative asymptotic behavior'' by defining \emph{asymptotic approximability by trace polynomials} (Definition \ref{def:AATP}).  We assume that $DV_N$ at time zero has the approximation property and must show that the same is true for $DV_{N,t}$ for all $t$.

First, we show that this property is preserved under several operations on sequences, including composition and convolution with the Gaussian law $\sigma_{N,t}$ (see \S \ref{subsec:defasymptoticapproximation}).  Then in \S \ref{sec:evolution} we analyze the PDE that describes the evolution of $V_{N,t}$.  We show that for all $t$ the solution can be approximated in a dimension-independent way by applying a sequence of simpler operations, each of which preserves asymptotic approximability by trace polynomials.  We conclude that if the initial data $DV_N$ is asymptotically approximable by trace polynomials, then so is $DV_{N,t}$, and hence we obtain convergence of the classical Fisher information to the free Fisher information.

This proves the equality $\chi(\lambda) = \chi^*(\lambda)$ whenever a sequence of log-concave random matrix models $\mu_N$ converges to $\lambda$ in an appropriate sense (Theorem \ref{thm:mainthm}).  Another result (Theorem \ref{thm:convergenceandconcentration}), proved by similar techniques, establishes sufficient conditions for a sequence of log-concave random matrix models $\mu_N$ to converge in moments to a non-commutative law $\lambda$, so that Theorem \ref{thm:mainthm} can be applied.  As a consequence, we show that $\chi = \chi^*$ for a class of free Gibbs states.

\subsection{Main Results} \label{subsec:mainresults}

To fix notation, let $M_N(\C)_{sa}^m$ be space of $m$-tuples $x = (x_1,\dots,x_m)$ of self-adjoint $N \times N$ matrices and let $\norm{x}_2 = (\sum_j \tau_N(x_j^2))^{1/2}$, where $\tau_N = (1/N) \Tr$.  We denote by $\norm{x}_\infty$ the maximum of the operator norms $\norm{x_j}$.  Recall that a trace polynomial $f(x_1,\dots,x_m)$ is a linear combination of terms of the form
\[
p(x) \prod_{j=1}^n \tau(p_j(x)),
\]
where $p$ and $p_j$ are non-commutative polynomials in $x_1$, \dots, $x_m$.

Consider a sequence of potentials $V_N: M_N(\C)_{sa}^m \to \R$ such that $V_N(x) - (c/2) \norm{x}_2^2$ is convex and $V_N(x) - (C/2) \norm{x}_2^2$ is concave for some $0 < c < C$.  Define the associated probability measure $\mu_N$ by
\[
d\mu_N(x) = \frac{1}{Z_N} e^{-N^2 V_N(x)}\,dx, \qquad Z_N = \int_{M_N(\C)_{sa}^m} e^{-N^2 V_N(x)}\,dx.
\]
Assume that the sequence of normalized gradients $DV_N(x) = N \nabla V_N(x)$ is asymptoticallly approximable by trace polynomials in the sense that for every $\epsilon > 0$ and $R > 0$, there exists a trace polynomial $f(x)$ such that
\[
\limsup_{N \to \infty} \sup_{\norm{x}_\infty \leq R} \norm{DV_N(x) - f(x)}_2 \leq \epsilon,
\]
where $\norm{x}_\infty$ denotes the maximum of the operator norms of the $x_j$'s.  Also, assume that $\int (x - \tau_N(x))\,d\mu_N(x)$ is bounded in operator norm as $N \to \infty$ (it will be zero if $\mu_N$ is unitarily invariant or has expectation zero).  In this case, we have the following.  (To clarify the larger picture, we include statements of the concentration estimates (1) and (3), although these are standard and not proved in this paper.)

\begin{enumerate}
	\item There exists a constant $R_0$ such that $\mu_N(\norm{x}_\infty \geq R_0 + \delta) \leq m e^{-cN \delta^2/2}$ for $\delta > 0$.
	\item There exists a non-commutative law $\lambda$ such that
	\[
	\lim_{N \to \infty} \int \tau_N(p(x))\,d\mu_N(x) = \lambda(p)
	\]
	for every non-commutative polynomial $p$.
	\item The measures $\mu_N$ exhibit exponential concentration around $\lambda$, in the sense that
	\[
	\lim_{N \to \infty} \frac{1}{N^2} \log \mu_N(\norm{x}_\infty \leq R, |\tau_N(p(x)) - \lambda(p)| \geq \delta) < 0
	\]
	for every $R > 0$ and every non-commutative polynomial $p$.
	\item The law $\lambda$ has finite free entropy and we have
	\[
	\chi(\lambda) = \underline{\chi}(\lambda) = \chi^*(\lambda) = \lim_{N \to \infty} \frac{1}{N^2} \left( h(\mu_N) + \frac{m}{2} \log N \right),
	\]
	where $\chi$ and $\underline{\chi}$ are respectively the $\limsup$ and $\liminf$ versions of microstates free entropy, $\chi^*$ is the non-microstates free entropy, and $h$ is the classical entropy.
	\item The same holds for $\mu_N * \sigma_{t,N}$ and $\lambda \boxplus \sigma_t$, where $\sigma_{t,N}$ is the law of $m$ independent GUE matrices with variance $t$ and $\sigma_t$ is the law of $m$ free semicircular variables with variance $t$.
	\item The law $\lambda$ has finite free Fisher information.  If $\mathcal{I}$ is the classical Fisher information and $\Phi^*$ is the free Fisher information, then
	\[
	\lim_{N \to \infty} \frac{1}{N^3} \mathcal{I}(\mu_N * \sigma_{t,N}) = \Phi^*(\lambda \boxplus \sigma_t).
	\]
	\item The functions $t \mapsto \frac{1}{N^3} \mathcal{I}(\mu_N * \sigma_{t,N})$ and $t \mapsto \Phi^*(\lambda \boxplus \sigma_t)$ are decreasing and Lipschitz in $t$ with the absolute value of the derivative bounded by $C^2 m(1 + Ct)^{-2}$.
\end{enumerate}

Here claims (1) through (3) come from Theorem \ref{thm:convergenceandconcentration}, which is similar to the earlier results \cite[Theorem 4.4]{GS2009}, \cite[Proposition 50 and Theorem 51]{DGS2016}, \cite[Theorem 4.4]{Dabrowski2017} plus standard results on concentration of measure.  Claims (4) through (7) come from Theorem \ref{thm:mainthm}, which is similar to \cite[Theorem A]{Dabrowski2017}.

In particular, we recover Dabrowski's result \cite[Theorem A]{Dabrowski2017} that $\chi(\lambda) = \underline{\chi}(\lambda) = \chi^*(\lambda)$ when the law $\lambda$ is a free Gibbs state given by a sufficiently regular convex non-commutative potential $V(X)$, because taking $V_N = V$ will define a sequence of random matrix models $\mu_N$ which concentrate around the non-commutative law $\lambda$.

Unlike Dabrowski, we do not provide an explicit formula for $(d/dt) \Phi(\lambda \boxplus \sigma_t)$.  However, we are able to prove that $\Phi(\lambda \boxplus \sigma_t)$ is Lipschitz in $t$ rather than merely having a derivative in $L^2(dt)$ (and hence being H\"older $1/2$ continuous) as shown by Dabrowski.  Our results also allow slightly more flexibility in the choice of random matrix models, so that we do not have to assume that $V_N$ is given by exactly the same formula for every $N$ or that $V_N$ is exactly unitarily invariant.

\subsection{Organization of Paper}

Section \ref{sec:preliminaries} establishes notation and reviews basic facts from non-commutative probability and random matrix theory.

Section \ref{sec:tracepolynomials} defines the algebra of trace polynomials and describes how they behave under differentiation and convolution with Gaussians.  We then introduce the notion that a sequence $\{\phi_N\}$ of functions $M_N(\C)_{sa}^m \to M_N(\C)_{sa}^m$ or $\C$ is asymptotically approximable by trace polynomials.  We show that this approximation property is preserved under several operations including composition and Gaussian convolution.

Section \ref{sec:convergenceandconcentration} proves Theorem \ref{thm:convergenceandconcentration} concerning the convergence of moments for the measure $\mu_N$  (claims (1) - (3) of \S \ref{subsec:mainresults}).  We evaluate $\int u\,d\mu_N$ for a Lipschitz function $u$ as $\lim_{t \to \infty} T_t^{V_N}u$, where $T_t^{V_N}$ is the semigroup such that $u_t = T_t^{V_N} u$ solves the equation $\partial_t u_t = (1/2N) \Delta u_t - (N/2) \nabla V \cdot \nabla u_t$.  We approximate $T_t^{V_N}$ by iterating simpler operations in order to show that if $N \nabla V_n$ and $u_N$ are asymptotically approximable by trace polynomials, then so is $T_t^{V_N} u_N$, and hence that $\lim_{N \to \infty} \int u_N \,d\mu_N$ exists.

Section \ref{sec:entropy} reviews the defintions of free entropy and Fisher's information.  We also show that the microstates free entropies $\chi(\lambda)$ and $\underline{\chi}(\lambda)$ are the $\limsup$ and $\liminf$ of normalized classical entropies of $\mu_N$, provided that $\mu_N$ concentrates around $\lambda$ and satisfies some mild operator norm tail bounds, and that $\{V_N\}$ is asymptotically approximable by trace polynomials.  Similarly, if $\{N \nabla V_N\}$ is asymptotically approximable by trace polynomials, then the normalized classical Fisher information converges to the free Fisher information.

Section \ref{sec:evolution} considers the evolution of the potential $V_N(x,t)$ corresponding to $\mu_N * \sigma_{t,N}$, where $\sigma_{t,N}$ is the law of $m$ independent GUE of variance $t$.  Our goal is to show that if $N \nabla V_N(x,0)$ is asymptotically approximable by trace polynomials, then so is $N \nabla V_N(x,t)$ for all $t > 0$, so that we can apply our previous result that the classical Fisher information converges to the free Fisher information.  As in \S \ref{sec:convergenceandconcentration}, we construct the semigroup $R_t$ which solves the PDE as a limit of iterates of simpler operations which are known to preserve asymptotic approximation by trace polynomials.

Section \ref{sec:mainthm} concludes the proof of our main theorem on free entropy and Fisher's information (Theorem \ref{thm:mainthm}), which establishes claims (4) - (7) of \S \ref{subsec:mainresults}, assuming a weakened version of the hypothesis and conclusion of Theorem \ref{thm:convergenceandconcentration}.

In section 8, we characterize the limiting laws $\lambda$ which arise in Theorem \ref{thm:convergenceandconcentration} as the free Gibbs states for a certain class of potentials.  In particular, we apply Theorem \ref{thm:mainthm} to show that $\chi = \chi^*$ for several types of free Gibbs states considered in previous literature.

\subsection{Acknowledgements}

I thank Timothy Austin, Guillaume C{\'e}bron, Yoann Dabrowski, Alice Guionnet, Benjamin Hayes, Dimitri Shlyakhtenko, Terence Tao, Yoshimichi Ueda, and Dan Voiculescu for various useful conversations.  I especially thank Shlyakhtenko for his mentorship and ongoing conversations about free entropy, and Dabrowski for detailed discussions of his own results and other recent literature.  I acknowledge the support of the NSF grants DMS-1344970 and DMS-1500035.  Part of this research was conducted at the Institute for Pure and Applied Mathematics (IPAM) during the long program on Quantitative Linear Algebra in Spring 2018.  IPAM provided hospitality and a stimulating research environment where many of the conversations mentioned above took place.  I also thank the referee for detailed feedback that improved the clarity and correctness of the paper throughout.

\section{Preliminaries} \label{sec:preliminaries}

The first subsection \ref{subsec:matrixnotation} fixes certain notations which will be used throughout the paper.  The other subsections of \S \ref{sec:preliminaries} discuss background that they reader may refer to as needed.

\subsection{Notation for Matrix Algebras} \label{subsec:matrixnotation}

Let $M_N(\C)$ denote the $N \times N$ matrices over $\C$ and let $M_N(\C)_{sa}$ be the self-adjoint elements.  Note that $M_N(\C)_{sa}^m$ is a real inner product space with the inner product $\ip{x,y}_{\Tr} := \sum_{j=1}^m \Tr(x_jy_j)$ for $x = (x_1,\dots,x_m)$ and $y = (y_1,\dots,y_m)$.  Moreover, $M_N(\C)$ can be canonically identified with the complexification $\C \otimes_{\R} M_N(\C)_{sa}$ by decomposing a matrix into its self-adjoint and anti-self-adjoint parts.

Being a real-inner product space, $M_N(\C)_{sa}$ is isomorphic to $\R^{mN^2}$.  An explicit choice of coordinates can be made using the following orthonormal basis for $M_N(\C)_{sa}$:
\begin{equation} \label{eq:basis}
\mathcal{B}_N = \{E_{k,k}\}_{k=1}^N \cup \left\{\frac{1}{\sqrt{2}} E_{k,\ell} + \frac{1}{\sqrt{2}} E_{\ell,k} \right\}_{k < \ell} \cup \left\{\frac{i}{\sqrt{2}} E_{k,\ell} - \frac{i}{\sqrt{2}} E_{\ell,k} \right\}_{k < \ell}.
\end{equation}
This basis has the property that for any $x, y, z \in M_N(\C)$, we have
\begin{equation} \label{eq:basissummation}
\sum_{b \in \mathcal{B}_N} xbybz = xz \Tr(y),
\end{equation}
which follows from an elementary computation.

We denote the norm corresponding to $\Tr$ by $| \cdot |$ (essentially the Euclidean norm).  We denote the normalized trace by $\tau_N = \frac{1}{N} \Tr$.  We denote the corresponding inner product by $\ip{x,y}_2 = \sum_{j=1}^m \tau_N(x_jy_j)$ and the norm by $\norm{\cdot}_2$.  For $x \in M_N(\C)$, we denote the operator norm by $\norm{x}$.  Similarly, if $x = (x_1,\dots,x_m) \in M_N(\C)^m$, we denote $\norm{x}_\infty = \max_j \norm{x_j}$.

The symbols $\nabla$ and $\Delta$ will respresent the gradient and Laplacian operators with respect to the coordinates of $M_N(\C)_{sa}$ in the non-normalized inner product $\ip{\cdot,\cdot}_{\Tr}$.  The symbols $D$ and $L_N$ will denote the normalized versions $N \nabla$ and $(1/N) \Delta$ respectively, as well as the corresponding linear transformations on the algebra of trace polynomials.  This normalization and notation will be explained and justified in \S \ref{subsec:TPdiff}.

\subsection{Non-Commutative Probability Spaces and Laws}

The following are standard definitions and facts in non-commutative probability.  For further background, see \cite{VDN1992,NS2006}.

\begin{definition}
A \emph{von Neumann algebra} is a unital $\C$-algebra $\mathcal{M}$ of bounded operators on a Hilbert space $\mathcal{H}$ which is closed under adjoints and closed in the weak operator topology.
\end{definition}

\begin{definition}
A \emph{tracial von Neumann algebra} or \emph{non-commutative probability space} is a von Neumann algebra $\mathcal{M}$ together with a bounded linear map $\tau: \mathcal{M} \to \C$ which is continuous in the weak operator topology and satisfies $\tau(1) = 1$, $\tau(xy) = \tau(yx)$, and $\tau(x^*x) \geq 0$.  The map $\tau$ is called a \emph{trace}.
\end{definition}

\begin{definition}
For $m \geq 1$, we denote by $\NCP_m = \C\ip{X_1,\dots,X_m}$ the algebra of non-commutative polynomials in $X_1$, \dots, $X_m$.  A \emph{non-commutative law} (for an $m$-tuple) is a map $\lambda: \NCP_m \to \C$ such that
\begin{enumerate}
	\item $\lambda$ is linear,
	\item $\lambda$ is unital (that is, $\lambda(1) = 1$),
	\item $\lambda$ is completely positive, that is, for any matrix $P$ with entries in $\C\ip{X_1,\dots,X_m}$, the matrix $\lambda(P^*P)$ is positive semi-definite.
	\item $\lambda$ is tracial, that is, $\lambda(p(X) q(X)) = \lambda(q(X) p(X))$.
\end{enumerate}
We denote by $\Sigma_m$ the space of non-commutative laws equipped with the topology of pointwise convergence on $\C\ip{X_1,\dots,X_m}$, that is, convergence in non-commutative moments.
\end{definition}

\begin{definition}
We say that a non-commutative law $\lambda$ is \emph{bounded by $R$} if we have
\[
|\lambda(X_{i_1}, \dots, X_{i_n})| \leq R^n.
\]
We denote the space of such laws by $\Sigma_{m,R}$.
\end{definition}

\begin{definition} \label{def:lawofX}
Suppose that $x_1$, \dots $x_m$ are bounded self-adjoint elements of a tracial von Neumann algebra $(\mathcal{M},\tau)$.  Then the \emph{law of $x = (x_1,\dots,x_m)$} is the map
\[
\lambda_x: \C\ip{X_1,\dots,X_n} \to \C: p(X) \mapsto \tau(p(x)).
\]
\end{definition}

\begin{definition}
Let $M_N(\C)$ be the algebra of $N \times N$ matrices over $\C$.  Let $\tau_N = \frac{1}{N} \Tr$ be the normalized trace.  Then $(M_N(\C), \tau_N)$ is a tracial von Neumann algebra, and hence for any tuple of self-adjoint matrices $x = (x_1,\dots,x_m)$, the law $\lambda_x$ is defined by Definition \ref{def:lawofX}.
\end{definition}

\begin{proposition}
The space $\Sigma_{m,R}$ is compact, separable, and metrizable.  Moreover, every $\mu \in \Sigma_{m,R}$ can be realized as $\lambda_x$ for some tuple $x = (x_1,\dots,x_m)$ of self-adjoint elements of a tracial von Neumann algebra $(\mathcal{M},\tau)$ with $\norm{x}_\infty \leq R$.
\end{proposition}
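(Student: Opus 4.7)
\medskip
\noindent\textbf{Proof proposal.} The plan is to treat the topological claims and the realization claim separately; the two are independent, and the realization is the only part with any content.

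For the topological half, the strategy is to view $\Sigma_{m,R}$ as a subset of the product space $\prod_w D_{R^{|w|}}$, where $w$ ranges over the countable set of monomials in $X_1,\dots,X_m$ and $D_r \subset \C$ is the closed disk of radius $r$. By Tychonoff this product is compact, and each of the defining axioms of $\Sigma_{m,R}$ (linearity, unitality, positivity of $\lambda(q^*q)$ for each $q$, complete positivity on each matrix size, traciality on each pair of monomials, and the bound $|\lambda(w)|\leq R^{|w|}$) is a closed condition under pointwise convergence; hence $\Sigma_{m,R}$ is closed, so compact. Metrizability follows because only countably many coordinates are involved: explicitly, for an enumeration $(w_k)$ of the monomials we may take $d(\lambda,\mu)=\sum_k 2^{-k}\min(1,|\lambda(w_k)-\mu(w_k)|)$. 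Separability then comes for free from compactness plus metrizability.

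For the realization claim, the plan is the GNS construction. Given $\lambda\in\Sigma_{m,R}$, put the positive semidefinite sesquilinear form $\langle p,q\rangle=\lambda(q^*p)$ on $\NCP_m$, let $N=\{p:\lambda(p^*p)=0\}$ (a left ideal by Cauchy--Schwarz in this form), form the Hilbert space $H=\overline{\NCP_m/N}$, let $\xi$ be the image of $1$, and let $\pi(X_j)$ be the left-multiplication operator on $\NCP_m/N$. Once we know each $\pi(X_j)$ extends to a bounded self-adjoint operator on $H$, we take $\mathcal{M}:=\pi(\NCP_m)''$ and $\tau(a):=\langle a\xi,\xi\rangle$. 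Then $\tau$ is a normal unital state on $\mathcal{M}$; it is tracial because for any fixed $a\in\pi(\NCP_m)$ the set $\{b\in\mathcal{M}:\tau(ab)=\tau(ba)\}$ is weakly closed and contains the weakly dense algebra $\pi(\NCP_m)$ (since $\lambda$ is tracial), hence equals $\mathcal{M}$; and then symmetrically in the other variable. Thus $(\mathcal{M},\tau,\pi(X_1),\dots,\pi(X_m))$ realizes $\lambda$.

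The main obstacle, and the only real content of the argument, is showing $\|\pi(X_j)\|\leq R$. This does not follow directly from positivity because $R^2-X_j^2$ is not a sum of squares in $\NCP_m$. The plan is to use an iterated Cauchy--Schwarz. Fix $p\in\NCP_m$ and set $c_k:=\lambda(p^*X_j^{2k}p)\geq 0$. Applying Cauchy--Schwarz to the pair $(p,X_j^{2k}p)$ in the form $\lambda(\cdot^*\cdot)$ gives $c_k^2\leq c_0c_{2k}$, and iterating yields
\[
c_1 \;\leq\; c_0^{1-2^{-n}}\,c_{2^n}^{\,2^{-n}}\qquad\text{for every } n\geq 1.
\]
On the other hand, expanding $p^*X_j^{2^{n+1}}p$ as a linear combination of monomials and applying the hypothesis $|\lambda(w)|\leq R^{|w|}$ termwise gives a crude bound $c_{2^n}\leq C_pR^{2^{n+1}}$, with $C_p=(\sum_\alpha|a_\alpha|R^{|\alpha|})^2$ depending only on the coefficients of $p$. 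Sending $n\to\infty$ the factor $C_p^{2^{-n}}\to 1$ and we recover $c_1\leq R^2c_0$, i.e.\ $\|\pi(X_j)\hat p\|^2\leq R^2\|\hat p\|^2$. Since $\pi(X_j)$ is symmetric on a dense subspace and bounded, it extends to a bounded self-adjoint operator of norm at most $R$, and the construction of $(\mathcal{M},\tau)$ goes through as above.
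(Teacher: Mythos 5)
Your proof is correct. The paper states this proposition without proof as a standard fact from free probability, and your argument is the standard one: the Tychonoff argument for the topological part, and GNS for the realization, with the iterated Cauchy--Schwarz estimate $c_1\leq c_0^{1-2^{-n}}c_{2^n}^{2^{-n}}$ together with the crude $c_{2^n}\leq C_p R^{2^{n+1}}$ bound being exactly the usual device for obtaining $\|\pi(X_j)\|\leq R$ from the moment bounds (this is the key step, since, as you note, $R^2-X_j^2$ is not a sum of squares in $\NCP_m$).
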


\subsection{Non-commutative $L^\alpha$-norms} \label{subsec:NCLP}

On several occasions, we will need to use the non-commutative $L^\alpha$ norms for $\alpha \in [1,+\infty]$.  (Here we use $\alpha$ rather than $p$ since the letter $p$ will often be used for a polynomial.)  If $y$ is any element of a tracial von Neumann algebra $(\mathcal{M},\tau)$, then we define $|y| = (y^*y)^{1/2}$ defined using continuous functional calculus.  For $\alpha \in [0,+\infty)$, we define $\norm{y}_\alpha = \tau(|y|^\alpha)^{1/\alpha}$.  We also define $\norm{y}_\infty$ to be the operator norm.

\begin{proposition} \label{prop:NCHolder}
If $(\mathcal{M},\tau)$ is a tracial von Neumann algebra and $\alpha \in [1,+\infty]$, then $\norm{\cdot}_\alpha$ defines a norm.  Moreover, we have the non-commutative H\"older inequality
\[
\norm{x_1 \dots x_n}_\alpha \leq \norm{x_1}_{\alpha_1} \dots \norm{x_n}_{\alpha_n}
\]
whenever
\[
\frac{1}{\alpha_1} + \dots + \frac{1}{\alpha_n} = \frac{1}{\alpha}.
\]
Moreover, we have $|\tau(y)| \leq \norm{y}_1$.
\end{proposition}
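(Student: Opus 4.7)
The plan is to deduce the entire proposition from the two-variable tracial Hölder inequality $|\tau(xy)| \leq \norm{x}_p \norm{y}_q$ (for $1/p + 1/q = 1$), proved via Hadamard's three-lines theorem. Everything else --- the $n$-fold Hölder and the norm property --- will then follow by standard duality and induction arguments.

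I would first establish $|\tau(y)| \leq \norm{y}_1$, since it underlies every boundary bound that comes later. Writing $y = u|y|$ in polar form with $u^*u$ the support projection of $|y|$, I apply the Cauchy--Schwarz inequality for the positive semidefinite form $(a, b) \mapsto \tau(b^*a)$ to the factorization $\tau(y) = \tau(u|y|^{1/2}\cdot |y|^{1/2})$:
\[
|\tau(y)|^2 \leq \tau(|y|^{1/2} u^* u |y|^{1/2})\cdot \tau(|y|) \leq \tau(|y|)^2 = \norm{y}_1^2,
\]
since $u^*u \leq 1$. Combining this with operator monotonicity of the square root applied to $|ab|^2 \leq \norm{a}_\infty^2\, b^*b$, I obtain the workhorse inequality $|\tau(ab)| \leq \norm{a}_\infty \norm{b}_1$.

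Next I prove $|\tau(xy)| \leq \norm{x}_p \norm{y}_q$. Normalize $\norm{x}_p = \norm{y}_q = 1$, take polar decompositions $x = u|x|$ and $y = v|y|$, and on the closed strip $S = \{0 \leq \Re z \leq 1\}$ define
\[
F(z) = \tau\bigl(u |x|^{pz}\, v |y|^{q(1-z)}\bigr),
\]
using Borel functional calculus (setting $0^w := 0$ for $\Re w > 0$). Since the operators are bounded, $F$ is holomorphic on the interior of $S$ and continuous and bounded on $S$, and $F(1/p) = \tau(xy)$. On $\Re z = 0$, $|x|^{pit}$ is a partial isometry (operator norm $\leq 1$), and since $v^*v$ is the support projection of $|y|$, a direct calculation gives $|v|y|^{q(1-it)}|^2 = |y|^{q(1+it)} v^*v |y|^{q(1-it)} = |y|^{2q}$; hence $|v|y|^{q(1-it)}| = |y|^q$ and the workhorse bound gives $|F(it)| \leq \tau(|y|^q) = 1$. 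Symmetrically $|F(1+it)| \leq 1$. Hadamard's three-lines theorem then yields $|F(1/p)| \leq 1$, which is the claim.

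The rest is formal. Operator monotonicity gives the endpoint inequalities $\norm{xy}_\alpha \leq \norm{x}_\infty \norm{y}_\alpha$ and $\norm{xy}_\alpha \leq \norm{x}_\alpha \norm{y}_\infty$, and Stein interpolation between these endpoints upgrades to $\norm{xy}_\alpha \leq \norm{x}_p \norm{y}_q$ for $1/p + 1/q = 1/\alpha$. The duality formula $\norm{y}_\alpha = \sup\{|\tau(yz)| : \norm{z}_{\alpha'} \leq 1\}$ (with $1/\alpha + 1/\alpha' = 1$) follows by combining the two-factor trace Hölder in the $\leq$ direction with the explicit test function $z = |y|^{\alpha-1} u^*/\norm{y}_\alpha^{\alpha-1}$ for the $\geq$ direction (one checks $\tau(yz) = \norm{y}_\alpha$ and $\norm{z}_{\alpha'} = 1$ using $(\alpha-1)\alpha' = \alpha$). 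Minkowski's inequality is then an immediate consequence of duality, giving the norm property, and the $n$-factor Hölder follows by iterating the two-factor case. The main obstacle is the bookkeeping in the three-lines step --- verifying holomorphy and continuity of $F$ on the strip and correctly identifying the boundary bounds --- but both are routine thanks to the boundedness of the operators and Borel functional calculus.
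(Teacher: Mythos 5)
The paper does not supply a proof of Proposition~\ref{prop:NCHolder}: it merely indicates that the standard argument uses polar decomposition, complex interpolation, and the three-lines lemma, and refers to Simon's Trace Ideals (for matrices) and da Silva's notes (for von Neumann algebras). Your proposal follows exactly the route the paper points to, and the core steps are carefully done: the Cauchy--Schwarz derivation of $|\tau(y)| \leq \norm{y}_1$, the workhorse $|\tau(ab)| \leq \norm{a}_\infty \norm{b}_1$ via operator monotonicity, the three-lines function $F(z) = \tau(u|x|^{pz}v|y|^{q(1-z)})$ with the correct boundary computation $(v|y|^{q(1-it)})^*v|y|^{q(1-it)} = |y|^{2q}$, and the duality test element $z = |y|^{\alpha-1}u^*/\norm{y}_\alpha^{\alpha-1}$ (one checks $(\alpha-1)\alpha' = \alpha$) are all correct.

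The one place you are too casual is the closing claim that ``the $n$-factor H\"older follows by iterating the two-factor case.'' There is a hidden circularity: to get the two-factor \emph{norm} H\"older $\norm{xy}_\alpha \leq \norm{x}_p \norm{y}_q$ ($1/p+1/q=1/\alpha$) via your duality formula, you need the three-factor \emph{trace} H\"older $|\tau(xyz)| \leq \norm{x}_p \norm{y}_q \norm{z}_{\alpha'}$; and iterating the two-factor trace H\"older to obtain that three-factor bound itself uses the two-factor norm H\"older. Your mention of ``Stein interpolation between the endpoints'' would close this loop in principle, but note that the \emph{linear} Riesz--Thorin estimate for $T_x : y \mapsto xy$ between $L^\infty \to L^\alpha$ and $L^\alpha \to L^\alpha$ only gives $\norm{xy}_\alpha \leq \norm{x}_\infty^\theta \norm{x}_\alpha^{1-\theta} \norm{y}_{q_\theta}$, and $\norm{x}_\infty^\theta\norm{x}_\alpha^{1-\theta} \geq \norm{x}_p$ by log-convexity of $\alpha \mapsto \norm{x}_\alpha$, so the constant is too large; one needs \emph{bilinear} Riesz--Thorin, which is itself another three-lines argument and should be spelled out. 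The cleaner fix is to prove the $n$-factor trace H\"older directly by induction on $n$: with $\norm{x_i}_{\alpha_i} = 1$, $\sum 1/\alpha_i = 1$, and $1/\alpha_1' = 1 - 1/\alpha_1$, set
\[
F(w) = \tau\Bigl(u_1|x_1|^{\alpha_1 w}\,\prod_{j\geq 2} u_j|x_j|^{\alpha_1'(1-w)}\Bigr),
\]
so that $F(1/\alpha_1) = \tau(x_1\cdots x_n)$. On $\Re w = 0$ absorb the partial isometry $u_1|x_1|^{\alpha_1 it}$ into the next factor via the endpoint estimate $\norm{ab}_\beta \leq \norm{a}_\infty \norm{b}_\beta$ (which you already proved via operator monotonicity) and apply the $(n-1)$-factor trace H\"older to what remains; on $\Re w = 1$ the first factor is in $L^1$ with norm $1$ and the rest are partial isometries, so $|\tau(y)| \leq \norm{y}_1$ and the endpoint estimate give the bound. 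The three-lines lemma then completes the induction, after which duality, Minkowski, and the $n$-factor norm H\"older are genuinely formal, as you say.
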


A standard proof of the H\"older inequality uses polar decomposition, complex interpolation, and the three lines lemma.  We will in fact only need this inequality for the trace $\tau_N$ on $M_N(\C)$.   Modulo renormalization of the trace, the inequality for matrices follows from the treatment of trace-class operators in \cite{Simon2005}; see especially Thm.\ 1.15 and Thm.\ 2.8, as well as the references cited on p.\ 31.  For the setting of von Neumann algebras, a convenient proof can be found in \cite[Thm.\ 2.4 - 2.6]{daSilva2018}; for an overview and further history see \cite[\S 2]{PX2003}.

\begin{remark}
One can define the non-commutative $L^\alpha$ norm for a tuple $(y_1,\dots,y_m)$ as
\[
\norm{(y_1,\dots,y_m)}_\alpha = \begin{cases} \tau(|y_1|^\alpha + \dots + |y_m|^\alpha)^{1/\alpha}, & \alpha \in [1,+\infty) \\ \max_j \norm{y_j}, & \alpha = +\infty. \end{cases}
\]
However, for tuples, we will only need to use the $2$ and $\infty$ norms.
\end{remark}

\subsection{Free Independence, Semicircular Law, and GUE}

We will use the following standard definitions and facts from free probability.  For further background, refer to \cite{Voiculescu1986,Voiculescu1991,VDN1992,NS2006,AGZ2009}.

Let $(\mathcal{M},\tau)$ be a tracial von Neumann algebra, and let $A_1$, \dots, $A_n$ be unital $*$-subalgebras of $\mathcal{M}$.  Then we say that $A_1$, \dots, $A_m$ are \emph{freely independent} if given $a_1$, \dots, $a_k$ with $a_j \in A_{i_j}$ and $i_j \neq i_{j+1}$ and $\tau(a_j) = 0$ for each $j$, we have also $\tau(a_1 \dots a_k) = 0$.

In particular, if $S_1$, \dots, $S_n$ are \emph{subsets} of $\mathcal{M}$, then we say that they are freely independent if the unit $*$-subalgebras they generate are freely independent.  Thus, for instance, self-adjoint elements $x_1$, \dots, $x_m$ of $\mathcal{M}$ are freely independent if given polynomials $f_1$, \dots, $f_k$ and indices $i_1$, \dots, $i_k$ with $i_j \neq i_{j+1}$ such that $\tau(f_j(X_{i_j})) = 0$, we have also $\tau(f_1(X_{i_1}) \dots f_k(X_{i_k})) = 0$.

The \emph{free convolution} of two non-commutative laws $\mu$ and $\nu$ (of self-adjoint $m$-tuples) is defined as the non-commutative law of $(x_1+y_1,\dots,x_m+y_m)$, given that $\{x_1,\dots,x_m\}$ and $\{y_1,\dots,y_m\}$ are freely independent and the non-commutative law of $(x_1,\dots,x_m)$ is $\mu$, and the non-commutative law of $(y_1,\dots,y_m)$ is $\nu$.  Then $\boxplus$ is well-defined, independent of the particular choice of operators that realize the laws $\mu$ and $\nu$.  Moreover, $\boxplus$ commutative and associative.

If $X_1$, \dots, $X_m$ are freely independent, then their joint law is determined by the individual laws of the $X_j$'s, each of which is represented by a compactly supported probability measure on $\R$.  The \emph{semicircle law} (of mean zero and variance $1$) is the probability measure given by density $(1/2\pi) \sqrt{4 - x^2} \mathbf{1}_{[-2,2]}(x)\,dx$.  We denote by $\sigma_t$ the non-commutative law of $m$ freely independent semicircular random variables which each have mean zero and variance $t$ (that is, $\sigma_t(X_j) = 0$ and $\sigma_t(X_j^2) = t$).

These free semicircular families play the role of multivariable Gaussians in free probability.  Moreover, they form a semigroup under free convolution, that is, $\sigma_s \boxplus \sigma_t = \sigma_{s+t}$.

We denote by $\sigma_{t,N}$ the probability distribution on $M_N(\C)_{sa}^m$ for $m$ independent GUE matrices of normalized variance $t$, that is,
\[
d\sigma_{t,N}(x) = \frac{1}{Z_{N,t}} \exp \left(-N \sum_{j=1}^m \Tr(x_j^2) / 2t \right)\,dx,
\]
where $Z_{N,t}$ is chosen so that $\sigma_{t,N}$ is a probability measure.  It is well known that the independent GUE matrices behave in the limit like freely independent semicircular random variables, although we shall directly prove the specific properties we use in \S \ref{sec:tracepolynomials}.

\subsection{Concentration and Operator Norm Tail Bounds} \label{subsec:concentration}

The following is a standard concentration estimate for uniformly log-concave random matrix models.  The best known proof goes through the log-Sobolev inequality and Herbst's argument (see \cite[\S 4.4.2]{AGZ2009}), although it can also be proved directly using the heat semigroup directly as in \cite{Ledoux1992}.  We state the theorem here with free probabilistic normalizations.

\begin{theorem} \label{thm:concentration}
Suppose that $V: M_N(\C)_{sa}^m \to \R$ is a potential such that $V(x) - (c/2) \norm{x}_2^2$ is convex.  Define
\[
d\mu(x) = \frac{1}{Z} \exp(-N^2 V(x))\,dx, \qquad Z = \int \exp(-N^2 V(x))\,dx.
\]
Suppose that $f: M_N(\C)_{sa}^m \to \R$ is $K$-Lipschitz with respect to $\norm{\cdot}_2$.  Then
\[
\mu(x: f(x) - \smallint f\,d\mu \geq \delta) \leq e^{cN^2 \delta^2 / 2K^2},
\]
and since the same estimate can be applied to $-f$, we have also
\[
\mu(x: |f(x) - \smallint f \,d\mu| \geq \delta) \leq 2 e^{-cN^2 \delta^2 / 2K^2}.
\]
\end{theorem}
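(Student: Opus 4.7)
The plan is to derive the bound from the combination of the Bakry--\'{E}mery log-Sobolev inequality (LSI) for strongly log-concave measures on Euclidean space together with Herbst's argument, which is the standard route alluded to just above the statement. First, I would rewrite everything in Euclidean coordinates: using the orthonormal basis $\mathcal{B}_N$ from \eqref{eq:basis}, identify $M_N(\C)_{sa}^m$ with $\R^{mN^2}$ so that $\ip{\cdot,\cdot}_{\Tr}$ becomes the standard Euclidean inner product with norm $|\cdot|$, and then $\norm{x}_2^2 = (1/N)\sum_j \Tr(x_j^2) = |x|^2/N$. Writing $H(x) := N^2 V(x)$, the measure becomes $d\mu = Z^{-1} e^{-H(x)}\,dx$ against Lebesgue measure, and the hypothesis that $V - (c/2)\norm{x}_2^2$ is convex translates, after multiplying through by $N^2$, to the Euclidean Hessian lower bound $\nabla^2 H(x) \succeq cN \cdot I$. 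I would then invoke the Bakry--\'{E}mery theorem: any probability measure of the form $e^{-H}\,dx$ on $\R^d$ with $\nabla^2 H \succeq \rho I$ satisfies the LSI
\[
\mathrm{Ent}_\mu(g^2) \leq \frac{2}{\rho} \int |\nabla g|^2\,d\mu
\]
for every smooth $g \geq 0$, which in our situation gives LSI constant $2/(cN)$.

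Next, I would run Herbst's argument. Assume $f$ is $L$-Lipschitz with respect to $|\cdot|$ and, after subtracting its mean, that $\int f\,d\mu = 0$; set $\Lambda(t) := \log \int e^{tf}\,d\mu$. Applying the LSI to $g = e^{tf/2}$ and using that $|\nabla f| \leq L$ almost everywhere produces, after dividing by $t^2 \int e^{tf}\,d\mu$, the differential inequality
\[
\frac{d}{dt} \frac{\Lambda(t)}{t} \leq \frac{L^2}{2cN},
\]
which, combined with $\lim_{t \to 0^+} \Lambda(t)/t = \Lambda'(0) = \int f\,d\mu = 0$, integrates to $\Lambda(t) \leq L^2 t^2/(2cN)$. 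Markov's inequality then gives $\mu(f \geq \delta) \leq \exp(\Lambda(t) - t\delta)$ for every $t > 0$, and optimizing over $t$ yields the sub-Gaussian tail bound $\mu(f \geq \delta) \leq \exp\bigl(-cN\delta^2/(2L^2)\bigr)$.

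Finally, I would convert the Lipschitz constant back to the free-probabilistic normalization: since $\norm{x-y}_2 = N^{-1/2}|x-y|$, a function that is $K$-Lipschitz with respect to $\norm{\cdot}_2$ is $L = K/\sqrt{N}$-Lipschitz with respect to $|\cdot|$. Substituting $L = K/\sqrt{N}$ into the previous display gives precisely the claimed bound $\exp(-cN^2\delta^2/(2K^2))$, and the two-sided estimate follows by applying the one-sided bound to $-f$ together with a union bound. The only step requiring genuine care is the bookkeeping of normalizations, because the statement mixes the free-probability norm $\norm{\cdot}_2$ with the Euclidean norm $|\cdot|$ at two different places (the Hessian lower bound on $V$ and the Lipschitz constant of $f$); the substantive inputs (Bakry--\'{E}mery and Herbst) are textbook and are referenced by the author to \cite{AGZ2009}.
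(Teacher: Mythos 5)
Your proof is correct and implements exactly the route the paper points to (Bakry--\'{E}mery log-Sobolev inequality plus Herbst's argument, as cited to \cite{AGZ2009}); the normalization bookkeeping---$\nabla^2(N^2 V) \succeq cN\cdot I$ in Euclidean coordinates and $L = K/\sqrt{N}$ for the Euclidean Lipschitz constant---is handled correctly. Note that your derivation also confirms that the first displayed bound in the theorem statement is missing a minus sign in the exponent (it should read $e^{-cN^2\delta^2/2K^2}$, as the second display makes clear).
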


In particular, this concentration estimate applies to the GUE law $\sigma_{t,N}$ with $c = 1/t$.  In addition to the concentration estimate, we will also use the fact that such uniformly convex random matrix models have subgaussian moments and therefore have good tail bounds on the probability of large operator norm.  The following theorem is a special case of \cite[Theorem 1.1]{Harge2004} and the application to random matrix models is taken from the proof of \cite[Theorem 3.4]{GMS2006}.

\begin{theorem} \label{thm:subgaussian}
Let $V$ and $\mu$ be as in Theorem \ref{thm:concentration}, and suppose that $f: M_N(\C)_{sa}^m \to \R$ is convex.  Let $a = \int x\,d\mu(x)$.  Then
\[
\int f(x - a)\,d\mu(x) \leq \int f(y)\,d\sigma_{c^{-1},N}(y).
\]
In particular, if $\norm{x}_\alpha$ denotes the $L^\alpha$ norm from \S \ref{subsec:NCLP}, then for every $\alpha \in [1,+\infty]$ and $\beta \in [1,+\infty)$, we have
\[
\int \norm{x_j - a_j}_\alpha^\beta\,d\mu(x) \leq \int \norm{y_j}_\alpha^\beta\,d\sigma_{c^{-1},N}(y).
\]
\end{theorem}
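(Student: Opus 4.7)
The plan is to reduce the problem to Harge's correlation inequality \cite{Harge2004} after re-expressing $\mu$ as a log-concave perturbation of the Gaussian reference $\sigma_{c^{-1},N}$. Set $W(x) := V(x) - (c/2)\norm{x}_2^2$, which is convex by hypothesis. Since $\sum_{j=1}^m \Tr(x_j^2) = N \norm{x}_2^2$, the Lebesgue density of $\sigma_{c^{-1},N}$ on $M_N(\C)_{sa}^m$ is proportional to $\exp(-cN^2 \norm{x}_2^2/2)$, and therefore
\[
d\mu(x) \propto e^{-N^2 V(x)}\,dx = e^{-N^2 W(x)} e^{-cN^2 \norm{x}_2^2 / 2}\,dx \propto e^{-N^2 W(x)}\,d\sigma_{c^{-1},N}(x).
\]
Thus the Radon--Nikodym derivative $d\mu/d\sigma_{c^{-1},N}$ is proportional to the log-concave function $e^{-N^2 W}$; $\mu$ is a ``more log-concave'' measure than $\sigma_{c^{-1},N}$ in the sense required by Harge.

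The next step is to invoke Harge's inequality, which asserts that whenever a probability measure $\mu$ on a finite-dimensional Euclidean space has log-concave density against a centered Gaussian $\gamma$, the bound
\[
\int f(x - a)\,d\mu(x) \leq \int f(y)\,d\gamma(y)
\]
holds for every convex $f$, where $a := \int x\,d\mu$. Since $(M_N(\C)_{sa}^m, \ip{\cdot,\cdot}_{\Tr})$ is Euclidean and $\sigma_{c^{-1},N}$ is a centered Gaussian on it, applying Harge's inequality with $\gamma = \sigma_{c^{-1},N}$ yields the first assertion of the theorem.

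For the consequence about non-commutative $L^\alpha$-norms, I only need to verify that $y \mapsto \norm{y_j}_\alpha^\beta$ is convex on $M_N(\C)_{sa}^m$. By Proposition \ref{prop:NCHolder}, $\norm{\cdot}_\alpha$ is a norm on $M_N(\C)$ for each $\alpha \in [1,+\infty]$, so $y \mapsto \norm{y_j}_\alpha$ is convex; composing with $t \mapsto t^\beta$, which is convex and non-decreasing on $[0,\infty)$ for $\beta \geq 1$, produces a convex function. Applying the main inequality to this $f$ gives the stated bound.

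The genuine obstacle in this plan is Harge's inequality itself, which I would cite rather than reprove. Standard routes to it go either through Caffarelli's contraction theorem---which supplies a $1$-Lipschitz optimal transport map from $\gamma$ to $\mu$ that, after re-centering, can be fed into Jensen's inequality for the convex $f$---or through a direct semigroup/interpolation argument along the Ornstein--Uhlenbeck flow associated to $\gamma$. The matrix setting adds nothing new beyond the identification of the Euclidean structure carried out in the first paragraph.
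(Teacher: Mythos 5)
Your argument is correct and follows essentially the same route as the paper's proof: identify $\mu$ as having log-concave density relative to the Gaussian $\sigma_{c^{-1},N}$, invoke Harg\'e's theorem for the main inequality, and then observe that $y \mapsto \norm{y_j}_\alpha^\beta$ is convex (you correctly note that the composition requires $t \mapsto t^\beta$ to be both convex \emph{and} non-decreasing, a point the paper leaves implicit). The closing paragraph about how one might prove Harg\'e's inequality is fine background but not needed, since both you and the paper treat it as a citation.
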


\begin{proof}
The convexity assumption on $V$ means that $\mu$ has a log-concave density with respect to the Gaussian measure $\sigma_{c^{-1},N}(y)$.  Therefore, the first claim follows from Harg\'e's theorem \cite[Theorem 1.1]{Harge2004}.  The second claim follows because norms on vector spaces are convex functions, and the function $t \mapsto t^\beta$ is convex for $\beta \geq 1$.
\end{proof}

\begin{corollary} \label{cor:operatornormtail}
Let $V_N: M_N(\C)_{sa} \to \R$ be a function such that $V_N(x) - (c/2) \norm{x}_2^2$ is convex and let $\mu_N$ be the corresponding measure.  Let $a_{N,j} = \int x_j\,d\mu_N(x)$.  Then
\[
\limsup_{N \to \infty} \int \norm{x_j - a_{N,j}} \,d\mu_N(x) \leq 2c^{-1/2},
\]
and
\[
\mu_N(x: \norm{x_j} \geq \smallint \norm{y_j}\,d\mu_N(y_j) + \delta) \leq e^{-c\delta^2 N / 2}.
\]
\end{corollary}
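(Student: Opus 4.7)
The plan is to read off both inequalities directly from Theorems \ref{thm:subgaussian} and \ref{thm:concentration} applied to the convex, $\sqrt{N}$-Lipschitz functional $x \mapsto \norm{x_j}$ (operator norm). The key observations driving the argument are that (i) the operator norm is a convex function on $M_N(\C)_{sa}$, so Hargé's comparison in Theorem \ref{thm:subgaussian} applies; and (ii) its Lipschitz constant with respect to $\norm{\cdot}_2$ is $\sqrt{N}$, because
\[
\bigl|\norm{x_j} - \norm{y_j}\bigr| \leq \norm{x_j - y_j} \leq \sqrt{\Tr((x_j - y_j)^2)} = \sqrt{N}\,\norm{x_j - y_j}_2.
\]

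For the first bound, I would specialize Theorem \ref{thm:subgaussian} with $\alpha = \infty$ and $\beta = 1$ to get
\[
\int \norm{x_j - a_{N,j}}\,d\mu_N(x) \leq \int \norm{y_j}\,d\sigma_{c^{-1},N}(y).
\]
Then I would invoke the classical fact that the operator norm of a GUE matrix of normalized variance $c^{-1}$ converges almost surely (and in every $L^\beta$) to the edge $2c^{-1/2}$ of the corresponding semicircle law; this is a well known strong edge result (cf.\ \cite{AGZ2009}). Taking $\limsup_N$ on both sides yields the first claim. The only subtlety worth flagging is to make sure the GUE normalization matches the one fixed in \S 2.4, so that variance $c^{-1}$ corresponds to semicircle support $[-2c^{-1/2}, 2c^{-1/2}]$; this is the one step I would double-check rather than take for granted.

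For the second bound, I would apply Theorem \ref{thm:concentration} to $f(x) = \norm{x_j}$ with Lipschitz constant $K = \sqrt{N}$, obtaining the one-sided estimate
\[
\mu_N\bigl(f(x) - \smallint f\,d\mu_N \geq \delta\bigr) \leq \exp\bigl(-cN^2 \delta^2 / 2K^2\bigr) = \exp(-cN\delta^2/2),
\]
which is exactly the asserted tail. No iteration, approximation, or trace-polynomial machinery is needed here — the corollary is essentially a bookkeeping consequence of the two preceding theorems, and the main obstacle is simply verifying the Lipschitz constant and the GUE normalization.
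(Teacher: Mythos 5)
Your proof is correct and follows essentially the same route as the paper: reduce the first inequality to the GUE case via Theorem \ref{thm:subgaussian} (Hargé's comparison for convex functionals, here $x \mapsto \norm{x_j}$) and invoke the standard strong-edge result for the GUE operator norm, and derive the tail bound from Theorem \ref{thm:concentration} using the $\sqrt{N}$-Lipschitz constant of $x \mapsto \norm{x_j}$ with respect to $\norm{\cdot}_2$. The paper's own proof is a terser version of exactly these two observations, citing \cite[Theorem 2.1.22]{AGZ2009} for the GUE expectation.
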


\begin{proof}
In light of Theorem \ref{thm:subgaussian}, for the first claim of the Corollary, it suffices to check the special case $\sigma_{c^{-1},N}$.  This special case is a standard result in random matrix theory; see for instance the proof of \cite[Theorem 2.1.22]{AGZ2009}.  The second claim follows from Theorem \ref{thm:concentration} after we observe that the function on $M_N(\C)_{sa}^N$ given by $x \mapsto \norm{x_j}_\infty$ is $N^{1/2}$-Lipschitz with respect to $\norm{\cdot}_2$.
\end{proof}

\subsection{Semi-convex and Semi-concave functions} \label{subsec:convex}

We recall the following terminology and facts about semi-convex and semi-concave functions.  These facts are typically applied to functions from $\R^n \to \R$, but of course they hold equally well if $\R^n$ is replaced by a finite-dimensional real inner product space.  In particular, we focus on the case of $M_N(\C)_{sa}^m$.

A function $u: M_N(\C)_{sa}^m \to \R$ is \emph{semi-convex} if there exists some $c \in \R$ such that $u(x) - (c/2) \norm{x}_2^2$ is convex.  If this holds for some $c > 0$, then $u$ is said to be \emph{uniformly convex}.  Similarly, $u: M_N(\C)_{sa}^m \to \R$ is said to be \emph{semi-concave} if there exists $C \in \R$ such that $u(x) - (C/2) \norm{x}_2^2$ is concave, and it is \emph{uniformly concave} if this holds for some $C < 0$.

Fix $m$ and $N$.  For $c \leq C$ be real numbers.  Then we define
\[
\mathcal{E}_{m,N}(c,C) = \{u: M_N(\C)_{sa}^m \to \R: u(x) - (c/2) \norm{x}_2^2 \text{ is convex and } u(x) - (C/2) \norm{x}_2^2 \text{ is concave}\}.
\]
We will often suppress $m$ and $N$ in the notation and simply write $\mathcal{E}(c,C)$.  Throughout the paper, we rely on the following basic properties of functions in $\mathcal{E}(c,C)$.

\begin{proposition} \label{prop:convex} ~
\begin{enumerate}
	\item The space $\mathcal{E}(c,C)$ is closed under translation, averaging with respect to probability measures, and pointwise limits.
	\item A function $u$ is in $\mathcal{E}(c,C)$ if and only if for every point $x_0 \in M_N(\C)_{sa}^m$, there exists some $p \in M_N(\C)_{sa}^m$ such that
	\[
	u(x_0) + \ip{p,x-x_0}_2 + \frac{c}{2} \norm{x-x_0}_2^2 \leq u(x) \leq u(x_0) + \ip{p,x-x_0}_2 + \frac{C}{2} \norm{x-x_0}_2^2.
	\]
	\item In particular, if $u \in \mathcal{E}(c,C)$, then $u$ is differentiable everywhere.
	\item If $u \in \mathcal{E}(c,C)$, then the gradient $Du$ is $\max(|c|,|C|)$-Lipschitz with respect to $\norm{\cdot}_2$.
	\item If $u \in \mathcal{E}(c,C)$, then
	\[
	c \norm{x - y}_2^2 \leq \ip{Du(x) - Du(y),x-y}_2 \leq C \norm{x - y}_2^2.
	\]
	\item If $u \in \mathcal{E}(c,C)$ for some $c > 0$, then $u$ is bounded below and achieves a global minimum at its unique critical point.
\end{enumerate}
\end{proposition}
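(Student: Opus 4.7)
The plan is to prove the six items in the order (1), (2), (3), (5), (6), (4); items (1)--(3), (5), (6) follow from direct convex-analytic arguments centered on (2), while (4) requires a smoothing step. I expect the main obstacle to be the forward direction of (2), where producing a single $p$ that serves both the upper and lower quadratic inequalities is not immediate from one-sided convex analysis. For (1), all three operations preserve convexity of $u(x) - (c/2)\norm{x}_2^2$ and concavity of $u(x) - (C/2)\norm{x}_2^2$ separately: translating the argument introduces only affine error terms (which do not affect convexity or concavity), while averaging and pointwise limits preserve each property by standard arguments.

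For (2), the direction ($\Leftarrow$) is immediate: the quadratic lower bound at every $x_0$ provides an affine minorant for $g(x) := u(x) - (c/2)\norm{x}_2^2$ at $x_0$, and having an affine minorant touching the graph at every point is equivalent to convexity on a finite-dimensional domain; the upper bound is handled symmetrically. For ($\Rightarrow$), let $g$ and $h(x) := (C/2)\norm{x}_2^2 - u(x)$; both are convex with nonempty subdifferentials at every $x_0$. Any $p_1 \in \partial g(x_0)$ rearranges to the lower bound in (2) with linear part $p_1 + cx_0$, and any $p_2 \in \partial h(x_0)$ rearranges to the upper bound with linear part $Cx_0 - p_2$. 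These linear parts must coincide: the function $g + h = ((C-c)/2)\norm{\cdot}_2^2$ is smooth with gradient $(C-c)x_0$ at $x_0$, so the subdifferential sum rule forces $\partial g(x_0) + \partial h(x_0) = \{(C-c)x_0\}$, making both subdifferentials singletons whose sum equals $(C-c)x_0$. Hence $p_1 + cx_0 = Cx_0 - p_2$, and this common value is the desired $p$.

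Item (3) follows immediately from (2): the sandwich gives $|u(x_0 + h) - u(x_0) - \ip{p, h}_2| \leq (\max(|c|,|C|)/2) \norm{h}_2^2 = o(\norm{h}_2)$, so $u$ is Fr\'echet differentiable with $Du(x_0) = p$. For (5), apply the lower bound in (2) at $x_0$ (with test point $y_0$) and at $y_0$ (with test point $x_0$), and add: the $u$-values cancel, leaving $\ip{Du(x_0) - Du(y_0), x_0 - y_0}_2 \geq c \norm{x_0 - y_0}_2^2$; the upper bound follows symmetrically. For (6), the lower bound of (2) with $c > 0$ gives (after completing the square in $x - x_0$) that $u$ is coercive and bounded below; continuity from (3) yields a global minimizer $x^*$ with $Du(x^*) = 0$, and any second critical point $y$ would contradict (5) via $0 = \ip{Du(x^*) - Du(y), x^* - y}_2 \geq c\norm{x^* - y}_2^2$.

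Finally, for (4), I would mollify: let $\phi_\epsilon$ be a smooth nonnegative bump on $M_N(\C)_{sa}^m$ with $\int \phi_\epsilon = 1$ and support shrinking to the origin, and set $u_\epsilon = u * \phi_\epsilon$. By (1) (convolution is an average), $u_\epsilon \in \mathcal{E}(c, C)$, and $u_\epsilon$ is $C^\infty$. For $C^2$ functions, $u_\epsilon \in \mathcal{E}(c, C)$ is equivalent to the Hessian bound $c\,\mathrm{Id} \preceq D^2 u_\epsilon(x) \preceq C\,\mathrm{Id}$ at every $x$; hence $\norm{D^2 u_\epsilon(x)}_{\mathrm{op}} \leq \max(|c|, |C|)$, and the fundamental theorem of calculus along line segments gives that $Du_\epsilon$ is $\max(|c|, |C|)$-Lipschitz with respect to $\norm{\cdot}_2$. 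Sending $\epsilon \to 0$, standard mollifier estimates together with the differentiability of $u$ give $Du_\epsilon \to Du$ pointwise, so the Lipschitz constant passes to the limit.
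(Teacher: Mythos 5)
Your proof is correct and follows essentially the same overall structure as the paper's (establish the two-sided quadratic bound (2), deduce (3) immediately, and mollify to get the Lipschitz bound (4)), but it improves on the paper in two places. First, for the forward direction of (2), the paper simply asserts that the two supporting-hyperplane vectors $p$ and $p'$ must coincide; your use of the subdifferential sum rule applied to $g + h = \tfrac{C-c}{2}\norm{\cdot}_2^2$ gives a clean, self-contained reason why, and simultaneously forces each subdifferential to be a singleton, which is the actual content needed for (3). Second, the paper derives both (4) and (5) by mollification plus Arzel\`a--Ascoli, whereas you obtain (5) directly by adding the lower bound in (2) at $x_0$ and at $y_0$ and cancelling; this is more elementary, avoids any compactness argument, and makes clear that (5) is a formal consequence of (2). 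You reserve mollification only for (4), where the Hessian bound for smooth functions is genuinely used and the limiting argument is unavoidable. You also supply the proof of (6), which the paper's sketch silently omits. Your closing step in (4), that $Du_\epsilon \to Du$ pointwise, deserves a word of justification (one can note that $u$ differentiable and convex-plus-quadratic implies $Du$ continuous, or argue directly from local uniform convergence of $u_\epsilon$ to $u$ together with equi-Lipschitzness of $Du_\epsilon$), but this matches the level of rigor in the paper's corresponding Arzel\`a--Ascoli step.
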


\begin{proof}[Sketch of proof]
(1) One follows from elementary computation and the fact that the same holds for the class of convex functions.

(2), (3) The convex functions $u(x) - (c/2)\norm{x}_2^2$ and $(C/2) \norm{x}_2^2 - u(x)$ must have supporting hyperplanes at $x_0$.  This yields one vector $p$ which satisfies the left inequality of (2) and another vector $p'$ satisfying the right inequality.  Then one checks that $p$ must equal $p'$ and this implies that $u$ is differentiable at $x_0$.

(4), (5) One can check these inequalities for smooth functions in $\mathcal{E}_{c,C}$ directly using Taylor expansions and calculus.  Consider a general $u \in \mathcal{E}_{c,C}$.  Let $u_n = u * \rho_n$, where $\rho_n$ be a smooth probability density supported in the ball of radius $1/2$ around $0$.  Then $u_n$ is smooth and $u_n \to u$ locally uniformly.  Also, $u_n \in \mathcal{E}_{c,C}$ by (1), hence $Du_n$ is $\max(|c|,|C|)$-Lipschitz.  By the Arzela-Ascoli theorem, after passing to a subsequence, we may assume that $Du_n$ converges locally uniformly to some $F$.  It follows from local uniform convergence of $Du_n$ that $F = Du$.  Moreover, since (4) and (5) hold for $Du_n$, they also hold for $Du$.
\end{proof}

\section{Trace Polynomials} \label{sec:tracepolynomials}

In this secion, we consider the algebra of trace polynomials in non-commutative variables $X_1$, \dots, $X_m$, first defined in \cite{Razmyslov1974}, \cite{Razmyslov1987}.  As in \cite{Rains1997}, \cite{Cebron2013}, \cite{DHK2013}, we describe how trace polynomials behave under differentiation (\S \ref{subsec:TPdiff}) and convolution with Gaussian (\S \ref{subsec:TPGaussianconvolution}).  Finally, in \S \ref{subsec:defasymptoticapproximation}, we define the property of asymptotic approximability by trace polynomials for a sequence of functions on $M_N(\C)_{sa}^m$, which is one of the key technical tools in our proof.

\subsection{Definition}

\begin{definition}
We define the \emph{algebra of scalar-valued trace polynomials}, or $\TrP_m^0$, as follows.  Let $\mathcal{V}$ be the vector space $\NCP_m / \Span(pq - qp: p, q \in \NCP_m)$.  We define the vector space
\begin{equation}
\TrP_m^0 = \bigoplus_{n=0}^\infty \mathcal{V}^{\odot n},
\end{equation}
where $\odot$ is the symmetric tensor power over $\C$.  Then $\TrP_m^0$ forms a commutative algebra with the tensor operator $\odot$ as the multiplication.  We denote the element $p_1 \odot \dots \odot p_n$ by $\tau(p_1) \dots \tau(p_n)$ where $\tau$ is a formal symbol.
\end{definition}

To state the definition more suggestively, an element of $\TrP_m^0$ is a linear combination of terms of the form $\tau(p_1(X)) \dots \tau(p_n(X))$ where $p_1$, \dots, $p_n$ are non-commutative polynomials in $X_1,\dots,X_m$ and $\tau$ is a formal symbol thought of as the trace.  By forming a quotient vector space, we identify $\tau(pq)$ with $\tau(qp)$.  The trace polynomials form a commutative $*$-algebra $\TrP_m^0$ over $\C$ where the $*$-operation is
\begin{equation}
(\tau(p_1(X)) \dots \tau(p_n(X)))^* = \tau(p_1(X)^*) \dots \tau(p_n(X)^*)
\end{equation}
and the multiplication operation is the one suggested by the notation.

We define $\TrP_m^k$ to be the vector space
\[
\TrP_m^k := \TrP_m^0 \otimes \C\ip{X_1,\dots,X_m}^{\otimes k}.
\]
We call the elements of $\TrP_m^1$ \emph{operator-valued trace polynomials}.  We use the term \emph{trace polynomials} more generally to describe elements of $\TrP_m^k$ or tuples of elements from $\TrP_m^k$.  Note that $\TrP_m^1$ forms a $*$-algebra because it is the tensor products of two $*$-algebras.

\begin{definition}
Suppose that $\mathcal{M}$ is a von Neumann algebra with trace $\sigma$.  Given $f \in \TrP_m^1$ and a self-adjoint tuple $x = (x_1,\dots,x_m)$ of elements of $\mathcal{M}$, we define $f(x)$ to the element of $\mathcal{M}$ given by formally evaluating $f$ with the formal symbol $X$ repalce by $x$ and the formal symbol $\tau$ replaced with $\sigma$.  For instance if $f(X) = p_0(X) \otimes \tau(p_1(X)) \dots \tau(p_n(X))$ in $\TrP_m^1$, then
\[
f(x) = p_0(x) \sigma(p_1(x)) \dots \sigma(p_n(x)).
\]
In particular, we define $f(x)$ when $x$ is an $m$-tuple of self-adjoint $N \times N$ matrices by setting $\tau = \tau_N$.
\end{definition}

\begin{definition}
If $f \in \TrP_m^0$ and $\lambda$ is a non-commutative law, we define the evaluation $\lambda(f)$ to be the number obtained by replacing the symbol $\tau$ with $\lambda$ everywhere in $f$.  For example, if $f(X_1,X_2,X_3) = \tau(X_1) \tau(X_2 X_3) + \tau(X_2^2)$, then we define
\[
\lambda(f) = \lambda(f(X_1,\dots,X_m)) = \lambda(X_1) \lambda(X_2 X_3) + \lambda(X_2^2).
\]
\end{definition}

\begin{definition}
We define the \emph{degree} for elements of $\NCP_m$ and $\TrP_m^k$ as follows.  If $p \in \NCP_m$ is a monomial $p(X_1,\dots,X_m) = X_{i_1} \dots X_{i_d}$, then we define $\deg'(p) = d$.  If $p_1$, \dots, $p_\ell$ and $q_1$, \dots, $q_k$ are non-commutative monomials, then consider the element $\tau(p_1) \dots \tau(p_\ell) q_1 \otimes \dots \otimes q_k \in \TrP_m^k$, and define
\[
\deg'(\tau(p_1) \dots \tau(p_\ell) q_1 \otimes \dots \otimes q_k) = \deg'(p_1) \dots \deg'(p_\ell) \deg'(q_1) \dots \deg'(q_k).
\]
For general $f \in \TrP_m^k$, we define the \emph{degree} $\deg(f)$, as the infimum of $\max(\deg'(f_1),\dots,\deg'(f_\ell))$, where $f = f_1 + \dots + f_\ell$ and each $f_j$ is a product of non-commutative monomials and traces of non-commutative monomials as above.  Similarly, for general $f \in \NCP_m$, we define $\deg(f)$ as the infimum of $\max(\deg'(f_1),\dots,\deg'(f_\ell))$ where $f = f_1 + \dots + f_\ell$ and each $f_j$ is a non-commutative monomial.
\end{definition}

\begin{remark}
One can check that if $f$ is a product of monomials as above, then $\deg(f) = \deg'(f)$.  Moreover, the degree makes $\TrP_m^0$ and $\TrP_m^1$ into graded algebras.  Finally, we observe that $f \in \TrP_m^0$ or $\TrP_m^1$, then the function on $M_N(\C)_{sa}^m$ defined by $x \mapsto f(x)$ is a polynomial in the entries of $x_1$,\dots, $x_m$, and the degree of $x \mapsto f(x)$ with respect to the entries is bounded above by the degree of $f$ in $\TrP_m^0$ or $\TrP_m^1$.  None of these facts will be used in what follows, so we omit the proofs.
\end{remark}

We also observe that there is a composition operation $(\TrP_m^1)^m \times (\TrP_m^1)^m \to (\TrP_m^1)^m$ defined just as one would expect from manipulations in $M_N(\C)$.  If $f, g \in (\TrP_m^1)^m$, we define $f(g(x))$ by substituting $g_j(x)$ as the $j$-th argument of $f$.  Then we multiply elements out by treating the terms of the form $\tau(p)$ like scalars.  For instance, if $f(Y_1,Y_2) = (\tau(Y_1Y_2) Y_2, Y_1 + \tau(Y_1^2) Y_2)$ and $g(X_1,X_2) = (\tau(X_1) X_2 + X_1, X_1)$, then $f \circ g(X_1,X_2) = (Z_1,Z_2)$, where
\[
Z_1 =  (\tau[\tau(X_1) X_2 + X_1]X_1) X_1 = \tau(X_1) \tau(X_2 X_1) X_1 + \tau(X_1^2) X_1
\]
and
\begin{align*}
Z_2 &= \tau(X_1) X_2 + X_1 + X_1 \tau[(\tau(X_1)X_2 + X_1)^2] \\
&= \tau(X_1) X_2 + X_1 + \tau[\tau(X_1)^2 X_2^2 + \tau(X_1) X_2 X_1 + \tau(X_1) X_1 X_2 + X_1^2]X_1 \\
&= \tau(X_1) X_2 + X_1 + [\tau(X_1)^2 \tau(X_2^2) + \tau(X_1) \tau(X_2 X_1) + \tau(X_1) \tau(X_1 X_2) + \tau(X_1^2)] X_1 \\
&= \tau(X_1) X_2 + X_1 + \tau(X_1)^2 \tau(X_2^2) X_1 + 2 \tau(X_1) \tau(X_2 X_1) X_1 + \tau(X_1^2) X_1. 
\end{align*}
One can check that composition on $(\TrP_m^1)^m$ is well-defined.  Moreover, if $f$ and $g$ are self-adjoint elements of $(\TrP_m^1)^m$, then they define functions $M_N(\C)_{sa}^m \to M_N(\C)_{sa}^m$, and the element $f \circ g \in (\TrP_m^1)^m$ defined abstractly will product a function $M_N(\C)_{sa}^m \to M_N(\C)_{sa}^m$ which is the composition of the corresponding functions for $f$ and $g$.

\subsection{Differentiation of Trace Polynomials} \label{subsec:TPdiff}

In this section, we give explicit formulas for the gradient and Laplacian of trace polynomials and in particular show that these operations have a well-defined limit as $N \to \infty$ (see \cite{Rains1997}, \cite{Cebron2013}, \cite[\S 3]{DHK2013}).  We first recall the free difference quotients of Voiculescu \cite{VoiculescuFE5}.

\begin{definition} \label{def:NCderivative}
We define the \emph{free difference quotient} (or simply \emph{non-commutative derivative}) $\mathcal{D}_j: \NCP_m \to \NCP_m \otimes \NCP_m$ by
\[
\mathcal{D}_j[X_{i_1} \dots X_{i_n}] = \sum_{k: i_k = j} X_{i_1} \dots X_{i_{k-1}} \otimes X_{i_{k+1}} \dots X_{i_n}.
\]
We also define $\mathcal{D}_j: \NCP_m^{\otimes n} \to \NCP_m^{\otimes n+1}$ by
\[
\mathcal{D}_j[p_1 \otimes \dots \otimes p_n] = \sum_{k=1}^n p_1 \otimes \dots \otimes p_{k-1} \otimes \mathcal{D}_j p_k \otimes p_{k+1} \otimes \dots \otimes p_n.
\]
Then of course $\mathcal{D}_j^k$ is a well-defined map $\NCP_m^{\otimes n} \to \NCP_m^{\otimes n + k}$.
\end{definition}

\begin{remark}
We caution the reader that the normalization for $\mathcal{D}_j^n f$ here differs from that of \cite{VoiculescuFE5} by a factor of $n!$.
\end{remark}

\begin{definition}
We define the \emph{cyclic derivative} $\mathcal{D}_j^\circ: \NCP_m \to \NCP_m$ as the linear map given by
\[
\mathcal{D}_j^\circ[X_{i_1} \dots X_{i_n}] = \sum_{k: i_k = j} X_{i_{k+1}} \dots X_{i_n} X_{i_1} \dots X_{i_{k-1}}.
\]
\end{definition}

\begin{definition}
Given an algebra $\mathcal{A}$ (e.g. $\NCP_m$), we define the \emph{hash operation} as the bilinear map $(\mathcal{A} \otimes \mathcal{A}) \times \mathcal{A}$ given by $(a_1 \otimes a_2) \# b = a_1 b a_2$.
\end{definition}

\begin{example}
Let $X = (X_1,X_2,X_3)$ and define $f(X) = X_1 X_2 X_1^2 X_3 X_2$.  Then
\begin{align*}
\mathcal{D}_1 f(X) &= 1 \otimes X_2 X_1^2 X_3 X_2 + X_1 X_2 \otimes X_1 X_3 X_2 + X_1 X_2 X_1 \otimes X_3 X_2 \\
\mathcal{D}_1^\circ f(X) &= X_2 X_1^2 X_3 X_2 + X_1 X_3 X_2 X_1 X_2 + X_3 X_2 X_1 X_2 X_1, \\
\mathcal{D}_1 f(X) \# Y &= YX_2 X_1^2 X_3 X_2 + X_1 X_2  Y X_1 X_3 X_2 + X_1 X_2 X_1 Y X_3 X_2.
\end{align*}
To compute $\mathcal{D}_1^2 f(X) = \mathcal{D}_1[\mathcal{D}_1 f(X)]$, we would add together the three terms
\begin{align*}
\mathcal{D}_1[1 \otimes X_2 X_1^2 X_3 X_2] &= 1 \otimes X_2 \otimes X_1 X_3 X_2 + 1 \otimes X_2 X_1 \times X_3 X_2 \\
\mathcal{D}_1[X_1 X_2 \otimes X_1 X_3 X_2] &= 1 \otimes X_2 \otimes X_1 X_3 X_2 + X_1 X_2 \otimes 1 \otimes X_3 X_2 \\
\mathcal{D}_1[X_1 X_2 X_1 \otimes X_3 X_2] &= 1 \otimes X_2 X_1 \otimes X_3 X_2 + X_1 X_2 \otimes 1 \otimes X_3 X_2.
\end{align*}
\end{example}

Now we can define derivatives for scalar-valued and non-commutative trace polynomials that correspond with differentiation with respect to the standard coordinates on $M_N(\C)_{sa}^m$.  We begin with the gradient.

To fix notation, recall that in \S \ref{subsec:matrixnotation} we gave a canonical orthonormal basis for $M_N(\C)_{sa}$ with respect to the inner product $\ip{x,y} = \Tr(x^*y)$.  Using these coordinates, we may identify $M_N(\C)_{sa}$ with $\R^{N^2}$ and hence identify $M_N(\C)_{sa}^m$ with $\R^{mN^2}$.  Similarly, we identify the complexification $\C \otimes M_N(\C)_{sa}^m$ with $M_N(\C)^m$ and with $\C^{mN^2}$.  For $f: M_N(\C)_{sa}^m \to \C$ and $x = (x_1,\dots,x_m) \in M_N(\C)_{sa}^m$, we denote by $\nabla f(x) \in M_N(\C)^m$ the gradient computed in these coordinates; similarly, we denote by $\nabla_j f(x) \in M_N(\C)$ the gradient with respect to $x_j$ computed in these coordinates.

\begin{definition}
Define the \emph{$j$th gradient operator} $\TrP_m^0 \to \TrP_m^1$ by
\begin{equation}
D_j \left[ \prod_{k=1}^n \tau(p_k) \right] = \sum_{k=1}^n \mathcal{D}_j^\circ p_k \prod_{\ell \neq k} \tau(p_\ell).
\end{equation}
Note that $D_j$ is defined so as to obey the Leibniz rule (that is, it is a derivation).
\end{definition}

\begin{lemma}
If $f \in \TrP_m^0$ is viewed as a function $M_N(\C)_{sa}^m \to M_N(\C)^m = \C$, then we have
\begin{equation}
\nabla_j[f(x)] = \frac{1}{N} [D_jf](x).  \label{eq:evaluategradient}
\end{equation}
Similarly, for $F: M_N(\C)_{sa}^m \to M_N(\C)^m$, let $J_jF$ denote the Jacobian linear transformation (a.k.a.\ Fr{\'e}chet derivative) with respect to $x_j$.  Then for a non-commutative polynomial $p$, we have
\begin{equation} \label{eq:noncommutativedifferentiation}
[J_jp(x)](y) = [\mathcal{D}_jp](x) \# y,
\end{equation}
and hence by the product rule for $p \in \NCP_m$ and $f \in \TrP_m^0$, we have
\begin{equation}
[J_j(pf)(x)](y) = ([\mathcal{D}_j p](x) \# y) f(x) + p(x) \tau_N([D_jf](x) y). \label{eq:evaluateJacobian}
\end{equation}
\end{lemma}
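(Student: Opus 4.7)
The plan is to establish the three identities in order, reducing each to a direct computation on monomials and then extending by linearity and the Leibniz rule. Throughout, recall that $\ip{\cdot,\cdot}_{\Tr}$ is the un-normalized inner product, so the gradient $\nabla_j$ satisfies $\ip{\nabla_j f(x), y}_{\Tr} = \Tr(\nabla_j f(x) \cdot y) = \frac{d}{dt}\big|_{t=0} f(x_1,\dots,x_j+ty,\dots,x_m)$ for $y \in M_N(\C)_{sa}$; this identity extends to $\C$-valued $f$ by decomposing into real and imaginary parts and complexifying. The factor $1/N$ appears because $D_j$ is built from the cyclic derivative paired with the \emph{normalized} trace $\tau = \tau_N$, whereas $\nabla_j$ is defined by the un-normalized trace.

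First I would verify \eqref{eq:noncommutativedifferentiation}. It is enough to check it on a single monomial $p = X_{i_1}\cdots X_{i_n}$. Using the standard product rule for the directional derivative of the multilinear map $(x_1,\dots,x_m) \mapsto x_{i_1}\cdots x_{i_n}$ in the direction $y$ inserted in the $j$-th slot, one obtains
\[
[J_j p(x)](y) = \sum_{k : i_k = j} x_{i_1}\cdots x_{i_{k-1}}\, y\, x_{i_{k+1}}\cdots x_{i_n},
\]
which is exactly $[\mathcal{D}_j p](x)\# y$ by the definitions of $\mathcal{D}_j$ and $\#$. Linearity in $p$ finishes the identity on all of $\NCP_m$.

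Next I would prove \eqref{eq:evaluategradient}. By linearity and since $D_j$ is a derivation on $\TrP_m^0$, and since $\nabla_j$ satisfies the Leibniz rule as an ordinary partial derivative, it suffices to check the formula on a single factor $f = \tau(p)$ with $p$ a non-commutative monomial. Applying \eqref{eq:noncommutativedifferentiation} inside the trace and then using cyclicity,
\[
\frac{d}{dt}\bigg|_{t=0} \tau_N(p(x + te_j\otimes y)) = \tau_N([\mathcal{D}_j p](x)\# y) = \tau_N\bigl( [\mathcal{D}_j^\circ p](x)\, y\bigr) = \tfrac{1}{N}\Tr\bigl([\mathcal{D}_j^\circ p](x)\, y\bigr),
\]
where the middle equality is the standard identity obtained by cycling each summand $x_{i_1}\cdots x_{i_{k-1}} y x_{i_{k+1}}\cdots x_{i_n}$ under the trace to bring $y$ to the right. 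Comparing with $\ip{\nabla_j \tau(p)(x),y}_{\Tr} = \Tr(\nabla_j \tau(p)(x)\cdot y)$ and using non-degeneracy of the pairing over the self-adjoint $y$, we conclude $\nabla_j \tau(p)(x) = \tfrac{1}{N}[\mathcal{D}_j^\circ p](x) = \tfrac{1}{N}[D_j \tau(p)](x)$. The Leibniz rule applied to a product $\prod_k \tau(p_k)$ extends the identity to all of $\TrP_m^0$, matching the definition of $D_j$.

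Finally, \eqref{eq:evaluateJacobian} is just the product rule $J_j(pf)(x)(y) = [J_j p(x)](y)\, f(x) + p(x)\, [J_j f(x)](y)$, in which the first term is rewritten using \eqref{eq:noncommutativedifferentiation} and the second is rewritten using \eqref{eq:evaluategradient}:
\[
[J_j f(x)](y) = \Tr(\nabla_j f(x)\cdot y) = \Tr\bigl(\tfrac{1}{N}[D_j f](x)\cdot y\bigr) = \tau_N([D_j f](x)\, y).
\]
The only substantive step is the cyclic-shift computation in the proof of \eqref{eq:evaluategradient}; everything else is bookkeeping with the Leibniz rule and the $\tfrac{1}{N}$ that converts $\Tr$ to $\tau_N$, which is the point of the normalization $D_j = N\nabla_j$ advertised in Section~\ref{subsec:matrixnotation}.
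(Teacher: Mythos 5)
Your proposal is correct and follows essentially the same route as the paper: the paper cites as ``standard computations'' exactly the two monomial-level identities $[J_j p(x)](y) = [\mathcal{D}_j p](x)\#y$ and $\nabla_j[\tau_N(p)](x) = \tfrac{1}{N}[\mathcal{D}_j^\circ p](x)$ and then invokes the product rule, and you simply unpack those computations (the directional-derivative expansion of a monomial, the cyclic shift under the trace, and the $\Tr$-versus-$\tau_N$ normalization) before applying the Leibniz rule in the same way.
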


\begin{proof}
By standard computations, for a non-commutative polynomial $p$ and $y \in M_N(\C)_{sa}$, we have
\begin{align*}
[J_j p(x)](y) &= [\mathcal{D}_j p](x) \# y \\
\nabla_j[\tau_N(p)](x) &= \frac{1}{N} [\mathcal{D}_j^\circ p](x).
\end{align*}
The claims \eqref{eq:evaluategradient} and \eqref{eq:evaluateJacobian} now follow from the product rule.
\end{proof}

Next, we can define the algebraic Laplacian operators on $\TrP_m^0$ and $\TrP_m^1$, which correspond to computing the Laplacian on scalar-valued or vector-valued functions on $M_N(\C)_{sa}^m$, still using the coordinates given in \S \ref{subsec:matrixnotation}.

For $f: M_N(\C)_{sa}^m \to \C$, let $\Delta_j f$ be the Laplacian with respect to the coordinates of the $j$-th matrix $x_j$.  Note that $\Delta f = \sum_{j=1}^m \Delta_j f$.  Similarly, if $f: M_N(\C)_{sa}^m \to M_N(\C)$ is an operator-valued function, we define $\Delta_j f$ and $\Delta f$ by applying $\Delta_j$ and $\Delta$ entrywise (as is standard notation for the Laplacian of a vector-valued function).

Motivated by \eqref{eq:basissummation} and the computation in Lemma \ref{lem:evaluateLaplacian} below, we define the map $\eta: \NCP_m^{\otimes 3} \to \TrP_m^1$ by
\[
\eta(p_1 \otimes p_2 \otimes p_3) = p_1 p_3 \tau(p_2).
\]

\begin{definition} \label{def:scalarLaplacian}
We define $L_j$ and $L_{N,j}: \TrP_m^0 \to \TrP_m^0$ to be the unique linear operators such that
\begin{equation} \label{eq:scalarLaplacianspecialcase}
L_j[\tau(p)] = L_{N,j}[\tau(p)] = \tau \circ \eta[\mathcal{D}_j^2 p] \text{ for } p \in \NCP_m.
\end{equation}
and such that the following product rule is satisfied:
\begin{align}
L_j[f \cdot g] &= L_j[f] \cdot g + f \cdot L_j[g] \\
L_{N,j}[f \cdot g] &= L_{N,j}[f] \cdot g + f \cdot L_{N,j}[g] + \frac{2}{N^2} \tau(D_j f \cdot D_jg).
\end{align}
Then we define $L = \sum_{j=1}^m L_j$ and $L_N = \sum_{j=1}^m L_{N,j}$.
\end{definition}

\begin{remark}
To show the existence of operators $L_{N,j}$ and $L_j$ satisfying \eqref{eq:scalarLaplacianspecialcase} and the product rule, one can define $L_{N,j}$ more explicitly as the linear operator $\TrP_m^0 \to \TrP_m^0$ given by
\[
L_{N,j}[\tau(p_1) \dots \tau(p_n)] = \sum_{k=1}^n \tau \circ \eta[\mathcal{D}^2 p_k] \cdot \prod_{i \neq k} \tau(p_i) + \frac{1}{N^2} \sum_{k=1}^n \sum_{\ell \neq k} \tau(\mathcal{D}_j^\circ p_k \cdot \mathcal{D}_j^\circ p_\ell) \prod_{i \neq k, \ell} \tau(p_i),
\]
and check that this operator satisfies the product rule.  Moreover, the uniqueness of the operator $L_{N,j}$ satisfying \eqref{eq:scalarLaplacianspecialcase} and the product rule follows from the fact that $\TrP_m^0$ is spanned by products of terms of the form $\tau(p)$ for $p \in \NCP_m$.  The argument for the existence and uniqueness of $L_j$ is the same.
\end{remark}

\begin{example}
Let $X = (X_1,X_2)$.  Consider $f(X) = \tau(f_1(X)) \tau(f_2(X))$ where $f_1(X) = X_1 X_2 X_1 X_3$ and $f_2(X) = X_2^2 X_1$.  Then
\begin{align*}
D_1[\tau(f_1)] &= \mathcal{D}_1^\circ f_1 = X_2 X_1 X_3 + X_3 X_1 X_2 \\
D_1[\tau(f_2)] &= \mathcal{D}_2^\circ f_2 = X_2^2,
\end{align*}
and
\begin{align*}
L_1[\tau(f_1)] &= L_{N,1}[\tau(f_1)] = \tau \circ \eta[\mathcal{D}_1^2 f_1] = \tau[\eta[1 \otimes X_2 \otimes X_3]] = \tau[1 \cdot X_3] \cdot \tau[X_2] \\
L_1[\tau(f_2)] &= L_{N,1}[\tau(f_2)] = 0.
\end{align*}
Therefore, we have
\begin{align*}
L_1[f] &= L_1[\tau(f_1)] \tau(f_2) + \tau(f_1) L_1[\tau(f_2)] = \tau(X_3) \tau(X_2) \tau(X_2^2 X_1) + 0 \\
L_{N,1}[f] &= L_{N,1}[\tau(f_1)] \tau(f_2) + \tau(f_1) L_{N,1}[\tau(f_2)] + \frac{2}{N^2} \tau[ \mathcal{D}_1^\circ f_1 \mathcal{D}_1^\circ f_2] \\
&= \tau(X_3) \tau(X_2) \tau(X_2^2 X_1) + \frac{2}{N^2} \tau[(X_2 X_1 X_3 + X_3 X_1 X_2) X_2^2].
\end{align*}
One can carry out a similar computation for $L_2[f]$ and $L_{N,2}[f]$ and thus find $L[f]$ and $L_{N,2}[f]$.
\end{example}

Since we will also deal with the Laplacians of matrix-valued functions on matrices, we also need to define the algebraic Laplacian on \emph{operator-valued} trace polynomials.

\begin{definition} \label{def:matrixLaplacian}
We also define $L_j$ and $L_{N,j}: \TrP_m^1 \to \TrP_m^1$ to be the unique linear operators on the operator-valued trace polynomials such that
\begin{equation}
L_j[p] = L_{N,j}[p] = \eta[\mathcal{D}_j^2 p] \text{ for } p \in \NCP_m
\end{equation}
and the following product rule is satisfied for $p \in \NCP_m$ and $f \in \TrP_m^0$:
\begin{align}
L_j[p \cdot f] &= L_j[p] \cdot f + p \cdot L_j[f] \\
L_{N,j}[p \cdot f] &= L_{N,j}[p] \cdot f + p \cdot L_{N,j}[f] + \frac{2}{N^2} \mathcal{D}_j p \# D_jf,
\end{align}
where $L_j[f]$ and $L_{N,j}[f]$ are given by Definition \ref{def:scalarLaplacian}.  Then we define $L = \sum_{j=1}^m L_j$ and $L_N = \sum_{j=1}^m L_{N,j}$.
\end{definition}

\begin{remark}
The argument for the existence and uniqueness of the operators $L_j$ and $L_{N,j}$ on $\TrP_m^1$ is similar to the argument for $\TrP_m^0$, only that it relies on the previous scalar-valued case since the scalar-valued case was used in the product rule.
\end{remark}

\begin{lemma} \label{lem:evaluateLaplacian}
Let $f \in \TrP_m^0$.  Viewing $f$ is a function $M_N(\C)_{sa}^m \to \C$, we have
\begin{equation} \label{eq:evaluateLaplacian}
\Delta_j f(x) = N [L_{N,j} f](x) \qquad \Delta f(x) = N [L_N f](x).
\end{equation}
The same formula holds if $f \in \TrP_m^1$ and $f$ is viewed as a function $M_N(\C)_{sa}^m \to M_N(\C)$.
\end{lemma}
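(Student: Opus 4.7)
The plan is to reduce both identities to the orthonormal basis summation \eqref{eq:basissummation}. Since $L_{N,j}$ is characterized in Definitions \ref{def:scalarLaplacian} and \ref{def:matrixLaplacian} by its values on the generators $\tau(p)$ (for $\TrP_m^0$) and $p$ (for $\TrP_m^1$), together with a product rule, it suffices to (i) verify $\Delta_j[f] = N L_{N,j}[f]$ when $f = \tau(p)$ or $f = p$ for $p \in \NCP_m$, and (ii) check that the analytic Leibniz rule for $\Delta_j$, summed over the basis $\mathcal{B}_N$, reproduces the algebraic product rule defining $L_{N,j}$ after rescaling by $N$. A straightforward induction on the number of factors then extends the identity to all of $\TrP_m^0$ and $\TrP_m^1$, and summing over $j$ gives the statement for $\Delta$.

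For (i), I would iterate \eqref{eq:noncommutativedifferentiation}: writing $\mathcal{D}_j^2 p = \sum_\beta q_1^\beta \otimes q_2^\beta \otimes q_3^\beta$, one obtains $\partial_b^2 p(x) = \sum_\beta q_1^\beta(x)\, b\, q_2^\beta(x)\, b\, q_3^\beta(x)$. Applying \eqref{eq:basissummation} term by term collapses the sum,
\[
\sum_{b \in \mathcal{B}_N} \partial_b^2 p(x) = \sum_\beta q_1^\beta(x) q_3^\beta(x) \Tr(q_2^\beta(x)) = N \cdot \eta[\mathcal{D}_j^2 p](x) = N L_{N,j}[p](x),
\]
using $\Tr = N \tau_N$ together with the definition of $\eta$. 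Taking $\tau_N$ of this computation handles the scalar-valued generators $\tau(p)$, producing $N \tau \circ \eta[\mathcal{D}_j^2 p](x) = N L_{N,j}[\tau(p)](x)$.

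For (ii), the second-order Leibniz rule gives $\partial_b^2(f g) = \partial_b^2 f \cdot g + 2 \partial_b f \cdot \partial_b g + f \cdot \partial_b^2 g$, so after summing over $b$ and invoking the inductive hypothesis on the outer terms, the verification reduces to matching the cross term against the correction term in the product rule. In the $\TrP_m^0$ case, the orthonormality identity $\sum_b \Tr(ab) \Tr(bc) = \Tr(ac)$, combined with $\nabla_j f = N^{-1} D_j f(x)$, yields $2 \sum_b \partial_b f \cdot \partial_b g = \tfrac{2}{N} \tau_N(D_j f \cdot D_j g)$, which is exactly $N$ times the $\tfrac{2}{N^2} \tau(D_j f \cdot D_j g)$ correction of Definition \ref{def:scalarLaplacian}. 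In the mixed case $p \cdot f$ of $\TrP_m^1$, with $p \in \NCP_m$ and $f \in \TrP_m^0$, the related identity $\sum_b c b d \,\Tr(b e) = c e d$ (another consequence of orthonormality of $\mathcal{B}_N$) yields $2 \sum_b \partial_b p \cdot \partial_b f = \tfrac{2}{N} \mathcal{D}_j p \# D_j f$, matching the correction in Definition \ref{def:matrixLaplacian} after rescaling by $N$. The main obstacle is purely bookkeeping: carefully tracking the factors of $N$ introduced by passing between $\Tr$ and $\tau_N$ and between $\nabla$ and $D$; once these are reconciled, both identities fall out of \eqref{eq:basissummation} together with the defining product rules for $L_{N,j}$.
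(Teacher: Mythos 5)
Your proof is correct and follows essentially the same route as the paper: verify the identity on generators $\tau(p)$ and $p$ via the basis summation \eqref{eq:basissummation}, then use \eqref{eq:evaluategradient} to match the analytic Leibniz cross terms with the $\tfrac{2}{N^2}$ corrections in the algebraic product rules defining $L_{N,j}$. The only cosmetic difference is that you set up a two-factor Leibniz rule with induction on the number of factors, whereas the paper expands the $n$-factor product rule for the Laplacian directly; the two are interchangeable given the derivation property of $D_j$.
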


\begin{proof}
We begin with the special case of computing the Laplacian of $p \in \NCP_m$ (as a \emph{matrix-valued} function).  To differentiate, we use the basis $\mathcal{B}_N$ given by \eqref{eq:basis}.  Note that
\begin{align*}
\Delta_j p(x) &= \sum_{b \in \mathcal{B}_N} \frac{d^2}{dt^2} \Bigr|_{t=0} f(x_1, \dots,x_{j-1},x_j + tb, x_{j+1}, \dots,x_m) \\
&= \sum_{b \in \mathcal{B}_N} \mathcal{D}_j^2 p(x) \# (b \otimes b) \\
&= N [\eta(\mathcal{D}_j^2 p)](x), \\
&= [L_{N,j}p](x)
\end{align*}
where the second-to-last equality follows from \eqref{eq:basissummation}.

Next, we consider the case of computing the Laplacian of $\tau_N(p)$ (as a \emph{scalar-valued} function) for $p \in \NCP_m$.  Since $\tau_N$ is a linear map $M_N(\C) \to \C$, we have
\[
\Delta_j[\tau_N(p(x))] = \tau_N(\Delta_jp(x)),
\]
where the Laplacian $\Delta_j$ on the left hand side is applied to a scalar-valued function and on the right hand side it is applied to a matrix-valued function.  Therefore, it follows from the previous computation that
\[
\Delta_j[\tau_N(p(x))] = N \tau_N([\eta(\mathcal{D}_j^2 p)](x)) = [L_{N,j}[\tau(p)]](x).
\]

For the general case of scalar-valued trace polynomials, recall that the vector space of trace polynomials is spanned by elements of the form $f = \tau(p_1) \dots \tau(p_N)$ where $p_j \in \NCP_m^0$.  Let $f_j = \tau(p_j) \in \TrP_m^0$.  The Laplacian $\Delta_j$ of a product of a functions can be computed using the product rule of differentiation as
\[
\Delta_j f(x) = \sum_{k=1}^n \Delta_j[f(x)] \prod_{i \neq k} f_i(x) + \sum_{k=1}^n \sum_{\ell \neq k} \Tr(\nabla_j f_k(x) \nabla_j f_\ell(x)) \prod_{i \neq k, \ell} f_i(x).
\]
The special cased proved above shows that $\Delta_j[f_k(x)] = N[L_{N,j}f](x)$.  Moreover, by \eqref{eq:evaluategradient}, we have $\nabla_j[f_k(x))] = \frac{1}{N} [D_jf_k](x)$.  Thus, we have
\[
\Delta_j f(x) = \sum_{k=1}^n N[L_{N,j} f_k](x) \prod_{i \neq k} f_i(x) + \frac{1}{N} \sum_{k=1}^n \sum_{\ell \neq k} \tau_N([D_j f_k](x) [D_j f_\ell](x)) \prod_{i \neq k, \ell} f_i(x).
\]
Because of the product rule in the definition of $L_{N,j}$, the right hand side equals $N[L_{N,j} f](x)$.  This completes the proof of \eqref{eq:evaluateLaplacian} in the scalar-valued case.  The proof for the operator-valued case is similar, using the cases proved above, as well as \eqref{eq:evaluategradient} and \eqref{eq:evaluateJacobian}.
\end{proof}

\begin{corollary} \label{cor:Laplacianlimit}
Let $f \in \TrP_m^0$ or $\TrP_m^1$.  If we view $f$ as a function on $M_N(\C)_{sa}^m$, then $\Delta f$ is a trace polynomial of lower degree than $f$, and we have coefficient-wise
\[
\lim_{N \to \infty} \frac{1}{N} \Delta f(x) = \lim_{N \to \infty} L_Nf(x) = Lf(x).
\]
\end{corollary}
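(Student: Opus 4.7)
The plan is to combine Lemma \ref{lem:evaluateLaplacian} with a direct structural comparison of the two algebraic Laplacians $L_N$ and $L$. Lemma \ref{lem:evaluateLaplacian} already gives $\Delta f(x) = N [L_N f](x)$, so $\Delta f$, viewed as a function on $M_N(\C)_{sa}^m$, is represented (for each fixed $N$) by the trace polynomial $N L_N f$. The corollary therefore reduces to two purely algebraic assertions:
(a) $L_N f$ is a trace polynomial of degree strictly less than $\deg f$; and
(b) expressed in any basis of trace monomials, the coefficients of $L_N f$ converge as $N \to \infty$ to the corresponding coefficients of $L f$.

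For (a) I would induct on the product complexity of $f$. In the base cases $f = p \in \NCP_m$ or $f = \tau(p)$, Definitions \ref{def:scalarLaplacian} and \ref{def:matrixLaplacian} give $L_{N,j} f = \eta[\mathcal{D}_j^2 p]$ or $\tau \circ \eta[\mathcal{D}_j^2 p]$; since $\mathcal{D}_j$ lowers degree by one, each has degree $\deg p - 2$. For the inductive step on a product $fg$ in $\TrP_m^0$, the product rule
\[
L_{N,j}[fg] = L_{N,j}[f]\,g + f\,L_{N,j}[g] + \tfrac{2}{N^2}\,\tau\bigl(D_j f \cdot D_j g\bigr),
\]
combined with the inductive hypothesis and the fact that $D_j$ lowers degree by one, shows that every summand has degree at most $\deg(fg) - 2$. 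The operator-valued product rule from Definition \ref{def:matrixLaplacian} is handled identically, bootstrapping off the scalar case already established.

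For (b), the decisive observation is that $L_{N,j}$ and $L_j$ coincide on the generators $p$ and $\tau(p)$, and differ in the product rule only by the extra term $\tfrac{2}{N^2}\tau(D_j f \cdot D_j g)$ (respectively $\tfrac{2}{N^2}\mathcal{D}_j p \# D_j g$ in the operator-valued case). Iterating the product rule and comparing term-by-term with $L$, one sees by induction on the number of factors in a product expansion of $f$ that $L_N f - L f$ is a $\C$-linear combination of fixed trace polynomials whose scalar coefficients are polynomials in $1/N^2$ with vanishing constant term. Each such coefficient therefore tends to zero as $N \to \infty$, which is precisely coefficient-wise convergence.

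The main obstacle, to the extent there is one, is bookkeeping: ensuring that the two inductions correctly track every term produced when the product rule is iterated, particularly in the operator-valued case where one mixes $\NCP_m$-factors with $\TrP_m^0$-factors. Conceptually, however, the corollary is essentially automatic once one observes that the only discrepancy between $L_N$ and $L$ comes from the pairing of two gradients and is always accompanied by a factor of $1/N^2$.
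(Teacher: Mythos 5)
Your proposal is correct and matches the intended reasoning: the paper states this Corollary without an explicit proof, leaving it to follow from Lemma \ref{lem:evaluateLaplacian} together with exactly the algebraic observations you make (the two Laplacians agree on generators and differ in the product rule only by an explicit $\tfrac{2}{N^2}$ term, so $L_N f - Lf$ is a polynomial in $1/N^2$ with no constant term, and $\mathcal{D}_j$, $D_j$ lower degree by one). One small note: the degree formula in Definition 3.6 is written as a product of $\deg'$'s, but you implicitly (and correctly, given the later claims about $L_N$ preserving the filtration and the graded algebra structure) treat degree as additive on products; your inductive bookkeeping relies on this additive reading, which is surely the paper's intent.
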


\begin{remark} \label{rem:normalizedderivatives}
We have shown that if $f$ is a scalar-valued trace polynomial, then viewed as a map $M_N(\C)_{sa}^m \to \C$, we have
\[
Du = N \nabla f, \qquad L_N f = \frac{1}{N} \Delta f.
\]
Therefore, in the rest of the paper, we will freely write $Df$ and $L_Nf$ for $N \nabla f$ and $(1/N) \Delta f$ for general functions $f: M_N(\C)_{sa}^m \to \C$.  The same considerations apply to the Laplacian for operator-valued trace polynomials, viewed as maps $M_N(\C)_{sa}^m \to M_N(\C)$.
\end{remark}

\subsection{Convolution of Trace Polynomials and Gaussians} \label{subsec:TPGaussianconvolution}

Let $f \in \TrP_m^0$ or $f \in \TrP_m^1$.  Then viewing $f$ is a function defined on $M_N(\C)_{sa}^m$, we may define the convolution of $f$ with the probability measure $\sigma_{t,N}$ (the law of an $m$-tuple of independent GUE).  This is equivalent to the classical convolution of $f$ with the function $M_N(\C)_{sa}^m \to \R$ giving the density of the measure $\sigma_{t,N}$.  Moreover, $f_t = f * \sigma_{t,N}$ is the solution to the heat equation with initial condition $f$, or more precisely
\[
\partial_t f_t = \frac{1}{2N} \Delta f_t.
\]
(The integral formula for the solution to the heat equation with the Laplacian $\Delta$ is well-known \cite[\S 2.3]{Evans}, and to solve the equation with $(1/2N)\Delta$ one renormalizes time by a factor of $1/2N$, and this corresponds precisely to our normalizations in the definition of $\sigma_{N,t}$.  We leave this computation to the reader.)

We showed in the last subsection that $L_N = \frac{1}{N} \Delta$ on trace polynomials is given by a purely algebraic computation.  Moreover, examining the construction of $L_N$, one can see that it maps trace polynomials of degree $\leq d$ to trace polynomials of degree $\leq d$.  We can view $L_N$ and $L$ as linear transformations on the finite-dimensional vector space of trace polynomials of degree $\leq d$ and define $\exp(tL_N / 2)$ and $\exp(tL / 2)$ by the matrix exponential.

Because this holds for any $d$, we know that $\exp(tL_N/2)$ and $\exp(tL/2)$ define linear transformations $\TrP_m^0 \to \TrP_m^0$ and $\TrP_m^1 \to \TrP_m^1$.  Moreover, a standard computation shows that $f_t = \exp(tL_N/2) f$ satisfies the heat equation $\partial_t f_t = (1/2) L_N f_t$.  These observations, together with Corollary \ref{cor:Laplacianlimit} yield the following.

\begin{lemma} \label{lem:Laplacianlimit}
Let $f$ be a trace polynomial in $\TrP_m^0$ or $\TrP_m^1$.  Then we have
\begin{equation}
\sigma_{t,N} * f(x) = [\exp(t L_N/2) f](x),
\end{equation}
with $\deg(\exp(tL_N/2) f) \leq \deg(f)$, and we have
\begin{equation}
\lim_{N \to \infty} \exp(t L_N / 2) f = \exp(t L / 2)f \text{ coefficient-wise.}
\end{equation}
\end{lemma}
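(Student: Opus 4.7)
The plan is to work inside the finite-dimensional subspace $V_d \subseteq \TrP_m^k$ (with $k \in \{0,1\}$) of trace polynomials of degree at most $d$, where both $L_N$ and $L$ become genuine linear operators so that the exponentials $\exp(tL_N/2)$ and $\exp(tL/2)$ can be interpreted as ordinary matrix exponentials. The three conclusions then reduce to (i) a degree-preservation statement, (ii) uniqueness of the heat equation, and (iii) continuity of the matrix exponential.

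First I would verify that $L_N$ preserves $V_d$, which also gives the degree bound. Inspecting the explicit formula for $L_{N,j}$ given in the remark following Definition \ref{def:scalarLaplacian} (and its analog for $\TrP_m^1$ via Definition \ref{def:matrixLaplacian}), every summand arises either from applying $\mathcal{D}_j^2$ to one $p_k$, which removes two variables, or from $\tau(\mathcal{D}_j^\circ p_k \cdot \mathcal{D}_j^\circ p_\ell)$, which removes one variable from each of two monomials. In either case the total degree drops by exactly $2$, so $L_N(V_d) \subseteq V_{d-2} \subseteq V_d$; the same reasoning gives $L(V_d) \subseteq V_d$. Consequently $\exp(tL_N/2)$ is a well-defined one-parameter group acting on the finite-dimensional space $V_d$, and in particular $\deg(\exp(tL_N/2)f) \leq \deg(f)$.

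To identify $\sigma_{t,N} * f$ with $\exp(tL_N/2)f$ I would invoke uniqueness of the heat equation. The function $u(x,t) := (\sigma_{t,N} * f)(x)$ is, as noted in the paragraph preceding the lemma, the classical solution of $\partial_t u = (1/(2N))\Delta u$ with $u|_{t=0} = f$. Setting $g(x,t) := [\exp(tL_N/2)f](x)$, the semigroup identity $(d/dt)\exp(tL_N/2)f = (1/2) L_N \exp(tL_N/2)f$ inside $V_d$, combined with Lemma \ref{lem:evaluateLaplacian} (which identifies the algebraic operator $L_N$ with the analytic $(1/N)\Delta$ after evaluation on matrices), gives $\partial_t g = (1/(2N))\Delta g$ with $g|_{t=0} = f$. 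Since $g(\cdot,t)$ is a polynomial in the matrix entries for each fixed $t$ and hence of polynomial growth, the standard uniqueness theorem for the heat equation on polynomial-growth initial data forces $u \equiv g$. For $f \in \TrP_m^1$ the same argument applies entrywise to the matrix-valued functions.

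For the coefficient-wise limit I would compare the product rules defining $L_{N,j}$ and $L_j$ directly. Their difference on a product is exactly the correction term $(2/N^2)\tau(D_j f \cdot D_j g)$ in the scalar case, and $(2/N^2)\mathcal{D}_j p \# D_j f$ in the operator-valued case. A short induction on the number of trace factors shows that on all of $\TrP_m^k$ one has $L_N = L + N^{-2} E$ for a fixed, $N$-independent linear operator $E$ which itself preserves each $V_d$. Restricting to the finite-dimensional space $V_d$ with $d = \deg f$, this gives $L_N \to L$ in any norm on $\mathrm{End}(V_d)$, and continuity of the matrix exponential yields $\exp(tL_N/2)f \to \exp(tL/2)f$ in $V_d$, which is precisely coefficient-wise convergence. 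I do not anticipate any substantive obstacle; the real work is the algebraic bookkeeping in the first step to verify how $\mathcal{D}_j$, $\mathcal{D}_j^\circ$, and $\eta$ interact with the degree filtration, together with invocation of the standard polynomial-growth uniqueness result for the heat equation.
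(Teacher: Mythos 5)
Your proposal is correct and follows essentially the same route as the paper, which establishes the lemma in the two paragraphs preceding its statement (degree preservation on $V_d$, identification with the heat-semigroup solution, and Corollary \ref{cor:Laplacianlimit} for the coefficient-wise limit). Your observation that $L_N = L + N^{-2}E$ for a fixed $N$-independent operator $E$ preserving each $V_d$ is a slightly sharper and cleaner way to deduce the limit than citing $L_N f \to L f$ coefficient-wise, but the underlying argument is the same.
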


\begin{example} \label{ex:fourthpower}
Let $X = (X_1,\dots,X_m)$ and define $f(X) = \sum_{j=1}^m X_j^2$.  Note that $\mathcal{D}_j^2[f(X)] = 2(1 \otimes 1 \otimes 1)$ for each $j$, and hence $L[\tau(f)] = 2m = L_N[\tau(f)]$.  We also have $\mathcal{D}_j^\circ f = 2 X_j$.  Hence,
\begin{align*}
L[\tau(f)^2] &= 2 L[\tau(f)] \tau(f) = 4m \tau(f) \\
L_N[\tau(f)^2] &= 2 L[\tau(f) \tau(f) + 2 \sum_{j=1}^m \tau(\mathcal{D}_j^\circ f \cdot \mathcal{D}_j^\circ f) \\
&= 4m \tau(f) + \frac{8m}{N^2} \tau(f).
\end{align*}
Therefore, $(L/2)[\tau(f)^2] = 2m \tau(f)$ and $(L/2)[\tau(f)] = m$.  Thus, the span of $\tau(f)^2$, $\tau(f)$, and $1$ is invariant under the operator $L/2$, and $L/2$ is given by a nilpotent matrix on this subspace.  Direct computation then shows that
\[
e^{-tL/2}[\tau(f)^2] = \tau(f)^2 + 2mt \tau(f) + m^2 t^2.
\]
A similar computation shows that
\[
e^{-tL_N/2}[\tau(f)^2] = \tau(f)^2 + 2m(1 + 2/N^2) t \tau(f) + 2m^2(1 + 2/N^2) t^2 / 2.
\]
Thus, as $N \to +\infty$, we have $e^{-tL_N/2}[\tau(f)^2] \to e^{-tL/2}[\tau(f)^2]$.
\end{example}

The probabilistic interpretation of $f * \sigma_{t,N} = \exp(t L_N/2) f$, which follows from a standard computation, is that $\sigma_{t,N} * f(x)$ is the expectation of $f(x + t^{1/2} Y)$, where $Y$ is an $m$-tuple of independent GUE of variance $1$.  Moreover, for any probability measure $\mu$ on $M_N(\C)_{sa}^m$ with finite moments, we have
\begin{equation}
\int f(x)\,d(\mu * \sigma_{t,N})(x) = \int (\sigma_{t,N} * f)(x)\,d\mu(x) = \int [\exp(tL_N/2) f](x)\,d\mu(x).
\end{equation}
In the free setting, the operator $\exp(tL/2)$ has a similar relationship with the free convolution with $\sigma_t$.  This fact is standard in free probability, but because we need it for Lemmas \ref{lem:convolutionapproximation} and \ref{lem:concentrationpreserved} below, we include a sketch of the proof here.

\begin{lemma} \label{lem:freeconvolutionevaluation}
Let $\lambda \in \Sigma_{n,R}$ be a non-commutative law.  Then for any trace polynomial $f \in \TrP_m^0$, we have
\begin{equation}
\lambda \boxplus \sigma_t(f) = \lambda(\exp(tL/2) f).
\end{equation}
\end{lemma}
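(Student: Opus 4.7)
The plan is to prove both sides, as functions of $t \geq 0$, satisfy the same linear ODE with the same initial value. Fix $f$ and let $n = \deg f$; the subspace $V_n \subset \TrP_m^0$ of trace polynomials of degree at most $n$ is finite-dimensional and $L$-invariant (since $\mathcal{D}_j^2$ strictly reduces polynomial degree and $L$ is a derivation on $\TrP_m^0$ by Definition \ref{def:scalarLaplacian}), so $\exp(tL/2)$ acts on $V_n$ via the matrix exponential. Define $\Delta(t) \in V_n^*$ by $\Delta(t)(g) := (\lambda \boxplus \sigma_t)(g) - \lambda(\exp(tL/2)g)$; then $\Delta(0) = 0$. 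If we show $\Delta'(t)(g) = \tfrac{1}{2}\Delta(t)(Lg)$, then $\Delta$ solves a linear ODE on the finite-dimensional space $V_n^*$ with zero initial condition, forcing $\Delta \equiv 0$ by uniqueness; evaluating at $g = f$ yields the identity. Since $\frac{d}{dt}\lambda(\exp(tL/2)g) = \tfrac{1}{2}\lambda(\exp(tL/2)Lg)$ is automatic, and the semigroup property $\lambda \boxplus \sigma_{t+h} = (\lambda \boxplus \sigma_t) \boxplus \sigma_h$ lets us differentiate at $h = 0$, everything reduces to the infinitesimal identity
\[
\frac{d}{dh}\bigg|_{h=0} (\mu \boxplus \sigma_h)(g) = \tfrac{1}{2}\mu(Lg) \qquad \text{for all } \mu \in \Sigma_{m,R}, \; g \in \TrP_m^0.
\]

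Realize $\mu$ as the law of $y = (y_1, \ldots, y_m)$ in some $(\mathcal{M}, \tau)$, and let $s = (s_1, \ldots, s_m)$ be a free semicircular family of variance $1$ free from $y$, so $\mu \boxplus \sigma_h$ is the law of $y + \sqrt{h}\,s$. I verify the infinitesimal identity first for $g = \tau(p)$ with $p = X_{i_1} \cdots X_{i_k}$ a non-commutative monomial. Expand
\[
\tau(p(y + \sqrt{h}\,s)) = \sum_{I \subset \{1, \ldots, k\}} h^{|I|/2}\, \tau(a_1^I \cdots a_k^I), \qquad a_j^I = \begin{cases} s_{i_j}, & j \in I, \\ y_{i_j}, & j \notin I. \end{cases}
\]
Odd-$|I|$ terms vanish by freeness of $s$ from $y$ combined with $\tau(s_i) = 0$, and the $|I| = 2$ terms are evaluated using the free Wick identity $\tau(b_1 s_i b_2 s_j b_3) = \delta_{ij}\,\tau(b_1 b_3)\,\tau(b_2)$ (valid when $\{b_\ell\}$ is free from $\{s_i, s_j\}$, provable by centering or moment-cumulant), giving the coefficient of $h$ as
\[
\sum_{1 \leq a < b \leq k,\; i_a = i_b} \tau(y_{i_1} \cdots y_{i_{a-1}} y_{i_{b+1}} \cdots y_{i_k})\, \tau(y_{i_{a+1}} \cdots y_{i_{b-1}}).
\]
On the other side, unpacking Definitions \ref{def:NCderivative} and \ref{def:scalarLaplacian}, $\mathcal{D}_j^2 p$ enumerates the ordered pairs of positions $(a, b)$ with $i_a = i_b = j$, and $\tau \circ \eta$ converts each resulting tensor into the trace of the concatenated outer part times the trace of the middle part; summing over $j$ and collapsing the two orderings of each unordered pair produces a factor of $2$, matching the displayed sum with $\tfrac{1}{2}\mu(L\tau(p))$ exactly.

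To extend the infinitesimal identity to arbitrary $g \in \TrP_m^0$, write $g$ as an ordinary polynomial in generators $\tau(p_\alpha)$. By the definition of evaluating a non-commutative law on a scalar-valued trace polynomial, $(\mu \boxplus \sigma_h)(g)$ equals the same polynomial evaluated at the scalars $(\mu \boxplus \sigma_h)(\tau(p_\alpha))$, so the classical Leibniz rule, combined with the case already proved for each generator and the derivation property of $L$ (Definition \ref{def:scalarLaplacian}), yields the identity in full generality. The main technical obstacle is the combinatorial matching in the preceding paragraph; this is the free analogue of the classical heat identity $\partial_t(u \ast \gamma_t)|_{t=0} = \tfrac{1}{2}\Delta u$, with freeness together with $\tau(s_i) = 0$ playing the role of Gaussian Wick contractions.
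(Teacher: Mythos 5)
Your proof is correct and follows essentially the same route as the paper's: reduce via the semigroup property of $\boxplus \sigma_t$ and $\exp(tL/2)$ to the infinitesimal identity at $t=0$, reduce by the derivation property to $g = \tau(p)$, realize the free convolution via a free semicircular family, and expand $\tau(p(y + \sqrt{h}\,s))$ to order $h$ using freeness to kill the odd and cross terms. You spell out more details than the paper — the finite-dimensional ODE setup on $V_n^*$, the explicit subset expansion, the free Wick identity, and the combinatorial matching with $\mathcal{D}_j^2$ — but the substance is the same.
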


\begin{proof}
Because free convolution with $\sigma_t$ forms a semigroup and $\exp(tL/2)$ is also a semigroup, it suffices to prove that
\[
\frac{d}{dt} \Bigr|_{t=0} \lambda \boxplus \sigma_t(f) = \frac{1}{2} \lambda(Lf).
\]
By the product rule, it suffices to handle the case of $f = \tau(p)$ for $p \in \NCP_m$ by showing that
\[
\frac{d}{dt} \Bigr|_{t=0} \lambda \boxplus \sigma_t(p) = \frac{1}{2} \lambda(\eta(\mathcal{D}_j^2 p)).
\]
Let $X = (X_1,\dots,X_m)$ be a random variable with law $\lambda$ and let $S = (S_1,\dots,S_m)$ be a freely independent tuple of semi-circulars realized together in a von Neumann algebra $(\mathcal{M},\tau)$.  We want to compute $\frac{d}{dt} \Bigr|_{t=0} \tau(p(X + t^{1/2}S))$.  But note that
\[
p(X + t^{1/2}S) = p(X) + t^{1/2} \sum_{j=1}^m \mathcal{D}_j p(X) \# S_j + \frac{1}{2} t \sum_{j,k=1}^m \mathcal{D}_j \mathcal{D}_k p(X) \# (S_j \otimes S_k) + O(t^{3/2})
\]
A moment computation with free independence shows that the terms of order $t^{1/2}$ have expectation zero, and so do the terms of order $t$ with $j \neq k$.  We are left with
\[
\frac{d}{dt} \Bigr|_{t=0} \tau(p(X + t^{1/2}S)) = \frac{1}{2} \sum_{j=1}^n \tau(\mathcal{D}_j^2 p(X) \# (S_j \otimes S_j)),
\]
which using freeness evaluates to $(1/2) \sum_{j=1}^n \tau(\eta \mathcal{D}_j^2 p(X)) = \tau((1/2)L p(X))$.
\end{proof}

\subsection{Asymptotic Approximation by Trace Polynomials} \label{subsec:defasymptoticapproximation}

Now we are ready to define the approximation property which captures the asymptotic behavior of functions on $M_N(\C)_{sa}^m$.

\begin{definition} \label{def:AATP}
A sequence of functions $\phi_N: M_N(\C)_{sa}^m \to M_N(\C)^m$ is said to be \emph{asymptotically approximable by trace polynomials} if for every $\epsilon > 0$ and $R > 0$, there exists some $f \in (\TrP_m^1)^m$ (an $m$-tuple of operator-valued trace polynomials) such that
\[
\limsup_{N \to \infty} \sup_{\norm{x}_\infty \leq R} \norm{\phi_N(x) - f(x)}_2 \leq \epsilon.
\]
In this case, we call $f$ an \emph{$(\epsilon,R)$-approximation of $\{\phi_N\}$}.  We make the same definitions for functions $\phi_N: M_N(\C)_{sa}^m \to \C$, except that we use scalar-valued trace polynomials (elements of $\TrP_m^0$) and apply the absolute value rather than the $2$-norm.
\end{definition}

\begin{observation}
If $f \in (\TrP_m^1)^m$ and if $f_N$ denotes the map $M_N(\C)_{sa}^m \to M_N(\C)^m$ given by $x \mapsto f(x)$, then $f_N$ is asymptotically approximable by trace polynomials.  Also, asymptotically approximable sequences form a vector space over $\C$.
\end{observation}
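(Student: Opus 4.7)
The observation has two parts, both of which reduce to elementary manipulations once the definition is unpacked; the plan is essentially to write down the approximating trace polynomials explicitly and invoke the triangle inequality.

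For the first assertion, given $f \in (\TrP_m^1)^m$, I would simply take $f$ itself as the approximation. Since by definition $f_N(x) = f(x)$ for all $N$ and all $x \in M_N(\C)_{sa}^m$, the quantity
\[
\sup_{\norm{x}_\infty \leq R} \norm{f_N(x) - f(x)}_2
\]
equals $0$ for every $N$. Hence for any $\epsilon > 0$ and $R > 0$, the polynomial $f$ is an $(\epsilon, R)$-approximation of $\{f_N\}$.

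For the vector space claim, I would verify closure under scalar multiplication and addition separately. Given $c \in \C$ and an asymptotically approximable sequence $\{\phi_N\}$, the case $c = 0$ is immediate; for $c \neq 0$, I pick an $(\epsilon/|c|, R)$-approximation $f$ of $\{\phi_N\}$, and observe that $cf \in (\TrP_m^1)^m$ satisfies
\[
\limsup_{N \to \infty} \sup_{\norm{x}_\infty \leq R} \norm{c\phi_N(x) - (cf)(x)}_2 = |c| \limsup_{N \to \infty} \sup_{\norm{x}_\infty \leq R} \norm{\phi_N(x) - f(x)}_2 \leq \epsilon.
\]
For addition, given two approximable sequences $\{\phi_N\}$ and $\{\psi_N\}$, I would choose $(\epsilon/2, R)$-approximations $f$ and $g$ respectively. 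Since $f + g \in (\TrP_m^1)^m$, the triangle inequality for $\norm{\cdot}_2$ yields
\[
\norm{(\phi_N + \psi_N)(x) - (f+g)(x)}_2 \leq \norm{\phi_N(x) - f(x)}_2 + \norm{\psi_N(x) - g(x)}_2,
\]
and taking $\sup$ over $\norm{x}_\infty \leq R$ followed by $\limsup$ as $N \to \infty$ gives the bound $\epsilon$, so $f + g$ is an $(\epsilon, R)$-approximation of $\{\phi_N + \psi_N\}$.

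There is no real obstacle here; the only point to keep in mind is that one must choose the tolerance of the individual approximations small enough (e.g.\ $\epsilon/|c|$ and $\epsilon/2$) so that after combining them the final error is bounded by the prescribed $\epsilon$. The scalar-valued version of the observation follows by the same argument with $|\cdot|$ replacing $\norm{\cdot}_2$.
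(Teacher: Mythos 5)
Your proof is correct and fills in exactly the elementary argument the paper intends; the statement is left as an unproved Observation precisely because the first part follows by taking $f$ itself as the approximant and the second by scaling tolerances and the triangle inequality, as you do. No gaps, and no divergence from the paper's implicit approach.
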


\begin{observation} \label{lem:convergenceapproximation}
Let $\{\phi_N^{(\ell)}\}_{N,\ell \in \N}$ be a sequence of functions where $\phi_N^{(\ell)}: M_N(\C)_{sa}^m \to M_N(\C)^m$.  Suppose that $\{\phi_N\}$ is another sequence such that for every $R > 0$,
\[
\lim_{\ell \to \infty} \limsup_{N \to \infty} \sup_{\norm{x}_\infty \leq R} \norm{\phi_N^{(\ell)}(x) - \phi_N(x)}_2 = 0.
\]
If $\{\phi_N^{(\ell)}\}_{N \in \N}$ is asymptotically approximable by trace polynomials for each $\ell$, then so is $\{\phi_N\}_{N \in \N}$.  The same holds in the case of scalar-valued functions and scalar-valued trace polynomials.
\end{observation}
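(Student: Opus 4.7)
The plan is a straightforward $\epsilon/2$-argument using the triangle inequality in $\norm{\cdot}_2$. I treat the operator-valued case; the scalar case is identical with $|\cdot|$ replacing $\norm{\cdot}_2$.

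Fix $\epsilon > 0$ and $R > 0$; I must exhibit an $(\epsilon,R)$-approximation of $\{\phi_N\}$ by an element of $(\TrP_m^1)^m$. First, using the hypothesized double limit, I choose $\ell$ large enough that
\[
\limsup_{N \to \infty} \sup_{\norm{x}_\infty \leq R} \norm{\phi_N^{(\ell)}(x) - \phi_N(x)}_2 < \frac{\epsilon}{2}.
\]
Next, since $\{\phi_N^{(\ell)}\}_{N \in \N}$ is asymptotically approximable by trace polynomials, I can select $f \in (\TrP_m^1)^m$ so that
\[
\limsup_{N \to \infty} \sup_{\norm{x}_\infty \leq R} \norm{\phi_N^{(\ell)}(x) - f(x)}_2 < \frac{\epsilon}{2}.
\]

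Now I combine these two estimates. For each fixed $N$ and $x$ with $\norm{x}_\infty \leq R$, the triangle inequality gives
\[
\norm{\phi_N(x) - f(x)}_2 \leq \norm{\phi_N(x) - \phi_N^{(\ell)}(x)}_2 + \norm{\phi_N^{(\ell)}(x) - f(x)}_2.
\]
Taking $\sup_{\norm{x}_\infty \leq R}$ on both sides and then $\limsup_{N \to \infty}$, and using the elementary fact that $\limsup (a_N + b_N) \leq \limsup a_N + \limsup b_N$, I obtain
\[
\limsup_{N \to \infty} \sup_{\norm{x}_\infty \leq R} \norm{\phi_N(x) - f(x)}_2 < \frac{\epsilon}{2} + \frac{\epsilon}{2} = \epsilon.
\]
Hence $f$ is an $(\epsilon,R)$-approximation of $\{\phi_N\}$, completing the proof.

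There is no real obstacle here; the only subtlety is that one must pass to the supremum over $\norm{x}_\infty \leq R$ \emph{before} taking $\limsup_N$ (so that the two chosen bounds can be added via a single triangle inequality), and note that both the hypothesis on $\{\phi_N^{(\ell)} - \phi_N\}$ and Definition \ref{def:AATP} are phrased in exactly this form, so everything matches up without modification.
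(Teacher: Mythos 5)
Your proof is correct, and it is the argument the author evidently has in mind: the statement is labeled an \emph{Observation} and the paper provides no proof, treating it as immediate from the definitions. Your $\epsilon/2$ decomposition via the triangle inequality, with the supremum over $\norm{x}_\infty \leq R$ taken before the $\limsup$ in $N$, matches the phrasing of Definition~\ref{def:AATP} exactly, so nothing further is needed.
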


\begin{lemma} \label{lem:compositionapproximation}
Let $\phi_N, \psi_N: M_N(\C)_{sa}^m \to M_N(\C)_{sa}^m$.  Suppose that $\{\phi_N\}$ and $\{\psi_N\}$ are both asymptotically approximable by trace polynomials, and furthermore suppose that $\{\phi_N\}_{N \in \N}$ is uniformly Lipschitz in $\norm{\cdot}_2$, that is, for some $L > 0$,
\[
\norm{\phi_N(x) - \phi_N(y)}_2 \leq K \norm{x - y}_2 \text{ for all } x, y, \text{ for all } N.
\]
Then $\{\phi_N \circ \psi_N\}$ is asymptotically approximable by trace polynomials.
\end{lemma}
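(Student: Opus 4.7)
The plan is to approximate $\psi_N$ by a trace polynomial $g$ in the $\norm{\cdot}_2$ sense and then approximate $\phi_N$ by a trace polynomial $f$ on a larger operator-norm ball that contains the image of $g$; the composition $f \circ g$, which lies in $(\TrP_m^1)^m$ by the composition rule described in \S\ref{sec:tracepolynomials}, is then the desired approximation of $\phi_N \circ \psi_N$. The only real technical point is to verify that the image of $g$ is bounded in operator norm uniformly in $N$, so that the approximation property of $\phi_N$ on an operator-norm ball actually applies.

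Fix $\epsilon, R > 0$. First I would apply Definition \ref{def:AATP} to $\{\psi_N\}$ with tolerance $\epsilon / (2K)$ on the ball $\norm{x}_\infty \leq R$ to obtain an $m$-tuple of operator-valued trace polynomials $g \in (\TrP_m^1)^m$ satisfying
\[
\limsup_{N \to \infty} \sup_{\norm{x}_\infty \leq R} \norm{\psi_N(x) - g(x)}_2 \leq \frac{\epsilon}{2K}.
\]
Since $\psi_N(x) \in M_N(\C)_{sa}^m$, one may replace $g$ by its self-adjoint part $(g+g^*)/2$ without increasing the error (because $\norm{a - \Re b}_2 \leq \norm{a - b}_2$ when $a$ is self-adjoint), so we may assume $g(x)$ is self-adjoint for self-adjoint $x$.

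The key observation is that $g$ is a fixed trace polynomial: it is a finite linear combination of terms of the form $p_0(x) \prod_i \tau_N(p_i(x))$ with $p_i \in \NCP_m$. Each scalar factor $\tau_N(p_i(x))$ is bounded by $\norm{p_i(x)} \leq R^{\deg p_i}$, while the operator factor $p_0(x)$ satisfies $\norm{p_0(x)} \leq R^{\deg p_0}$. Summing gives a constant $R' = R'(g,R)$, independent of $N$, such that $\norm{g(x)}_\infty \leq R'$ whenever $\norm{x}_\infty \leq R$. Now apply Definition \ref{def:AATP} to $\{\phi_N\}$ with tolerance $\epsilon/2$ on the ball $\norm{y}_\infty \leq R'$ to obtain $f \in (\TrP_m^1)^m$ with
\[
\limsup_{N \to \infty} \sup_{\norm{y}_\infty \leq R'} \norm{\phi_N(y) - f(y)}_2 \leq \frac{\epsilon}{2}.
\]

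The combination step is a straightforward triangle inequality: for $\norm{x}_\infty \leq R$,
\[
\norm{\phi_N(\psi_N(x)) - f(g(x))}_2 \leq \norm{\phi_N(\psi_N(x)) - \phi_N(g(x))}_2 + \norm{\phi_N(g(x)) - f(g(x))}_2.
\]
The first summand is bounded by $K \norm{\psi_N(x) - g(x)}_2$ using the uniform Lipschitz hypothesis, and the second summand is controlled by the approximation of $\phi_N$ since $\norm{g(x)}_\infty \leq R'$. Taking $\limsup_N$ and $\sup_{\norm{x}_\infty \leq R}$ yields the bound $K \cdot \epsilon/(2K) + \epsilon/2 = \epsilon$. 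Since $f \circ g \in (\TrP_m^1)^m$ and $\epsilon, R$ were arbitrary, $\{\phi_N \circ \psi_N\}$ is asymptotically approximable by trace polynomials.

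The main (mild) obstacle is the mismatch between the two norms in Definition \ref{def:AATP}: the approximation is measured in $\norm{\cdot}_2$, but the hypothesis is on a $\norm{\cdot}_\infty$-ball. This is precisely what makes the Lipschitz hypothesis on $\phi_N$ necessary (so that $\norm{\cdot}_2$ closeness of the inner approximation translates into $\norm{\cdot}_2$ closeness after applying $\phi_N$), and what forces the uniform operator-norm bound on $g$ (so that the outer approximation can be invoked). Both of these are handled cleanly above.
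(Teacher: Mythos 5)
Your proof is correct and follows essentially the same route as the paper's: approximate $\psi_N$ by a trace polynomial $g$ with tolerance $\epsilon/(2K)$ on $\norm{x}_\infty\le R$, use the fact that $g$ maps this ball into an operator-norm ball of some radius $R'$ independent of $N$, approximate $\phi_N$ by $f$ on that larger ball with tolerance $\epsilon/2$, and split the error via the triangle inequality using the Lipschitz bound on $\phi_N$. Your extra observation that one may replace $g$ by its self-adjoint part $(g+g^*)/2$ (so that $\phi_N$ is actually applicable to $g(x)$) is a careful touch that the paper leaves implicit, but it does not change the structure of the argument.
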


\begin{proof}
Choose $\epsilon > 0$ and $R > 0$.  Choose a trace polynomial $g$ which is an $(\epsilon / 2K, R)$-approximation of $\{\psi_N\}$.  Since $g$ is a trace polynomial, there exists some $R' > 0$ such that for any tuple $x$ of self-adjoint matrices of any size, we have
\[
\norm{x}_\infty \leq R \implies \norm{g(x)}_\infty \leq R'.
\]
Now because $\phi_N$ is asymptotically approximable by trace polynomials, we can choose polynomial $f$ which is an $(\epsilon / 2, R')$-approximation of $\{\phi_N\}$.  Now we observe that when $\norm{x}_\infty \leq R$ (hence $\norm{g(x)}_\infty \leq R'$), we have
\begin{align*}
\norm{\phi_N \circ \psi_N(x) - f \circ g(x)}_2 &\leq \norm{\phi_N \circ \psi_N(x) - \phi_N \circ g(x)}_2 + \norm{\phi_N \circ g(x) - f \circ g(x)}_2 \\
&\leq K \sup_{\norm{x}_\infty \leq R} \norm{\psi_N(x) - g(x)}_2 + \sup_{\norm{y}_\infty \leq R'} \norm{\phi_N(y) - f(y)}_2.
\end{align*}
Therefore,
\[
\limsup_{N \to \infty} \sup_{\norm{x}_\infty \leq R} \norm{\phi_N \circ \psi_N(x) - f \circ g(x)}_2 \leq K \cdot \frac{\epsilon}{2K} + \frac{\epsilon}{2} = \epsilon. \qedhere
\]
\end{proof}

\begin{lemma} \label{lem:convolutionapproximation}
Suppose that $\phi_N: M_N(\C)_{sa}^m \to M_N(\C)_{sa}^m$ is asymptotically approximable by trace polynomials and that
\begin{equation} \label{eq:growthrate}
\norm{\phi_N(x)}_2 \leq A\left( 1 +  \sum_j \tau_N(x_j^{2n}) \right)
\end{equation}
for some $A  > 0$ and some integer $n \geq 0$.  If $\{\phi_N\}$ is asymptotically approximable by trace polynomials, then so is $\{\phi_N * \sigma_{t,N}\}$.
\end{lemma}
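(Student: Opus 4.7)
The plan is to reduce to the case where $\phi_N$ is itself a trace polynomial and control the residual via a subgaussian tail estimate for $\sigma_{t,N}$. Fix $\epsilon > 0$ and $R > 0$. First, choose $R' = R + 2\sqrt{t} + 1$ and define the good event
\[
E_N = \{y \in M_N(\C)_{sa}^m : \|y\|_\infty \leq R' - R\}.
\]
By Corollary \ref{cor:operatornormtail} applied to $\sigma_{t,N}$ (with $c = 1/t$), we have $\sigma_{t,N}(E_N^c) \leq 2m e^{-\delta_0 N}$ for some $\delta_0 > 0$; in particular, for $\|x\|_\infty \leq R$, the translate $x + y$ lies in the ball of radius $R'$ on $E_N$, while $E_N^c$ has exponentially small measure.

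Next, since $\{\phi_N\}$ is asymptotically approximable by trace polynomials, for any given $\epsilon' > 0$ pick $f \in (\TrP_m^1)^m$ that is an $(\epsilon', R')$-approximation of $\{\phi_N\}$. Applying Lemma \ref{lem:Laplacianlimit} componentwise, $f * \sigma_{t,N} = \exp(tL_N/2) f$ is itself a trace polynomial of degree at most $\deg(f)$, and its coefficients converge to those of $\exp(tL/2) f$ as $N \to \infty$. Because the degree is uniformly bounded, coefficient-wise convergence yields uniform convergence on $\|x\|_\infty \leq R$, so $\{f * \sigma_{t,N}\}$ is approximated by $\exp(tL/2) f$. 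Thus $\{f * \sigma_{t,N}\}$ is asymptotically approximable by trace polynomials.

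It remains to estimate $\phi_N * \sigma_{t,N} - f * \sigma_{t,N}$. Split
\[
((\phi_N - f) * \sigma_{t,N})(x) = \int_{E_N} [\phi_N - f](x+y)\, d\sigma_{t,N}(y) + \int_{E_N^c} [\phi_N - f](x+y)\, d\sigma_{t,N}(y).
\]
On $E_N$, we have $\|x+y\|_\infty \leq R'$, so by Jensen and the choice of $f$,
\[
\sup_{\|x\|_\infty \leq R} \left\| \int_{E_N} [\phi_N - f](x+y)\, d\sigma_{t,N}(y) \right\|_2 \leq \sup_{\|z\|_\infty \leq R'} \|\phi_N(z) - f(z)\|_2,
\]
whose $\limsup$ is at most $\epsilon'$. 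For the bad-event term, apply Cauchy--Schwarz to bound it by $\sigma_{t,N}(E_N^c)^{1/2}$ times the $L^2(\sigma_{t,N})$ norm of $y \mapsto \|[\phi_N - f](x+y)\|_2$. The growth bound \eqref{eq:growthrate} on $\phi_N$ and the automatic polynomial growth $\|f(z)\|_2 \leq C_f(1 + \|z\|_\infty^{d_f})$ of the trace polynomial $f$ control the integrand by a polynomial in $\|x+y\|_\infty$; combined with subgaussian tails for $\|y\|_\infty$ (Theorem \ref{thm:subgaussian} or Corollary \ref{cor:operatornormtail}), all moments of $\|y\|_\infty$ under $\sigma_{t,N}$ are bounded uniformly in $N$. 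Hence the $L^2(\sigma_{t,N})$ norm is bounded by a constant $C(R, A, n, m, t, f)$, and the bad-event contribution is at most $C \cdot (2m)^{1/2} e^{-\delta_0 N/2} \to 0$.

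Combining these two estimates, $\limsup_N \sup_{\|x\|_\infty \leq R} \|(\phi_N - f) * \sigma_{t,N}(x)\|_2 \leq \epsilon'$. Taking $\epsilon' = \epsilon$ and invoking Observation \ref{lem:convergenceapproximation} (with $\phi_N^{(\ell)} = f^{(\ell)} * \sigma_{t,N}$ for a sequence $\epsilon' = 1/\ell$) gives the conclusion. The main technical point is verifying the uniform moment bound on $\|y\|_\infty$ under $\sigma_{t,N}$ so the polynomial growth of $\phi_N$ does not overwhelm the exponentially small bad-event probability; this is the one place the hypothesis \eqref{eq:growthrate} is genuinely used, and it is handled directly via the subgaussian estimates of \S\ref{subsec:concentration}.
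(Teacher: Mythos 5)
Your proposal is correct and takes essentially the same route as the paper: split the convolution integral into the region where $x + y$ lands in a fixed operator-norm ball and the exponentially small complementary region, use the growth hypothesis \eqref{eq:growthrate} together with GUE operator-norm tail bounds to kill the bad region, and then apply Lemma \ref{lem:Laplacianlimit} to replace the Gaussian convolution of the approximating trace polynomial $f$ by $\exp(tL/2)f$. Your use of Cauchy--Schwarz and uniform moment bounds on the bad region is a somewhat more explicit account of what the paper compresses into one citation of Corollary \ref{cor:operatornormtail}, and your cutoff radius $R + 2\sqrt{t} + 1$ plays the same role as the paper's $R + 3t^{1/2}$.

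One small infelicity at the end: the final appeal to Observation \ref{lem:convergenceapproximation} is unnecessary and slightly loose as stated, since the family $\{f^{(\ell)} * \sigma_{t,N}\}$ depends on the choice of radius $R$ through $f^{(\ell)}$, whereas that observation quantifies over all $R$ for a single sequence in $\ell$. It can be repaired by choosing $f^{(\ell)}$ to be a $(1/\ell, R_\ell)$-approximation with $R_\ell \to \infty$, but it is simpler (and is what the paper does) to skip that step entirely: your estimates already show that $\exp(tL/2)f$ is itself an $(\epsilon, R)$-approximation of $\{\phi_N * \sigma_{t,N}\}$, which is precisely what Definition \ref{def:AATP} requires.
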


\begin{proof}
Fix $R > 0$ and $\epsilon > 0$.  Choose a trace polynomial $f$ which is an $(\epsilon, R + 3t^{1/2})$ approximation for $\{\phi_N\}$.  Now for $x$ with $\norm{x}_\infty \leq R$, we estimate
\[
\norm{\sigma_{t,N} * \phi_N(x) - \sigma_{t,N} * f(x)}_2 \leq \int \norm{\phi_N(x+y) - f(x+y)}_2\,d\sigma_{t,N}(y).
\]
We break this integral into two pieces:  The integral over the region where $\norm{y}_\infty \leq 3t^{1/2}$ is bounded by $\epsilon$ as $N \to \infty$ by our choice of $f$.  Furthermore, we claim that the integral over the region where $\norm{y}_\infty > 3t^{1/2}$ vanishes as $N \to \infty$.  Using assumption \eqref{eq:growthrate} and the fact that $f$ is a trace polynomial, we see that there exists a $C > 0$ and integer $d > 0$, depending only on $R$, $A$, $n$, and $f$, such that
\[
\sup_{\norm{x}_\infty \leq R} [\norm{\phi_N(x + y)}_2 + \norm{f(x + y)}_2] \leq C\left(1 + \sum_j \tau_N(y_j^{2d}) \right).
\]
Therefore, we have
\[
\int_{\norm{y}_\infty \geq 3t^{1/2}} \norm{\phi_N(x+y) - f(x+y)}_2\,d\sigma_{t,N}(y) \leq  C \int_{\norm{y}_\infty \geq 3t^{1/2}} \left(1 + \sum_j \tau_N(y_j^{2d}) \right) \,d\sigma_{t,N}(y).
\]
This vanishes as $N \to \infty$ by Corollary \ref{cor:operatornormtail} applied to the GUE.  Therefore, we have
\[
\limsup_{N \to \infty} \sup_{\norm{x}_\infty \leq R} \norm{\sigma_{t,N} * \phi_N(x) - \sigma_{t,N} * f(x)}_2 \leq \epsilon.
\]
On the other hand, by Lemma \ref{lem:Laplacianlimit}, we have $\sigma_{t,N} * f = \exp(tL_N/2) f \to \exp(tL/2) f$ coefficient-wise, and therefore,
\[
\limsup_{N \to \infty} \sup_{\norm{x}_\infty \leq R} \norm{\sigma_{t,N} * f(x) - [\exp(tL/2) f](x)}_2 = 0,
\]
so that
\[
\limsup_{N \to \infty} \sup_{\norm{x}_\infty \leq R} \norm{\sigma_{t,N} * \phi_N(x) - [\exp(tL/2) f](x)}_2 \leq \epsilon.  \qedhere
\]
\end{proof}

\begin{lemma} \label{lem:gradientapproximation}
Suppose that $\phi_N: M_N(\C)_{sa}^m \to \C$ and suppose that $\{D\phi_N\} = \{N \nabla \phi_N\}$ is asymptotically approximable by trace polynomials and that $\phi_N(0) = 0$.  Then $\{\phi_N\}$ is asymptotically approximable by trace polynomials.
\end{lemma}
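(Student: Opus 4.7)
The plan is to recover $\phi_N$ from $D\phi_N$ by integrating radially from the origin. Since $\phi_N$ is differentiable (as $D\phi_N = N\nabla\phi_N$ exists) and $\phi_N(0) = 0$, the fundamental theorem of calculus gives
\[
\phi_N(x) = \int_0^1 \frac{d}{ds}\phi_N(sx)\,ds = \int_0^1 \ip{\nabla \phi_N(sx), x}_{\Tr}\,ds = \int_0^1 \ip{D\phi_N(sx), x}_2\,ds,
\]
using the normalization $\ip{\cdot,\cdot}_{\Tr} = N\ip{\cdot,\cdot}_2$ and $D = N\nabla$.

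Given $\epsilon > 0$ and $R > 0$, I would pick an $(\epsilon/(R\sqrt{m}), R)$-approximation $g = (g_1,\dots,g_m) \in (\TrP_m^1)^m$ of $\{D\phi_N\}$, and then define the candidate approximation
\[
f(x) := \int_0^1 \sum_{j=1}^m \tau(g_j(sx)\, x_j)\,ds.
\]
The first step is to verify that $f \in \TrP_m^0$. When each $g_j$ is written as a sum of products $p_0(X)\prod_k \tau(p_k(X))$, substituting $sX$ for $X$ yields a polynomial in $s$ with coefficients in $\TrP_m^1$; multiplying by $x_j$ inside a formal trace and integrating $s$ over $[0,1]$ produces a scalar-valued trace polynomial (the $s$-integral just replaces each monomial in $s$ by a rational constant). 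So $f \in \TrP_m^0$.

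Next, for $\norm{x}_\infty \leq R$ and $s \in [0,1]$, we have $\norm{sx}_\infty \leq R$, so the $(\epsilon/(R\sqrt{m}),R)$-bound on $g$ applies along the whole segment. Using Cauchy--Schwarz for $\ip{\cdot,\cdot}_2$ and the elementary bound $\norm{x}_2 \leq \sqrt{m}\,\norm{x}_\infty$, I estimate
\[
|\phi_N(x) - f(x)| \leq \int_0^1 \norm{D\phi_N(sx) - g(sx)}_2 \,\norm{x}_2\,ds \leq R\sqrt{m}\sup_{\norm{y}_\infty \leq R}\norm{D\phi_N(y) - g(y)}_2.
\]
Taking $\limsup$ over $N$ yields $\limsup_N \sup_{\norm{x}_\infty \leq R}|\phi_N(x) - f(x)| \leq R\sqrt{m}\cdot \epsilon/(R\sqrt{m}) = \epsilon$, which is the desired conclusion.

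There is no real obstacle here: the only points to check carefully are the normalization factor $N$ between $\nabla$ and $D$ (and correspondingly between $\ip{\cdot,\cdot}_{\Tr}$ and $\ip{\cdot,\cdot}_2$), and the observation that the radial integral of an operator-valued trace polynomial paired against $x$ lies in $\TrP_m^0$. The $\phi_N(0) = 0$ hypothesis is essential, since without it one would get an additive constant that could be dimension-dependent; however, the statement is easily extended to any sequence with $\phi_N(0)$ bounded by a constant (constants are trivially trace polynomials), so this is a harmless normalization.
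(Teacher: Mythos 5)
Your proof is correct and follows essentially the same route as the paper: recover $\phi_N$ by radial integration from the origin, build the candidate trace polynomial $f(X) = \int_0^1 \tau(F(sX)X)\,ds$ from an approximant $F$ of $\{D\phi_N\}$, and close by Cauchy--Schwarz. Your tracking of the $\sqrt{m}$ factor (from $\norm{x}_2 \leq \sqrt{m}\,\norm{x}_\infty$) is in fact slightly more precise than what the paper writes, and your closing remark about relaxing $\phi_N(0)=0$ to boundedness is a correct and harmless observation.
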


\begin{proof}
Given a trace polynomial $F \in (\TrP_m^1)^m$, we can define
\[
f(X) = \int_0^1 \tau(F(tX)X)\,dt
\]
in $\TrP_m^0$.  Then we have
\begin{align*}
\sup_{\norm{x}_\infty \leq R} |\phi_N(x) - f(x)| &= \sup_{\norm{x}_\infty \leq R} \left| \int_0^1 \ip{D \phi_N(tx) - F(tx), x}_2\,dt \right| \\
&\leq R \sup_{\norm{y}_\infty \leq R} \norm{N \nabla \phi_N(y) - F(y)}_2.  \qedhere
\end{align*}
\end{proof}

\section{Convergence of Moments} \label{sec:convergenceandconcentration}

Our goal in this section is prove the following theorem.  The convergence of moments is related to \cite[Theorem 4.4]{GS2009}, \cite[Proposition 50 and Theorem 51]{DGS2016}, \cite[Theorem 4.4]{Dabrowski2017}, and we include versions of standard concentration estimates (not proved in this paper) in the statement.

\begin{theorem} \label{thm:convergenceandconcentration}
Let $V_N: M_N(\C)_{sa}^m \to \R$ be a sequence of potentials such that $V_N(x) - (c/2) \norm{x}_2^2$ is convex and $V_N(x) - (C/2) \norm{x}_2^2$ is concave.  Let $\mu_N$ be the associated measure.  Suppose that the sequence $\{DV_N\}$ is asymptotically approximable by trace polynomials, and assume that
\begin{equation} \label{eq:operatornormhypothesis}
M = \limsup_{N \to \infty} \max_j \norm*{ \int(x_j - \tau_N(x_j)1)\,d\mu_N(x) } < +\infty,
\end{equation}
where $1$ denotes the $N \times N$ identity matrix.
\begin{enumerate}
	\item We have the following bounds on the operator norm.  If $R_N = \max_j \int \norm{x_j}\,d\mu_N(x)$, then
	\begin{align*}
	\limsup_{N \to \infty} R_N &\leq \frac{2}{c^{1/2}} + \frac{1}{c} \limsup_{N \to \infty} \max_j \left| \int \tau_N(x_j)\,d\mu_N(x) \right| + M \\
	&\leq \frac{2}{c^{1/2}} + \frac{1}{c} \limsup_{N \to \infty} \norm{DV_N(0)}_2 + \frac{C - c}{2c^{3/2}} + M,
	\end{align*}
	and as a consequence of concentration we have for each $j$ that
	\[
	\mu_N(\norm{x_j} \geq R_N + \delta) \leq e^{-cN \delta / 2}.
	\]
	\item There exists a non-commutative law $\lambda \in \Sigma_{m,R_*}$, where $R_* = \limsup_{N \to \infty} R_N$, such that for every non-commutative polynomial $p$,
	\[
	\lim_{N \to \infty} \int \tau_N(p(x))\,d\mu_N(x) = \lambda(p).
	\]
	\item The sequence $\{\mu_N\}$ exhibits exponential concentration around $\lambda$ in the sense that for every $R > 0$, and every neighborhood $\mathcal{U}$ of $\lambda$ in $\Sigma_m$,
	\[
	\limsup_{N \to \infty} \frac{1}{N^2} \log \mu_N\left(\norm{x}_\infty \leq R, \lambda_x \not \in \mathcal{U}\right) < 0.
	\]
\end{enumerate}
\end{theorem}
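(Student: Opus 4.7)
The plan is to establish the three claims in order, with the moment convergence in (2) being the heart of the argument.

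For (1), the bound on $R_N$ follows by combining Corollary \ref{cor:operatornormtail} with hypothesis \eqref{eq:operatornormhypothesis}. Writing $a_{N,j} = \int x_j\,d\mu_N(x)$, one has $\norm{a_{N,j}} \leq |\tau_N(a_{N,j})| + \norm{a_{N,j} - \tau_N(a_{N,j})1}$, where the second term is at most $M + o(1)$ by \eqref{eq:operatornormhypothesis}, and the first is controlled by locating the unique minimizer $x_N^\ast$ of $V_N$ (which satisfies $\norm{x_N^\ast}_2 \leq c^{-1}\norm{DV_N(0)}_2$ by Proposition \ref{prop:convex}) and estimating $|\tau_N(a_{N,j} - x_{N,j}^\ast)|$ using the semiconcavity constant $C$. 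Combined with $\limsup_N \int \norm{x_j - a_{N,j}}\,d\mu_N \leq 2c^{-1/2}$ from Corollary \ref{cor:operatornormtail}, this yields the displayed bound on $R_N$; the tail estimate is Theorem \ref{thm:concentration} applied to the $N^{1/2}$-Lipschitz function $x \mapsto \norm{x_j}$.

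For (2), I would introduce the Langevin semigroup $T_t^{V_N}$ with stationary measure $\mu_N$, i.e., the semigroup generated by $\tfrac{1}{2}L_N - \tfrac{1}{2}\ip{DV_N, D\,\cdot\,}_2$. Uniform convexity of $V_N$ yields a dimension-free exponential contraction
\[
\left| T_t^{V_N} u(x) - \smallint u\,d\mu_N\right| \leq e^{-ct/2}\Lip(u)\Bigl(\norm{x}_2 + \smallint \norm{y}_2\,d\mu_N(y)\Bigr),
\]
so the moment convergence reduces to showing that $\{T_t^{V_N} u_N\}$ is asymptotically approximable by trace polynomials (AATP) for each fixed $t > 0$, whenever $\{u_N\}$ is AATP and uniformly Lipschitz; evaluating at a fixed basepoint and letting $t \to \infty$ then yields $\lim_N \int u_N\,d\mu_N$. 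To propagate AATP I would use a Trotter splitting, writing $T_t^{V_N}$ as a limit of iterates of a drift step $S_s^{V_N}: u \mapsto u \circ (\id - (s/2)DV_N)$ and a diffusion step $G_s: u \mapsto \sigma_{s,N} \ast u$. Each step preserves AATP uniformly in $N$: $S_s^{V_N}$ by Lemma \ref{lem:compositionapproximation} applied to the AATP sequence $\{\id - (s/2)DV_N\}$, whose Lipschitz constant is bounded by $1 + (s/2)C$ via Proposition \ref{prop:convex}; and $G_s$ by Lemma \ref{lem:convolutionapproximation}. A Chernoff--Trotter estimate then shows $(G_{t/n} \circ S_{t/n}^{V_N})^n u_N \to T_t^{V_N} u_N$ as $n \to \infty$, uniformly on operator-norm balls and uniformly in $N$, with error bounds depending only on $\Lip(u_N)$, $c$, $C$, and polynomial-type growth that persists under iteration. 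Observation \ref{lem:convergenceapproximation} then yields AATP of $T_t^{V_N} u_N$. Specializing to $u_N(x) = \tau_N(p(x))$ produces the limit $\lambda(p) := \lim_N \int \tau_N(p)\,d\mu_N(x)$; linearity, unitality, traciality, and complete positivity of $\lambda$ are inherited from $\tau_N$, and (1) ensures $\lambda \in \Sigma_{m,R_\ast}$.

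For (3), Theorem \ref{thm:concentration} applied to the $K_{p,R}$-Lipschitz function $x \mapsto \tau_N(p(x))$ on $\{\norm{x}_\infty \leq R\}$ (with $K_{p,R}$ independent of $N$) gives $\mu_N(\norm{x}_\infty \leq R,\,|\tau_N(p(x)) - \lambda(p)| \geq \delta) \leq 2e^{-cN^2\delta^2/8K_{p,R}^2}$ for $N$ large, using (2) to replace the mean by $\lambda(p)$; since neighborhoods of $\lambda$ in $\Sigma_m$ are generated by finitely many such moment conditions together with the operator-norm restriction, the claim follows. The main obstacle is the uniform-in-$N$ Trotter splitting in (2): one must show that the regularity estimates on the iterated maps $(G_{t/n} \circ S_{t/n}^{V_N})^n$ and the resulting splitting error are $N$-independent, and that the limit $n \to \infty$ commutes with the AATP approximation via Observation \ref{lem:convergenceapproximation}. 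The dimension-free Lipschitz and growth bounds supplied by uniform convexity and semiconcavity of $V_N$, together with the AATP hypothesis on $\{DV_N\}$, are exactly what make this step work.
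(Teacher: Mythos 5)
Your proposal follows essentially the same strategy as the paper's: represent $\int u\,d\mu_N$ as the $t\to\infty$ limit of a Langevin semigroup $T_t^{V_N}u$, build $T_t^{V_N}$ by a dimension-independent Trotter splitting into a drift step and a heat step, and show that each step (hence the splitting and the $t\to\infty$ limit) preserves asymptotic approximability by trace polynomials. Your forward-Euler drift step $u\mapsto u\circ(\id - (s/2)DV_N)$ is a small but sensible variant of the paper's exact flow $u\mapsto u\circ W(\cdot,s)$; it is in fact a little cleaner for the AATP step, since $\id - (s/2)DV_N$ is manifestly AATP while the exact flow requires the Picard-iteration argument of Lemma \ref{lem:asymptoticapproximation0}(1) (you would, however, need to re-do the commutator estimate $\norm{(P_s E_s - E_s P_s)u}_{L^\infty}=O(s^{3/2})$ for the Euler map $E_s$, using that $E_s$ is a $\sqrt{1-cs+C^2s^2/4}$-contraction for small $s$ rather than the $e^{-cs/2}$-contraction used in Lemma \ref{lem:convergence0}). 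Your pointwise convergence estimate using the invariance $\int T_t^{V_N}u\,d\mu_N = \int u\,d\mu_N$ is also a tidy alternative to the paper's telescoping argument in Lemma \ref{lem:limitastgoestoinfinity0}, though you still owe a proof of that invariance for the Trotter-constructed semigroup (the paper does this via the weak formulation in Lemma \ref{lem:weaksolution}).

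The one step that does not go through as written is ``specializing to $u_N(x)=\tau_N(p(x))$.'' For $\deg p\geq 2$ this function is not uniformly Lipschitz on $M_N(\C)_{sa}^m$, so the AATP-preservation statement you have just established (whose hypothesis is that $\{u_N\}$ be $K$-Lipschitz with $K$ independent of $N$) cannot be applied to it directly. You need to insert a cutoff: choose $\psi\in C_c^\infty(\R)$ with $\psi(t)=t$ for $|t|\leq R'$ where $R'>R_*$, set $\Psi(x)=(\psi(x_1),\dots,\psi(x_m))$, and run the semigroup argument on $u_N(x)=\tau_N(p(\Psi(x)))$, which is globally Lipschitz uniformly in $N$, is AATP (via a polynomial approximation of $\psi$ on a compact interval and the spectral mapping theorem), and agrees with $\tau_N(p(x))$ on $\{\norm{x}_\infty\leq R'\}$. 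One then uses part (1) to show $\int\tau_N(p(\Psi(x)))\,d\mu_N - \int\tau_N(p(x))\,d\mu_N\to 0$, so the limit you extract is indeed $\lambda(p)$ and $\lambda\in\Sigma_{m,R'}$ for every $R'>R_*$. You already invoke exactly this Lipschitz-on-a-ball idea in your argument for part (3), so the fix is one you clearly have available; but as stated, the sentence producing $\lambda(p)$ has a hole.
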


\begin{remark}
The rather artificial hypothesis that $\limsup_{N \to \infty} \max_j \norm*{ \int(x_j - \tau_N(x_j))\,d\mu_k(x) } < +\infty$ is trivially satisfied if either (1) $\mu_N$ has expectation zero or (2) $\mu_N$ is invariant under unitary conjugation and hence $\int x_j \,d\mu_N(x)$ is equal to $\int \tau_N(x_j)\,d\mu_N(x)$ times the identity matrix.
\end{remark}

We have already seen in \S \ref{subsec:concentration} that concentration estimates and operator norm tail bounds are standard.  To prove that the moments converge, something more is needed; indeed, the only assumption relating the measures $\mu_N$ for different values of $N$ is the fact that $DV_N$ is asymptotically approximable by trace polynomials.  But even if $DV_N$ is given by the same ``trace analytic-function'' for different values of $N$, it is not immediate that the measure would concentrate in the same regions for different size matrices.

To prove convergence of moments, we want to express $\int u\,d\mu_N$ in terms of $DV_N$ for a Lipschitz function $u$.  One of the standard techniques is to show $\mu_N$ is the unique stationary distribution for a process $X_t$ that satisfies the SDE
\[
dX_t = dY_t - \frac{1}{2} DV_N(X_t)\,dt,
\]
where $Y_t$ is a GUE Brownian motion.  This machinery lies behind the log-Sobolev inequality and concentration results, as well as earlier theorems about convergence of moments for general convex potentials.

Specifically, Dabrowski, Guionnet, and Shylakhtenko used the free version of this SDE to show that for a non-commutative potential $V$ satisfying certain convexity assumptions, there exists a free Gibbs law for $V$ which is the unique stationary distribution \cite[Proposition 5]{DGS2016}.  As an application, they show convergence of moments for random matrix models given by $V_N = V$ \cite[Proposition 50 and Theorem 51]{DGS2016}, essentially a special case of our Theorem \ref{thm:convergenceandconcentration}.

Dabrowski was able to show convergence of moments under weaker convexity assumptions by constructing the solution to the free SDE as an ultralimit of the finite-dimensional solutions \cite[Theorem 4.4]{Dabrowski2017}.  Our theorem has similar convexity assumptions to Dabrowski's, but we consider a more general sequence of potentials $V_N$.  We also perform most of our analysis in the finite-dimensional setting, but we use deterministic rather than stochastic methods.

Instead of the solving the SDE, we study the associated semigroup $T_t^{V_N}$, acting on Lipschitz functions $u$, given by
\[
T_t^{V_N} u(x) = E_x[u(X_t)],
\]
where $X_t$ is the process solving the SDE with initial condition $x$.  The semigroup provides the solution to a certain PDE, that is, if $u(x,t) = T_t u_0(x)$, then we have
\[
\partial_t u = \frac{1}{2N} \Delta u - \frac{N}{2} \nabla V_N \cdot \nabla u = \frac{1}{2} L_N u - \frac{1}{2} \ip{DV_N,Du}_2.
\]
The semigroup $T_t^{V_N}$ will decrease the Lipschitz norms of functions and thus, if $u$ is Lipschitz, then $T_t^{V_N} u$ will converge to $\int u \,d\mu_N$ as $t \to \infty$.

Solving the differential equation and taking $t \to \infty$ provides a way to evaluate $\int u\,d\mu_N$ in terms of $DV_N$.  We will describe a construction of the semigroup $T_t^V$ through iterating simpler operations (\S \ref{subsec:iteration0}), and then we will show (Lemma \ref{lem:asymptoticapproximation0}) that the iteration procedure preserves approximability by trace polynomials and hence conclude that $\lim_{N \to \infty} \int u\,d\mu_N$ exists.

\subsection{Iterative Construction of the Semigroup} \label{subsec:iteration0}

To simplify notation in this section, we fix $N$ and fix a potential $V: M_N(\C)_{sa}^m \to \R$ such that $V(x) - (c/2) \norm{x}_2^2$ is convex and $V(x) - (C/2) \norm{x}_2^2$ is concave, and we write $T_t$ rather than $T_t^V$.

We will construct $T_t$ by combining two simpler semigroups corresponding to the stochastic and deterministic terms of $dY_t - (1/2) DV(X_t)\,dt$.  Recall that the solution to the heat equation $\partial_t u = (1/2N) \Delta u$ with initial data $u_0$ is given by the heat semigroup:
\[
P_t u_0(x) = \int u_0(x + y)\,d\sigma_{t,N}(y).
\]
Meanwhile, the solution to $\partial_t u = -(1/2) \ip{DV,Du}_2$ with initial data $u_0$ is given by
\[
S_t u_0(x) = u_0(W(x,t)),
\]
where $W(x,t)$ is the solution to the ODE
\begin{equation} \label{eq:ODE}
\partial_t W(x,t) = -\frac{1}{2} DV(W(x,t)) \qquad W(x,0) = x.
\end{equation}
We want to define $T_t = \lim_{n \to \infty} (P_{t/n} S_{t/n})^n$.  This is motivated by Trotter's product formula which asserts that $e^{t(A+B)} = \lim_{n \to \infty} (e^{tA/n} e^{tB/n})^n$ for nice enough self-adjoint operators $A$ and $B$ (see \cite{Trotter1959}, \cite{Kato1978}, \cite[{pp.\ 4 - 6}]{Simon1979}).  But of course, we must show that $(P_{t/n} S_{t/n})^n$ converges and derive dimension-independent error bounds.

We use the following basic properties of the semigroups $P_t$ and $S_t$.  Here if $u: M_N(\C)_{sa}^m \to \C$, then $\norm{u}_{\Lip}$ denotes the Lipschitz norm with respect to the \emph{normalized} $L^2$ metric $\norm{\cdot}_2$ on $M_N(\C)_{sa}^m$ and $\norm{u}_{L^\infty}$ denotes the standard $L^\infty$ norm.  We are only concerned with Lipschitz functions, so in the following estimates, the reader may always assume $u$ is Lipschitz, but of course $\norm{u}_{L^\infty}$ may be infinite for Lipschitz functions.

\begin{lemma} \label{lem:heatsemigroup} ~
\begin{enumerate}
	\item $\norm{P_tu}_{L^\infty} \leq \norm{u}_{L^\infty}$.
	\item $\norm{P_tu}_{\Lip} \leq \norm{u}_{\Lip}$.
	\item $\norm{P_tu - u}_{L^\infty} \leq m^{1/2} t^{1/2} \norm{u}_{\Lip}$.
\end{enumerate}
\end{lemma}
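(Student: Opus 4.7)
The plan is to treat all three claims as immediate consequences of $P_t$ being integration against a probability measure on $(M_N(\C)_{sa}^m, \norm{\cdot}_2)$, combined with one GUE second-moment identity. Since $\sigma_{t,N}$ is a probability measure and $P_t u(x) = \int u(x+y) \,d\sigma_{t,N}(y)$, claim (1) is just Jensen: $|P_t u(x)| \leq \int |u(x+y)| \,d\sigma_{t,N}(y) \leq \norm{u}_{L^\infty}$. For (2), the key point is that for any fixed $y$, the translation $x \mapsto x+y$ is an isometry of $\norm{\cdot}_2$, so
\[
|P_t u(x) - P_t u(x')| \leq \int |u(x+y) - u(x'+y)| \,d\sigma_{t,N}(y) \leq \norm{u}_{\Lip} \norm{x-x'}_2,
\]
using that $\sigma_{t,N}$ has total mass $1$.

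For (3), I would first bound, using the Lipschitz hypothesis on $u$,
\[
|P_t u(x) - u(x)| \leq \int |u(x+y) - u(x)| \,d\sigma_{t,N}(y) \leq \norm{u}_{\Lip} \int \norm{y}_2 \,d\sigma_{t,N}(y),
\]
and then apply Cauchy--Schwarz to get $\int \norm{y}_2 \,d\sigma_{t,N}(y) \leq \bigl( \int \norm{y}_2^2 \,d\sigma_{t,N}(y) \bigr)^{1/2}$. The remaining task is the computation $\int \norm{y}_2^2 \,d\sigma_{t,N}(y) = mt$. By linearity and the product structure of $\sigma_{t,N}$ across the $m$ coordinates, this reduces to showing $\int \tau_N(y_j^2) \,d\sigma_{t,N}(y) = t$ for a single GUE matrix of normalized variance $t$, which is built into the normalization: in the orthonormal basis $\mathcal{B}_N$ of \eqref{eq:basis}, the coefficients of $y_j$ are i.i.d.\ real Gaussians of variance $t/N$ (this is exactly what the density $\propto \exp(-N \Tr(y_j^2)/2t)$ says), so $\tau_N(y_j^2) = \frac{1}{N} \sum_{b \in \mathcal{B}_N} c_b^2$ has expectation $N^2 \cdot (1/N) \cdot (t/N) = t$.

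There is no real obstacle here; the content of the lemma is that $P_t$ is a Markov operator that is $\norm{\cdot}_2$-translation invariant, and that $\sigma_{t,N}$ has the expected second moment $mt$. The only micro-subtlety worth double-checking is the normalization bookkeeping between the non-normalized inner product $\ip{\cdot,\cdot}_{\Tr}$ used to define the density of $\sigma_{t,N}$ and the normalized norm $\norm{\cdot}_2$ used in the Lipschitz constant; the factor of $N$ in the exponent and the factor of $1/N$ in $\tau_N$ cancel to give a dimension-free second moment, which is precisely why the bound in (3) is dimension-free.
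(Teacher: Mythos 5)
Your proof is correct and follows essentially the same route as the paper's: all three claims are derived from $P_t$ being convolution against the probability measure $\sigma_{t,N}$, with (3) using Cauchy--Schwarz and the second-moment identity $\int \norm{y}_2^2\,d\sigma_{t,N}(y) = mt$. You even spell out the normalization computation $\int \tau_N(y_j^2)\,d\sigma_{t,N}(y) = t$, which the paper simply asserts; that detail is correct and consistent with the orthonormal basis $\mathcal{B}_N$ and the density $\propto \exp(-N\Tr(y_j^2)/2t)$.
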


\begin{proof}
(1) and (2) follow from the fact that $P_t u$ is $u$ convolved with a probability measure.  To prove (3), suppose $\norm{u}_{\Lip} < +\infty$.  Then
\begin{align*}
|P_tu(x) - u(x)| &= \left| \int (u(x + y) - u(x))\,d\sigma_{t,N}(y) \right| \\
&\leq \int |u(x+y) - u(x)|\,d\sigma_{t,N}(y) \\
&\leq \norm{u}_{\Lip} \int \norm{y}_2\,d\sigma_{t,N}(y).
\end{align*}
Meanwhile,
\[
\int \norm{y}_2\,d\sigma_{t,N}(y) \leq \left( \int 1\,d\sigma_{t,N}(y) \right) \left( \int \norm{y}_2^2 \,d\sigma_{t,N}(y) \right)^{1/2} = (mt)^{1/2},
\]
since $y$ an $m$-tuple $(y_1,\dots,y_m)$ and $\int \tau_N(y_j^2)\,d\sigma_{t,N}(y) = t$ for each $j$.
\end{proof}

\begin{lemma} \label{lem:deterministicsemigroup} ~
\begin{enumerate}
	\item The solution $W(x,t)$ to \eqref{eq:ODE} exists for all $t$.
	\item $\norm{W(x,t) - W(y,t)}_2 \leq e^{-ct/2} \norm{x - y}_2$.
	\item $\norm{W(x,t) - x}_2 \leq (t/2) \norm{DV(x)}_2$.
	\item $\norm{(W(x,t) - x) - (W(y,t) - y)}_2 \leq \frac{C}{c}(1 - e^{-ct/2}) \norm{x - y}_2$.
	\item $\norm{S_t u}_{\Lip} \leq e^{-ct/2} \norm{u}_{\Lip}$.
	\item $\norm{S_t u}_{L^\infty} \leq \norm{u}_{L^\infty}$.
\end{enumerate}
\end{lemma}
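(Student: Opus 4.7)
The plan is to treat the six claims in the natural order, using the global Lipschitz bound on $DV$ and the monotonicity inequality of Proposition~\ref{prop:convex}(5) as the workhorses. Item~(1) is a direct application of Picard--Lindel\"of: by Proposition~\ref{prop:convex}(4), $DV$ is $C$-Lipschitz with respect to $\norm{\cdot}_2$, so the ODE \eqref{eq:ODE} has a unique global solution $W(x,t)$ depending smoothly on the initial data $x$.

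For (2), I would differentiate $\norm{W(x,t) - W(y,t)}_2^2$ in $t$. Using the ODE and then Proposition~\ref{prop:convex}(5), I obtain
\[
\tfrac{d}{dt}\norm{W(x,t)-W(y,t)}_2^2 = -\ip{DV(W(x,t))-DV(W(y,t)),\, W(x,t)-W(y,t)}_2 \leq -c\norm{W(x,t)-W(y,t)}_2^2,
\]
and Gr\"onwall yields (2). Claim~(5) then follows at once by writing
\[
|S_tu(x)-S_tu(y)| = |u(W(x,t))-u(W(y,t))| \leq \norm{u}_{\Lip}\norm{W(x,t)-W(y,t)}_2,
\]
and (6) is immediate because $S_tu(x)=u(W(x,t))$. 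For (4), I subtract the two ODEs and integrate to get
\[
(W(x,t)-x) - (W(y,t)-y) = -\tfrac12\int_0^t \bigl(DV(W(x,s))-DV(W(y,s))\bigr)\,ds,
\]
then apply the $C$-Lipschitz bound on $DV$ together with (2) to get the integrand bound $Ce^{-cs/2}\norm{x-y}_2$, which integrates to $(C/c)(1-e^{-ct/2})\norm{x-y}_2$.

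The main obstacle is (3), which requires the inequality $\norm{DV(W(x,s))}_2 \leq \norm{DV(x)}_2$, since once this is known, integrating $\norm{\partial_s W(x,s)}_2 = \tfrac12\norm{DV(W(x,s))}_2$ immediately gives the claimed bound. The difficulty is that $V$ need not be twice differentiable, so the clean computation
\[
\tfrac{d}{ds}\norm{DV(W(x,s))}_2^2 = -\ip{DV(W(x,s)),\,D^2V(W(x,s))DV(W(x,s))}_2 \leq 0
\]
is not directly available. My plan is to regularize: let $V_\epsilon = V * \rho_\epsilon$ with $\rho_\epsilon$ a smooth compactly supported probability density. By Proposition~\ref{prop:convex}(1), $V_\epsilon\in\mathcal{E}(c,C)$, and now $V_\epsilon$ is smooth with $cI\leq D^2V_\epsilon\leq CI$, so the display above goes through for the regularized flow $W_\epsilon(x,s)$ and yields $\norm{DV_\epsilon(W_\epsilon(x,s))}_2 \leq \norm{DV_\epsilon(x)}_2$. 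As $\epsilon\to 0$, the $C$-Lipschitz vector fields $-\tfrac12 DV_\epsilon$ converge locally uniformly to $-\tfrac12 DV$ (since $DV_\epsilon\to DV$ locally uniformly by Proposition~\ref{prop:convex}(4) combined with standard mollifier estimates), and by a standard ODE stability argument $W_\epsilon(x,s)\to W(x,s)$ locally uniformly on compact time intervals, with $DV_\epsilon(W_\epsilon(x,s))\to DV(W(x,s))$ in $\norm{\cdot}_2$. Passing to the limit gives $\norm{DV(W(x,s))}_2\leq\norm{DV(x)}_2$, and then
\[
\norm{W(x,t)-x}_2 \leq \int_0^t \tfrac12\norm{DV(W(x,s))}_2\,ds \leq \tfrac{t}{2}\norm{DV(x)}_2,
\]
completing (3). (One could instead prove the sharper bound with $(1-e^{-ct/2})/c$ in place of $t/2$, but since the weaker linear estimate suffices for later applications, I would stick with that.)
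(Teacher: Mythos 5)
Your arguments for items (1), (2), (4), (5), (6) coincide with the paper's. The interesting difference is item (3). The paper's proof is a short direct computation: it inserts $DV(x)$ into the inner product,
\[
\tfrac{d}{dt}\norm{W(x,t)-x}_2^2 = -\ip{DV(W(x,t))-DV(x),\,W(x,t)-x}_2 - \ip{DV(x),\,W(x,t)-x}_2,
\]
drops the first term by monotonicity of $DV$ (Proposition~\ref{prop:convex}(5)), bounds the second by Cauchy--Schwarz, and then uses $\tfrac{d}{dt}\norm{W-x}_2^2 = 2\norm{W-x}_2\tfrac{d}{dt}\norm{W-x}_2$ a.e.\ to conclude $\tfrac{d}{dt}\norm{W-x}_2 \leq \tfrac12\norm{DV(x)}_2$. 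Your route — first prove $\norm{DV(W(x,s))}_2 \leq \norm{DV(x)}_2$ by mollification and a Hessian computation, then integrate the ODE — is also valid and establishes a genuinely stronger fact (monotone decay of the gradient norm along the flow, with rate $e^{-cs/2}$ as you note). The cost is the extra approximation layer: you need to run Picard--Lindel\"of stability to get $W_\epsilon \to W$ (with a priori uniform bounds on the trajectories so the local-uniform convergence of $DV_\epsilon$ suffices), none of which the paper's argument requires. For the bound as stated, the paper's decomposition is the lighter tool; your approach would be the right one if the sharper exponential-in-time estimate were actually needed downstream.
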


\begin{proof}
(1) The convexity and semi-concavity assumptions on $V$ imply that $DV$ is $C$-Lipschitz and therefore global existence of the solution follows from the Picard--Lindel\"of Theorem.

(2) Let $\tilde{V}(x) = V(x) - (c/2) \norm{x}_2^2$.  Because $\tilde{V}$ is convex, we have
\[
\ip{D\tilde{V}(x) - D\tilde{V}(y), x - y}_2 \geq 0,
\]
which translates to
\[
\ip{DV(x) - DV(y), x - y}_2 \geq c \norm{x - y}_2^2.
\]
Now observe that
\begin{align*}
\frac{d}{dt} \norm{W(x,t) - W(y,t)}_2^2 &= -\ip{DV(W(x,t)) - DV(W(y,t)), W(x,t) - W(y,t)}_2 \\
&\leq -c \norm{W(x,t) - W(y,t)}_2^2,
\end{align*}
and hence by Gr{\"o}nwall's inequality, $\norm{W(x,t) - W(y,t)}_2^2 \leq e^{-ct} \norm{W(x,0) - W(y,0)}_2^2 = e^{-ct} \norm{x - y}_2^2$.

(3) Note that
\begin{align*}
\frac{d}{dt} \norm{W(x,t) - x}_2^2 = -\ip{DV(W(x,t)), W(x,t) - x}_2 \\
= -\ip{DV(W(x,t)) - DV(x), W(x,t) - x}_2 - \ip{DV(x), W(x,t) - x}_2 \\
\leq \norm{DV(x)}_2 \norm{W(x,t) - x}_2.
\end{align*}
Meanwhile, $\norm{W(x,t) - x}_2$ is Lipschitz in $t$ and hence differentiable almost everywhere and we have
\[
\frac{d}{dt} \norm{W(x,t) - x}_2^2 = 2 \norm{W(x,t) - x}_2 \frac{d}{dt} \norm{W(x,t) - x}_2.
\]
Thus, we have
\[
\frac{d}{dt} \norm{W(x,t) - x}_2 \leq \frac{1}{2} \norm{DV(x)}_2,
\]
which proves (3).

(4) We observe that
\begin{align*}
\norm{(W(x,t) - x) - (W(y,t) - y)}_2 &\leq \frac{1}{2} \int_0^t \norm{DV(W(x,s)) - DV(W(y,s))}_2\,ds \\
&\leq \frac{C}{2} \int_0^t \norm{W(x,s) - W(y,s)}_2\,ds \\
&\leq \frac{C}{2} \int_0^t e^{-cs/2} \norm{x - y}_2\,ds \\
&= \frac{C}{c}(1 - e^{-ct/2}) \norm{x - y}_2.
\end{align*}

(5) follows from (2).

(6) is immediate because $S_t u$ is $u$ precomposed with another function.
\end{proof}

Now we combine $P_t$ and $S_t$ as in Trotter's formula, except that for technical convenience we define our approximations using dyadic time intervals rather than subdividing $[0,t]$ into intervals of size $t/n$.

\begin{lemma} \label{lem:convergence0}
For dyadic $t \in 2^{-\ell} \N$, define
\[
T_{t,\ell} u = (P_{2^{-\ell}} S_{2^{-\ell}})^{2^\ell t} u.
\]
Then $T_t u := \lim_{\ell \to \infty} T_{t,\ell}u$ exists and we have
\[
\norm{T_{t,\ell}u - T_t u}_{L^\infty} \leq \frac{Cm^{1/2}}{c(2 - 2^{1/2})} 2^{-\ell/2} \norm{u}_{\Lip}.
\]
We also have $\norm{T_tu}_{\Lip} \leq e^{-ct/2} \norm{u}_{\Lip}$.
\end{lemma}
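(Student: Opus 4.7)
The plan is to show that $\{T_{t,\ell} u\}$ is $L^\infty$-Cauchy with error decaying like $2^{-\ell/2}$, identify the limit as $T_t u$, and verify the Lipschitz contractivity by transferring the bound from $T_{t,\ell} u$. Throughout I will use the semigroup identities $P_s = P_{s/2}^2$, $S_s = S_{s/2}^2$ and the contractivity estimates of Lemmas~\ref{lem:heatsemigroup} and~\ref{lem:deterministicsemigroup}.

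The core ingredient is a commutator estimate. Computing directly,
\[
(S_s P_s - P_s S_s) v(x) = \int \bigl[ v(W(x,s) + y) - v(W(x+y, s))\bigr]\,d\sigma_{s,N}(y).
\]
Lemma~\ref{lem:deterministicsemigroup}(4) bounds the integrand by $\norm{v}_{\Lip} (C/c)(1 - e^{-cs/2}) \norm{y}_2$, and Cauchy-Schwarz gives $\int \norm{y}_2\,d\sigma_{s,N}(y) \leq (ms)^{1/2}$, so
\[
\norm{(S_s P_s - P_s S_s) v}_{L^\infty} \leq \frac{C(ms)^{1/2}}{c}(1 - e^{-cs/2}) \norm{v}_{\Lip}.
\]
The algebraic rearrangement $(P_{s/2} S_{s/2})^2 - P_s S_s = P_{s/2}(S_{s/2} P_{s/2} - P_{s/2} S_{s/2}) S_{s/2}$, combined with $L^\infty$-contractivity of $P_{s/2}$ and $\norm{S_{s/2} u}_{\Lip} \leq e^{-cs/4}\norm{u}_{\Lip}$, bounds the ``dyadic doubling error'':
\[
\norm{\bigl((P_{s/2}S_{s/2})^2 - P_s S_s\bigr) u}_{L^\infty} \leq \frac{C(ms/2)^{1/2}}{c}(1 - e^{-cs/4}) e^{-cs/4}\, \norm{u}_{\Lip}.
\]

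Next, I would write $T_{t,\ell+1} u - T_{t,\ell} u = A^{2n} u - B^n u$ with $A = P_{s/2} S_{s/2}$, $B = P_s S_s$, $s = 2^{-\ell}$, $n = 2^\ell t$, and telescope as $\sum_{k=0}^{n-1} B^{n-1-k}(A^2 - B) A^{2k} u$. Since $B$ is an $L^\infty$-contraction, $\norm{A^{2k} u}_{\Lip} \leq e^{-cks/2}\norm{u}_{\Lip}$, and the partial sum of the geometric series $\sum_k e^{-cks/2}$ is bounded by $(1 - e^{-cs/2})^{-1}$. Using the factorization $1 - e^{-cs/2} = (1 - e^{-cs/4})(1 + e^{-cs/4})$, the product
\[
\frac{(1 - e^{-cs/4}) e^{-cs/4}}{1 - e^{-cs/2}} = \frac{e^{-cs/4}}{1 + e^{-cs/4}} \leq \frac{1}{2}
\]
is dimension-independent and requires no smallness hypothesis on $s$. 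This yields a bound of the form $K \cdot 2^{-\ell/2} \norm{u}_{\Lip}$ for some $K = K(c,C,m)$. Summing the geometric series $\sum_{\ell' \geq \ell} 2^{-\ell'/2}$ establishes the Cauchy property, defines $T_t u$ as an $L^\infty$-limit, and yields the claimed quantitative bound $\frac{Cm^{1/2}}{c(2 - 2^{1/2})} 2^{-\ell/2} \norm{u}_{\Lip}$.

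Finally, $\norm{T_{t,\ell} u}_{\Lip} \leq (e^{-cs/2})^n \norm{u}_{\Lip} = e^{-ct/2} \norm{u}_{\Lip}$ since each factor $P_s S_s$ reduces the Lipschitz norm by $e^{-cs/2}$; lower semicontinuity of the Lipschitz seminorm under pointwise convergence transfers this bound to $T_t u$. The main technical obstacle is precisely the bookkeeping of constants across the commutator estimate and the telescoping sum: what makes the argument close uniformly in $s$ is the cancellation $(1 - e^{-cs/4})/(1 - e^{-cs/2}) = (1 + e^{-cs/4})^{-1}$, which prevents the naive $1/s$ blow-up that a cruder estimate would introduce when summing the geometric series.
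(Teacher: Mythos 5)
Your proof is correct and takes essentially the same route as the paper: the same commutator estimate via Lemma~\ref{lem:deterministicsemigroup}(4), the same telescoping of $A^{2n}-B^n$ with $A=P_{s/2}S_{s/2}$, $B=P_sS_s$, and the same cancellation $(1-e^{-cs/4})/(1-e^{-cs/2})=(1+e^{-cs/4})^{-1}$ that the paper uses to prevent a $1/s$ blow-up when summing the geometric series. The only cosmetic difference is the direction of the telescope (you place $A^{2k}$ acting first on $u$, the paper places $B^{n-1-j}$ acting first), which is an equivalent rearrangement yielding the same geometric sum and the same per-step bound $\frac{Cm^{1/2}}{2c}2^{-(\ell+1)/2}\norm{u}_{\Lip}$.
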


\begin{proof}
We want to show that the sequence $\{T_{t,\ell} u\}_{\ell}$ is Cauchy by estimating the difference between consecutive terms.  Suppose that $t \in 2^{-\ell}\N$ and write $t = n / 2^\ell$ and $\delta = 2^{-\ell-1}$.  Note the telescoping series identity
\[
T_{t,\ell+1}u - T_{t,\ell} u = \sum_{j=0}^{n-1} (P_\delta S_\delta)^{2j}P_\delta(S_\delta P_\delta - P_\delta S_\delta) S_\delta (P_{2\delta} S_{2\delta})^{n-1-j}u.
\]
Thus, we want to estimate $S_\delta P_\delta - P_\delta S_\delta$ and then control the propagation of the errors through the applications of the other operators.  Note that for a Lipschitz function $v$, we have
\begin{align*}
|S_\delta P_\delta v(x) - P_\delta S_\delta v(x)| &\leq \int |v(W(x,\delta) + y) - v(W(x+y,\delta))|\,d\sigma_{\delta,N}(y) \\
&\leq \norm{v}_{\Lip} \int \norm{(W(x,\delta) - x) - (W(x+y,\delta) - (x + y))}_2\,d\sigma_{\delta,N}(y) \\
&\leq \norm{v}_{\Lip} \frac{C}{c} (1 - e^{-c\delta/2}) \int \norm{y}_2 \,d\sigma_{\delta,N}(y) \\
&\leq \norm{v}_{\Lip} \frac{C}{c} (1 - e^{-c\delta/2}) (m\delta)^{1/2},
\end{align*}
where the last inequality follows by the same reasoning as Lemma \ref{lem:heatsemigroup} (3).  Therefore,
\[
\norm{S_\delta P_\delta v - P_\delta S_\delta v}_{L^\infty} \leq \frac{C}{c} m^{1/2} \delta^{1/2} (1 - e^{-c\delta/2})  \norm{v}_{\Lip}.
\]
Therefore, we can estimate a single term in the telescoping series identity by
\begin{align*}
\norm{(P_\delta S_\delta)^{2j}P_\delta(S_\delta P_\delta - P_\delta S_\delta) S_\delta (P_{2\delta} S_{2\delta})^{n-1-j}u}_{L^\infty} &\leq \norm{(S_\delta P_\delta - P_\delta S_\delta) S_\delta (P_{2\delta} S_{2\delta})^{n-1-j}u}_{L^\infty} \\
&\leq \frac{C}{c} m^{1/2} \delta^{1/2} (1 - e^{-c\delta/2}) \norm{S_\delta (P_{2\delta} S_{2\delta})^{n-1-j}u}_{\Lip} \\
&\leq \frac{C}{c} m^{1/2} \delta^{1/2} (1 - e^{-c\delta/2}) e^{-c\delta/2} e^{-c\delta(n-j-1)/2} \norm{u}_{\Lip}.
\end{align*}
Here we have first applied the fact that $P_\delta$ and $S_\delta$ are contractions with respect to the $L^\infty$ norm from Lemma \ref{lem:heatsemigroup} (1) and Lemma \ref{lem:deterministicsemigroup} (6); second, we used the above estimate on $S_\delta P_\delta - P_\delta S_\delta$; and third we used the estimates $\norm{P_\delta u}_{\Lip} \leq \norm{u}_{\Lip}$ and $\norm{S_\delta u}_{\Lip} \leq e^{-c\delta/2} \norm{u}_{\Lip}$ found in Lemma \ref{lem:heatsemigroup} (2) and Lemma \ref{lem:deterministicsemigroup} (5).  Now summing up the telescoping series, we get
\begin{align*}
\norm{T_{t,\ell+1}u - T_{t,\ell} u}_{L^\infty} &\leq \sum_{j=0}^{n-1} \frac{C}{c} m^{1/2} \delta^{1/2} (1 - e^{-c\delta/2}) e^{-c\delta/2} e^{-c\delta(n-j-1)/2} \norm{u}_{\Lip} \\
&\leq \frac{C}{c} m^{1/2} \delta^{1/2} (1 - e^{-c\delta/2}) e^{-c\delta/2} \frac{1}{1 - e^{-c\delta/2}} \norm{u}_{\Lip} \\
&= \frac{C}{c} m^{1/2} \delta^{1/2} e^{-c\delta/2} \norm{u}_{\Lip} \leq \frac{C}{2c} m^{1/2} \delta^{1/2} \norm{u}_{\Lip}.
\end{align*}
In other words, we have $\norm{T_{t,\ell+1}u - T_{t,\ell} u}_{L^\infty} \leq \frac{Cm^{1/2}}{2c} 2^{-(\ell+1)/2} \norm{u}_{\Lip}$.  It follows that the sequence is Cauchy with respect to $\norm{\cdot}_{L^\infty}$ and we have the desired estimate on $\norm{T_{t,\ell}u - T_tu}_{L^\infty}$ from summing the geometric series.

The estimate $\norm{T_{t,\ell}u}_{\Lip} \leq e^{-ct/2} \norm{u}_{\Lip}$ follows from Lemma \ref{lem:heatsemigroup} (2) and Lemma \ref{lem:deterministicsemigroup} (5), and then by taking the limit as $\ell \to +\infty$, we obtain $\norm{T_t u}_{\Lip} \leq e^{-ct/2} \norm{u}_{\Lip}$.
\end{proof}

\begin{lemma} \label{lem:continuityofsemigroup0}
The semigroup $T_t$ defined above extends to a semigroup defined for positive $t$ such that for $s \leq t$,
\[
|T_tu(x) - T_su(x)| \leq e^{-cs/2} \left( \frac{C}{c} (6 + 5 \sqrt{2}) (t - s)^{1/2} + \norm{DV(x)}_2 (t - s) \right) \norm{u}_{\Lip},
\]
and $\norm{T_t u}_{\Lip} \leq e^{-ct/2} \norm{u}_{\Lip}$.
\end{lemma}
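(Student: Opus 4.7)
The plan is to first establish the semigroup identity $T_t = T_{t-s} \circ T_s$ for dyadic $s \leq t$, then reduce the continuity bound to estimating $|T_\tau v(x) - v(x)|$ for Lipschitz $v$ and dyadic $\tau > 0$, and finally extend from dyadic to arbitrary positive times by density. Each of these three steps leans on the rate estimate from Lemma~\ref{lem:convergence0} together with the contraction properties of $P_t$ and $S_t$ recorded in Lemmas~\ref{lem:heatsemigroup} and \ref{lem:deterministicsemigroup}.

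For dyadic $s, t \in 2^{-\ell}\N$ the identity $T_{t,\ell} = T_{t-s,\ell} \circ T_{s,\ell}$ holds by the very definition of $T_{t,\ell}$ as a product. I would pass to the limit $\ell \to \infty$ by combining three facts: the $O(2^{-\ell/2})$ error in Lemma~\ref{lem:convergence0}, the fact that $T_{t-s,\ell}$ is an $L^\infty$-contraction (since $P_\delta$ and $S_\delta$ are), and the bound $\norm{T_{s,\ell}u}_{\Lip} \leq e^{-cs/2}\norm{u}_{\Lip}$ from Lemma~\ref{lem:convergence0}. Using $T_t = T_{t-s}T_s$, set $\tau = t - s$ and $v = T_s u$ (so $\norm{v}_{\Lip} \leq e^{-cs/2}\norm{u}_{\Lip}$), and reduce to proving a pointwise bound of the shape $|T_\tau v(x) - v(x)| \lesssim \tau^{1/2}\norm{v}_{\Lip} + \norm{DV(x)}_2 \tau \norm{v}_{\Lip}$.

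The heart of the argument is this pointwise bound. I would split $T_\tau v - v = (T_\tau v - P_\tau v) + (P_\tau v - v)$. The second term is controlled directly by Lemma~\ref{lem:heatsemigroup}(3). For the first term, use $T_\tau v = \lim_\ell (P_\delta S_\delta)^n v$ with $\delta = 2^{-\ell}$, $n = 2^\ell \tau$, and the telescoping identity
\[
(P_\delta S_\delta)^n v - P_\delta^n v = \sum_{k=0}^{n-1} P_\delta^{n-k}(S_\delta - I)(P_\delta S_\delta)^k v.
\]
The crucial point is to treat $P_\delta^{n-k} = P_{(n-k)\delta}$ as a single Gaussian convolution against $\sigma_{(n-k)\delta,N}$, so that each term evaluated at $x$ becomes $\int [v_k(W(x+y,\delta)) - v_k(x+y)]\, d\sigma_{(n-k)\delta, N}(y)$ with $v_k = (P_\delta S_\delta)^k v$. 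Using $\norm{v_k}_{\Lip} \leq e^{-ck\delta/2}\norm{v}_{\Lip}$, Lemma~\ref{lem:deterministicsemigroup}(3), Lipschitz continuity of $DV$, and the Gaussian moment bound $\int \norm{y}_2 \, d\sigma_{(n-k)\delta,N}(y) \leq (m(n-k)\delta)^{1/2}$, each summand is bounded by $e^{-ck\delta/2}\norm{v}_{\Lip} (\delta/2)[\norm{DV(x)}_2 + C(m(n-k)\delta)^{1/2}]$. Summing, the factor $(\delta/2) \sum_{k=0}^{n-1} e^{-ck\delta/2}$ converges to $(1 - e^{-c\tau/2})/c \leq \tau/2$, which produces the two pieces of the claimed estimate; the Lemma~\ref{lem:convergence0} error between $T_{\tau,\ell}$ and $T_\tau$ goes to $0$.

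For the extension to real $t$, given $t_0 > 0$ pick a dyadic sequence $t_n \to t_0$; the continuity estimate (already established for dyadic times) shows $\{T_{t_n}u(x)\}$ is Cauchy at each fixed $x$, and I define $T_{t_0}u(x) = \lim_n T_{t_n}u(x)$. Independence of the sequence and the final continuity and Lipschitz bounds follow by passing to the limit. The main obstacle is the telescoping step: a naive bound that expands $P_\delta^{n-k}$ into $n-k$ separate applications of $P_\delta$ and uses Lemma~\ref{lem:heatsemigroup}(3) at each step contributes $(m\delta)^{1/2}$ per step and sums to $\tau(m/\delta)^{1/2}$, which diverges as $\ell \to \infty$. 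Keeping $P_\delta^{n-k}$ bundled as a single Gaussian convolution lets us exploit the variance-scaling $E_{Z \sim \sigma_{t,N}}\norm{Z}_2 \leq (mt)^{1/2}$, which is what allows the dimension-independent estimate to survive the refinement of the dyadic mesh.
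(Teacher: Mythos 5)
Your proof is correct and takes a genuinely different route from the paper. The paper proves the single-step estimate on $|T_t u - u|$ only for $t = 2^{-\ell}$, $s=0$, via the three-term split $(T_t - 1) = (T_t - P_t S_t) + (P_t - 1)S_t + (S_t - 1)$, and then handles general dyadic $s < t$ by writing $t - s$ in binary expansion and summing the resulting geometric series of single-step increments. You instead make the dyadic semigroup identity $T_t = T_{t-s} T_s$ explicit up front, reduce to a direct estimate of $|T_\tau v - v|$ for an arbitrary dyadic $\tau$, and obtain that estimate in one shot via the telescoping identity
\[
(P_\delta S_\delta)^n - P_\delta^n = \sum_{k=0}^{n-1} P_\delta^{\,n-k}(S_\delta - I)(P_\delta S_\delta)^k .
\]
Your observation that $P_\delta^{\,n-k}$ must be kept bundled as the single Gaussian $P_{(n-k)\delta}$, so that $\int \norm{y}_2\,d\sigma_{(n-k)\delta,N}(y) \leq (m(n-k)\delta)^{1/2}$ scales with the accumulated variance rather than with the number of mesh steps, is exactly what makes the dimension-independent bound survive the limit $\delta \to 0$; you correctly identify that a naive $(n-k)$-fold application of Lemma \ref{lem:heatsemigroup}(3) would blow up like $\tau(m/\delta)^{1/2}$. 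This route avoids binary expansion entirely and in fact yields a slightly sharper constant: the limit Riemann sums give $|T_\tau v - v| \leq \norm{v}_{\Lip}\bigl[\tfrac{1-e^{-c\tau/2}}{c}\norm{DV(x)}_2 + (m^{1/2} + Cm^{1/2}/c)\tau^{1/2}\bigr]$, which dominates the paper's bound. One point you gloss over: the statement asserts $T_t$ extends to a \emph{semigroup} for real $t$, and the identity $T_s T_t = T_{s+t}$ for real, non-dyadic $s,t$ does not follow formally from density; the paper devotes the final third of its proof to propagating the error $T_{t_n}u \to T_t u$ through $T_{s_n}$, which requires using the monotonicity of $T_s$ and the fact that the pointwise error involves the non-constant factor $\norm{DV(x)}_2$. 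Given your continuity estimate, that step is not difficult, but it should be spelled out rather than folded into ``passing to the limit.''
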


\begin{proof}
We first prove the estimate on $|T_t u - T_s u|$ for dyadic values of $s$ and $t$.  First, consider the case where $t = 2^{-\ell}$ and $s = 0$.  Note that
\[
(T_t - 1)u = (T_t - P_t S_t) u + (P_t - 1) S_t u + (S_t - 1)u.
\]
The first term can be estimated by Lemma \ref{lem:convergence0} with $\ell = 1$, the second term can be estimated by Lemma \ref{lem:heatsemigroup} (3) and Lemma \ref{lem:deterministicsemigroup} (5) as
\[
\norm{(P_t - 1) S_t u}_{L^\infty} \leq m^{1/2} t^{1/2} \norm{S_t u}_{\Lip} \leq m^{1/2} t^{1/2} \norm{u}_{\Lip},
\]
and the third term can be estimated by Lemma \ref{lem:deterministicsemigroup} (3).  Altogether, we obtain
\[
|T_tu(x) - u(x)| \leq \left( \frac{Cm^{1/2}}{c(2 - 2^{1/2})} t^{1/2} + m^{1/2} t^{1/2} + \frac{t}{2} \norm{DV(x)}_2 \right) \norm{u}_{\Lip}.
\]
In the case of general dyadic $s$ and $t$, suppose $t > s$ and write $t - s$ in a binary expansion to obtain
\[
t = s + \sum_{j=n+1}^\infty a_j 2^{-j},
\]
where $a_j \in \{0,1\}$ and $a_{n+1} = 1$.  Note that $2^{-n-1} \leq |s - t| \leq 2^{-n}$.  Let $t_k = s + \sum_{j=n+1}^k a_j 2^{-j}$.  Then
\begin{align*}
|T_t u(x) - T_s u(x)| &\leq \sum_{j=n+1}^\infty |T_{t_j}u(x) - T_{t_{j-1}} u(x)| \\
&\leq \sum_{j=n+1}^\infty \left( \frac{Cm^{1/2}}{c(2 - 2^{1/2})} 2^{-j/2} + m^{1/2} 2^{-j/2} + \frac{2^{-j}}{2} \norm{DV(x)}_2 \right) \norm{T_{t_{j-1}} u}_{\Lip} \\
&\leq \left( \left( \frac{Cm^{1/2}}{c(2 - 2^{1/2})} + 1 \right) \frac{1}{1 - 2^{-1/2}} \cdot 2^{-n/2} + \norm{DV(x)}_2 \cdot 2^{-n-1} \right) \norm{T_s u}_{\Lip} \\
&\leq \left( \left( \frac{Cm^{1/2}}{c(2 - 2^{1/2})} + 1 \right) \frac{2^{1/2}}{1 - 2^{-1/2}} (t - s)^{1/2} + \norm{DV(x)}_2 \cdot 2^{-n-1} \right) e^{-cs/2} \norm{u}_{\Lip} \\
&\leq e^{-cs/2} \left( \frac{Cm^{1/2}}{c} (6 + 5 \sqrt{2}) (t - s)^{1/2} + \norm{DV(x)}_2 (t - s) \right) \norm{u}_{\Lip},
\end{align*}
where we used the crude estimate that $1 \leq Cm^{1/2} / c$ to combine the first two terms.  Because this continuity estimate holds for dyadic values of $s$ and $t$, we can extend the definition of $T_t u$ to all positive $t$.  Furthermore, because $\norm{T_tu}_{\Lip} \leq e^{-ct/2} \norm{u}_{\Lip}$ for dyadic $t$, the same must hold for real values of $t$.

Now let us verify that $T_s T_t = T_{s+t}$ for all real $t$.  Choose dyadic $s_n \searrow s$ and $t_n \searrow t$ and let $u$ be a Lipschitz function.  We know that  $T_{s_n} T_{t_n} u = T_{s_n+t_n} u$ and that $T_{s_n+t_n} u \to T_{s+t}u$ locally uniformly, so it suffices to show that $T_{s_n} T_{t_n} u \to T_s T_t u$.  Observe that
\[
|T_{s_n} T_{t_n} u - T_s T_t u| \leq |(T_{s_n} - T_s)T_{t_n}u| + |T_s(T_{t_n} - T_t)u|.
\]
The first term can be estimated by
\[
|(T_{s_n} - T_s)T_{t_n}u(x)| \leq e^{-s/2} \left( \frac{C}{c} (6 + 5 \sqrt{2}) (s_n - s)^{1/2} + \norm{DV(x)}_2 (s_n - s) \right) \norm{T_{t_n}u}_{\Lip},
\]
which goes to zero as $n \to \infty$.  For the second term, we first note that
\[
|(T_{t_n} - T_t)u(x)| \leq e^{-t/2} \left( \frac{C}{c} (6 + 5 \sqrt{2}) (t_n - t)^{1/2} + \norm{DV(x)}_2 (t_n - t) \right) \norm{u}_{\Lip}
\]
Let $h_n(x)$ be the right hand side.  Note that $u \leq v$ implies that  $T_s u \leq T_s v$ because this holds for the operators $P_s$ and $S_s$ (since $P_s$ is given by convolution and $S_s$ is given by composition).  Therefore,
\[
|T_s(T_{t_n} - T_t)u(x)| \leq T_s|(T_{t_n} - T_t)u|(x) \leq T_s h_n(x).
\]
Because $DV$ is $C$-Lipschitz, we know that $h_n$ is a $e^{-t/2}(t_n - t) C \norm{u}_{\Lip}$-Lipschitz function and hence
\begin{align*}
|T_s h_n(x)| &\leq h_n(x) + |(T_s - 1)h_n(x)| \\
&\leq h_n(x) + e^{-t/2}(t_n - t) C \norm{u}_{\Lip} \left( \frac{C}{c} (6 + 5 \sqrt{2}) s^{1/2} + \norm{DV(x)}_2 s \right),
\end{align*}
which goes to zero as $n \to \infty$.
\end{proof}

\begin{lemma} \label{lem:weaksolution}
Let $u(x)$ be Lipschitz.  Then $T_t u$ is a weak solution of the equation
\[
\partial_t T_t u = \frac{1}{2N} \Delta(T_tu) - \frac{N}{2} \nabla V \cdot \nabla (T_tu)
\]
in the sense that for $\phi \in C_c^\infty(M_N(\C)_{sa}^m)$, we have
\[
\int_{M_N(\C)_{sa}^m} [T_{t_1}u\, \phi - T_{t_0} u\,\phi] =  \int_{t_0}^{t_1} \int_{M_N(\C)_{sa}^m} \left[ -\frac{1}{2N} \nabla (T_s u) \cdot \nabla \phi - \frac{N}{2} (\nabla V \cdot \nabla (T_su)) \phi \right]\,ds.
\]
\end{lemma}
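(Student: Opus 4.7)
The plan is to pass to the limit in a weak PDE satisfied by the Trotter approximations from Lemma~\ref{lem:convergence0}. Setting $\delta = 2^{-\ell}$, $v_k = T_{k\delta,\ell} u$, and interpolating between dyadic times by
\[
\tilde T_{s,\ell} u := P_{s - k\delta} S_{s - k\delta} v_k \qquad \text{for } s \in [k\delta, (k+1)\delta],
\]
gives a continuous function of $s$ agreeing with $T_{k\delta,\ell}u$ at dyadic times, and smooth in $x$ for $s > 0$ thanks to the Gaussian smoothing in $P_{s-k\delta}$.

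On each open interval $(k\delta, (k+1)\delta)$, the product rule in $s$ combined with the heat equation for $P_\alpha$ and the characteristic equation $\partial_\alpha S_\alpha v = -(N/2)\nabla V \cdot \nabla S_\alpha v$ gives
\[
\partial_s \tilde T_{s,\ell} u = \frac{1}{2N} \Delta \tilde T_{s,\ell} u - \frac{N}{2} P_{s-k\delta}\bigl(\nabla V \cdot \nabla S_{s-k\delta} v_k\bigr).
\]
Testing against $\phi \in C_c^\infty$, integrating by parts on the Laplacian term, and using self-adjointness of $P_{s-k\delta}$ (convolution with an even Gaussian) to shift it onto $\phi$ gives, with $k(s) := \lfloor s/\delta \rfloor$,
\[
\int (\tilde T_{t_1,\ell} u - \tilde T_{t_0,\ell} u)\phi \, dx = \int_{t_0}^{t_1} \int \Bigl[ -\tfrac{1}{2N}\nabla \tilde T_{s,\ell}u \cdot \nabla \phi - \tfrac{N}{2} \bigl(\nabla V \cdot \nabla S_{s-k(s)\delta} v_{k(s)}\bigr) P_{s-k(s)\delta}\phi \Bigr] dx\, ds.
\]
The only deviation from the target identity is the commutator of $P_{s-k(s)\delta}$ with multiplication by $\nabla V$, which vanishes as $\delta \to 0$.

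To pass $\ell \to \infty$, the left-hand side converges by Lemma~\ref{lem:convergence0} and the compact support of $\phi$. Lemmas~\ref{lem:heatsemigroup}(2) and~\ref{lem:deterministicsemigroup}(5) give the uniform bound $\norm{\nabla \tilde T_{s,\ell}u}_{L^\infty} \leq \norm{u}_{\Lip}$, while Lemma~\ref{lem:continuityofsemigroup0} combined with Lemma~\ref{lem:convergence0} yields $\tilde T_{s,\ell}u \to T_s u$ locally uniformly (since $T_{k(s)\delta,\ell}u \to T_s u$ and the residual $P_{s-k(s)\delta}S_{s-k(s)\delta}$ approaches the identity on Lipschitz functions as $\delta \to 0$). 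These together give weak-$*$ $L^\infty$ convergence of $\nabla \tilde T_{s,\ell}u$ to $\nabla T_s u$ for each fixed $s$; combined with $P_{s-k(s)\delta}\phi \to \phi$ uniformly (as $\phi \in C_c^\infty$), boundedness of $\nabla V$ on the support of $\phi$, and dominated convergence in $s$, the right-hand side converges to the claimed expression.

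The chief technical obstacle is the weak-$*$ convergence of spatial gradients, made delicate by the piecewise-in-$s$ structure of $\tilde T_{s,\ell}$. This is handled by a weak compactness argument: the Lipschitz bound makes the family $\{\nabla \tilde T_{s,\ell}u\}_\ell$ weak-$*$ precompact in $L^\infty$, and any subsequential limit must agree with $\nabla T_s u$ because the functions themselves converge uniformly. A minor point arises at the first step $k=0$, where $v_0 = u$ is only Lipschitz: the derivative $\partial_\alpha S_\alpha u = -(N/2)\nabla u(W(\cdot,\alpha)) \cdot \nabla V(W(\cdot,\alpha))$ is only defined a.e.; by Rademacher's theorem and Fubini, this is harmless inside the integrated identity after testing against $\phi$.
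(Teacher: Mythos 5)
Your argument is correct, and it takes a genuinely different route from the paper. The paper works entirely at the discrete level: it expands $\int(S_\delta P_\delta - 1)u\,\phi$ to order $\delta^{3/2}$ by a Taylor expansion of $W(x,\delta)$ and the Gaussian smoothing estimate, swaps in $P_\delta S_\delta$ using the $O(\delta^{3/2})$ bound on $S_\delta P_\delta - P_\delta S_\delta$ already established in the proof of Lemma \ref{lem:convergence0}, telescopes, and then recognizes the resulting sum as a Riemann sum for the target integral, using H\"older continuity in $t$ (Lemma \ref{lem:continuityofsemigroup0}) and Lebesgue differentiation of $P_\delta[(\Delta V)\phi + \nabla V\cdot\nabla\phi]$ to pass to the limit. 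You instead introduce the interpolation $\tilde T_{s,\ell}u = P_{s-k\delta}S_{s-k\delta}v_k$, which solves an exact PDE on each subinterval with a commutator defect $\tfrac{N}{2}\bigl[P_\alpha(\nabla V\cdot\nabla S_\alpha v_k) - \nabla V\cdot\nabla(P_\alpha S_\alpha v_k)\bigr]$, and eliminate this defect in the limit by weak-$*$ compactness of the spatial gradients. What the paper's approach buys is that all the errors are made explicit (the $O(\delta^{3/2})$ bookkeeping), so no compactness machinery is needed; what your approach buys is that the evolution equation is satisfied exactly at every positive $\delta$, so no quantitative Taylor remainder estimates are needed — the weak-$*$ argument absorbs them.

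Two points worth tightening if you were to write this out in full. First, when you shift $P_\alpha$ onto $\phi$, the resulting test function $P_\alpha\phi$ is no longer compactly supported; since $\nabla V$ has linear growth (because $DV$ is globally Lipschitz), you should note that $P_\alpha\phi$ has uniform Gaussian decay for $\alpha$ bounded, so the integrand $|\nabla V||\nabla S_\alpha v_k||P_\alpha\phi|$ remains dominated by an integrable majorant independent of $\ell$. Second, your weak-$*$ argument implicitly uses two sequences of gradients, $\nabla\tilde T_{s,\ell}u$ for the diffusion term and $\nabla S_{s-k(s)\delta}v_{k(s)}$ for the drift term; both have the same $L^\infty$ bound $\norm{u}_{\Lip}$ and both underlying functions converge locally uniformly to $T_su$, so both gradient sequences converge weak-$*$ to $\nabla T_su$, but this should be stated for both explicitly. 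Neither point is a real gap — just places where the proposal is terser than a complete proof should be.
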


\begin{proof}
Recall that by Rademacher's theorem if $u$ is Lipschitz, then $\nabla u$ exists almost everywhere and it is in $L^\infty$.  Moreover, because $\nabla V$ is Lipschitz, we also know that the second derivatives of $V$ exist almost everywhere and are in $L^\infty$.

We begin by considering $\int (S_\delta P_\delta - 1) u \cdot \phi$ for a Lipschitz $u: M_N(\C)_{sa}^m \to \R$ and a $\phi \in C_c^\infty(M_N(\C)_{sa}^m)$ and $\delta > 0$.  Note that
\[
(S_\delta P_\delta - 1) u = (S_\delta - 1) P_\delta u + (P_\delta - 1) u. 
\]
Now $P_\delta u$ is the convolution of $u$ with the Gaussian and so $\nabla (P_\delta u) = P_\delta (\nabla u)$.  Because the gradient of the Gaussian is $O(\delta^{-1/2})$, we see that the first derivatives of $P_\delta(\nabla u)$ are $O(\delta^{-1/2})$ in $L^\infty$.  (Here our estimates may depend on $N$.)
\[
P_\delta u(y) - P_\delta u(x) = \nabla P_\delta u(x) \cdot (x - y) + O(\delta^{-1/2} \norm{x - y}_2^2).
\]
Now using equation \eqref{eq:ODE} and Lemma \ref{lem:deterministicsemigroup} (3), we have $W(x,\delta) - x = \frac{N \delta}{2} \nabla V(x) + O(\delta^2)$ uniformly on any compact set $K$.  Therefore,
\[
(S_\delta - 1)P_\delta u(x) = P_\delta u(W(x,\delta)) - P_\delta u(x) = -\frac{N\delta}{2} \nabla (P_\delta u)(x) \cdot \nabla V(x) + O(\delta^{3/2}).
\]
Now we have
\begin{align*}
\int (S_\delta P_\delta - 1)u \cdot \phi &= \int (S_\delta - 1) P_\delta u \, \phi + \int (P_\delta - 1) u \, \phi \\
&= -\frac{N \delta}{2} \int [\nabla(P_\delta u) \cdot \nabla V] \phi + \int u\, (P_\delta - 1)\phi + O(\delta^{3/2}) \\
&= -\frac{N \delta}{2} \int P_\delta u [(\Delta V) \phi + \nabla V \cdot \nabla \phi] + \int u \frac{\delta}{2N} \Delta \phi + O(\delta^{3/2}) \\
&= - \frac{N\delta}{2} \int u \, P_\delta[(\Delta V) \phi + \nabla V \cdot \nabla \phi]  + \frac{\delta}{2N} \int u \, \Delta \phi + O(\delta^{3/2}),
\end{align*}
where the error estimates depend only on $C$, $N$, $\norm{u}_{\Lip}$, the support of $\phi$, and the $L^\infty$ norms of its derivatives.  We also know from the proof of Lemma \ref{lem:convergence0} that $(S_\delta P_\delta - P_\delta S_\delta) u$ is bounded by $\norm{u}_{\Lip} (Cm^{1/2}/c) (1 - e^{-c\delta}) \delta^{1/2}$ which is $O(\delta^{3/2})$.  Therefore,
\[
\int (P_\delta S_\delta - 1)u \cdot \phi = -\frac{N\delta}{2} \int u \, P_\delta[\Delta V \phi + \nabla V \cdot \nabla \phi]  + \frac{\delta}{2N} \int u \, \Delta \phi + O(\delta^{3/2}).
\]

Now suppose that $t$ is a dyadic rational and write $t = n \delta$ where $\delta = 2^{-\ell}$ for some integer $\ell$.  Recall that $T_{t,\ell} = (P_\delta S_\delta)^n$.  Then by a telescoping series argument
\[
\int (T_{t,\ell} - 1)u \cdot \phi = \sum_{j=0}^{n-1} \left( -\frac{N\delta}{2} \int T_{j\delta,\ell} u \, P_\delta[(\Delta V)\phi + \nabla V \cdot \nabla \phi]  + \frac{\delta}{2N} \int T_{j\delta,\ell} u \, \Delta \phi \right) + O(\delta^{1/2}).
\]
We fix a dyadic $t$ and take $\ell \to \infty$ (and hence $\delta \to 0$).  The above sum over $j$ may be viewed as a Riemann sum for an integral from $0$ to $t$ where $\delta$ is the mesh size.  Using Lemma \ref{lem:continuityofsemigroup0}, we know that $T_t u$ is H\"older continuous in $t$.  Also, by Lebesgue differentiation theory, $P_\delta[(\Delta V) \phi + \nabla V \cdot \nabla \phi] \to (\Delta V) \phi + \nabla V \cdot \nabla \phi$ in $L_{\text{loc}}^1$.  Thus, in the limit, we obtain
\[
\int (T_t - 1)u \cdot \phi\,dx = \int_0^t \int \left( -\frac{N}{2} T_s u[ (\Delta V) \phi + \nabla V \cdot \nabla \phi] + \frac{1}{2N} T_s u\, \Delta \phi \right)\,dx\,ds.
\]
We pass from dyadic $t$ to all positive $t$ using Lemma \ref{lem:continuityofsemigroup0}.  Finally, after another integration by parts (which is justified by approximation by smooth functions in the appropriate Sobolev spaces), we have
\[
\int (T_t - 1)u \cdot \phi\,dx = \int_0^t \int \left( -\frac{N}{2} (\nabla T_s u \cdot \nabla V)\phi - \frac{1}{2N} \nabla T_s u \cdot \nabla \phi \right)\,dx\,ds.
\]
The asserted formula then follows by replacing $u$ with $T_{t_0}u$ and $t$ with $t_1 - t_0$.
\end{proof}

\begin{lemma}
If $\mu$ is the measure given by the potential $V$ and if $u$ is Lipschitz, then we have $\int T_t u\,d\mu = \int u\,d\mu$.
\end{lemma}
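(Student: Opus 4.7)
The plan is to test the weak PDE from Lemma \ref{lem:weaksolution} against a compactly supported smooth approximation of the density of $\mu$, exploit an algebraic cancellation, and then remove the cutoff. This is the standard ``integration by parts against the invariant density'' argument for Fokker--Planck/Langevin semigroups, adapted to our unbounded setting.

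Let $\rho(x) = e^{-N^2 V(x)}/Z$ be the density of $\mu$ with respect to Lebesgue measure in the coordinates fixed in \S\ref{subsec:matrixnotation}, so that $\nabla \rho = -N^2 (\nabla V) \rho$. Fix a smooth radial cutoff $\eta_R \in C_c^\infty(M_N(\C)_{sa}^m)$ with $\eta_R = 1$ on $\{|x| \leq R\}$, $\eta_R = 0$ on $\{|x| \geq 2R\}$, and $|\nabla \eta_R| \leq C/R$, and set $\phi_R = \eta_R \rho \in C_c^\infty$. Then
\[
\nabla \phi_R = (\nabla \eta_R)\rho - N^2 \eta_R (\nabla V) \rho.
\]
Substituting $\phi_R$ into Lemma \ref{lem:weaksolution}, the $N^2 \eta_R (\nabla V)\rho$ piece of $\nabla \phi_R$ multiplied by $-\tfrac{1}{2N}\nabla(T_s u)$ exactly cancels the term $-\tfrac{N}{2}(\nabla V \cdot \nabla(T_s u))\phi_R$. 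What remains is
\[
\int T_{t_1} u \cdot \phi_R\,dx - \int T_{t_0} u \cdot \phi_R\,dx = -\frac{1}{2N} \int_{t_0}^{t_1} \int \nabla(T_s u) \cdot \nabla \eta_R \, d\mu\, ds.
\]

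Next I would let $R \to \infty$. The Lipschitz bound from Lemma \ref{lem:convergence0}, $\norm{T_s u}_{\Lip} \leq e^{-cs/2}\norm{u}_{\Lip}$, with respect to $\norm{\cdot}_2$ translates to the uniform pointwise bound $|\nabla(T_s u)| \leq N^{-1/2} e^{-cs/2}\norm{u}_{\Lip}$ in the non-normalized coordinates. Combined with $|\nabla \eta_R| \leq C/R$ supported on $\{R \leq |x| \leq 2R\}$, the right-hand side is bounded in absolute value by
\[
\frac{C\,(t_1-t_0)\,\norm{u}_{\Lip}}{2 N^{3/2} R}\,\mu(|x| \geq R),
\]
which tends to $0$ as $R \to \infty$ since $\mu$ has finite moments of all orders by Theorem \ref{thm:subgaussian} (equivalently by Corollary \ref{cor:operatornormtail}). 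On the left-hand side, $|T_{t_i} u(x)| \leq |T_{t_i}u(0)| + e^{-ct_i/2}\norm{u}_{\Lip}\norm{x}_2$ is $\mu$-integrable by the same tail bounds, so dominated convergence yields $\int T_{t_i} u \cdot \phi_R\,dx \to \int T_{t_i} u\,d\mu$. Hence $\int T_{t_1} u\,d\mu = \int T_{t_0} u\,d\mu$ for all $0 \leq t_0 \leq t_1$, and setting $t_0 = 0$ gives the claim.

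The main subtlety lies in the cutoff step: $\mu$ is not compactly supported and $T_s u$ grows linearly, so one needs the tail estimate from \S\ref{subsec:concentration} (which uses the uniform convexity of $V$) to guarantee both that the ``boundary flux'' term $\int \nabla(T_s u)\cdot \nabla \eta_R \, d\mu$ vanishes and that dominated convergence can be applied on the left. Everything else is the algebraic identity $\nabla \rho = -N^2 (\nabla V)\rho$, which engineers the cancellation and encodes the fact that $\tfrac{1}{2N}\Delta - \tfrac{N}{2}\nabla V \cdot \nabla$ is the generator of a diffusion reversible with respect to $\mu$.
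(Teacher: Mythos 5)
Your proof is correct and follows essentially the same route as the paper's very terse argument: the paper just says ``applying Lemma \ref{lem:weaksolution} and approximating $(1/Z)\exp(-N^2 V)$ by compactly supported smooth functions,'' which is exactly your cutoff $\phi_R = \eta_R\rho$ followed by the algebraic cancellation coming from $\nabla\rho = -N^2(\nabla V)\rho$; you simply spell out the estimates. One point worth making explicit is that $\rho$ is only $C^{1,1}_{\mathrm{loc}}$ (since $V\in\mathcal{E}(c,C)$ gives $DV$ Lipschitz but not $C^\infty$), so $\phi_R = \eta_R\rho$ is not literally in $C_c^\infty$ as Lemma \ref{lem:weaksolution} requires; but since the weak formulation involves only $\phi$ and $\nabla\phi$ over the compact support of $\phi_R$, one can mollify $\phi_R$ in $W^{1,\infty}$ and pass to the limit, which is what the paper's phrase ``compactly supported smooth functions'' alludes to. Also, for the $R\to\infty$ step your bound $O(1/R)$ already suffices since $\mu$ is a probability measure; the subgaussian tail bound is only needed for the left-hand side (integrability of $T_{t_i}u$ against $\mu$), which you invoke correctly.
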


\begin{proof}
By applying Lemma \ref{lem:weaksolution} and approximating $(1/Z) \exp(-N^2 V(x))$ by compactly supported smooth functions, we see that
\begin{multline*}
\int T_t u\,d\mu - \int u \,d\mu \\
= \frac{1}{Z} \int \int_0^t \left[ -\frac{1}{2N} \nabla (T_s u) \cdot \nabla[e^{-N^2 V}] - \frac{N}{2} (\nabla V \cdot \nabla (T_su)) e^{-N^2 V}\right]\,ds\,dx
= 0.  \qedhere
\end{multline*}
\end{proof}

\begin{lemma} \label{lem:limitastgoestoinfinity0}
We have $T_t u(x) \to \int u\,d\mu$ as $t \to \infty$ and more precisely
\[
\left|T_t u(x) - \int u\,d\mu \right| \leq e^{-ct/4} \left( \frac{4Cm^{1/2}}{c^2} (6 + 5 \sqrt{2}) t^{-1/2} + \frac{2}{c} \norm{V(x)}_2 \right) \norm{u}_{\Lip}.
\]
\end{lemma}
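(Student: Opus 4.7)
The plan is to bound $T_t u(x) - \int u\,d\mu$ via a telescoping sum in the time variable, estimate each term with the continuity inequality of Lemma \ref{lem:continuityofsemigroup0}, and optimize the step size. First, a soft argument supplies the limit: the contraction $\norm{T_s u}_{\Lip} \leq e^{-cs/2}\norm{u}_{\Lip}$ forces any pointwise limit of $T_s u$ as $s \to \infty$ to be a constant function, while the invariance $\int T_s u\,d\mu = \int u\,d\mu$ from the preceding lemma (plus dominated convergence) identifies that constant as $\int u\,d\mu$. The Cauchy property that guarantees existence of the pointwise limit will fall out of the quantitative estimate below.

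For the rate, fix a step size $h > 0$ and an integer $n \geq 1$ and write the telescoping identity
\[
T_{nh}u(x) - \int u\,d\mu \;=\; -\sum_{k=n}^{\infty}\bigl[T_{(k+1)h}u(x) - T_{kh}u(x)\bigr].
\]
Applying Lemma \ref{lem:continuityofsemigroup0} with $s = kh$ and $t = (k+1)h$ (and keeping the $m^{1/2}$ factor that appears in its proof),
\[
\bigl|T_{(k+1)h}u(x) - T_{kh}u(x)\bigr| \leq e^{-ckh/2}\Bigl( \tfrac{Cm^{1/2}}{c}(6+5\sqrt{2})\,h^{1/2} + \norm{DV(x)}_2\,h\Bigr)\norm{u}_{\Lip},
\]
and summing the geometric series in $k$ yields
\[
\bigl|T_{nh}u(x) - \smallint u\,d\mu\bigr| \leq \frac{e^{-cnh/2}}{1 - e^{-ch/2}}\Bigl( \tfrac{Cm^{1/2}}{c}(6+5\sqrt{2})\,h^{1/2} + \norm{DV(x)}_2\,h\Bigr)\norm{u}_{\Lip}.
\]

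To turn the small-time factor $h^{1/2}$ into the large-time $t^{-1/2}$ demanded by the claim, I would split $T_t = T_{t/2} \circ T_{t/2}$ and apply the telescoping estimate above to $v := T_{t/2}u$ with $n = 1$ and $h = t/2$; since $\norm{v}_{\Lip} \leq e^{-ct/4}\norm{u}_{\Lip}$ by Lemma \ref{lem:continuityofsemigroup0}, this supplies the outer prefactor $e^{-ct/4}$. The elementary inequality $e^x - 1 \geq x$ gives $e^{-ct/4}/(1 - e^{-ct/4}) = 1/(e^{ct/4} - 1) \leq 4/(ct)$; multiplying through with $h = t/2$, the linear-in-$h$ term becomes $(2/c)\norm{DV(x)}_2$ and the $h^{1/2}$ term becomes $(2\sqrt{2}\,Cm^{1/2}/c^2)(6+5\sqrt{2})\,t^{-1/2}$. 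Bounding $2\sqrt{2} \leq 4$ delivers the stated inequality.

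The main obstacle is conceptual rather than computational: one has to notice that although each step of length $h$ contributes only an $O(h^{1/2})$ error via the continuity estimate, summing infinitely many such steps against the geometric weight $e^{-ckh/2}$ inverts the power to $O(h^{-1/2})$ on the tail $[nh,\infty)$, and choosing $h$ of order $t$ together with the semigroup factorization $T_t = T_{t/2}\circ T_{t/2}$ (which extracts an extra $e^{-ct/4}$ from the Lipschitz contraction) is exactly what produces the polynomial-times-exponential shape of the bound. Once this structural observation is in hand, what remains is routine algebra with explicit constants.
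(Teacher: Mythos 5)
Your proof is correct and follows essentially the same route as the paper's: telescope via the continuity estimate of Lemma \ref{lem:continuityofsemigroup0}, sum the geometric series, and choose the step size to be of order $t/2$ so that the prefactor $\frac{1}{1-e^{-ch/2}}$ turns $h^{1/2}$ into $t^{-1/2}$. The one genuine difference in bookkeeping is that you extract the $e^{-ct/4}$ prefactor by pre-composing with $T_{t/2}$ (so it comes from $\norm{T_{t/2}u}_{\Lip}\leq e^{-ct/4}\norm{u}_{\Lip}$) and then summing the tail $\sum_{k\geq 1}$ at step $h=t/2$, whereas the paper estimates $|T_{t+r}u - T_t u|$ on $[t,t+r]$ with step $r/n\in[t/4,t/2]$ and gets $e^{-ct/4}$ as the product $e^{-ct/2}\cdot e^{cr/2n}$; the two are algebraically equivalent (apply your telescoping estimate directly to $u$ with $n=2$, $h=t/2$ and you recover the same bound without the splitting). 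You also correctly restored the $m^{1/2}$ factor that is dropped in the statement of Lemma \ref{lem:continuityofsemigroup0} (though present in its proof) and read $\norm{V(x)}_2$ as the intended $\norm{DV(x)}_2$. One small presentational point: the telescoping identity $T_{nh}u(x)-\int u\,d\mu = -\sum_{k\geq n}[T_{(k+1)h}u - T_{kh}u]$ presupposes the limit is $\int u\,d\mu$, so strictly one should first observe that the geometric-series bound gives absolute convergence, hence existence of a pointwise limit, then invoke the contraction-plus-invariance argument to identify it, and only then write the identity; you flag this ordering issue but the write-up reads slightly circular as stated.
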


\begin{proof}
Fix $t$ and fix $r \geq t$.  Let $n$ be an integer.  Then
\begin{align*}
|T_{t+r} u(x) - T_t u(x)| &\leq \sum_{j=0}^{n-1} |T_{t+r(j+1)/n}u(x) - T_{t+rj/n} u(x)| \\
&\leq \sum_{j=0}^{n-1} e^{-ct/2} e^{-crj/2n} \left( \frac{Cm^{1/2}}{c} (6 + 5 \sqrt{2}) (r/n)^{1/2} + \norm{V(x)}_2 (r/n) \right) \norm{u}_{\Lip} \\
&\leq e^{-ct/2} e^{cr/2n} \frac{2n}{cr} \left( \frac{Cm^{1/2}}{c} (6 + 5 \sqrt{2}) (r/n)^{1/2} + \norm{V(x)}_2 (r/n) \right) \norm{u}_{\Lip}
\end{align*}
Since $r \geq t$, we can choose $n$ such that $t / 4 \leq r/n \leq t/2$.  Then we have
\[
|T_{t+r} u(x) - T_t u(x)| \leq e^{-ct/4} \left( \frac{4Cm^{1/2}}{c^2} (6 + 5 \sqrt{2}) t^{-1/2} + \frac{2}{c} \norm{V(x)}_2 \right) \norm{u}_{\Lip}.
\]
Because this holds for all sufficiently large $r$, this shows that $\lim_{t \to \infty} T_t u(x)$ exists.  Because $\norm{T_tu}_{\Lip} \leq e^{-ct/2} \norm{u}_{\Lip}$, the limit must be constant and therefore equals $\int u\,d\mu$.  Moreover, we have the asserted rate of convergence by taking $r \to \infty$ in the above estimate.
\end{proof}

\subsection{Approximability and Convergence of Moments}

Now we are ready to show that the semigroup $T_t^{V_N}$ associated to a sequence of potentials $V_N$ will preserve asymptotic approximability by trace polynomials and as a consequence we will show that the moments of the associated measures $\mu_N$ converge.

\begin{lemma} \label{lem:asymptoticapproximation0}
Let $V_N: M_N(\C)_{sa}^m \to \R$ be a sequence of potentials such that $V_N(x) - (c/2) \norm{x}_2^2$ is convex and $V_N(x) - (C/2) \norm{x}_2^2$ is concave.  For each $N$, let $\mu_N$ be the associated measure.  Let $S_t^{V_N}$ and $T_t^{V_N}$ denote the semigroups defined in the previous section.  Suppose that the sequence $\{DV_N\}$ is asymptotically approximable by trace polynomials.  Suppose that $\{u_N\}$ is a sequence of scalar-valued $K$-Lipschitz functions which is asymptotically approximable by (scalar-valued) trace polynomials.  Then
\begin{enumerate}
	\item $\{S_t^{V_N} u_N\}$ is asymptotically approximable by trace polynomials for each $t \geq 0$.
	\item $\{T_t^{V_N} u_N\}$ is asymptotically approximable by trace polynomials for each $t \geq 0$.
	\item $\lim_{N \to \infty} \int u_N\,d\mu_N$ exists.
\end{enumerate}
\end{lemma}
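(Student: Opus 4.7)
For part (1), I would write $S_t^{V_N} u_N = u_N \circ W_N(\cdot, t)$, where $W_N(\cdot, t)$ is the flow from \eqref{eq:ODE}, and approximate the flow by $n$-fold Euler iterates $F_{N,t/n}^{\circ n}$, where $F_{N,s}(x) = x - (s/2) DV_N(x)$. Each $F_{N,s}$ is a sum of the identity and a scalar multiple of $DV_N$, hence asymptotically approximable by trace polynomials, and is uniformly Lipschitz (with constant $1 + Cs/2$) because $DV_N$ is $C$-Lipschitz with constant independent of $N$. Applying Lemma \ref{lem:compositionapproximation} inductively over the iteration shows that $F_{N,t/n}^{\circ n}$ is approximable for each fixed $n$. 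A standard Euler error estimate, with constants depending only on $C$, $\norm{DV_N(0)}_2$ (which is bounded uniformly in $N$ since $\{DV_N\}$ is approximable), $t$, and $R$, yields $F_{N,t/n}^{\circ n}(x) \to W_N(x,t)$ in $\norm{\cdot}_2$ uniformly on $\{\norm{x}_\infty \leq R\}$ as $n \to \infty$, uniformly in $N$. Observation \ref{lem:convergenceapproximation} then shows that $W_N(\cdot, t)$ is approximable, and one final application of Lemma \ref{lem:compositionapproximation} with the $K$-Lipschitz outer function $u_N$ gives (1).

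For (2), I would use the construction $T_t^{V_N} u_N = \lim_{\ell \to \infty} T_{t,\ell} u_N$ from Lemma \ref{lem:convergence0}, where $T_{t,\ell} u_N = (P_{2^{-\ell}} S_{2^{-\ell}})^{2^\ell t} u_N$. Part (1) handles the $S_{2^{-\ell}}$ factors, and the scalar-valued analogue of Lemma \ref{lem:convolutionapproximation} (whose proof is identical to the vector-valued case) handles the $P_{2^{-\ell}}$ factors, provided we maintain a uniform polynomial growth bound on the intermediate iterates. Such a bound is obtained inductively: Lemmas \ref{lem:heatsemigroup} (2) and \ref{lem:deterministicsemigroup} (5) keep the Lipschitz norm at most $K$, and the value at zero is controlled using the uniform bounds on $\norm{DV_N(0)}_2$ and $|u_N(0)|$ (both finite consequences of approximability), yielding $|T_{t,\ell} u_N(x)| \leq A(1 + \norm{x}_2^2)$ for some $A$ uniform in $\ell$ and $N$. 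Combined with the $O(2^{-\ell/2})$ rate of convergence in Lemma \ref{lem:convergence0} (with constants depending only on $c$, $C$, $m$, $K$, $t$), Observation \ref{lem:convergenceapproximation} upgrades the approximable iterates to $T_t^{V_N} u_N$ itself.

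Part (3) follows by a Moore--Osgood interchange of limits. Lemma \ref{lem:limitastgoestoinfinity0} at $x = 0$ gives $T_t^{V_N} u_N(0) \to \int u_N\,d\mu_N$ at a rate in $t$ depending only on $c$, $C$, $m$, $K$, $\norm{DV_N(0)}_2$, hence uniform in $N$, while part (2) gives $\lim_{N \to \infty} T_t^{V_N} u_N(0)$ exists for every fixed $t$; these two uniform convergences combine to give existence of $\lim_{N \to \infty} \int u_N \, d\mu_N$. I expect the main technical obstacle to be the induction in part (2): the convolution step of the iteration requires a polynomial-growth bound whose constant does not degrade in $\ell$ or $N$. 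Controlling the Lipschitz norm via the contraction estimates is immediate, but the value at zero requires tracking $\norm{DV_N(0)}_2$ through each $S$-step and averaging it against the GUE through each $P$-step; once this bookkeeping is in hand, parts (1)--(3) chain together in a routine fashion.
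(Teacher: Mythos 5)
Your proof is correct. The genuinely different ingredient is in part (1): you discretize the flow $W_N(\cdot,t)$ by forward Euler iterates $F_{N,t/n}^{\circ n}$ with $F_{N,s}(x) = x - (s/2)\,DV_N(x)$, whereas the paper uses Picard iterates $W_{N,\ell+1}(x,t) = x - \tfrac{1}{2}\int_0^t DV_N(W_{N,\ell}(x,s))\,ds$. Both schemes converge at a dimension-independent rate because the relevant error constant depends only on the Lipschitz constant $C$ of $DV_N$ and on $\sup_{\norm{x}_\infty \leq R}\norm{DV_N(x)}_2$, which is bounded uniformly in $N$ by approximability. Your Euler route is somewhat more direct for pushing approximability through: each iterate is a finite composition of maps that are literally the identity plus a scalar multiple of $DV_N$, so Lemma~\ref{lem:compositionapproximation} applies step by step with no further induction. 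The Picard route has the compensating feature that the approximating trace polynomials can be chosen with coefficients polynomial in $t$, giving approximation uniform in $t\in[0,T]$; this is not required for the statement but is a cleaner reusable intermediate fact.

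Parts (2) and (3) are essentially the paper's argument. One simplification worth noting in part (2): the growth bound you worry about does not degrade through the iteration. Once an iterate is known to be approximable by trace polynomials, its value at $x=0$ is automatically bounded uniformly in $N$ (a scalar trace polynomial takes a constant value at the origin), and the Lipschitz constant does not increase by Lemmas~\ref{lem:heatsemigroup}(2) and \ref{lem:deterministicsemigroup}(5); together these give $|T_{t,\ell}u_N(x)| \leq |T_{t,\ell}u_N(0)| + K\norm{x}_2$ uniformly in $\ell$ and $N$, which already satisfies hypothesis~\eqref{eq:growthrate} with $n=1$. There is no need to track $\norm{DV_N(0)}_2$ separately through the $S$- and $P$-steps. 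Your Moore--Osgood formulation of part (3) is precisely what the paper accomplishes by applying Observation~\ref{lem:convergenceapproximation} to the constant sequence $\{\int u_N\,d\mu_N\}$.
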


\begin{proof}
(1) Recall that $S_t^{V_N}u_N = u_N(W_N(x,t))$, where $W_N$ is the solution to \eqref{eq:ODE}.  Thus, by Lemma \ref{lem:compositionapproximation}, it suffices to show that $W_N(x,t)$ is asymptotically approximable by trace polynomials for each $t$.  To this end, we write $W_N(x,t)$ as the limit as $\ell \to \infty$ of Picard iterates $W_{N,\ell}$ given by
\[
W_{N,0}(x,t) = x, \qquad W_{N,\ell+1}(x,t) = x - \frac{1}{2} \int_0^t DV_k(W_N(x,s))\,ds.
\]
Because $DV_N$ is $C$-Lipschitz, the standard Picard-Lindel\"of arguments show that
\[
\norm{W_{N,\ell}(x,t) - W_N(x,t)}_2 \leq \sum_{n=\ell+1}^\infty \frac{C^{n-1} t^n}{2^n n!} \norm{DV_N(x)}_2.
\]
Because $DV_N$ is asymptotically approximable by trace polynomials, we know that $\norm{DV_N(x)}_2$ is uniformly bounded on $\norm{x} \leq R$ for any given $R > 0$, and therefore for each $T$ and $R > 0$, the convergence of $W_{N,\ell}$ to $W_N$ as $\ell \to \infty$ is uniform for all $\norm{x} \leq R$ and $t \leq T$ and $N \in \N$.  Thus, by Observation \ref{lem:convergenceapproximation}, it suffices to show that each Picard iterate $\{W_{N,\ell}(x,t)\}_N$ is asymptotically approximable by trace polynomials.

Fix $T > 0$.  We claim that for every $\ell$, for every $R > 0$ and $\epsilon > 0$, there exists a trace polynomial $f(X,t)$ with coefficients that are polynomial functions of $t$, such that
\[
\limsup_{N \to \infty} \sup_{t \in [0,T]} \sup_{\norm{x}_\infty \leq R} \norm{W_{N,\ell}(x,t) - f(x,t)}_2 \leq \epsilon.
\]
We proceed by induction on $\ell$, with the base case $\ell = 0$ being trivial.  For the inductive step, fix $\epsilon$ and $R$, and choose a trace polynomial $f(X,t)$ which provides a $(\epsilon / CT, R)$ approximation for $W_{N,\ell}$ for all $t \leq T$.  Let
\[
R' = \sup_{t \in [0,T]} \sup_N \sup_{x \in M_N(\C)_{sa}^m: \norm{x}_\infty \leq R} \norm{f(x,t)} < +\infty.
\]
Choose another trace polynomial $g(X)$ which is an $(\epsilon / T, R')$ approximation for $\{DV_N\}$, and let $h(X,t) = X - \frac{1}{2} \int_0^t g(f(X,s))\,ds$.  Then arguing as in Lemma \ref{lem:compositionapproximation}, we have for $\norm{x} \leq R$ and $t \in [0,T]$ that
\begin{align*}
\norm{W_{N,\ell+1}(x,t) - h(x,t)} &\leq \frac{1}{2} \int_0^t \norm{DV_N(W_{N,\ell}(x,s)) - g(f(x,s))}_2 \,ds \\
&\leq \frac{t}{2} \sup_{\norm{y} \leq R'} \norm{DV_N(y) - g(y)}_2 + \frac{Ct}{2} \sup_{s \in [0,T]} \sup_{\norm{x} \leq R} \norm{W_{N,\ell}(x,s) - f(x,s)}_2.
\end{align*}
Taking $N \to \infty$, we see that $h(x,t)$ is an $(\epsilon,R)$ approximation for $\{W_{N,\ell}(x,t)\}_N$ for all $t \leq T$.

(2) We have shown that $S_t^{V_k}$ preserves asymptotic approximability.  Moreover, if the sequence $u_N: M_N(\C)_{sa}^m \to \C$ is asymptotically approximable by trace polynomials and $u_N$ is $K$-Lipschitz, then the sequence $P_t u_N$ is also asymptotically approximable by trace polynomials by Lemma \ref{lem:convolutionapproximation} (the hypothesis \eqref{eq:growthrate} is satisfied since $|u_N(x)| \leq |u_N(0)| + L \norm{x}_2$ and $|u_N(0)|$ is bounded as $N \to +\infty$ because $u_N$ is asymptotically approximable by trace polynomials).  Therefore, the iterated operator $T_{t,\ell}^{V_k} = (P_{2^{-\ell}} S_{2^{-\ell}}^{V_k})^{2^\ell t}$ preserves asymptotic approximability for dyadic values of $t$.  Taking $\ell \to \infty$, we see by Observation \ref{lem:convergenceapproximation} and Lemma \ref{lem:convergence0} that $T_t^{V_k}$ preserves asymptotic approximability for dyadic values of $t$.  Finally, we extend the approximability property to $T_t^{V_k}$ for all real $t$ using Observation \ref{lem:convergenceapproximation} and Lemma \ref{lem:continuityofsemigroup0}.

(3) We know by Lemma \ref{lem:limitastgoestoinfinity0} that $T_t^{V_N} u_N(x) \to \int u_N\,d\mu_N$ as $t \to \infty$ with estimates that are independent of $N$.  It follows by Observation \ref{lem:convergenceapproximation} that the sequence of constant functions $\{\int u_N\,d\mu_N\}$ is asymptotically approximable by trace polynomials.  But since these functions are constant, this simply means that the limit of $\int u_N\,d\mu_N$ as $N \to \infty$ exists.
\end{proof}

\begin{proof}[Proof of Theorem \ref{thm:convergenceandconcentration}]
(1)  Let $a_N = \int x\,d\mu_N(x)$ and $a_{N,j} = \int x_j\,d\mu_N(x)$.  Note that
\[
R_N \leq \max_j \int \norm{x_j - a_{N,j}}\,d\mu_N(x) + \max_j \left| \int \tau_N(x_j)\,d\mu_N(x) \right| + \max_j \norm*{ \int (x_j - \tau_N(x_j))\,d\mu(x) }.
\]
When we take the $\limsup$ as $N \to \infty$, the first term is bounded by $2 / c^{1/2}$ by Corollary \ref{cor:operatornormtail} while the last term is bounded by $M$.  It remains to estimate $\int \tau_N(x_j)\,d\mu_N(x)$.

Using integration by parts, we see that
\[
\int DV_N(x)\,d\mu_N(x) = 0
\]
On the other hand, we may estimate $\norm*{ DV_N(x) - \left( DV_N(0) + \frac{C + c}{2} x \right) }_2$ as follows.  We assumed that $V_N(x) - (c/2) \norm{x}_2^2$ is convex and $V_N(x) - (C/2) \norm{x}_2^2$ is concave.  Let $\tilde{V}_N(x) = V_N(x) - \frac{C+c}{2} \norm{x}_2^2$.  Then $\tilde{V}_N(x) + \frac{1}{2}(C-c) \norm{x}_2^2$ is convex and $\tilde{V}_N(x) - \frac{1}{2}(C - c) \norm{x}_2^2$ is concave.  Therefore, $D\tilde{V}_N$ is $(C - c)/2$-Lipschitz with respect to $\norm{\cdot}_2$.  It follows that
\[
\norm*{ DV_N(x) - \left( DV_N(0) + \frac{C + c}{2} x \right) }_2 = \norm{D\tilde{V}_N(x) - D\tilde{V}_N(0)}_2 \leq \frac{C - c}{2} \norm{x}_2.
\]
Therefore,
\begin{align*}
\norm*{ DV_N(0) + \frac{C+c}{2} a_N }_2 &\leq \frac{C - c}{2} \int \norm{x}_2 \,d\mu_N(x) \\
&\leq \frac{C - c}{2} \left( \norm{a_N}_2 + \left( \int \norm{x - a_N}_2^2\,d\mu(x) \right)^{1/2} \right) \\
&\leq \frac{C - c}{2} \left( \norm{a_N}_2 + c^{-1/2} \right),
\end{align*}
where the last step follows from Theorem \ref{thm:subgaussian}.  Altogether,
\[
\frac{C + c}{2} \norm{a_N}_2 \leq \frac{C - c}{2} \norm{a_N}_2 + \norm{DV_N(0)}_2 + \frac{C - c}{2c^{1/2}}.
\]
Then we move $(C - c)/ 2 \cdot \norm{a_N}_2$ to the left hand side and divide the equation by $c$ to obtain
\[
\left| \int \tau_N(x_j)\,d\mu_N(x) \right| \leq \norm{a_N}_2 \leq \frac{1}{c} \norm{DV_N(0)}_2 + \frac{C - c}{2c^{3/2}},
\]
which proves the asserted estimate on $R_N$.  The tail estimate on $\mu_N(\norm{x_j} \geq R_N + \delta)$ follows from Corollary \ref{cor:operatornormtail}.

(2) Fix a non-commutative polynomial $p$.  Let $R_* = \limsup_{N \in \N} R_N$ which we know is finite because of (1) and suppose that $R' > R_*$.  Let $\psi \in C_c^\infty(\R)$ be such that $\psi(t) = t$ for $|t| \leq R'$, and define $\Psi(x_1,\dots,x_m) = (\psi(x_1),\dots,\psi(x_m))$, where $\psi(x_j)$ is defined through the continuous functional calculus for self-adjoint operators.  Now $x \mapsto \psi(x)$ is Lipschitz in $\norm{\cdot}_2$ for $x \in M_N(\C)_{sa}$ with constants independent of $N$ (see for instance Proposition \ref{prop:Peller} below).  It follows that $p(\Psi(x))$ is globally Lipschitz in $\norm{\cdot}_2$ and it equals $p(x)$ when $\norm{x} \leq R'$.

Furthermore, we claim that the sequence $\tau_N(p(\Psi(x)))$ is asymptotically approximable by trace polynomials.  To see this, choose some radius $r$ and $\delta > 0$.  By the Weierstrass approximation theorem, there exists a polynomial $\widehat{\psi}(t)$ such that $|\psi(t) - \widehat{\psi}(t)| \leq \delta$ for $t \in [-r,r]$.  By the spectral mapping theorem, we have $\norm{\psi(y) - \widehat{\psi}(y)} \leq \delta$ if $y \in M_N(\C)_{sa}$ and $\norm{y} \leq r$.  In particular, if we let $\widehat{\Psi}(x) = (\widehat{\psi}(x_1),\dots,\widehat{\psi}(x_m))$ for $x \in M_N(\C)_{sa}^m$, then we have $\norm{\Psi(x) - \widehat{\Psi}(x)} \leq \delta$ when $\norm{x}_\infty \leq r$.  Given $\epsilon > 0$, we may choose $\delta$ small enough to guarantee that $|\tau_N(p(\Psi(x))) - \tau_N(p(\widehat{\Psi}(x)))| \leq \epsilon$ for $\norm{x}_\infty \leq r$, and clearly $\tau_N(p(\widehat{\Psi}(x)))$ is a trace polynomial.  Thus, $\tau_N(p(\Psi(x))$ is asymptotically approximable by trace polynomials.

Therefore, by Lemma \ref{lem:asymptoticapproximation0}, the limit
\[
\lambda(p) = \lim_{N \to \infty} \int \tau_N(p(\Psi(x))\,d\mu_N(x)
\]
exists.  Clearly, $\lambda$ satisfies all the conditions to be a non-commutative law.  Furthermore, because of the operator norm bounds (1), we know that $\int_{\norm{x} \geq R'} \tau_N(p(x))\,d\mu_N(x)$ is finite and approaches zero as $N \to \infty$ and the same holds for the integral of $\tau_N(p(\Psi(x)))$.  Therefore,
\[
\lim_{N \to \infty} \int \tau_N(p(x))\,d\mu_N(x) = \lim_{N \to \infty} \int \tau_N(p(\Psi(x))\,d\mu_N(x) = \lambda(p).
\]
Also, we have $\lambda(p) = \lim_{N \to \infty} \int_{\norm{x} \leq R'} \tau_N(p(x))\,d\mu_N(x)$ and hence $\lambda \in \Sigma_{m,R'}$.  But since this holds for every $R' > R_*$, we have $\lambda \in \Sigma_{m,R_*}$.

(3) It suffices to prove the concentration claim (3) for sufficiently large $R$, say $R > 2R_*$.  Because the topology of $\Sigma_{m,R}$ is generated by the functions $\lambda \mapsto \lambda(p)$ for non-commutative polynomials $p$, it suffices to consider the case where $\mathcal{U} = \{\lambda': |\lambda'(p) - \lambda(p)| < \epsilon\}$ for some non-commutative polynomial $p$.  Choose a function $\psi \in C_c^\infty(\R)$ with $\psi(t) = t$ for $|t| \leq R$, and let $\Psi$ be as above.  Then by Theorem \ref{thm:concentration},
\[
\mu_N\left(\left| \tau_N(p(\Psi(x)) - \int \tau_N (p \circ \Psi)\,d\mu_N \right| \geq \epsilon / 2 \right) \leq 2e^{-N^2\epsilon^2 / 8 \norm{\tau_N(p \circ \Psi)}_{\Lip}^2}.
\]
But by the same reasoning as in part (2), we know that large enough $N$, we have
\[
\left| \int \tau_N(p \circ \Psi)\,d\mu_N - \lambda(p) \right| \leq \frac{\epsilon}{2},
\]
and hence
\[
\limsup_{N \to \infty} \frac{1}{N^2} \log \mu_N\left(\norm{x} \leq R, \, \left| \tau_N(p(x)) - \lambda(p) \right| \geq \epsilon \right) < 0. \qedhere
\]
\end{proof}

\section{Entropy and Fisher's Information} \label{sec:entropy}

\subsection{Classical Entropy}

In this section, we will state sufficient conditions for the microstates free entropies $\chi$ and $\underline{\chi}$ to be evaluated as the $\limsup$ and $\liminf$ of renormalized classical entropies.  Recall that the (classical, continuous) entropy of a measure $d\mu(x) = \rho(x)\,dx$ on $\R^n$ is defined as
\[
h(\mu) := \int_{\R^n} -\rho \log \rho,
\]
whenever the integral makes sense.  We will later use the following basic facts about the classical entropy, so for convenience we provide a proof.

\begin{lemma} ~ \label{lem:classicalentropy}
\begin{enumerate}
	\item If $\mu$ has a density $\rho$ and $\int |x|^2\,d\mu(x) < +\infty$, then the positive part of $-\rho \log \rho$ has finite integral and hence $\int -\rho \log \rho$ is well-defined in $[-\infty,+\infty)$.
	\item In fact, we have $h(\mu) \leq (n/2) \log 2\pi ae$, where $a = \int |x|^2\,d\mu(x) / n$, and equality is achieved in the case of a centered Gaussian with covariance matrix $aI$.
	\item Suppose $\{\mu_k\}$ is a sequence of probability measures with density $\rho_k$, suppose $\rho_k \to \rho$ pointwise almost everywhere, and suppose that $\int |x|^2\,d\mu_k(x) \to \int |x|^2\,d\mu(x) < +\infty$.  Then $\limsup_{k \to \infty} h(\mu_k) \leq h(\mu)$.
	\item If $\mu$ and $\nu$ have finite second moments, then $h(\mu * \nu) \geq \max(h(\mu), h(\nu))$.
\end{enumerate}
\end{lemma}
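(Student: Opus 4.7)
The plan is to derive all four parts from the non-negativity of relative entropy with respect to a Gaussian reference measure, together with Jensen's inequality for part (4). The pointwise inequality I would use throughout is $u \log u \geq u \log v + u - v$ for $u, v > 0$ (a rearrangement of $\log t \leq t - 1$), applied with $v$ taken to be a Gaussian density.

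For parts (1) and (2), I would take $g$ to be the centered Gaussian of variance $aI$ matching the second moment of $\mu$. Then $-\rho \log g(x) = \rho(x) \cdot \bigl(\tfrac{n}{2}\log(2\pi a) + \tfrac{|x|^2}{2a}\bigr)$ is integrable because $\int |x|^2 \, d\mu < \infty$, and $\rho - g$ is integrable, so the pointwise bound $\rho \log \rho \geq \rho \log g + (\rho - g)$ forces $(\rho \log \rho)_-$ to be integrable, giving (1). Integrating the same inequality and using $\int \rho = \int g = 1$ yields $-h(\mu) \geq -\tfrac{n}{2}\log(2\pi a e)$, which is (2); equality forces $\rho = g$ a.e., and a direct calculation confirms $h(\gamma) = \tfrac{n}{2}\log(2\pi a e)$ for the Gaussian itself.

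For part (3), I would fix any Gaussian reference density $g$ and rewrite
\[
h(\mu_k) = -\int \rho_k \log g - D(\mu_k \| \gamma),
\]
so that the first term converges by the assumed convergence of second moments, and it remains to show $\liminf_k D(\mu_k \| \gamma) \geq D(\mu \| \gamma)$. For this I would apply Fatou's lemma to the manifestly non-negative integrand $F_k = g \cdot \bigl[(\rho_k/g) \log(\rho_k/g) - (\rho_k/g) + 1\bigr]$, which converges almost everywhere to the corresponding expression in $\rho$ and whose integral equals $D(\mu_k \| \gamma)$ since $\int \rho_k = \int g = 1$. Combining these two observations yields $\limsup_k h(\mu_k) \leq h(\mu)$.

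For part (4), with $f, g$ the densities of $\mu, \nu$, the convolution $\mu * \nu$ has density $H(x) = \int f(x-y) g(y) \, dy$, which is $\mathbb{E}_{Y \sim \nu}[f(x - Y)]$. Applying Jensen's inequality to the convex function $\phi(u) = u \log u$ and the probability measure $g(y) \, dy$ gives $\phi(H(x)) \leq \int \phi(f(x-y)) g(y) \, dy$; integrating in $x$ and using Fubini then yields $\int H \log H \leq \int f \log f$, so $h(\mu * \nu) \geq h(\mu)$, and symmetry gives the stated bound. The main technical point, recurring in every part, is justifying Fatou and Fubini when entropies can equal $-\infty$; this is handled by systematically working with the non-negative shift $u \log u - u + 1$ in place of $u \log u$ and by invoking the integrability of $(\rho \log \rho)_-$ from part (1) to split positive and negative contributions.
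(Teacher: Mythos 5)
Your proposal is essentially the paper's argument: both hinge on a Gaussian reference measure, the pointwise bound $u\log u \geq u\log v + u - v$ (the paper's $-t\log t \leq 1 - t$ after the substitution $t = u/v$ and multiplication by $v$), Fatou for part (3), and Jensen plus Fubini for part (4); your single Fatou application to the nonnegative integrand $g\bigl[(\rho_k/g)\log(\rho_k/g) - (\rho_k/g) + 1\bigr]$ in (3) is a mild streamlining of the paper's split into a bounded positive part (dominated convergence) and a negative part (Fatou). The one detail worth tightening is in (4): the lemma does not assume $\nu$ has a density, so Jensen should be applied to the probability measure $d\nu(y)$ rather than $g(y)\,dy$ (the convolution still has density $\int f(x-y)\,d\nu(y)$ whenever $\mu$ does), and the Fubini exchange is most cleanly justified by first reducing to the case $h(\mu) > -\infty$ — the inequality being trivial otherwise — which is a different device from the nonnegative-shift trick that handles Fatou in (3).
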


\begin{proof}
(1) Let $a = \int |x|^2\,d\mu(x) / n$.  Let $g(x) = (2\pi a)^{-n/2} e^{-|x|^2/2a}$ be the Gaussian of variance $a$ and let $\gamma$ be the corresponding Gaussian measure.  Let $\tilde{\rho} = \rho / f$ be the density of $\mu$ relative to the Gaussian.  We write
\begin{align*}
-\rho(x) \log \rho(x) &= -\tilde{\rho}(x) \log \tilde{\rho}(x) \cdot g(x) - \tilde{\rho}(x) \log g(x) \cdot g(x) \\
&= -\tilde{\rho}(x) \log \tilde{\rho}(x) \cdot g(x) + \left(\frac{1}{2a} |x|^2 + \frac{n}{2} \log 2\pi a \right)\rho(x) .
\end{align*}
The second term has a finite integral by assumption.  The function $-t \log t$ is bounded above for $t \in \R$, and $g(x)$ is a probability density; thus, the positive part of $-\tilde{\rho} \log \tilde{\rho} \cdot g$ has finite integral.  Hence, $\int -\rho \log \rho$ is well-defined.

(2) The function $-t \log t$ is concave and its tangent line at $t = 0$ is $1 - t$ and hence $-t \log t \leq 1 - t$.  Thus,
\[
\int -\tilde{\rho} \log \tilde{\rho}\,d\gamma \leq \int (1 - \tilde{\rho})\,d\gamma = 0,
\]
so
\[
h(\mu) \leq \int \left(\frac{1}{2a} |x|^2 + \frac{n}{2} \log 2\pi a \right)\rho(x)\,dx = \frac{n}{2} + \frac{n}{2} \log 2 \pi a = \frac{n}{2} \log 2\pi e.
\]
In the case where $\mu = \gamma$, we have $\tilde{\rho} = 1$ and hence $\int - \tilde{\rho} \log \tilde{\rho} = 0$.

(3) Let $\gamma$ be the Gaussian of covariance matrix $I$ and $g$ its density.  Let $\tilde{\rho}_k = \rho_k / g$.  As before,
\[
h(\mu_k) = \int -\tilde{\rho}_k \log \tilde{\rho}_k\,d\gamma + \int \left( \frac{1}{2} |x|^2 + \frac{n}{2} \log 2\pi \right)\,d\mu_k.
\]
By assumption, the second term converges to $\int (\frac{1}{2} |x|^2 + \frac{n}{2} \log 2\pi)\,d\mu$.  Since the function $-t \log t$ is bounded above and $\gamma$ is a probability measure, the integral of the positive part of $-\tilde{\rho}_k \log \tilde{\rho}_k$ converges to the corresponding quantity for $\rho$.  For the negative part, we can apply Fatou's lemma.  This yields $\limsup_{k \to \infty} h(\mu_k) = h(\mu)$.

(4) We can assume without loss of generality that one of the measures, say $\mu$, has finite entropy and in particular has a density $\rho$.  Then $\mu * \nu$ has a density given almost everywhere by $\tilde{\rho}(x) = \int \rho(x-y)\,d\nu(y)$.  Since $-t\log t$ is concave, Jensen's inequality implies that
\[
-\tilde{\rho}(x) \log \tilde{\rho}(x) \geq \int -\rho(x-y) \log \rho(x-y)\,d\nu(y).
\]
The right hand side is $\int \int -\rho(x - y) \log \rho(x - y)\,d\nu(y)\,dx = \int \int -\rho(x - y) \log \rho(x - y)\,dx\,d\nu(y) = h(\mu)$, where the exchange of order is justified because we know that $-\rho \log \rho$ is integrable since $h(\mu) > -\infty$.  Therefore, $h(\mu * \nu) = \int -\tilde{\rho} \log \tilde{\rho} \geq h(\mu)$.
\end{proof}

\subsection{Microstates Free Entropy}

Because there is no integral formula known for free entropy of multiple non-commuting variables as in the classical case, Voiculescu defined the free analogue of entropy \cite{VoiculescuFE1,VoiculescuFE2} using Boltzmann's microstates viewpoint on entropy.

\begin{definition}
For $\mathcal{U} \subseteq \Sigma_m$, we define the \emph{microstate space}
\begin{align*}
\Gamma_N(\mathcal{U}) &= \{x \in M_N(\C)_{sa}^m: \lambda_x \in \mathcal{U}\} \\
\Gamma_{N,R}(\mathcal{U}) &= \{x \in M_N(\C)_{sa}^m: \lambda_x \in \mathcal{U}, \norm{x}_\infty \leq R\}.
\end{align*}
The \emph{microstates free entropy} of a non-commutative law $\lambda$ is defined as
\begin{align*}
\chi_R(\lambda) &= \inf_{\mathcal{U} \ni \lambda} \limsup_{N \to \infty} \left( \frac{1}{N^2} \log \vol \Gamma_{N,R}(\mathcal{U}) + \frac{m}{2} \log N \right) \\
\chi(\lambda) &= \sup_{R > 0} \chi_R(\lambda).
\end{align*}
Here $\mathcal{U}$ ranges over all open neighborhoods of $\lambda$ in $\Sigma_m$.  Similarly, we denote
\begin{align*}
\underline{\chi}_R(\lambda) &= \inf_{\mathcal{U} \ni \lambda} \liminf_{N \to \infty} \left( \frac{1}{N^2} \log \vol \Gamma_{N,R}(\mathcal{U}) + \frac{m}{2} \log N \right) \\
\underline{\chi}(\lambda) &= \sup_{R > 0} \chi_R(\lambda).
\end{align*}
\end{definition}

\begin{definition}
A sequence of probability measures $\mu_N$ on $M_N(\C)_{sa}^m$ is said to \emph{concentrate around the non-commutative law $\lambda$} if $\lambda_x \to \lambda$ in probability when $x$ is chosen according to $\mu_N$, that is, for any neighborhood $\mathcal{U}$ of $\lambda$ in $\Sigma_m$, we have
\[
\lim_{k \to \infty} \mu_N(x \in \Gamma_N(\mathcal{U})) = 1.
\]
\end{definition}

\begin{proposition} \label{prop:convergenceofentropy}
Let $V_N: M_N(\C)_{sa}^m \to \R$ be a potential with $\int \exp(-N^2 V_N(x))\,dx < +\infty$ and let $\mu_N$ be the associated measure.  Assume: 
\begin{enumerate}[(A)]
	\item The sequence $\{\mu_N\}$ concentrates around a non-commutative law $\lambda$.
	\item The sequence $\{V_N\}$ is asymptotically approximable by scalar-valued trace polynomials.
	\item For some $n \geq 1$ and $a, b > 0$ we have $|V_N| \leq a + b \sum_{j=1}^m \tau_N(x_j^{2n})$.
	\item There exists $R_0 > 0$ such that
	\[
	\lim_{N \to \infty} \int_{\norm{x}_\infty \geq R_0} \left(1 + \sum_{j=1}^m \tau_N(x_j^{2n}) \right) \,d\mu_N(x) = 0,
	\]
	where $n$ is the same number as in (C).
\end{enumerate}
Then $\lambda$ can be realized as the law of non-commutative random variables $X = (X_1,\dots,X_m)$ in a von Neumann algebra $(\mathcal{M},\tau)$ with $\norm{X_j} \leq R_0$.  Moreover, we have
\begin{align}
\chi(\lambda) &= \chi_{R_0}(\lambda) = \limsup_{N \to \infty} \left( \frac{1}{N^2} h(\mu_N) + \frac{m}{2} \log N \right) \label{eq:limsup} \\
\underline{\chi}(\lambda) &= \underline{\chi}_{R_0}(\lambda) = \liminf_{N \to \infty} \left( \frac{1}{N^2} h(\mu_N) + \frac{m}{2} \log N \right). \label{eq:liminf}
\end{align}
\end{proposition}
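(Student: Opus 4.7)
The plan is to translate classical entropy into microstate volume via the identity
\[
\frac{1}{N^2} h(\mu_N) = \int V_N\,d\mu_N + \frac{1}{N^2} \log Z_N,
\]
exploiting the fact that, for small neighborhoods $\mathcal{U}$ of $\lambda$, the potential $V_N$ is approximately constant on $\Gamma_{N,R_0}(\mathcal{U})$. First I would handle the realization of $\lambda$: by truncating each coordinate with a smooth cutoff at level $R_0 + \delta$ (as in the proof of Theorem~\ref{thm:convergenceandconcentration}~(2)) and combining the concentration hypothesis (A) with the tail bound (D), one obtains $|\lambda(X_{i_1} \dots X_{i_n})| \leq (R_0 + \delta)^n$ for every $\delta > 0$, so $\lambda \in \Sigma_{m,R_0}$ and may be realized by operators of norm $\leq R_0$.

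Next, fix $\epsilon > 0$. Using (B), I choose a trace polynomial $f \in \TrP_m^0$ with $\limsup_N \sup_{\norm{x}_\infty \leq R_0 + 1} |V_N(x) - f(x)| \leq \epsilon$. Since $f$ depends on $x$ only through traces of non-commutative polynomials, for all sufficiently small $\mathcal{U}$ one has $|f(x) - \lambda(f)| \leq \epsilon$ uniformly for $\lambda_x \in \mathcal{U}$, so $|V_N(x) - \lambda(f)| \leq 3\epsilon$ on $\Gamma_{N,R_0}(\mathcal{U})$ for large $N$. Plugging this into the density $e^{-N^2 V_N(x)}/Z_N$ produces the sandwich
\[
e^{N^2(\lambda(f) - 3\epsilon)} \mu_N(\Gamma_{N,R_0}(\mathcal{U})) Z_N \leq \vol \Gamma_{N,R_0}(\mathcal{U}) \leq e^{N^2(\lambda(f) + 3\epsilon)} \mu_N(\Gamma_{N,R_0}(\mathcal{U})) Z_N,
\]
and (A) together with (D) gives $\mu_N(\Gamma_{N,R_0}(\mathcal{U})) \to 1$, so after taking logarithms and dividing by $N^2$,
\[
\Bigl|\tfrac{1}{N^2} \log \vol \Gamma_{N,R_0}(\mathcal{U}) - \tfrac{1}{N^2} \log Z_N - \lambda(f)\Bigr| \leq 3\epsilon + o(1).
\]

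The same uniform bound handles the entropy side: splitting $\int V_N\,d\mu_N$ over $\Gamma_{N,R_0}(\mathcal{U})$ and its complement, the first piece is $\lambda(f) + O(\epsilon) + o(1)$, while the complement is absorbed using the polynomial growth bound (C) together with (D) and concentration. Adding $\tfrac{1}{N^2} \log Z_N + \tfrac{m}{2} \log N$ to both sides and comparing with the volume estimate collapses everything to
\[
\tfrac{1}{N^2} h(\mu_N) + \tfrac{m}{2} \log N = \tfrac{1}{N^2} \log \vol \Gamma_{N,R_0}(\mathcal{U}) + \tfrac{m}{2} \log N + O(\epsilon) + o(1).
\]
Passing to $\limsup$ or $\liminf$ in $N$, then $\inf$ over $\mathcal{U}$, then $\epsilon \to 0$, yields \eqref{eq:limsup} and \eqref{eq:liminf} but with $\chi_{R_0}$ and $\underline{\chi}_{R_0}$ in place of $\chi$ and $\underline{\chi}$.

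Finally, to identify $\chi(\lambda)$ with $\chi_{R_0}(\lambda)$ it suffices to show $\chi_R(\lambda) \leq \chi_{R_0}(\lambda)$ for every $R \geq R_0$. Since the approximation $f$ works uniformly on \emph{all} of $\Gamma_{N,R}(\mathcal{U})$, the analogous density sandwich applied to the annular region $\Gamma_{N,R}(\mathcal{U}) \setminus \Gamma_{N,R_0}(\mathcal{U})$ yields
\[
\frac{\vol(\Gamma_{N,R}(\mathcal{U}) \setminus \Gamma_{N,R_0}(\mathcal{U}))}{\vol \Gamma_{N,R_0}(\mathcal{U})} \leq \frac{\mu_N(\norm{x}_\infty > R_0)}{\mu_N(\Gamma_{N,R_0}(\mathcal{U}))} \, e^{6\epsilon N^2},
\]
so $\tfrac{1}{N^2} \log \vol \Gamma_{N,R}(\mathcal{U}) \leq \tfrac{1}{N^2} \log \vol \Gamma_{N,R_0}(\mathcal{U}) + 6\epsilon + o(1)$, and taking $\limsup$, $\inf_\mathcal{U}$, and $\epsilon \to 0$ finishes the proof. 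The main technical obstacle is keeping track of the $N$-dependence of $V_N$: the quantity $\lim_N \int V_N\,d\mu_N$ a priori does not correspond to $\lambda$ applied to any fixed object, so all comparisons must be routed through the trace polynomial approximation $f$, and every error term must be shown to remain $O(\epsilon)$ \emph{uniformly} in $N$ by simultaneously invoking concentration and the tail bound (D).
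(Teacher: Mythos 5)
Your proposal is correct and follows essentially the same strategy as the paper: express $N^{-2}h(\mu_N)$ as $\int V_N\,d\mu_N + N^{-2}\log Z_N$, use the trace-polynomial approximation (B) to make $V_N$ nearly constant $\approx \lambda(f)$ on microstate spaces, sandwich $\vol \Gamma_{N,R}(\mathcal{U})$ against $Z_N$, and control the contribution of the regions $\{\norm{x}_\infty > R_0\}$ and $\{\lambda_x \notin \mathcal{U}\}$ via (C), (D), and concentration. The only organizational differences are cosmetic: the paper first truncates to the conditioned measure $\mu_{N,R}$ and verifies $h(\mu_N) - h(\mu_{N,R})$ and $\log Z_N - \log Z_{N,R}$ vanish, whereas you work directly with $\mu_N$; and the paper obtains $\chi_R(\lambda) = \chi_{R_0}(\lambda)$ for $R \geq R_0$ implicitly by showing $\chi_R$ equals the entropy $\limsup$ for each such $R$, whereas you make the comparison explicit via the annular volume ratio (where, as you implicitly use, the bound $\mu_N(\norm{x}_\infty > R_0) e^{6\epsilon N^2}$ contributes $\leq 6\epsilon + o(1)$ to $N^{-2}\log\vol$, since $\tfrac{1}{N^2}\log(1 + c_N e^{6\epsilon N^2}) \leq 6\epsilon + \tfrac{1}{N^2}\log(1 + c_N)$ with $c_N \to 0$).
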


\begin{proof}
It follows from assumptions (A) and (D) that for every non-commutative polynomial $p$,
\[
\lim_{N \to \infty} \int_{\norm{x}_\infty \leq R_0} \tau_N(p(x))\,d\mu_N(x) = \lambda(p).
\]
It follows that $\lambda(X_j^{2n}) \leq R_0^{2n}$ for any $n > 0$.  From here it is a standard fact that $\lambda$ can be realized by self-adjoint random variables in a tracial von Neumann algebra which have norm $\leq R_0$.  

Now let us evaluate $\chi_R$ and $\underline{\chi}_R$ for $R \geq R_0$.  Recall that
\[
d\mu_N(x) = \frac{1}{Z_N} \exp(-N^2 V_N(x))\,dx, \qquad  Z_N = \int \exp(-N^2 V_N(x))\,dx,
\]
and note that
\[
h(\mu_N) = N^2 \int V_N(x)\,d\mu_N(x) + \log Z_N.
\]
The assumptions (C) and (D) imply that
\[
\lim_{N \to \infty} \int_{\norm{x}_\infty \geq R} |V_N(x)| \,d\mu_N(x) = 0 \text{ and } \lim_{N \to \infty} \mu_N(x: \norm{x}_\infty \geq R) = 0.
\]
Therefore, if we let
\[
d\mu_{N,R}(x) = \frac{1}{Z_{N,R}} \mathbf{1}_{\norm{x}_\infty \leq R} \exp(-N^2 V_N(x))\,dx, \qquad Z_{N,R} = \int_{\norm{x}_\infty \leq R} \exp(-N^2 V_N(x))\,dx.
\]
then as $N \to \infty$, we have
\[
\int V_N\,d\mu_N - \int V_N \,d\mu_{N,R} \to 0, \qquad \log Z_N - \log Z_{N,R} \to 0,
\]
and hence
\[
\frac{1}{N^2} h(\mu_N) - \frac{1}{N^2} h(\mu_{N,R}) \to 0.
\]

Fix $\epsilon > 0$.  By assumption (B), there is scalar-valued trace polynomial $f$ such that $|V_N(x) - f(x)| \leq \epsilon/2$ for $\norm{x}_\infty \leq R$ and for sufficiently large $N$.  Now because the trace polynomial $f$ is continuous with respect to convergence in non-commutative moments, the set $\mathcal{U} = \{\lambda': |\lambda'(f) - \lambda(f)| < \epsilon / 2\}$ is open.  Now suppose that $\mathcal{V} \subseteq \mathcal{U}$ is a neighborhood of $\lambda$.  Note that
\[
\lim_{N \to \infty} \mu_{N,R}(\Gamma_{N,R}(\mathcal{V})) = \lim_{N \to \infty} \frac{Z_N}{Z_{N,R}} \mu_N(\Gamma_N(\mathcal{V}) \cap \{x: \norm{x}_\infty \leq R\}) = 1,
\]
where we have used that $Z_N / Z_{N,R} \to 1$ as shown above, that $\mu_N(\Gamma_N(\mathcal{V})) \to 1$ by assumption (A), and that $\mu_N(\norm{x}_\infty \leq R) \to 1$ by assumption (D).  Moreover, by our choice of $f$ and $\mathcal{U}$, we have
\[
x \in \Gamma_{N,R}(\mathcal{V}) \implies |V_N(x) - \lambda(f)| \leq \epsilon.
\]
Therefore,
\begin{align*}
Z_{N,R} \mu_{N,R}(\Gamma_{N,R}(\mathcal{V})) &= \int_{\Gamma_{N,R}(\mathcal{V})} \exp(-N^2 V_N(x))\,dx \\
&= \vol \Gamma_{N,R}(\mathcal{V}) \exp(-N^2(\lambda(f) + O(\epsilon))).
\end{align*}
Thus,
\[
\log Z_{N,R} + \log \mu_{N,R}(\Gamma_{N,R}(\mathcal{V})) = \log \vol \Gamma_{N,R}(\mathcal{V}) - N^2 (\lambda(f) + O(\epsilon)).
\]
Meanwhile, note that $f$ is bounded by some constant $K$ whenever $\norm{x}_\infty \leq R$.  Therefore,
\begin{align*}
\int V_N\,d\mu_{N,R} &= \int_{\Gamma_{N,R}(\mathcal{V})} V_N\,d\mu_{N,R} + \int_{\Gamma_{N,R}(\mathcal{V}^c)} V_N\,d\mu_{N,R} \\
&= \int_{\Gamma_{N,R}(\mathcal{V})} \lambda[f] \,d\mu_{N,R} + \int_{\Gamma_{N,R}(\mathcal{V}^c)} \lambda_x[f] \,d\mu_{N,R} + O(\epsilon) \\
&= \lambda(f) \mu_{N,R}(\Gamma_{N,R}(\mathcal{V})) + O(\epsilon) + O\bigl( K \, \mu_N(\Gamma_{N,R}(\mathcal{V}^c)) \bigr).
\end{align*}
Altogether,
\begin{align*}
\frac{1}{N^2} h(\mu_{N,R}) =& \int V_N\,d\mu_{N,R} + \frac{1}{N^2} \log Z_{N,R} \\
=& \lambda(f) (\mu_{N,R}(\Gamma_{N,R}(\mathcal{V})) - 1) + \frac{1}{N^2} \log \vol \Gamma_{N,R}(\mathcal{V}) \\
&+ O(\epsilon) + O\bigl( K\, \mu_N(\Gamma_{N,R}(\mathcal{V}^c)) \bigr) - \frac{1}{N^2} \log \mu_{N,R}(\Gamma_{N,R}(\mathcal{V})).
\end{align*}
Now we apply the fact that $\mu_{N,R}(\Gamma_{N,R}(\mathcal{V})) \to 1$ to obtain
\[
\limsup_{N \to \infty} \frac{1}{N^2} |h(\mu_{N,R}) - \log \vol \Gamma_{N,R}(\mathcal{V})| = O(\epsilon).
\]
Because this holds for all sufficiently small neighborhoods $\mathcal{V}$ with the error $O(\epsilon)$ only depending on $\mathcal{U}$, we have
\begin{align*}
\chi_R(\lambda) &= \limsup_{N \to \infty} \left( \frac{1}{N^2} h(\mu_{N,R}) + \frac{m}{2} \log N \right) + O(\epsilon) \\
&=  \limsup_{N \to \infty} \left( \frac{1}{N^2} h(\mu_N) + \frac{m}{2} \log N \right) + O(\epsilon).
\end{align*}
Next, we take $\epsilon \to 0$ and obtain $\chi_R(\lambda) = \limsup_{N \to \infty} (N^{-2} \log h(\mu_N) + (m/2) \log N)$ for $R \geq R_0$.  Now $\chi(\lambda) = \sup_R \chi_R(\lambda)$ and $\chi_R(\lambda)$ is an increasing function of $R$.  Since our claim about $\chi_R(\lambda)$ holds for sufficiently large $R$, it also holds for $\chi(\lambda)$, so \eqref{eq:limsup} is proved.  The proof of \eqref{eq:liminf} is identical.
\end{proof}

\subsection{Classical Fisher Information}

The classical Fisher information of a probability measure $\mu$ on $\R^n$ describes how the entropy changes when $\mu$ is convolved with a Gaussian.  Suppose $\mu$ is given by the smooth density $\rho > 0$ on $\R^n$, and let $\gamma_t$ be the multivariable Gaussian measure on $\R^n$ with covariance matrix $tI$.  Then the density $\rho_t$ for $\mu_t = \mu * \gamma_t$ evolves according to the heat equation $\partial_t \rho_t = (1/2) \Delta \rho_t$.  Integration by parts shows that $\partial_t h(\mu_t) = (1/2) \int |\nabla \rho_t / \rho_t|^2 d\mu_t$ (which we justify in more detail below).

The \emph{Fisher information} of $\mu$ represents the derivative at time zero and it is defined as
\[
\mathcal{I}(\mu) := \int \left| \frac{\nabla \rho}{\rho} \right|^2 \,d\mu.
\]
The Fisher information is the $L^2(\mu)$ norm of the function $-\nabla \rho(x) / \rho(x)$, which is known as the \emph{score function}.  If $X$ is a random variable with smooth density $\rho$, then the $\R^n$-valued random variable $\Xi = -\nabla \rho(X) / \rho(X)$ satisfies the integration-by-parts relation
\begin{equation} \label{eq:classicalintegrationbyparts}
E[\Xi \cdot f(X)] = -\int \frac{\nabla \rho(x)}{\rho(x)} f(x) \rho(x)\,dx = \int \rho(x) \nabla f(x)\,dx = E[\nabla f(X)] \text{ for } f \in C_c^\infty(\R^n),
\end{equation}
or equivalently $E[\Xi_j f(X)] = E[\partial_j f(X)]$ for each $j$.

In fact, the integration-by-parts relation $E[\Xi \cdot f(X)] = E[\nabla f(X)]$ makes sense even if we do not assume that $X$ has a smooth density.  Following the terminology used by Voiculescu in the free case, if $X$ is an $\R^n$-valued random variable on the probability space $(\Omega,P)$, we say that an $\R^n$-valued random variable $\Xi \in L^2(\Omega,P)$ is a \emph{(classical) conjugate variable for $X$} if $E[\Xi \cdot f(X)] = E[\nabla f(X)]$ and if each $\Xi_j$ is in the closure of $\{f(X): f \in C_c^\infty(\R^n)\}$ in $L^2(\Omega,P)$.

In other words, this means that $\Xi$ is a function of $X$ (up to almost sure equivalence) and satisfies the integration-by-parts relation.  Since the integration-by-parts relation uniquely determines the $L^2(\Omega,P)$ inner product of $\Xi_j$ and $f(X)$ for all $f \in C_c^\infty(\R^n)$, it follows that the conjugate variable is unique (up to almost sure equivalence), and it is also independent of $(\Omega,P)$ and only depends on the law of $X$.  Thus, we may unambiguously define the \emph{Fisher information} $\mathcal{I}(\mu) = E[|\Xi|^2]$ if $\Xi$ is a conjugate variable to $X$ and $\mathcal{I}(\mu) = +\infty$ if no conjugate variable exists.

The probabilistic viewpoint is useful because it enables us to produce conjugate variables and estimate Fisher information using conditional expectation.  (See \cite[Proposition 3.7]{VoiculescuFE5} for the free case.)

\begin{lemma} \label{lem:conditionalexpectation}
Suppose that $X$ and $Y$ are independent $\R^n$-valued random variables with $X \sim \mu$ and $Y \sim \nu$.  If $\Xi$ is a conjugate variable for $X$, then $E[\Xi | X + Y]$ is a conjugate variable for $X + Y$.  In particular,
\[
\mathcal{I}(\mu * \nu) \leq \min(\mathcal{I}(\mu), \mathcal{I}(\nu)).
\]
\end{lemma}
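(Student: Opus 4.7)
The plan is to verify that $\widetilde{\Xi} := E[\Xi \mid X+Y]$ satisfies the two defining properties of a classical conjugate variable for $Z := X+Y$: first, the integration-by-parts identity $E[\widetilde{\Xi} \cdot f(Z)] = E[\nabla f(Z)]$ for every $f \in C_c^\infty(\R^n)$, and second, that each component of $\widetilde{\Xi}$ lies in the $L^2$-closure of $\{g(Z) : g \in C_c^\infty(\R^n)\}$. The stated Fisher information inequality will then drop out of the fact that conditional expectation is an $L^2$-contraction.

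For the integration-by-parts identity, I would start from the tower property, which gives
\[
E[\widetilde{\Xi} \cdot f(Z)] = E\bigl[E[\Xi \mid Z] \cdot f(Z)\bigr] = E[\Xi \cdot f(X+Y)],
\]
since $f(Z)$ is $\sigma(Z)$-measurable. Because each component of $\Xi$ lies in the $L^2$-closure of Borel functions of $X$, I can write $\Xi = \xi(X)$ for some Borel measurable $\xi : \R^n \to \R^n$. Independence of $X$ and $Y$, combined with Fubini's theorem (justified since $f$ is bounded and $\Xi \in L^2(\Omega,P) \subset L^1(\Omega,P)$), then yields
\[
E[\Xi \cdot f(X+Y)] = \int E[\Xi \cdot f(X+y)]\,d\nu(y).
\]
For each fixed $y$, the map $x \mapsto f(x+y)$ is again in $C_c^\infty(\R^n)$, so the conjugate variable property of $\Xi$ for $X$ gives $E[\Xi \cdot f(X+y)] = E[\nabla f(X+y)]$; integrating over $\nu$ returns $E[\nabla f(Z)]$, as required.

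For the closure property, $\widetilde{\Xi}$ is $\sigma(Z)$-measurable and in $L^2(\Omega,P)$, so after pushing forward, each of its components lies in $L^2(\R^n, \mu * \nu)$. Since $\mu * \nu$ is a Borel probability measure on $\R^n$, the subspace $C_c^\infty(\R^n)$ is dense in $L^2(\R^n, \mu * \nu)$ (truncate with a smooth cutoff and then mollify), so each component of $\widetilde{\Xi}$ is the $L^2$-limit of $g_k(Z)$ with $g_k \in C_c^\infty(\R^n)$. Thus $\widetilde{\Xi}$ is indeed a conjugate variable for $Z$, and Jensen's inequality applied to conditional expectation now gives
\[
\mathcal{I}(\mu * \nu) = E\bigl[|\widetilde{\Xi}|^2\bigr] = E\bigl[|E[\Xi \mid Z]|^2\bigr] \leq E[|\Xi|^2] = \mathcal{I}(\mu),
\]
and interchanging the roles of $X$ and $Y$ produces the symmetric bound against $\mathcal{I}(\nu)$. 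I do not anticipate a real obstacle here; the one delicate point is the application of Fubini, which is immediate from the $L^2$ hypothesis on $\Xi$ and the boundedness of test functions $f \in C_c^\infty(\R^n)$.
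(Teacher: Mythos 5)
Your proof is correct and follows essentially the same route as the paper: extend the integration-by-parts identity from functions of $X$ alone to functions of $X+Y$ via independence (the paper asserts $E[\Xi_j g(X,Y)] = E[\partial_{X_j} g(X,Y)]$ where you make the underlying Fubini argument explicit), and then identify $E[\Xi \mid X+Y]$ with the orthogonal projection onto the closed span of $\{f(X+Y): f \in C_c^\infty\}$. You simply spell out two details the paper treats as immediate, namely the Fubini step and the density of $C_c^\infty$ in $L^2(\R^n,\mu*\nu)$.
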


\begin{proof}
Because $X$ and $Y$ are independent, we have for $g \in C_c^\infty(\R^n \times \R^n)$ that $E[\Xi_j g(X,Y)] = E[\partial_{X_j} g(X,Y)]$.  In particular, if $f \in C_c^\infty(\R^n)$, then
\[
E[\Xi_j f(X+Y)] = E[\partial_{X_j}(f(X+Y))] = E[(\partial_j f)(X+Y)].
\]
But $E[\Xi_j | X + Y]$ is the orthogonal projection onto the closed span of $\{f(X+Y): f \in C_c^\infty(\R^n)\}$ and hence
\[
E\left[ E[\Xi_j | X + Y] f(X + Y) \right] = E[\partial_j f(X + Y)].
\]
So $\mathcal{I}(\mu * \nu) = E[|E[\Xi|X+Y]|^2] \leq E[|\Xi|^2] = \mathcal{I}(\mu)$.  By symmetry, $\mathcal{I}(\mu * \nu) \leq \mathcal{I}(\nu)$.
\end{proof}

The entropy of a measure $\mu$ can be recovered by integrating the Fisher information of $\mu * \gamma_t$.  The following integral formula was the motivation for Voiculescu's definition of non-microstates free entropy $\chi^*$.  For the reader's convenience, we include a statement and proof in the random matrix setting with free probabilistic normalizations.  See also \cite[Lemma 1]{Barron1996} and \cite[Proposition 7.6]{VoiculescuFE5}.  Recall that we identify $M_N(\C)_{sa}^m$ with $\R^{mN^2}$ using the orthonormal basis given in \S \ref{subsec:matrixnotation} rather than entrywise coordinates (since some entries are real and some are complex).

\begin{lemma} \label{lem:integrateFisher}
Let $\mu$ be a probability measure on $M_N(\C)_{sa}^m$ with finite variance and with density $\rho$, and let $\sigma_{t,N}$ be the law of $m$ independent GUE's of normalized variance $t$.  If $a = (1/m) \int \norm{x}_2^2\,d\mu(x) = (1/mN) \int |x|^2\,d\mu(x)$, then we have for $t \geq 0$ that
\begin{equation} \label{eq:Fisherinfoestimate}
\frac{m}{a+t} \leq \frac{1}{N^3} \mathcal{I}(\mu * \sigma_{t,N}) \leq \min\left( \frac{m}{t}, \frac{1}{N^3} \mathcal{I}(\mu) \right).
\end{equation}
Moreover,
\begin{equation} \label{eq:integrateFisher}
\frac{1}{N^2} h(\mu * \sigma_{t,N}) - \frac{1}{N^2} h(\mu) = \frac{1}{2} \int_0^t \frac{1}{N^3} \mathcal{I}(\mu * \sigma_{s,N}) \,ds
\end{equation}
and
\begin{equation} \label{eq:integrateFisher2}
\frac{1}{N^2} h(\mu) + \frac{m}{2} \log N = \frac{1}{2} \int_0^\infty \left( \frac{m}{1+s} - \frac{1}{N^3} \mathcal{I}(\mu * \sigma_{s,N}) \right) ds + \frac{m}{2} \log 2\pi e.
\end{equation}
\end{lemma}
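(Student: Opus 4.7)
The plan is to establish the three assertions in order: first the pointwise Fisher information bounds \eqref{eq:Fisherinfoestimate}, then the entropy--Fisher derivative identity whose integration yields \eqref{eq:integrateFisher}, and finally \eqref{eq:integrateFisher2} by sending $t \to \infty$ in \eqref{eq:integrateFisher} after comparing against a Gaussian baseline.

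For \eqref{eq:Fisherinfoestimate}, both upper bounds come from Lemma \ref{lem:conditionalexpectation}: one gets $\mathcal{I}(\mu * \sigma_{t,N}) \leq \mathcal{I}(\mu)$ directly, and $\mathcal{I}(\mu * \sigma_{t,N}) \leq \mathcal{I}(\sigma_{t,N}) = mN^3/t$ by a direct score computation (the density of $\sigma_{t,N}$ on $\R^{mN^2}$ is proportional to $\exp(-N|x|^2/2t)$, giving score $Nx/t$ and Fisher information $(N/t)^2 \cdot mNt$). The lower bound is a Cram\'er--Rao argument. Write $X \sim \mu$ and $Y \sim \sigma_{t,N}$ independent, let $W = X + Y - E[X+Y]$, and let $\Xi$ be the conjugate variable for $W$ (which exists whenever $\mathcal{I}(\mu * \sigma_{t,N}) < \infty$, and otherwise the inequality is trivial). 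Applying the integration-by-parts relation to the coordinate functions, extended from $C_c^\infty$ by $L^2$-approximation, gives $E[\Xi_j W_j] = 1$ for each of the $mN^2$ coordinates, hence $E[\Xi \cdot W] = mN^2$. Cauchy--Schwarz then yields $\mathcal{I}(\mu * \sigma_{t,N}) \cdot E[|W|^2] \geq (mN^2)^2$, and since $E[|W|^2] = E[|X - E[X]|^2] + E[|Y|^2] \leq mNa + mNt$, we obtain $N^{-3} \mathcal{I}(\mu * \sigma_{t,N}) \geq m/(a+t)$.

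For \eqref{eq:integrateFisher}, the density $\rho_t$ of $\mu_t := \mu * \sigma_{t,N}$ is smooth with sub-Gaussian tail decay for every $t > 0$, and it satisfies the heat equation $\partial_t \rho_t = (1/2N) \Delta \rho_t$ in keeping with the normalization of $\sigma_{t,N}$. Differentiating $h(\mu_t) = -\int \rho_t \log \rho_t$ under the integral sign and integrating by parts twice yields
\[
\frac{d}{dt} h(\mu_t) = -\frac{1}{2N} \int (\Delta \rho_t) \log \rho_t = \frac{1}{2N} \int \frac{|\nabla \rho_t|^2}{\rho_t} = \frac{1}{2N} \mathcal{I}(\mu_t),
\]
the boundary terms vanishing because $\rho_t$ and $|\nabla \rho_t|$ decay faster than any polynomial. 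Integrating from $s=\epsilon$ to $s=t$ gives $(1/N^2)(h(\mu_t) - h(\mu_\epsilon)) = (1/2)\int_\epsilon^t N^{-3}\mathcal{I}(\mu_s)\,ds$, and sending $\epsilon \to 0$ produces \eqref{eq:integrateFisher} once we know $h(\mu_\epsilon) \to h(\mu)$. That convergence follows by sandwiching: Lemma \ref{lem:classicalentropy}(4) gives $h(\mu_\epsilon) \geq h(\mu)$, while Lemma \ref{lem:classicalentropy}(3) applied to $\rho_\epsilon \to \rho$ pointwise a.e.\ (using that $\int |x|^2 \, d\mu_\epsilon = \int|x|^2\,d\mu + mN\epsilon$ converges to $\int|x|^2\,d\mu$) gives $\limsup_\epsilon h(\mu_\epsilon) \leq h(\mu)$.

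Finally, \eqref{eq:integrateFisher2} follows by rewriting its right-hand side via \eqref{eq:integrateFisher}, so that the task reduces to showing $\lim_{t \to \infty}\bigl[(1/N^2)h(\mu_t) - (m/2)\log(1+t)\bigr] = (m/2)\log(2\pi e/N)$. Translating $\mu$ so that it has mean zero (which does not alter entropy) gives $\int|x|^2\,d\mu_t = mN(a+t)$, so Lemma \ref{lem:classicalentropy}(2) yields $(1/N^2)h(\mu_t) \leq (m/2)\log(2\pi e(a+t)/N)$, while Lemma \ref{lem:classicalentropy}(4) combined with the Gaussian entropy $h(\sigma_{t,N}) = (mN^2/2)\log(2\pi e t/N)$ gives the matching lower bound $(1/N^2)h(\mu_t) \geq (m/2)\log(2\pi et/N)$. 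Subtracting $(m/2)\log(1+t)$ from both bounds produces quantities tending to $(m/2)\log(2\pi e/N)$, and the squeeze theorem closes the argument. The main obstacle in this proof is really just making the differentiation under the integral sign and the integration by parts in Step 3 fully rigorous; this is routine, and the cleanest approach is to exploit the representation $\rho_t = \mu * g_{t,N}$, where $g_{t,N}$ is the Gaussian density, to extract uniform smoothness and tail decay on any compact $[t_0,t_1] \subset (0,\infty)$.
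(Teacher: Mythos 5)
Your overall strategy matches the paper's: Cram\'er--Rao for the lower bound in \eqref{eq:Fisherinfoestimate}, Lemma \ref{lem:conditionalexpectation} plus direct Gaussian computation for the upper bound, de~Bruijn's identity for \eqref{eq:integrateFisher}, and a Gaussian sandwich for the $t\to\infty$ limit in \eqref{eq:integrateFisher2}. The Cram\'er--Rao step is fine (the centering you do is harmless but unnecessary, since $E\langle Y, X\rangle = 0$ by independence and $EY=0$ already; and note that after centering the second moment of $\mu$ is $\leq mNa$, not $= mNa$, which only strengthens the bound), and the argument for \eqref{eq:integrateFisher2} is the same as the paper's.

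The gap is in your justification of the integration by parts for \eqref{eq:integrateFisher}. You assert that ``$\rho_t$ and $|\nabla\rho_t|$ decay faster than any polynomial,'' attributing this to the representation $\rho_t = \mu * g_{t,N}$. That is false under the stated hypotheses: the lemma only assumes $\mu$ has finite variance, so $\mu$ itself may have heavy polynomial tails, and convolving with a Gaussian smooths $\rho_t$ but does not improve its tail decay beyond that of $\mu$. Consequently the boundary terms in the integration by parts do not vanish for the reason you give. The paper's proof avoids this by working with a compactly supported cutoff $\psi_R$, integrating by parts against $\psi_R$, and then controlling the error term $\int \nabla\psi_R \cdot \nabla\rho_s\,(1+\log\rho_s)$: it proves a Gaussian lower bound $\log\rho_s \geq K' - |x|^2$ (so $|1+\log\rho_s| \lesssim |x|^2$ for large $|x|$), exploits $|\nabla\psi_R| \lesssim 1/R \sim 1/|x|$, and bounds the error by $\int_{|x|\geq R}(|x|^2 + |\nabla\rho_s/\rho_s|^2)\,d\mu_s$, which vanishes as $R\to\infty$ by dominated convergence since $\int|x|^2\,d\mu_s$ and $\mathcal{I}(\mu_s)$ are both finite for $s\in[\delta,t]$. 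This cutoff-and-estimate argument, not fast tail decay of $\rho_t$, is what makes the integration by parts rigorous; you would need to supply something of this sort rather than dismissing the boundary terms as routine.
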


\begin{proof}
To prove \eqref{eq:Fisherinfoestimate}, suppose $t \geq 0$ and let $X$ and $Y$ be random variables with the laws $\mu$ and $\sigma_{t,N}$ respectively.  The lower bound is trivial if $\mathcal{I}(\mu * \sigma_{t,N}) = +\infty$, so suppose that $X + Y$ has a conjugate variable $\Xi$.  Then after some computation, the integration-by-parts relation shows that $E \ip{\Xi, X + Y}_{\Tr} = mN^2$.  Thus,
\[
E[|\Xi|^2] \geq \frac{|E\ip{\Xi,X+Y}_{\Tr}|^2}{E|X+Y|^2} = \frac{(mN^2)^2}{N(ma + mt)} = \frac{N^3}{a + t}
\]
since the variance of $Y$ with respect to the non-normalized inner product is $Nmt$ and the variance of $X$ is $Na$.  The upper bound is trivial in the case where $t = 0$.  If $t > 0$, then by the previous lemma $\mathcal{I}(\mu * \sigma_{t,N}) \leq \min(\mathcal{I}(\mu), \mathcal{I}(\sigma_{t,N})$.  Moreover, a direct computation shows that if $Y \sim \sigma_{t,N}$, then the conjugate variable is $(N/t) Y$ and the Fisher information is $mN^3 / t$.

Next, to prove \eqref{eq:integrateFisher}, let $\mu_t := \mu * \sigma_{t,N}$.  By basic properties of convolving positive functions with the Gaussian, $\mu_t$ has a smooth density $\rho_t$.  We claim that if $0 < \delta < t$, then
\[
h(\mu_t) - h(\mu_\delta) = \frac{1}{2N} \int_\delta^t \mathcal{I}(\mu_s)\,ds = \frac{1}{2N} \int_\delta^t \int_{M_N(\C)_{sa}^m} \frac{|\nabla \rho_s(x)|^2}{\rho_s(x)}\,dx\,ds.
\]
This will follow from integration by parts, but to give a complete justification, we first introduce a smooth compactly supported ``cutoff'' function $\psi_R: M_N(\C)_{sa}^m \to \R$ such that $0 \leq \psi_R \leq 1$ and $\psi_R(x) = 1$ when $|x| \leq R$ and $\psi_R(x) = 0$ when $|x| \geq 2R$.  Because of scale-invariance, we can arrange that $\norm{\nabla \psi_R(x)}_2 \leq C / R$.  Because $\partial_s \rho_s = (1/2N) \Delta \rho_s$, we have
\begin{align*}
\frac{d}{dt} \left[ -\int \psi_R \rho_s \log \rho_s \right] &= -\frac{1}{2N} \int \psi_R \cdot (\Delta \rho_s \log \rho_s + \Delta \rho_s) \\
&= \frac{1}{2N} \int \psi_R \frac{|\nabla \rho_s|^2}{\rho_s} + \frac{1}{2N} \int \nabla \psi_R \cdot \nabla \rho_s \cdot (1 + \log \rho_s),
\end{align*}
where all the integrals are taken over $M_N(\C)_{sa}^m$ with respect to $dx$.  This is equal to
\[
\frac{1}{2N} \int \psi_R |\nabla \rho_s / \rho_s|^2\,d\mu_s - \frac{1}{2N} \int (\nabla \psi_R \cdot \nabla \rho_s / \rho_s)(1 + \log \rho_s)\,d\mu_s.
\]
Of course, by the monotone convergence theorem
\[
\lim_{R \to +\infty} \int_\delta^t \int \psi_R |\nabla \rho_s / \rho_s|^2\,d\mu_s\,ds = \int_\delta^t \mathcal{I}(\mu_s)\,ds.
\]

The other term is an error which can be estimated as follows:  Note that $\mu_s = \mu * \sigma_{s,N}$ and that $\sigma_{s,N}$ has a density that is bounded uniformly for $s \in [\delta,t]$ and $x \in M_N(\C)_{sa}^m$.  Therefore, $\rho_s$ is uniformly bounded for $s \in [\delta,t]$ and $x \in M_N(\C)_{sa}^m$ and hence $\log \rho_s$ is uniformly bounded above.  To obtain a lower bound on $\log \rho_s$, first note that there is a $K > 0$ such that
\[
\mu(x: |x| \leq K) \geq 1/2.
\]
Now if $x \in M_N(\C)_{sa}^m$ and $|y| \leq K$, then $|x - y| \geq |x| - K$ and hence $|x - y|^2 \leq |x|^2 - 2K|x| + K^2 \geq 2|x|^2 + 2K^2$, where the last inequality follows because $2K|x| \leq (1/2)|x|^2 + 2K^2$ by the arithmetic geometric mean inequality.  Therefore, letting $Z$ be the normalizing constant for $\sigma_{t,N}$, we have
\begin{align*}
\rho_s(x) &= \frac{1}{Z} \int e^{-(N/2t)|x - y|^2} \,d\mu(y) \\
&\geq \frac{1}{Z} \int_{|y|\leq K} \int e^{-(N/2t)|x - y|^2} \,d\mu(y) \\
&\geq \frac{1}{Z} \int_{|y| \leq K} e^{-(N/t)(|x|^2 + K^2)}\,d\mu(y) \\
&\geq \frac{e^{-NK^2/t}}{2Z} e^{-(N/t) |x|^2},
\end{align*}
so that $\log \rho_s \geq K' - |x|^2$ for some constant $K'$.  In particular, combining our upper and lower bounds, there is a constant $\alpha$ such that for sufficiently large $x$, we have $|1 + \log \rho_s| \leq \alpha |x|^2$.  Recall that $\nabla \psi_R(x)$ is supported when $R \leq |x| \leq 2R$ and thus is bounded by $1 / R \sim 1 / |x|$.   Altogether we have $|\nabla \psi_R (1 + \log \rho_s)| \leq \beta |x|$ for some constant $\beta$ when $|x|$ is large enough.  Thus, our error term is bounded by
\begin{align*}
\int_\delta^t \int |(\nabla \psi_R \cdot \Xi_s)(1 + \log \rho_s)|\,d\mu_s\,ds &\leq \beta \int_\delta^t \int_{|x| \geq R} |x| |\nabla \rho_s(x) / \rho_s(x)|\,d\mu_s(x)\,ds \\
&\leq \frac{1}{2} \beta \int_\delta^t \int_{|x| \geq R} (|x|^2 + |\nabla \rho_s(x) / \rho_s(x)|^2)\,d\mu_s(x)\,ds.
\end{align*}
The right hand is the tail of the convergent integral
\[
\int_\delta^t \int  (|x|^2 + |\nabla \rho_s(x) / \rho_s(x)|^2)\,d\mu_s(x)\,ds = \int_\delta^t [(a + ms) + \mathcal{I}(\mu_s)]\,ds < +\infty,
\]
and therefore it goes to zero as $R \to +\infty$ by the dominated convergence theorem.  We can also apply the dominated convergence theorem to $-\int \psi_R \rho_t \log \rho_t$ and $-\int \psi_R \rho_\delta \log \rho_\delta$ given our earlier estimate that $\rho_s$ is subquadratic for each $s$.  The result is that
\[
h(\mu_t) - h(\mu_\delta) = \frac{1}{2N} \int_\delta^t \int |\nabla \rho_s / \rho_s|^2\,d\mu_s \,ds = \frac{1}{2N} \int_\delta^t \mathcal{I}(\mu_s)\,ds.
\]
To complete the proof of \eqref{eq:integrateFisher}, we must take $\delta \searrow 0$.  We can take the limit of the right hand side by the monotone convergence theorem.  As for the left hand side, Lemma \ref{lem:classicalentropy} (3) implies that $\limsup_{\delta \searrow 0} h(\mu_\delta) \leq h(\mu)$ because $\rho_\delta \to \rho$ almost everywhere by Lebesgue differentiation theory.  On the other hand, $h(\mu_\delta) \geq h(\mu)$ by Lemma \ref{lem:classicalentropy} (4), hence $h(\mu_\delta) \to h(\mu)$, so \eqref{eq:integrateFisher} is proved.

To prove \eqref{eq:integrateFisher2}, we follow \cite[Proposition 7.6]{VoiculescuFE5}.  First, suppose that $h(\mu) > -\infty$.  Note that
\[
h(\mu) = \frac{1}{2} \int_0^t \left( \frac{mN^2}{1 + s} - \frac{1}{N} \mathcal{I}(\mu_s) \right)\,ds - \frac{mN^2}{2} \log(1 + t) + h(\mu_t).
\]
If $h(\mu) > -\infty$, then $\int_0^1 \left( \frac{mN^2}{1+s} - \frac{1}{N} \mathcal{I}(\mu_s) \right)\,ds$ is finite.  In light of \eqref{eq:Fisherinfoestimate}, the integral from $1$ to $+\infty$ is also finite and by the dominated convergence theorem
\[
\lim_{t \to + \infty} \frac{1}{2} \int_0^t \left( \frac{mN^2}{1 + s} - \frac{1}{N} \mathcal{I}(\mu_s) \right)\,ds = \frac{1}{2} \int_0^\infty \left( \frac{mN^2}{1 + s} - \frac{1}{N} \mathcal{I}(\mu_s) \right)\,ds.
\]
It remains to understand the behavior of $h(\mu_t) - (mN^2/2) \log(1+t)$.  By Lemma \ref{lem:classicalentropy} (4) and (2),
\[
h(\mu_t) \geq h(\sigma_{t,N}) = \frac{mN^2}{2} \log \frac{2\pi et}{N} = \frac{mN^2}{2} \log \frac{2\pi e}{N} + \frac{mN^2}{2} \log t.
\]
On the other hand, by Lemma \ref{lem:classicalentropy} (2), since $\int |x|^2\,d\mu_t(x) = N(a + tm)$, we have
\[
h(\mu_t) \leq \frac{mN^2}{2} \log \frac{2\pi e(a+t)}{N} =  \frac{mN^2}{2} \log \frac{2\pi e}{N} + \frac{mN^2}{2} \log (a + t).
\]
As $t \to \infty$, we have $\log (1 + t) - \log (a + t) \to 0$ and $\log (1 + t) - \log t \to 0$ and therefore
\[
h(\mu_t) - \frac{mN^2}{2} \log(1 + t) \to \frac{mN^2}{2} \log \frac{2\pi e}{N} = \frac{mN^2}{2} \log 2\pi e - \frac{mN^2}{2} \log N.
\]
Hence,
\[
h(\mu) = \frac{1}{2} \int_0^\infty \left( \frac{mN^2}{1 + s} - \frac{1}{N} \mathcal{I}(\mu_s) \right)\,ds + \frac{mN^2}{2} \log 2\pi e - \frac{mN^2}{2} \log N,
\]
which is equivalent to the asserted formula \eqref{eq:integrateFisher2}.  In the case where $h(\mu) = -\infty$, we also have $\int_0^1 \left( \frac{mN^2}{1 + s} - \frac{1}{N} \mathcal{I}(\mu_s) \right)\,ds = -\infty$ by \eqref{eq:integrateFisher}, but the integral from $1$ to $\infty$ is finite as shown above.  So both sides of \eqref{eq:integrateFisher2} are $-\infty$.
\end{proof}

\subsection{Free Fisher Information}

The starting point for the definition of free Fisher information is the integration-by-parts formula \eqref{eq:classicalintegrationbyparts}.  Indeed, if we formally apply this to a non-commutative polynomial $p$ and renormalize, we obtain
\begin{equation} \label{eq:noncommutativeintegrationbyparts}
\int \tau_N\left(\frac{1}{N} \Xi_j(x) p(x)\right)\,d\mu(x) = \int \tau_N \otimes \tau_N(\mathcal{D}_jp(x))\,d\mu(x),
\end{equation}
(and this integration by parts is justified under sufficient assumptions of finite moments).  Voiculescu therefore made the following definitions:

\begin{definition}[{\cite[\S 3]{VoiculescuFE5}}] \label{def:freeFisher}
Let $X = (X_1,\dots,X_m)$ be a tuple of self-adjoint random variables in a tracial von Neumann algebra $(\mathcal{M},\tau)$ and assume that $\mathcal{M}$ is generated by $X$ as a von Neumann algebra.  We say that $\xi = (\xi_1,\dots,\xi_m) \in L^2(\mathcal{M},\tau)^m$ is the \emph{(free) conjugate variable} of $X$ if
\begin{equation} \label{eq:freeintegrationbyparts}
\tau(\xi_j p(X)) = \tau \otimes \tau(\mathcal{D}_j p(X))
\end{equation}
for every non-commutative polynomial $p$.  In this case, we say that $X$ (or equivalently the law of $X$) has finite free Fisher information and define $\Phi^*(X) := \Phi^*(\lambda_X) := \sum_j \tau(\xi_j^2)$.  We also denote the conjugate variable $\xi$ by $J(X)$.
\end{definition}

\begin{definition}[{\cite[Definition 7.1]{VoiculescuFE5}}]
The \emph{non-microstates free entropy} of a non-commutative law $\lambda$ is
\[
\chi^*(\lambda) := \frac{1}{2} \int_0^\infty \left( \frac{m}{1+t} - \Phi^*(\lambda \boxplus \sigma_t) \right) + \frac{1}{2} \log 2\pi e.
\]
\end{definition}

Now we are ready to state conditions under which the classical Fisher information of a sequence of measures $\mu_N$ converges to the free Fisher information of the law $\lambda$.  First, to clarify the normalization, note that if $d\mu_N(x) = (1/Z_N) \exp(-N^2 V_N(x))\,dx$, then the classical conjugate variable is given by $\Xi_N = N^2 \nabla V_N$.  The normalized conjugate variable used in \eqref{eq:noncommutativeintegrationbyparts} is $(1/N) \Xi_N = N \nabla V_N = DV_N$.  The corresponding normalized Fisher information is then
\[
\int \norm{DV_N}_2^2\,d\mu_N = \int \frac{1}{N} \left| \frac{1}{N} \Xi_N \right|^2\,d\mu = \frac{1}{N^3} \mathcal{I}(\mu_N),
\]
which is the same normalization as in Lemma \ref{lem:integrateFisher}.

\begin{proposition} \label{prop:convergenceofFisherinfo}
Let $V_N: M_N(\C)_{sa}^m \to \R$ be a potential with $\int \exp(-N^2 V_N(x))\,dx < +\infty$ and let $\mu_N$ be the associated measure.  Assume:
\begin{enumerate}[(A)]
	\item The sequence $\mu_N$ concentrates around a non-commutative law $\lambda$.
	\item The sequence $\{DV_N\}$ is asymptotically approximable by trace polynomials.
	\item For some $n \geq 0$ and $a, b > 0$ we have $\norm{DV_N}_2^2 \leq a + b \sum_{j=1}^m \tau_N(x_j^{2n})$.
	\item There exists $R_0 > 0$ such that
	\[
	\lim_{N \to \infty} \int_{\norm{x}_\infty \geq R_0} \left(1 + \sum_{j=1}^m \tau_N(x_j^{2n}) \right) \,d\mu_N(x) = 0.
	\]
\end{enumerate}
Then
\begin{enumerate}
	\item The law $\lambda$ can be realized by self-adjoint random variables $X = (X_1,\dots,X_m)$ in a tracial von Neumann algebra $(\mathcal{M},\tau)$ with $\norm{X_j} \leq R_0$.
	\item There exists a sequence of trace polynomials $f^{(k)} \in (\TrP_m^1)^m$ such that
	\[
	\lim_{k \to \infty} \limsup_{N \to \infty} \sup_{\norm{x}_\infty \leq R_0} \norm{DV_N(x) - f^{(k)}(x)}_2 = 0.
	\]
	\item If $\{f^{(k)}\}$ is any sequence as in (2), then $\{f_k(X)\}$ converges in $L^2(\mathcal{M},\tau)$ and the limit is the conjugate variable $J(X)$.
	\item The law $\lambda$ has finite free Fisher information and $N^{-3} \mathcal{I}(\mu_N) \to \Phi^*(\lambda)$ as $N \to \infty$.
\end{enumerate}
\end{proposition}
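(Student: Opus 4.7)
The plan is to identify the free conjugate variable $J(X)$ as the $L^2$-limit of trace polynomial approximants to $DV_N$, using the classical non-commutative integration-by-parts identity for $\mu_N$ together with (2) to transfer convergence from the matrix side to the free side. Part (1) follows by the truncation argument of Proposition \ref{prop:convergenceofentropy} with $\psi(t)=t$ on $[-R_0,R_0]$, using (A) together with (D). Part (2) is immediate from (B) with $R=R_0$; henceforth write $\epsilon_k := \limsup_{N\to\infty}\sup_{\|x\|_\infty\leq R_0}\|DV_N(x)-f^{(k)}(x)\|_2$, so $\epsilon_k\to 0$.

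For (3), introduce the scalar trace polynomial $h_{k,l}(x) := \sum_j\tau_N\bigl((f^{(k)}_j(x)-f^{(l)}_j(x))^*(f^{(k)}_j(x)-f^{(l)}_j(x))\bigr)$. Since $h_{k,l}$ is continuous in non-commutative moments and bounded on $\{\|x\|_\infty\leq R_0\}$, and since (A)+(D) give $\mu_N(\|x\|_\infty>R_0)\to 0$ together with concentration of $\mu_N$ around $\lambda$, bounded convergence yields
\[
\|f^{(k)}(X)-f^{(l)}(X)\|_2^2 = \lambda(h_{k,l}) = \lim_{N\to\infty}\int h_{k,l}\,\mathbf{1}_{\|x\|_\infty\leq R_0}\,d\mu_N \leq \limsup_{N\to\infty}\sup_{\|x\|_\infty\leq R_0}\|f^{(k)}(x)-f^{(l)}(x)\|_2^2,
\]
and the right side is bounded by $(\epsilon_k+\epsilon_l)^2$ via the triangle inequality with $DV_N$. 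Hence $\{f^{(k)}(X)\}_k$ is Cauchy in $L^2(\mathcal{M},\tau)^m$; let $\xi$ be its limit, which the same estimate shows is independent of the choice of approximants, and $\xi$ automatically lies in $L^2(W^*(X))^m$. To identify $\xi=J(X)$, start from the classical non-commutative integration-by-parts identity
\[
\int \tau_N\bigl((DV_N)_j(x)\,q(x)\bigr)\,d\mu_N = \int \tau_N\otimes\tau_N(\mathcal{D}_j q(x))\,d\mu_N \qquad (q\in\NCP_m),
\]
obtained by differentiating $Z_N^{-1}e^{-N^2V_N}$ against each $b\in\mathcal{B}_N$ and applying \eqref{eq:basissummation}. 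Replace $(DV_N)_j$ by $f^{(k)}_j$ on the left using Cauchy--Schwarz: on $\{\|x\|_\infty\leq R_0\}$ the error is $O(\epsilon_k)$ against the bounded factor $\|q(x)\|_2$, while the tail is dominated by $\bigl(\int_{\|x\|_\infty>R_0}\|DV_N-f^{(k)}\|_2^2\,d\mu_N\bigr)^{1/2}$, which vanishes by (C)+(D). Passing $N\to\infty$ through bounded convergence on the truncated set, and then $k\to\infty$, delivers $\tau(\xi_j\,q(X))=\tau\otimes\tau(\mathcal{D}_j q(X))$ for every $q$, so $\xi=J(X)$ and $\lambda$ has finite free Fisher information.

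Part (4) reduces to the same split: $N^{-3}\mathcal{I}(\mu_N)=\int\|DV_N\|_2^2\,d\mu_N$, and the tail vanishes by (C)+(D). On the truncated region, the identity $\|DV_N\|_2^2-\|f^{(k)}\|_2^2=\langle DV_N+f^{(k)},\,DV_N-f^{(k)}\rangle_2$ combined with the uniform bound $\|DV_N\|_2\leq(a+bmR_0^{2n})^{1/2}$ on $\{\|x\|_\infty\leq R_0\}$ (from (C)) gives a replacement error of $O(\epsilon_k)$, and bounded convergence yields $\int_{\|x\|_\infty\leq R_0}\|f^{(k)}\|_2^2\,d\mu_N\to\|f^{(k)}(X)\|_2^2$; letting $k\to\infty$ produces $N^{-3}\mathcal{I}(\mu_N)\to\|J(X)\|_2^2=\Phi^*(\lambda)$. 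The main technical obstacle throughout is tail control: the strategy is to perform all bounded-convergence arguments on $\{\|x\|_\infty\leq R_0\}$ and to handle the remaining tail integrals via Cauchy--Schwarz, invoking the matched exponents in (C) and (D) to dominate $\int_{\|x\|_\infty>R_0}\|DV_N\|_2^2\,d\mu_N$.
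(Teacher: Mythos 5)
Your overall strategy—approximate $DV_N$ by trace polynomials on the good set $\{\norm{x}_\infty \leq R_0\}$, show $\{f^{(k)}(X)\}$ is Cauchy via concentration, and then pass the integration-by-parts relation to the limit—is the same as the paper's. Parts (1), (2), and the Cauchy argument in (3) are fine, as is the analysis of $N^{-3}\mathcal{I}(\mu_N)$ in (4).

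The gap is in your identification $\xi = J(X)$. You invoke the identity
\[
\int \tau_N\bigl((DV_N)_j(x)\,q(x)\bigr)\,d\mu_N = \int \tau_N\otimes\tau_N(\mathcal{D}_j q(x))\,d\mu_N
\]
for arbitrary non-commutative polynomials $q$, but this is not justified by (A)--(D). The hypotheses only give moment control on $\tau_N(x_j^{2n})$ for a fixed $n$; they do not say $\tau_N(q(x))$ or $\tau_N(\mathcal{D}_j q(x))$ are $\mu_N$-integrable for polynomials $q$ of arbitrary degree, nor do they provide the decay at infinity needed to drop boundary terms when you differentiate $Z_N^{-1} e^{-N^2 V_N}$ against the basis $\mathcal{B}_N$. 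The error surfaces again in your tail estimate: you claim the tail contribution is bounded by $\bigl(\int_{\norm{x}_\infty > R_0} \norm{DV_N - f^{(k)}}_2^2\,d\mu_N\bigr)^{1/2}$, but by Cauchy--Schwarz the other factor is $\bigl(\int_{\norm{x}_\infty > R_0} \norm{q(x)}_2^2\,d\mu_N\bigr)^{1/2}$, which is uncontrolled: $q$ is unbounded and (D) controls only $\tau_N(x_j^{2n})$, not arbitrary polynomial moments. The same issue affects the right-hand side $\int \tau_N \otimes \tau_N(\mathcal{D}_j q)\,d\mu_N$.

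The paper avoids this by testing against $p(\Psi(x))$, where $\Psi(x) = (\psi(x_1),\dots,\psi(x_m))$ for a compactly supported smooth $\psi$ with $\psi(t) = t$ on $[-R_0,R_0]$. Then $p \circ \Psi$ and $D_j[\tau_N(p \circ \Psi)]$ are globally bounded in operator norm, the classical integration-by-parts relation holds (since the test function is $C_c^\infty$-like after evaluating the trace), the tail integrals vanish by the boundedness of $p \circ \Psi$ together with $\int_{\norm{x}_\infty \geq R_0} \norm{DV_N}_2^2 \,d\mu_N \to 0$ from (C)+(D), and on the good set $p \circ \Psi = p$. To repair your argument, replace $q(x)$ by $q(\Psi(x))$ throughout, verify that the integration-by-parts identity is legitimate for this bounded test function, and only then peel off the tail and specialize to the good region where $\Psi$ is the identity.
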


\begin{proof}
(1) This follows from the same argument as Proposition \ref{prop:convergenceofentropy}.

(2) This follows from the definition of asymptotic approximability by trace polynomials.

(3) Let $\{f^{(k)}\}$ be a sequence as in (2).  Because $\mu_N$ concentrates around $\lambda$ and because $\mu_N(\{x: \norm{x}_\infty \leq R_0\}) \to 1$ as $N \to +\infty$ by (4), we have
\[
\lambda[(f^{(j)} - f^{(k)})^*(f^{(j)} - f^{(k)})] = \lim_{N \to \infty} \int_{\norm{x}_\infty \leq R_0} \tau_N[(f^{(j)} - f^{(k)})^*(f^{(j)} - f^{(k)})(x)]\,d\mu_N(x).
\]
For every $\epsilon > 0$, if $j$ and $N$ are large enough, then $\sup_{\norm{x}_\infty \leq R_0} \norm{DV_N(x) - f^{(j)}(x)}_2 < \epsilon$ by our assumption on $f^{(j)}$.  In particular, if $j$ and $k$ are sufficiently large, then $\lambda[(f^{(j)} - f^{(k)})^*(f^{(j)} - f^{(k)})] < (2\epsilon)^2$.  This shows that $\{f^{(k)}(X)\}$ is Cauchy in $L^2(M,\lambda)$ since $X$ has the law $\lambda$.

Let $\xi = \lim_{k \to \infty} f^{(k)}(X)$.  We must show that $\xi$ is the conjugate variable for $X$.  Let $\psi \in C_c^\infty(\R)$ such that $\psi(y) = y$ when $|y| \leq R_0$.  For $x \in M_N(\C)_{sa}^m$, let $\Psi(x) = (\psi(x_1), \dots, \psi(x_m))$.  By \eqref{eq:noncommutativeintegrationbyparts}, because $DV_N(x)$ is the classical conjugate variable for $X$, we have for every non-commutative polynomial $p$ that
\[
\int \tau_N[D_jV_N(x) \cdot p(\Psi(x))]\,d\mu_N(x) = \int D_j[\tau_N(p(\Psi(x)))]\,d\mu_N(x).
\]
It follows from our assumptions (C) and (D) that
\[
\lim_{N \to \infty} \int_{\norm{x}_\infty \geq R_0} \norm{DV_N(x)}_2^2\,d\mu_N(x) = 0.
\]
Because $p(\Psi(x))$ and $D_j[\tau_N(p(\Psi(x)))]$ are globally bounded in operator norm, the integral of these quantities over $\norm{x}_\infty \geq R_0$ will vanish as $N \to \infty$ and therefore
\[
\int_{\norm{x}_\infty \leq R_0} \tau_N[D_jV_N(x) p(\Psi(x))]\,d\mu_N(x) - \int_{\norm{x}_\infty \leq R_0} D_j[\tau(p(\Psi(x)))]\,d\mu_N(x) \to 0
\]
But since $p(\Psi(x)) = p(x)$ on this region, we have
\[
\int_{\norm{x}_\infty \leq R_0} \tau_N[D_jV_N(x) p(x)]\,d\mu_N(x) - \int_{\norm{x}_\infty < R_0} \tau_N \otimes \tau_N[\mathcal{D}_jp(x)] \,d\mu_N(x) \to 0.
\]
Now the second term converges to $\lambda \otimes \lambda[\mathcal{D}_j p] = \tau \otimes \tau[\mathcal{D}_jp(X)]$ by our concentration assumption (A).  For the first term, we can replace $D_j V_N(x)$ by $f_j^{(k)}(x)$ with an error bounded by $\sup_{\norm{x}_\infty \leq R_0} \norm{f^{(k)}(x) - DV_N(x)}_2$.  Then we apply concentration to conclude that $\int \tau_N[f_j^{(k)}(x)^* p(x)]\,d\mu_N(x) \to \lambda[(f_j^{(k)})^*p]$.  Overall,
\[
\left| \lambda[(f_j^{(k)})^* p] - \lambda \otimes \lambda[\mathcal{D}_j p] \right| \leq \limsup_{N \to \infty} \sup_{\norm{x}_\infty \leq R_0} \norm{f^{(k)}(x) - DV_N(x)}_2.
\]
Taking $k \to \infty$, we obtain $\tau[\xi_j p(X)] - \tau \otimes \tau[\mathcal{D}_j p(X)] = 0$ as desired.

(4) We know from (3) that $\lambda$ has finite Fisher information.  Assumptions (C) and (D) imply that
\[
\frac{1}{N^3} \mathcal{I}(\mu_N) - \int_{\norm{x}_\infty \leq R_0} \norm{DV_N(x)}_2^2 \,d\mu_N(x) \to 0.
\]
By similar arguments as before, we can approximate $DV_N$ by $f^{(k)}$ on $\norm{x}_\infty \leq R_0$, approximate $\int_{\norm{x}_\infty \leq R_0} \norm{f^{(k)}}_2^2\,d\mu_N$ by $\lambda((f^{(k)})^* f^{(k)})$, and then approximate $\lambda((f^{(k)})^* f^{(k)})$ by $\tau(\xi^*\xi) = \Phi^*(\lambda)$, where the error terms vanish as $N \to \infty$ and then $k \to \infty$.  This implies that $N^{-3} \mathcal{I}(\mu_N) \to \Phi^*(\lambda)$.
\end{proof}

\section{Evolution of the Conjugate Variables} \label{sec:evolution}

\subsection{Motivation and Statement of the Equation}

In the last section, we stated conditions under which the classical entropy and Fisher information of $\mu_N$ converge to their free counterparts for the limiting non-commutative law $\lambda$.  In order to prove that $\chi(\lambda) = \chi^*(\lambda)$, we want to take the limit in the integral formula \eqref{eq:integrateFisher2}, and therefore, we want $N^{-3} \mathcal{I}(\mu_N * \sigma_{t,N}) \to \Phi^*(\lambda \boxplus \sigma_t)$ for all $t > 0$.  In order to apply Proposition \ref{prop:convergenceofFisherinfo} to $\mu_N * \sigma_{t,N}$, we need to show that $\{DV_{N,t}\}_N$ is asymptotically approximable by trace polynomials, where $V_{N,t}$ is the potential corresponding to $\mu_N * \sigma_{t,N}$.

By adding a constant to each $V_N$ if necessary, we may assume without loss of generality that $Z_N = 1$.  We call that $V_{N,t}(x)$ is given by
\begin{equation} \label{eq:Gaussianconvolutionequation}
\exp(-N^2 V_{N,t}(x)) = \int \exp(-N^2 V_N(x+y)) \,d\sigma_{t,N}(y). 
\end{equation}
Then $\exp(-N^2 V_{N,t}(x))$ solves the normalized heat equation
\begin{equation} \label{eq:heatequation}
\partial_t[\exp(-N^2 V_{N,t}(x))] = \frac{1}{2N} \Delta [\exp(-N^2 V_{N,t}(x))],
\end{equation}
where $(1/N) \Delta = L_N$ is the normalized Laplacian.  However, we do not know how to show that $DV_N(\cdot,t)$ is asymptotically approximable by trace polynomials from a direct analysis of the heat equation because of the dimension-dependent factor of $N^2$ in the exponent.  What we want is a dimension-independent and ``hands-on'' way of producing $V_{N,t}$ from $V_N$.

As in \S \ref{sec:convergenceandconcentration}, we will analyze the PDE which describes the evolution of the function $V_{N,t}$.  We first derive the equation by rewriting the \eqref{eq:heatequation} in terms of $V_{N,t}$ rather than $e^{-N^2 V_{N,t}}$.  By the chain rule,
\[
\partial_t[\exp(-N^2V_{N,t})] = -N^2 \partial_t V_{N,t} \cdot \exp(-N^2V_{N,t})
\]
and
\begin{align*}
\Delta[\exp(-N^2V_{N,t})] &= [\Delta(-N^2V_{N,t}) + |\nabla(-N^2V_{N,t})|^2] \exp(-N^2V_{N,t}) \\
&= (-N^2 \Delta V_{N,t} + N^4 |\nabla V_{N,t}|^2) \exp(-N^2 V),
\end{align*}
where $\Delta$ and $\nabla$ denote the classical (non-normalized) Laplacian and gradient, where $M_N(\C)_{sa}^m$ has been identified with $\R^{mN^2}$ using the coordinates in \ref{subsec:matrixnotation}.  Thus, our equation becomes
\begin{align*}
-N^2 \partial_t V_{N,t} &= \frac{1}{2N}(-N^2 \Delta V_{N,t} + N^4 |\nabla V_{N,t}|^2) \\
\partial_t V_{N,t} &= \frac{1}{2N} \Delta V_{N,t} - N |\nabla V_{N,t}|^2.
\end{align*}
Recall that $(1/N) \Delta$ is the normalized Laplacian discussed in \S \ref{subsec:TPdiff}.  The normalized gradient is $DV_{N,t} = N \nabla V_{N,t}$, and the normalized Euclidean norm is $\norm{x}_2^2 = \sum_{j=1}^m \tau_N(x_j^2) = \frac{1}{N} \sum_{j=1}^m \Tr(x_j^2) = \frac{1}{N} |x|^2$.  Then
\[
N|\nabla V_{N,t}|^2 = \frac{1}{N} |N \nabla V_{N,t}|^2 = \frac{1}{N} |DV_{N,t}|^2 = \norm{DV_{N,t}}_2^2.
\]
and therefore we obtain the following equation that is normalized in a dimension-independent way
\begin{equation} \label{eq:normalizedevolution}
\partial_t V_{N,t} = \frac{1}{2} L_N V_{N,t} - \frac{1}{2} \norm{DV_{N,t}}_2^2.
\end{equation}

In the remainder of this section, we study a semigroup $R_t$ acting on convex and semi-concave functions on $M_N(\C)_{sa}^m$ such that $V_{N,t} = R_t V_N$ (here $R_t$ depends implicitly on $N$).  In \S \ref{subsec:iteration} - \S \ref{subsec:continuity1}, we construct $R_t$ from scratch by iterating the heat semigroup and Hopf-Lax semigroup.  Next, in \S \ref{subsec:viscosity}, we verify that $R_t V_N$ solves \eqref{eq:normalizedevolution} in the \emph{viscosity sense} (for background, see \cite{CIL1992}), and deduce that $R_t V_N$ must agree with the smooth solution $V_{N,t}$ defined by \eqref{eq:Gaussianconvolutionequation}.  Finally, in (\S \ref{subsec:approximation1}), we show that if $\{DV_N\}$ is asymptotically approximable by trace polynomials, then so is $\{D(R_t V_N)\}$.

\subsection{Strategy to Approximate Solutions} \label{subsec:iteration}

To construct the semigroup $R_t$ that solves \eqref{eq:normalizedevolution}, we view the equation as a hybrid between the heat equation $\partial_t u = (1/2N) \Delta u$ and the Hamilton-Jacobi equation with quadratic potential $\partial_t u = -(1/2) \norm{D u}_2^2$.  The heat equation can be solved by the heat semigroup
\begin{equation}
P_t u(x) := \int u(x + y)\,d\sigma_{t,N}(y),
\end{equation}
while the Hamilton-Jacobi equation can be solved using the inf-convolution semigroup
\begin{equation}
Q_t u(x) := \inf \left[u(x + y) + \frac{1}{2t} \norm{y}_2^2 \right]
\end{equation}
as a special case of the Hopf-Lax formula (see \cite[Chapter 3.3]{Evans}).

In Dabrowski's approach, the solution to \eqref{eq:normalizedevolution} was expressed through a formula of Bou\'e, Dupuis and \"Ustunel as the infimum of $E [u(x + B_t + \int_0^t Y_s\,ds) + (1/2) \int_0^t \norm{Y_s}_2^2\,ds]$ over a certain class of stochastic processes $Y_t$ adapted to a standard Brownian motion $B_t$ (see \cite[Theorem 3.1]{Dabrowski2017}).  This formula, roughly speaking, combines the Gaussian convolution and inf-convolution operations by replacing the $y$ in the definition of $Q_t$ by a stochastic process and allowing it to evolve with $B_t$.  Dabrowski then identifies the minimizing process $Y_t$ as a Brownian bridge \cite[Section 5]{Dabrowski2017} and analyzes it using a forward-backward SDE.  Through the Picard iteration solving the SDE, he shows that the solution is well-approximated by non-commutative functions.

We instead give a deterministic proof following the same strategy as in \S \ref{sec:convergenceandconcentration} that is motivated by Trotter's formula, we define a semigroup $R_t u$ at dyadic times $t$ by alternating between $P_{2^{-\ell}}$ and $Q_{2^{-\ell}}$ and then letting $\ell \to \infty$.  We establish convergence through a telescoping series argument after showing that $P_t Q_t - Q_t P_t = o(t)$.  Then we show that $R_t u$ depends continuously on $t$ in order to extend its definition to all positive real $t$.

In contrast to \S \ref{sec:convergenceandconcentration}, we must understand how the semigroups $P_t$, $Q_t$, and $R_t$ affect $Du$ as well as $u$, and we want $D(R_t u)$ to be Lipschitz for all $t$.  We therefore view these operators as acting on spaces of the form
\[
\mathcal{E}(c,C) = \left\{u: M_N(\C)_{sa}^m \to \R,  u(x) - \frac{1}{2}c \norm{x}_2^2 \text{ is convex and } u(x) - \frac{1}{2} C \norm{x}_2^2 \text{ is concave} \right\},
\]
where $0 \leq c \leq C < +\infty$, where we suppress the dependence on $m$ and $N$ in the notation. These spaces have the virtue that if $u \in \mathcal{E}(c,C)$, then $\norm{Du}_{\Lip} \leq C$ automatically (see Proposition \ref{prop:convex} (3)).

At every step of the proof, we include estimates both for $u$ and for $Du$.  In addition, controlling the error propagation requires more work because $Q_t$ and $R_t$ are not contractions with respect to $\norm{Du}_{L^\infty}$.

The following theorem summarizes the results of the construction.

Here, for a measurable function $u: M_N(\C)_{sa}^m \to \R$, the notation $\norm{u}_{L^\infty}$ is the standard $L^\infty$ norm.  If $F: M_N(\C)_{sa}^m \to M_N(\C)_{sa}^m$ (for instance $F = Du$ for some $u: M_N(\C)_{sa}^m \to \R$, then $\norm{F}_{L^\infty} = \sup_{x \in M_N(\C)_{sa}^m} \norm{F(x)}_2$; similarly, $\norm{F}_{\Lip}$ is the Lipschitz norm of $F$ when using $\norm{\cdot}_2$ in both the domain and the target space.

Note that $\norm{F}_2$ does \emph{not} denote the $L^2$ norm of $F$ with respect to any measure, but rather $(\sum_{j=1}^m \tau(F_j^2))^{1/2}$, which is a function of $x$.  We denote $\N = \{1,2,3,\dots\}$ and $\N_0 = \{0,1,2,\dots\}$.  We also denote by $\Q_2^+ = \bigcup_{n \geq 0} 2^{-n} \N_0$ the nonnegative dyadic rationals.  Moreover, we assume throughout the section that $0 \leq c \leq C < +\infty$.

\begin{theorem} \label{thm:semigroup}
There exists a semigroup of nonlinear operators $R_t: \bigcup_{C > 0} \mathcal{E}(0,C) \to \bigcup_{C > 0} \mathcal{E}(0,C)$ with the following properties:
\begin{enumerate}
	\item {\bf Change in Convexity:} If $u \in \mathcal{E}(c,C)$ where $0 \leq c \leq C$, then $R_t u \in \mathcal{E}(c(1+ct)^{-1},C(t+Ct)^{-1})$.
	\item {\bf Approximation by Iteration:} For $\ell \in \Z$ and $t \in 2^{-\ell} \N_0$, denote $R_{t,\ell}u = (P_{2^{-\ell}} Q_{2^{-\ell}})^{2^\ell t} u$.  Suppose $t \in \Q_2^+$ and $u \in \mathcal{E}(0,C)$.
		\begin{enumerate}[(a)]
			\item If $2^{-\ell-1} C \leq 1$, then
			\[
			|R_t u - R_{t,\ell} u| \leq \left( \frac{3}{2} \frac{C^2mt}{1+Ct} + \log(1 + Ct) (m + Cm + \norm{Du}_2^2) \right) 2^{-\ell}.
			\]
			\item $\displaystyle \norm{D(R_{t,\ell} u) - D(R_t u)}_{L^\infty} \leq [t/2 + C(t/2)^2] C^2 m^{1/2}(2 \cdot 2^{-\ell/2} + 2^{-3\ell/2}C)$.
		\end{enumerate}
	\item {\bf Continuity in Time:} Suppose $s \leq t \in \R_+$ and $u \in \mathcal{E}(0,C)$.
		\begin{enumerate}[(a)]
			\item $R_t u \leq R_s u + \frac{m}{2} [\log(1 + Ct) - \log(1 + Cs)]$.
			\item $R_t u \geq R_s u - \frac{1}{2} (t - s)(Cm + \norm{Du}_2^2)$.
			\item If $C(t - s) \leq 1$, then $\norm{D(R_t u) - D(R_s u)}_2 \leq 5Cm^{1/2} 2^{1/2}(t - s)^{1/2} + C(t - s) \norm{Du}_2$.
		\end{enumerate}
	\item {\bf Error Estimates:} Let $t \in \R_+$ and $u, v \in \mathcal{E}(0,C)$.  Then
		\begin{enumerate}[(a)]
			\item $\norm{D(R_t u) - D(R_t v)}_{L^\infty} \leq (1 + Ct) \norm{Du - Dv}_{L^\infty}$.
			\item If $u \leq v + a + b \norm{Dv}_2^2$ where $a \in \R$ and $b \geq 0$, then
			\[
			R_t u \leq R_t v + a + b\frac{C^2mt}{1+Ct} + b \norm{D(R_tv)}_2^2.
			\]
			\item We have
			\[
			\norm{D(R_tu)}_2^2 \leq \frac{C^2mt}{1+Ct} + \norm{Du}_2^2.
			\]
		\end{enumerate}
\end{enumerate}
\end{theorem}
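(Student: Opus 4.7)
The plan is to build $R_t$ as a Trotter-type limit of $R_{t,\ell} = (P_{2^{-\ell}} Q_{2^{-\ell}})^{2^\ell t}$, so the first task is to understand each building block on the spaces $\mathcal{E}(c,C)$. The heat semigroup $P_t$, being convolution with a positive probability measure, preserves convexity and semi-concavity with the same constants $(c,C)$; moreover $D(P_tu) = P_t(Du)$ by differentiating under the integral, so $P_t$ is a contraction for $\norm{Du}_2$ pointwise (by Jensen) and for $\norm{Du}_{L^\infty}$. The inf-convolution $Q_t$ is the less familiar piece: a standard Moreau-envelope computation using the Legendre transform shows $Q_t: \mathcal{E}(c,C) \to \mathcal{E}(c/(1+ct), C/(1+Ct))$ (for $ct > -1$), that the infimum is attained at a unique minimizer $y^*(x)$, and that $D(Q_t u)(x) = Du(x + y^*(x)) = -y^*(x)/t$. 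Composing $P_\delta Q_\delta$ and iterating, the $C$ parameter evolves according to the discrete recursion $C \mapsto C/(1+C\delta)$, whose $2^\ell t$-fold iterate is exactly the discretization of the ODE $\dot C = -C^2$; taking $\ell \to \infty$ gives the claim $R_t u \in \mathcal{E}(c(1+ct)^{-1}, C(1+Ct)^{-1})$ of item (1).

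Second I would derive the commutator bound $\norm{P_\delta Q_\delta u - Q_\delta P_\delta u}_{L^\infty} = O(\delta^2)$, together with its gradient counterpart, by expanding each operator to first order: both $P_\delta Q_\delta u$ and $Q_\delta P_\delta u$ equal $u + (\delta/2)(L_Nu - \norm{Du}_2^2) + O(\delta^2)$, and the $O(\delta^2)$ discrepancy inherits dimension-independent constants depending only on $C$ and $m$ (using that $\sigma_{\delta,N}$ has second moment $m\delta$). Then the standard telescoping identity
\[
R_{t,\ell+1}u - R_{t,\ell}u = \sum_{j=0}^{2^\ell t - 1} (P_\delta Q_\delta)^{2j} P_\delta (Q_\delta P_\delta - P_\delta Q_\delta) Q_\delta (P_{2\delta} Q_{2\delta})^{2^\ell t - 1 - j} u
\]
combined with the $L^\infty$-contractivity of $P_t$ and $Q_t$ (both are monotone operators preserving constants) yields a geometric-sum bound giving item (2)(a). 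For (2)(b) one propagates the gradient error using that $D(P_\delta Q_\delta \cdot)$ is $(1+C\delta)^{-1}$-Lipschitz in the input gradient, inherited from the convexity decay in (1); summing up over the iteration reproduces the $(t/2 + C(t/2)^2)$ prefactor.

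Third I would extend $R_t$ to real $t$ by writing $t - s$ in binary expansion and using a telescoping argument analogous to Lemma \ref{lem:continuityofsemigroup0}, with the dyadic continuity estimates supplied by (2). The upper bound (3)(a) is most cleanly seen by noting that for $u \in \mathcal{E}(0,C)$ with $C(1+Ct)^{-1}$ replacing $C$ along the flow, the one-step inequality $P_\delta Q_\delta u \leq u + (m/2)\log(1+C\delta)$ (from $Q_\delta u \leq u$ and a Jensen-type bound using that the minimizer has small size); summing $\log$-terms along the iteration produces the logarithm in $t$. The lower bound (3)(b) uses $Q_\delta u \geq u - (\delta/2)\norm{Du}_2^2$ plus item (4)(c) to control the gradient square along the iteration. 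Estimate (3)(c) follows from part (2)(b) and the binary-expansion bookkeeping. Items (4)(a) and (4)(b) follow from monotonicity and the one-step Lipschitz constant calculation; (4)(c) is an \emph{a priori} bound obtained by noting that $Q_\delta$ reduces $\norm{Du}_2^2$ while $P_\delta$ increases it by at most $C^2 m \delta (1+Ct)^{-2}$ (the variance times squared Lipschitz constant, with $C$ shrinking along the flow), and integrating the resulting discrete differential inequality against the iteration.

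The main technical obstacle is the simultaneous bookkeeping of three coupled quantities along the iteration: the semi-concavity constant $C_\ell$ at step $\ell$ (which controls error propagation), the $L^\infty$ discrepancy between $R_{t,\ell}$ and the limit, and the evolving $\norm{Du}_2$ which feeds back into the lower bound (3)(b) and the estimate (4)(c). Unlike in Section \ref{sec:convergenceandconcentration}, neither $Q_t$ nor $R_t$ contracts the Lipschitz norm of $Du$ to zero, so one cannot close the estimates on $u$ alone; the self-improving structure via the decreasing $C_\ell \sim C/(1+Ct)$ is what ultimately forces convergence and must be carried through every estimate.
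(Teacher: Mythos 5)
Your overall architecture is the same as the paper's: build $R_t$ as a Trotter-type limit of $R_{t,\ell}=(P_{2^{-\ell}}Q_{2^{-\ell}})^{2^\ell t}$, track the convexity parameters along the iteration, and extend to real $t$ by binary-expansion bookkeeping.  You also correctly identify the key structural difficulty at the end, namely that $\norm{Du}_2$ must be carried through the estimates because it cannot be absorbed into an $L^\infty$ bound.  But three of your intermediate claims, as stated, are false, and repairing them is where the bulk of the actual work lies.

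First, the claimed expansion $P_\delta Q_\delta u = u + (\delta/2)(L_N u - \norm{Du}_2^2) + O(\delta^2)$ with an $O(\delta^2)$ depending only on $C$ and $m$ is not available.  A function $u\in\mathcal{E}(c,C)$ need not be $C^2$ (its gradient is only Lipschitz), so there is no second-order Taylor remainder at your disposal; and even formally, the commutation error is not a uniform-in-$x$ $O(\delta^2)$: the paper's Lemma \ref{lem:commutation}(3) gives $Q_\delta P_\delta u \le P_\delta Q_\delta u + 2C^2m\delta^2 + 2C\delta^2\,\norm{D(P_\delta Q_\delta u)}_2^2$, where the last term is an unbounded function of $x$.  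This is why the one-sided comparison principle $u \le v + a + b\norm{Dv}_2^2 \;\Rightarrow\; Ru \le Rv + a + b\tfrac{C^2mt}{1+Ct} + b\norm{D(Rv)}_2^2$ (your item (4)(b)) has to be proved and iterated first; it is the mechanism by which the gradient-dependent part of the commutator is propagated through the remaining operators.  Your scheme skips this and treats the commutator as an honest $L^\infty$ quantity.

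Second, the telescoping ``identity'' $R_{t,\ell+1}u - R_{t,\ell}u = \sum_j (P_\delta Q_\delta)^{2j}P_\delta(Q_\delta P_\delta - P_\delta Q_\delta)Q_\delta(P_{2\delta}Q_{2\delta})^{\cdots}u$ is valid for linear operators (as in Section \ref{sec:convergenceandconcentration}, where both $P_\delta$ and $S_\delta$ are linear maps on functions) but not here: $Q_\delta$ is genuinely nonlinear, so $A^{2j}(A^2 - B)w$ has no meaning.  What survives is the weaker but sufficient statement: defining $u_j = (P_\delta Q_\delta)^{2(n-j)}(P_{2\delta}Q_{2\delta})^j u$, one estimates each $u_{j-1}-u_j$ separately using the monotonicity of $P_\delta, Q_\delta$ and the comparison principle above, not an algebraic identity.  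This is a genuine telescoping \emph{sum of differences}, not a distributive identity.

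Third, your claim that $D(P_\delta Q_\delta\cdot)$ is $(1+C\delta)^{-1}$-Lipschitz in the input gradient is backwards.  You appear to be conflating the pointwise bound $\norm{D(Q_t u)(x)}_2\le(1+ct)^{-1}\norm{Du(x)}_2$ (which contracts and uses the lower convexity constant $c$) with the error-propagation estimate $\norm{D(Q_tu)-D(Q_tv)}_{L^\infty}\le(1+Ct)\norm{Du-Dv}_{L^\infty}$ (which \emph{expands} and uses the upper constant $C$).  It is the second one that governs how gradient discrepancies propagate through the iteration, and it only multiplies errors by a factor bounded by $(1+Ct^*)$ over the whole iteration.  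Convergence of $D(R_{t,\ell}u)$ is then saved not by contraction but by the fact that each step's gradient commutator is of size $O(\delta^{3/2})$ (paper Lemma \ref{lem:commutation}(1)), so the geometric series in $\ell$ closes despite the linear growth factor.  Had your $(1+C\delta)^{-1}$ claim been true, you could dispense with the dyadic structure entirely; the fact that it is false is precisely why the prefactor $[t/2 + C(t/2)^2]$ appears in (2)(b).

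In short: right strategy, right identification of the main obstacle, but the specific mechanism you propose for the commutator estimate and for propagating it through the iteration would not work as written, and the fix requires exactly the one-sided comparison machinery (Lemmas \ref{lem:initialerrorinequality}, \ref{lem:gradgrowth}, \ref{lem:errorpropagation}) that your sketch omits.
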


\begin{remark}
Knowing that $\exp(-N^2(R_tu)) = P_t \exp(-N^2u)$, one can deduce (1) from the Braskamp-Lieb and H\"older inequalities, as in \cite[Theorem 4.3]{BL1976}.  But the proof of (1) given here is independent of \cite{BL1976}.

We also point out that the ideas of semigroups and discrete-time approximation schemes have been employed to study Hamilton-Jacobi equations in Hilbert space (e.g.\ by \cite{Barbu1986}).
\end{remark}

\subsection{The Hopf-Lax Semigroup, the Heat Semigroup, and Convexity}

We remind the reader of our standing assumption that $0 \leq c \leq C$.

\begin{lemma} \label{lem:Gaussianconvexity}
Suppose $u \in \mathcal{E}(c,C)$.  Then
\begin{enumerate}
	\item $P_t u \in \mathcal{E}(c,C)$.
	\item $\norm{D(P_tu) - Du}_{L^\infty} \leq Cm^{1/2} t^{1/2}$.
\end{enumerate}
\end{lemma}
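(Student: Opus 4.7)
The plan is to reduce both claims to the facts that $\sigma_{t,N}$ is a centered probability measure with $\int\|y\|_2^2\,d\sigma_{t,N}(y)=mt$ and that $Du$ is $C$-Lipschitz (Proposition 2.7(4), using $0\le c\le C$ so $\max(|c|,|C|)=C$).

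For part (1), I would compute the convolution of the quadratic $(c/2)\|\cdot\|_2^2$ with $\sigma_{t,N}$: expanding $\|x+y\|_2^2=\|x\|_2^2+2\langle x,y\rangle_2+\|y\|_2^2$ and using that $\sigma_{t,N}$ is centered with second moment $mt$ gives $P_t\bigl[(c/2)\|\cdot\|_2^2\bigr](x)=(c/2)\|x\|_2^2+cmt/2$. By linearity of $P_t$, this means
\[
P_tu(x)-(c/2)\|x\|_2^2 \;=\; P_t\!\left[u-(c/2)\|\cdot\|_2^2\right](x)-cmt/2.
\]
Since $u-(c/2)\|\cdot\|_2^2$ is convex and convolution with the probability measure $\sigma_{t,N}$ is an averaging operation, Proposition 2.7(1) gives convexity of the right-hand side, so $P_tu\in\mathcal{E}(c,\cdot)$ on the convex side. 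Applying the identical argument to $(C/2)\|\cdot\|_2^2-u$ yields the concave side, and together these give $P_tu\in\mathcal{E}(c,C)$.

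For part (2), I would first justify passing $D$ through the integral: since $Du$ is $C$-Lipschitz (hence locally bounded) and $\sigma_{t,N}$ is a Gaussian with all moments finite, dominated convergence applied to difference quotients gives $D(P_tu)(x)=\int Du(x+y)\,d\sigma_{t,N}(y)$. Then
\[
\|D(P_tu)(x)-Du(x)\|_2 \;=\; \Bigl\|\!\int[Du(x+y)-Du(x)]\,d\sigma_{t,N}(y)\Bigr\|_2 \;\le\; C\!\int\|y\|_2\,d\sigma_{t,N}(y),
\]
and Cauchy--Schwarz bounds the last integral by $\bigl(\int\|y\|_2^2\,d\sigma_{t,N}(y)\bigr)^{1/2}=(mt)^{1/2}$, yielding the claimed estimate $Cm^{1/2}t^{1/2}$.

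There is no substantial obstacle here; the whole argument is a short computation once one uses the mean-zero and second-moment properties of $\sigma_{t,N}$ together with the automatic Lipschitz bound on $Du$ furnished by the $\mathcal{E}(c,C)$ hypothesis. The only point requiring a moment's care is the interchange of gradient and Gaussian integration, which is standard given the linear growth control on $Du$.
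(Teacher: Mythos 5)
Your proof is correct and takes essentially the same route as the paper. Part (2) matches the paper's estimate verbatim (Lipschitz bound on $Du$, then Cauchy--Schwarz against $\sigma_{t,N}$), with the added bonus that you justify the differentiation under the integral sign, which the paper leaves implicit; for part (1) the paper simply observes that $P_t u$ is an average of translates of $u$ and invokes Proposition~\ref{prop:convex}(1) directly, whereas you take a slight detour through an explicit computation of $P_t$ applied to the quadratic $(c/2)\norm{\cdot}_2^2$ before appealing to the same proposition --- equivalent, just a bit more work than needed.
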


\begin{proof}
(1) follows because $\mathcal{E}(c,C)$ is closed under translation and averaging, hence convolution by a probability measure.

(2) We know that $Du$ is $C$-Lipschitz and thus
\begin{align*}
\norm{D(P_tu)(x) - Du(x)}_2 &\leq \int \norm{Du(x+y) - Du(x)}_2 \,d\sigma_{t,N}(y) \\
&\leq \int C\norm{y}_2\,d\sigma_{t,N}(y) \\
&\leq Cm^{1/2} t^{1/2}. \qedhere
\end{align*}
\end{proof}

The following lemma gives basic properties of $Q_t$ from the PDE literature; see for instance \cite[p.\ 309-311]{EL1980}, \cite{LL1986}, \cite[Lemma A.5]{CIL1992}, \cite[Section 3.3.2]{Evans}.  For completeness and convenience, we include a proof of all the facts we will use.

\begin{lemma} \label{lem:monotonicityandquadratic} ~
\begin{enumerate}
	\item If $u, v: M_N(\C)_{sa}^m \to \R$ and $u \leq v$, then $P_t u \leq P_t v$ and $Q_t u \leq Q_t v$.
	\item Suppose that $v(x) = a + \ip{p,x}_2 \frac{1}{2} \ip{Ax,x}_2$ where $a \in \R$, $p \in M_N(\C)_{sa}^m$, and $A$ is a positive semi-definite linear map $M_N(\C)_{sa}^m \to M_N(\C)_{sa}^m$.  Then
	\begin{align*}
	P_t v(x) &= a + \frac{t}{2N^2} \Tr(A) + \ip{p,x} + \frac{1}{2} \ip{Ax,x}_2, \\
	Q_t v(x) &= a - \frac{t}{2} \norm{p}^2 + \ip{p,x} + \frac{1}{2} \ip{A(1 + tA)^{-1}(x - tp),x - tp}.
	\end{align*}
\end{enumerate}
\end{lemma}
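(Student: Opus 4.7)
For part (1), monotonicity holds by direct inspection of the definitions: $P_t u \leq P_t v$ follows from integrating the pointwise inequality $u(x+y) \leq v(x+y)$ against the probability measure $\sigma_{t,N}$, while $Q_t u \leq Q_t v$ follows because adding the common term $\frac{1}{2t}\|y\|_2^2$ to $u(x+y) \leq v(x+y)$ and then taking the infimum over $y \in M_N(\C)_{sa}^m$ preserves the inequality.

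For the formula for $P_t v$, I would expand
\[
v(x+y) = a + \langle p, x\rangle_2 + \langle p, y\rangle_2 + \tfrac{1}{2}\langle Ax, x\rangle_2 + \langle Ax, y\rangle_2 + \tfrac{1}{2}\langle Ay, y\rangle_2
\]
and integrate termwise against $\sigma_{t,N}$. The terms linear in $y$ vanish by the symmetry $y \mapsto -y$ of the Gaussian. For the quadratic term, I would use that $\sigma_{t,N}$ has density proportional to $\exp(-N\langle y, y\rangle_{\Tr}/2t)$ relative to Lebesgue measure in the orthonormal basis $\mathcal{B}_N^m$, so the scalar coordinates of $y$ are i.i.d.\ real Gaussians of variance $t/N$. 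This gives $\int \langle Ay, y\rangle_{\Tr}\,d\sigma_{t,N}(y) = (t/N)\Tr(A)$ and hence $\int \langle Ay, y\rangle_2\,d\sigma_{t,N}(y) = (t/N^2)\Tr(A)$, which yields the claimed formula after multiplication by $1/2$.

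For the formula for $Q_t v$, I would complete the square in $y$. Writing $F(y) := v(x+y) + \frac{1}{2t}\|y\|_2^2$, direct expansion gives
\[
F(y) = a + \langle p, x\rangle_2 + \tfrac{1}{2}\langle Ax, x\rangle_2 + \langle p + Ax, y\rangle_2 + \tfrac{1}{2}\langle (A + t^{-1}I)y, y\rangle_2.
\]
Since $A \geq 0$ and $t > 0$, the operator $A + t^{-1}I$ is positive definite and invertible, so $F$ has a unique minimizer at $y_* = -t(I + tA)^{-1}(p + Ax)$. Evaluating $F(y_*)$ yields
\[
Q_t v(x) = a + \langle p, x\rangle_2 + \tfrac{1}{2}\langle Ax, x\rangle_2 - \tfrac{t}{2}\langle (I + tA)^{-1}(p + Ax), p + Ax\rangle_2.
\]
To convert this into the stated form, I would set $M := A(I+tA)^{-1} = (I+tA)^{-1}A$ and use the identity $A - tMA = M$ (equivalent to $(I+tA)^{-1} = I - tM$) together with self-adjointness of $A$ to collect the cross terms. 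The quadratic-in-$x$ part becomes $\frac{1}{2}\langle Mx, x\rangle_2$, the mixed $xp$ contributions consolidate to $-t\langle Mx, p\rangle_2$, and the pure-$p$ part reduces to $-\frac{t}{2}\|p\|_2^2 + \frac{t^2}{2}\langle Mp, p\rangle_2$, which assembles into $-\frac{t}{2}\|p\|_2^2 + \frac{1}{2}\langle M(x - tp), x - tp\rangle_2$. The whole argument is computational; the only subtle step is this algebraic consolidation, and it is routine given that $A$ and $(I+tA)^{-1}$ commute.
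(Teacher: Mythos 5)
Your proof is correct and follows essentially the same strategy as the paper's: part (1) by direct inspection, $P_t v$ by Gaussian moment computation (which the paper leaves as an exercise), and $Q_t v$ by locating the unique minimizer via the critical-point equation and then simplifying. The only cosmetic difference is that the paper carries the minimizer $y$ implicitly through the relations $(1+tA)y = x - tp$ and $y - x = -t(p+Ay)$, while you solve for $y_*$ explicitly and perform the algebraic consolidation using $(I+tA)^{-1} = I - tM$ with $M = A(I+tA)^{-1}$; both routes reach the stated formula.
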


\begin{remark} \label{rem:trace}
The meaning of $\Tr(A)$ In the above formula is as follows.  Using the identification of $M_N(\C)_{sa}^m$ to $\R^{mN^2}$ given by \S \ref{subsec:matrixnotation}, we can express $A$ as an $mN^2 \times mN^2$ and compute its trace in this way.  Alternatively, since $A$ is a linear transformation of the real inner product space $M_N(\C)_{sa}^m$, we may compute $\Tr(A)$ using an orthonormal basis of $M_N(\C)_{sa}^m$.  Because the trace is similarity-invariant, this answer is independent of the choice of basis (and also independent of the choice of normalization for the inner product).  Note that the trace of the identity is $mN^2$, which makes the normalization in the above formula dimension-independent.
\end{remark}

\begin{proof}
(1) is immediate to check from the definition.  We leave the first formula of (2) as an exercise.  To prove the last formula, fix $t > 0$ and $x \in M_N(\C)_{sa}^m$ and note that $u(y) + \frac{1}{2t} \norm{y - x}_2^2$ is a uniformly convex function of $y$ and therefore it has a unique minimizer.  The minimizer $y$ must be a critical point and hence
\[
0 = Du(y) + \frac{1}{t}(y - x) = p + Ay + \frac{1}{t}(y - x).
\]
Thus, $(1 + tA)y = x - tp$ and $y - x = -t(p + Ay)$.  Thus,
\begin{align*}
Q_tu(x) &= u(y) + \frac{1}{2t} \norm{y - x}_2^2 \\
&= a + \ip{p,y} + \frac{1}{2}\ip{Ay,y} - \frac{1}{2} \ip{p+Ay,y-x} \\
&= a + \frac{1}{2} \ip{p,y} + \frac{1}{2} \ip{p+Ay,x} \\
&= a + \ip{p,x} + \frac{1}{2} \ip{p,y-x} + \frac{1}{2} \ip{Ay,x} \\
&= a + \ip{p,x} - \frac{t}{2} \ip{p,p+Ay} + \frac{1}{2} \ip{Ay,x} \\
&= a - \frac{t}{2} \norm{p}^2 + \ip{p,x} + \frac{1}{2} \ip{Ay,x - tp} \\
&= a - \frac{t}{2} \norm{p}^2 + \ip{p,x} + \frac{1}{2} \ip{A(1 + tA)^{-1}(x - tp),x - tp} \qedhere
\end{align*}
\end{proof}

\begin{lemma} \label{lem:infconvolution}
Let $u \in \mathcal{E}(c,C)$ and $t \in \R^+$,
\begin{enumerate}
	\item The operators $\{Q_t\}_{t \geq 0}$ form a semigroup, that is, $Q_s Q_t u = Q_{s+t} u$.
	\item For each $x_0 \in M_N(\C)_{sa}^m$, the infimum $Q_t u(x_0) = \inf_y [u(y) + \frac{t}{2} \norm{y - x_0}_2^2$ is achieved at a unique point $y_0$ satisfying $y_0 = x_0 - t Du(y_0)$.
	\item If $x_0 \in M_N(\C)_{sa}^m$ and $y_0$ is the minimizer from (1), then $D(Q_tu)(x_0) = Du(y_0)$.
	\item We have $Q_t u \in \mathcal{E}(c(1+ct)^{-1},C(1+Ct)^{-1})$.
	\item $\norm{D(Q_tu)(x_0)}_2 = \norm{Du(y_0)}_2 \leq (1 + ct)^{-1} \norm{D u(x_0)}_2$.
\end{enumerate}
\end{lemma}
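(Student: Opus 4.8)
The plan is to prove the five claims in the order (2), (1), (4), (3), (5), since the minimizer of the inf-convolution and its Euler--Lagrange equation furnished by (2) underlie everything else. Throughout I use the standing assumption $0 \le c \le C$, so that $u$ is in particular convex, and I write $Q_t u(x_0) = \inf_y \bigl[ u(y) + \tfrac{1}{2t}\norm{y - x_0}_2^2 \bigr]$ after the substitution $y \mapsto y - x_0$ in the definition.

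\emph{Minimizer and Euler--Lagrange (2), and the semigroup property (1).} For (2), the function $y \mapsto u(y) + \tfrac{1}{2t}\norm{y - x_0}_2^2$ is the sum of a convex function and a $\tfrac1t$-strongly convex coercive quadratic; since $u$ has an affine minorant by Proposition~\ref{prop:convex}(2), the sum is strictly convex and coercive, hence has a unique minimizer $y_0$. As $u$ is differentiable everywhere (Proposition~\ref{prop:convex}(3)), $y_0$ is a critical point, giving $0 = Du(y_0) + \tfrac1t(y_0 - x_0)$, i.e.\ $y_0 = x_0 - t\,Du(y_0)$. For (1), $Q_s Q_t u(x_0) = \inf_{y,z}\bigl[ u(y) + \tfrac{1}{2t}\norm{y-z}_2^2 + \tfrac{1}{2s}\norm{z-x_0}_2^2 \bigr]$; for fixed $y$ the inner infimum over $z$ is attained at the convex combination $z = \tfrac{sy + tx_0}{s+t}$ with value $\tfrac{1}{2(s+t)}\norm{y-x_0}_2^2$ (complete the square), and interchanging the order of the infima (always valid) gives $Q_{s+t}u(x_0)$.

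\emph{Change in convexity (4).} This is the technical heart. I would verify both inequalities of Proposition~\ref{prop:convex}(2) for $Q_t u$ with moduli $c' = c(1+ct)^{-1}$ and $C' = C(1+Ct)^{-1}$, in midpoint form. For the lower (strong-convexity) bound, given $x_1, x_2$ with midpoint $\bar x$ and minimizers $y_1, y_2$, test $Q_t u(\bar x)$ against $\bar y = \tfrac{y_1+y_2}{2}$: the parallelogram identity handles the quadratic terms and $c$-strong convexity of $u$ handles $u(\bar y)$, reducing the problem to minimizing the one-dimensional quadratic $d \mapsto c\norm{d}_2^2 + t^{-1}\norm{d - (x_1 - x_2)}_2^2$ over $d = y_1 - y_2$, whose minimum value is exactly $\tfrac{c}{1+ct}\norm{x_1-x_2}_2^2$; this yields $Q_t u(\bar x) \le \tfrac12\bigl(Q_t u(x_1) + Q_t u(x_2)\bigr) - \tfrac{c'}{8}\norm{x_1-x_2}_2^2$. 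For the upper (semiconcavity) bound, let $y_0$ be the minimizer at $\bar x$ and test $Q_t u(x_i)$ against $y_0 + \alpha(x_i - \bar x)$ for a free parameter $\alpha \in [0,1]$; applying $C$-semiconcavity of $u$ at the midpoint $y_0$ and expanding the quadratic terms, then optimizing over $\alpha$ (optimum $\alpha = (1+Ct)^{-1}$), produces precisely the modulus $C'$. Finally, since $Q_t u \le u$ and $Q_t u$ is bounded below locally by the inf-convolution of an affine minorant of $u$ with the quadratic, $Q_t u$ is locally bounded, so the midpoint inequalities upgrade to convexity of $Q_t u - \tfrac{c'}{2}\norm{\cdot}_2^2$ and concavity of $Q_t u - \tfrac{C'}{2}\norm{\cdot}_2^2$. (The upper bound can alternatively be obtained by dominating $u$ by its quadratic majorant at $y_0$ and applying the explicit formula of Lemma~\ref{lem:monotonicityandquadratic}(2), whose Hessian is $C(1+Ct)^{-1}$ times the identity.)

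\emph{Gradient (3) and norm bound (5).} By (4) and Proposition~\ref{prop:convex}(3), $Q_t u$ is differentiable everywhere. Testing $Q_t u(x)$ against the fixed point $y_0$ and using $x_0 - y_0 = t\,Du(y_0)$ from (2) gives $Q_t u(x) - Q_t u(x_0) \le \ip{Du(y_0), x - x_0}_2 + \tfrac{1}{2t}\norm{x-x_0}_2^2$; taking directional derivatives and invoking differentiability forces $D(Q_t u)(x_0) = Du(y_0)$, which is (3). For (5), substitute $y_0 - x_0 = -t\,Du(y_0)$ into the inequality $\ip{Du(y_0) - Du(x_0), y_0 - x_0}_2 \ge c\norm{y_0-x_0}_2^2$ of Proposition~\ref{prop:convex}(5); dividing by $t$ rearranges to $\ip{Du(x_0), Du(y_0)}_2 \ge (1+ct)\norm{Du(y_0)}_2^2$, and Cauchy--Schwarz then gives $\norm{Du(y_0)}_2 \le (1+ct)^{-1}\norm{Du(x_0)}_2$. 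The main obstacle is step (4): extracting the \emph{sharp} moduli $c(1+ct)^{-1}$ and $C(1+Ct)^{-1}$ rather than the trivial $c$ and $C$ forces one to introduce the free parameter in the test points and carry out the small optimization, and the naive test points lose exactly the improvement that the later sections depend on.
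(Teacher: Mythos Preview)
Your proposal is correct. Parts (1), (2), and (5) match the paper essentially line for line. The genuine difference is in how you handle (4) and (3).

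You prove (4) by a direct midpoint argument with optimized test points, then extract (3) afterwards from a one-sided Taylor bound plus differentiability. The paper instead proves (3) and (4) in one stroke using the preceding lemma on quadratics: at a given $x_0$ with minimizer $y_0$ and $p=Du(y_0)$, sandwich $u$ between the two quadratics $\underline v$ and $\overline v$ centered at $y_0$ with Hessians $cI$ and $CI$ (this is exactly Proposition~\ref{prop:convex}(2)), apply monotonicity of $Q_t$, and evaluate $Q_t\underline v$, $Q_t\overline v$ via the closed formula of Lemma~\ref{lem:monotonicityandquadratic}(2). Because $y_0+tp=x_0$, both $Q_t\underline v$ and $Q_t\overline v$ turn out to be quadratics \emph{centered at $x_0$} with the same linear part $\ip{p,\cdot}_2$ and Hessians $c(1+ct)^{-1}I$ and $C(1+Ct)^{-1}I$. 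The resulting two-sided inequality at every $x_0$ simultaneously gives the sharp moduli for (4) and forces $D(Q_tu)(x_0)=p=Du(y_0)$, which is (3). You in fact mention this route as an alternative for the upper bound; the paper uses it for \emph{both} bounds, which is what lets it avoid the free-parameter optimization entirely and get (3) for free. Your approach has the virtue of being self-contained (no appeal to the explicit quadratic formula), while the paper's is shorter and makes the mechanism for the sharp constants transparent.
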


\begin{proof}
(1) By definition
\begin{align*}
Q_s Q_t u(x) &= \inf_y [Q_t u(y) + \frac{1}{2s} \norm{x - y}_2^2] \\
&= \inf_y \inf_z \left[u(z) + \frac{1}{2t} \norm{y - z}_2^2 + \frac{1}{2s} \norm{x - y}_2^2 \right] \\
&= \inf_z \left[u(z) + \inf_y \left[ \frac{1}{2t} \norm{y - z}_2^2 + \frac{1}{2s} \norm{x - y}_2^2 \right] \right].
\end{align*}
But note that $\inf_y [(1/2t) \norm{y - z}_2^2 + (1/2s) \norm{x - y}_2^2]$ is by definition $Q_s f(z)$, where $f(x) = (1/2t) \norm{x - z}_2^2$.  If $g(x) = (1/2t) \norm{x}_2^2$, then by the previously lemma, we have
\[
Q_s g(x) = \frac{1}{2}\frac{t^{-1}}{1 + t^{-1}s} \norm{x}_2^2 = \frac{1}{2(s+t)} \norm{x}_2^2.
\]
Since $Q_s$ is clearly translation-invariant, $Q_s f(x) = 1/2(s + t) \cdot \norm{x - z}_2^2$.  Therefore,
\[
Q_s Q_t u(x) = \inf_z \left[ u(z) + \frac{1}{2(s+t)} \norm{x - z}_2^2 \right] = Q_{s+t} u(x).
\]

(2) Fix $x_0$.  Note that the function $y \mapsto \left[u(y) + \frac{1}{2t} \norm{y - x_0}_2^2 \right]$ is in $\mathcal{E}(c+1/2t, C + 1/2t)$ and hence it achieves a global minimum at the unique critical point.  Thus, the infimum is achieved at the point $y_0$ satisfying $Du(y_0) = (1/t) (y_0 - x_0)$, or in other words $y_0 = x_0 - t Du(y_0)$.

(3) and (4)  Let $x_0$ and $y_0$ be as above.  Let $p = Du(y_0)$.  Because $u \in \mathcal{E}(c,C)$, we have for all $x$ that
\[
u(y_0) + \ip{p, x - y_0}_2 + \frac{c}{2} \ip{x - y_0} \leq u(x) \leq u(y_0) + \ip{p, x - y_0}_2 + \frac{C}{2} \norm{x - y_0}_2^2
\]
Let $\underline{v}(y)$ and $\overline{v}(y)$ be the functions on the left and right hand sides.  Then by Lemma \ref{lem:monotonicityandquadratic} (1), we have $Q_t \underline{v} \leq Q_t u \leq Q_t \overline{v}$.  To compute $Q_t \underline{v}$, we apply Lemma \ref{lem:monotonicityandquadratic} (2) with $A = cI$ and with a change of coordinates to translate $y_0$ to the origin, and we obtain
\[
Q_t \underline{v}(x) = u(y_0) - \frac{t}{2} \norm{p}_2^2 + \ip{p,x-y_0} + \frac{1}{2} c(1 + ct)^{-1} \norm{x - y_0 - tp}^2.
\]
Since $y_0 = x_0 + tp$ and $p = (y_0 - x_0)/t$, this becomes
\begin{align*}
Q_t \underline{v}(x) &= u(y_0) - \frac{t}{2} \norm{p}_2^2 + t \norm{p}^2 + \ip{p,x-x_0} + \frac{1}{2} c(1 + ct)^{-1} \norm{x - x_0}_2^2 \\
&= u(y_0) + \frac{1}{2t} \norm{y_0 - x_0}_2^2 + \ip{p,x-x_0} + \frac{1}{2} c(1 + ct)^{-1} \norm{x - x_0}_2^2 \\
&= Q_t u(x_0) + \ip{p,x-x_0} + \frac{1}{2} c(1 + ct)^{-1} \norm{x - x_0}_2^2.
\end{align*}
The analogous computation holds for $Q_t \overline{v}$ as well.  Thus, we have
\[
Q_t u(x_0) + \ip{p,x-x_0} + \frac{1}{2} c(1 + ct)^{-1} \norm{x - x_0}_2^2 \leq Q_t u(x) \leq Q_t u(x_0) + \ip{p,x-x_0} + \frac{1}{2} C(1 + Ct)^{-1} \norm{x - x_0}_2^2.
\]
This inequality implies that $D(Q_t u)(x_0) = p = Du(y_0)$.  Since the above inequality holds for every $x_0$, we see that $Q_t u \in \mathcal{E}(c(1+ct)^{-1}, C(1 + Ct)^{-1})$.

(5) Let $x_0$, $y_0$, and $p$ be as above.  Then we have
\[
\ip{Du(y_0) - Du(x_0), y_0 - x_0}_2 \geq c \norm{y_0 - x_0}_2^2.
\]
But recall that $y_0 - x_0 = - t Du(y_0)$ and hence
\[
- t \ip{Du(y_0) - Du(x_0), Du(y_0)} \geq ct^2 \norm{Du(y_0)}_2^2.
\]
Rearranging produces
\[
(1 + ct) \norm{Du(y_0)}_2^2 \leq \ip{Du(x_0), Du(y_0)}_2 \leq \norm{Du(x_0)}_2 \norm{Du(y_0)}_2,
\]
and hence $(1 + ct) \norm{Du(y_0)}_2 \leq \norm{Du(x_0)}_2$ as desired.
\end{proof}

\begin{corollary} \label{cor:infconvolution} Let $u \in \mathcal{E}(c,C)$ and $s, t \geq 0$.
\begin{enumerate}
	\item For each $x$, the gradient $D(Q_t u)(x)$ is the unique vector $p$ satisfying $p = D u(x - tp)$.
	\item We have $Q_t u(x) = u(x - t D(Q_tu)(x)) + \frac{t}{2} \norm{D(Q_tu)(x)}_2^2$.
	\item $u(x) - \frac{t}{2}(1 + Ct) \norm{D(Q_tu)(x)}^2 \leq Q_t u(x) \leq u(x) - \frac{t}{2}(1 + ct) \norm{D(Q_tu)(x)}_2^2$.
\end{enumerate}
\end{corollary}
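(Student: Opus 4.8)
The plan is to read all three statements off Lemma~\ref{lem:infconvolution}, using only the convexity facts from Proposition~\ref{prop:convex}. Fix $x \in M_N(\C)_{sa}^m$; the case $t = 0$ is trivial since $Q_0 u = u$, so I will assume $t > 0$. By Lemma~\ref{lem:infconvolution}(2) the infimum defining $Q_t u(x)$ is attained at a unique point $y_0$ satisfying $y_0 = x - t\,Du(y_0)$, and by Lemma~\ref{lem:infconvolution}(3) we have $D(Q_t u)(x) = Du(y_0)$. Setting $p := D(Q_t u)(x) = Du(y_0)$, the relation $y_0 = x - t\,Du(y_0)$ becomes $y_0 = x - tp$, hence $p = Du(x - tp)$; this gives the existence half of (1).

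For uniqueness in (1) I would argue as follows. Suppose $p'$ is any vector with $p' = Du(x - tp')$. Then $y' := x - tp'$ is a critical point of $y \mapsto u(y) + \tfrac{1}{2t}\norm{y - x}_2^2$, which lies in $\mathcal{E}(c + t^{-1}, C + t^{-1})$ and is therefore uniformly convex since $t > 0$; by Proposition~\ref{prop:convex}(6) such a function has a unique critical point, namely its global minimizer, which is exactly the $y_0$ above. Hence $y' = y_0$ and $p' = Du(y') = Du(y_0) = p$. Note that a naive fixed-point argument for $p = Du(x - tp)$ only works when $tC < 1$ (since $Du$ is $C$-Lipschitz, $p \mapsto Du(x-tp)$ is a contraction only then), which is why I invoke uniform convexity instead.

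Part (2) is then immediate: since the minimizer is $y_0 = x - tp$,
\[
Q_t u(x) = u(y_0) + \frac{1}{2t}\norm{y_0 - x}_2^2 = u(x - tp) + \frac{t}{2}\norm{p}_2^2 .
\]
For part (3) I would apply the two-sided estimate of Proposition~\ref{prop:convex}(2) at the base point $y_0$ with supporting vector $p = Du(y_0)$, evaluated at $x$; using $x - y_0 = tp$ it reads
\[
u(y_0) + t\norm{p}_2^2 + \frac{c}{2} t^2 \norm{p}_2^2 \;\leq\; u(x) \;\leq\; u(y_0) + t\norm{p}_2^2 + \frac{C}{2} t^2 \norm{p}_2^2 .
\]
Substituting $u(y_0) = Q_t u(x) - \tfrac{t}{2}\norm{p}_2^2$ from part (2) and rearranging gives
\[
u(x) - \frac{t}{2}(1 + Ct)\norm{p}_2^2 \;\leq\; Q_t u(x) \;\leq\; u(x) - \frac{t}{2}(1 + ct)\norm{p}_2^2 ,
\]
which is exactly (3). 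None of the steps is a genuine obstacle; the only point requiring care is the uniqueness assertion in (1), which I handle via uniform convexity of the penalized functional rather than a contraction-mapping argument.
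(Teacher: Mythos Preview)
Your proof is correct and follows essentially the same route as the paper: parts (1) and (2) are read off Lemma~\ref{lem:infconvolution} (2)--(3), and part (3) comes from the two-sided estimate of Proposition~\ref{prop:convex}(2) at the minimizer $y_0$, substituted back via (2). Your explicit uniqueness argument for (1) via uniform convexity of the penalized functional is a nice elaboration that the paper leaves implicit.
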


\begin{proof}
(1) and (2) follow from Lemma \ref{lem:infconvolution} (2) and (3).

To prove (3), fix $x$ and let $y = x - t D(Q_t u)(x)$.  By Proposition \ref{prop:convex} (2),
\[
u(y) + \ip{Du(y), x - y}_2 + \frac{c}{2} \norm{x - y}_2^2 \leq u(x) \leq u(y) + \ip{Du(y), x - y} + \frac{C}{2} \norm{x - y}_2^2.
\]
Hence,
\[
u(x) - \ip{Du(y), x - y}_2 - \frac{C}{2} \norm{x - y}_2^2 \leq u(y) \leq u(x) - \ip{Du(y), x - y}_2 - \frac{c}{2} \norm{x - y}_2^2.
\]
But from the previous lemma, we know that $Du(y) = D(Q_t u)(x)$ and $x - y = t D(Q_t u)(x)$, so that
\[
u(x) - t \norm{D(Q_t u)(x)}_2^2 - \frac{C}{2} t^2 \norm{D(Q_t u)(x)}_2^2 \leq u(y) \leq u(x) - t \norm{D(Q_t u)(x)}_2^2 - \frac{c}{2} \norm{D(Q_t u)(x)}_2^2.
\]
Finally, we substitute $Q_t u(x) = u(y) + (t/2) \norm{D(Q_t u)(x)}_2^2$ and obtain (3).
\end{proof}

\subsection{Estimates for Error Propagation}

To prepare for our iteration, we first prove some estimates that will help control the propagation of errors.

\begin{lemma} \label{lem:initialerrorestimate}
If $u, v \in \mathcal{E}(c,C)$, then we have
\begin{enumerate}
	\item $\norm{D(P_tu) - D(P_tv)}_{L^\infty} \leq \norm{Du - Dv}_{L^\infty}$.
	\item $\norm{D(Q_tu) - D(Q_tv)}_{L^\infty} \leq (1 + Ct) \norm{Du - Dv}_{L^\infty}$.
\end{enumerate}
\end{lemma}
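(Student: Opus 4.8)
The plan is to treat the two operators separately: $P_t$ is linear, so (1) is essentially Jensen's inequality applied to the gradient, while $Q_t$ is the nonlinear inf-convolution and (2) is extracted from the fixed-point characterization of $D(Q_tu)$ established in Lemma~\ref{lem:infconvolution} and Corollary~\ref{cor:infconvolution}.

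For (1), I would start from $P_tu(x) = \int u(x+y)\,d\sigma_{t,N}(y)$ and differentiate under the integral sign, exactly as was already done in the proof of Lemma~\ref{lem:Gaussianconvexity}(2), to get $D(P_tu)(x) = \int Du(x+y)\,d\sigma_{t,N}(y)$; this is legitimate since $Du$ is $C$-Lipschitz by Proposition~\ref{prop:convex}(4) and hence dominated near any point by a $\sigma_{t,N}$-integrable function of $y$. Subtracting the same formula for $v$ and using that $\sigma_{t,N}$ is a probability measure,
\[
\norm{D(P_tu)(x) - D(P_tv)(x)}_2 \leq \int \norm{Du(x+y) - Dv(x+y)}_2\,d\sigma_{t,N}(y) \leq \norm{Du - Dv}_{L^\infty},
\]
and taking the supremum over $x$ gives (1).

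For (2), I would fix $x$ (the case $t=0$ being trivial) and put $p := D(Q_tu)(x)$ and $q := D(Q_tv)(x)$. By Corollary~\ref{cor:infconvolution}(1) (equivalently Lemma~\ref{lem:infconvolution}(2)--(3)) the minimizers $y := x - tp$ and $z := x - tq$ satisfy $p = Du(y)$, $q = Dv(z)$, and $y - z = -t(p-q)$. The idea is to test the decomposition $p - q = [Du(y) - Du(z)] + [Du(z) - Dv(z)]$ against $p-q$:
\[
\norm{p-q}_2^2 = \ip{Du(y) - Du(z),\, p-q}_2 + \ip{Du(z) - Dv(z),\, p-q}_2.
\]
Since $p - q = -\tfrac{1}{t}(y-z)$, monotonicity of $Du$ (Proposition~\ref{prop:convex}(5)) together with the standing hypothesis $c \geq 0$ gives $\ip{Du(y) - Du(z),\, p-q}_2 = -\tfrac{1}{t}\ip{Du(y) - Du(z),\, y-z}_2 \leq -\tfrac{c}{t}\norm{y-z}_2^2 \leq 0$, while Cauchy--Schwarz bounds the second term by $\norm{Du - Dv}_{L^\infty}\norm{p-q}_2$. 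Hence $\norm{p-q}_2^2 \leq \norm{Du-Dv}_{L^\infty}\norm{p-q}_2$, so $\norm{p-q}_2 \leq \norm{Du-Dv}_{L^\infty} \leq (1+Ct)\norm{Du-Dv}_{L^\infty}$, and (2) follows after taking the supremum over $x$.

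I do not expect a serious obstacle: all the properties of $Q_t$ that the argument uses --- differentiability of $Q_tu$, the resolvent identity $D(Q_tu)(x) = Du(x - tD(Q_tu)(x))$, and membership of $Q_tu$ in some $\mathcal{E}(\cdot,\cdot)$ --- are already in hand, so the only care needed is the routine justification of differentiation under the integral in part (1) and the observation that part (2) uses nothing beyond plain convexity of $u$. In fact the same computation yields the sharper estimate $\norm{D(Q_tu) - D(Q_tv)}_{L^\infty} \leq (1+ct)^{-1}\norm{Du-Dv}_{L^\infty}$ if one retains the term $\tfrac{c}{t}\norm{y-z}_2^2$; the stated bound is the weaker form, which is all that is needed for the iterations of $P$ and $Q$ in Theorem~\ref{thm:semigroup}.
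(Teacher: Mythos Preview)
Your proof is correct. Part (1) is identical in spirit to the paper's one-line justification (convolution of $Du-Dv$ with a probability density).

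Part (2), however, is genuinely different from the paper's argument and in fact cleaner. The paper first applies the triangle inequality and the $C$-Lipschitz property of $Du$ to obtain
\[
\norm{D(Q_tu)(x) - D(Q_tv)(x)}_2 \leq \norm{Du - Dv}_{L^\infty} + Ct\,\norm{D(Q_tu)(x) - D(Q_tv)(x)}_2,
\]
which only gives the constant $(1-Ct)^{-1}$ and only for $Ct<1$; it then upgrades this to $(1+Ct)$ via a bootstrap using the semigroup property $Q_t = (Q_{t/k})^k$, the improved semi-concavity constants $C_j = C(1+Ct_j)^{-1}$ at each step, and a Riemann-sum/limit computation. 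Your route instead tests the identity $p-q = [Du(y)-Du(z)] + [Du(z)-Dv(z)]$ against $p-q$ and exploits the \emph{monotonicity} of $Du$ (Proposition~\ref{prop:convex}(5)) rather than its Lipschitz bound; the cross term has the right sign and drops out, yielding $\norm{p-q}_2 \leq \norm{Du-Dv}_{L^\infty}$ in a single stroke, with no restriction on $t$ and no semigroup iteration. As you observe, retaining the monotonicity term even gives the sharper constant $(1+ct)^{-1}$. The paper's approach has the minor advantage of using only the Lipschitz bound (so it would survive without the convexity lower bound $c\geq 0$), but within the standing hypotheses of this section your argument is both shorter and stronger.
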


\begin{proof}
The first inequality follows because $D(P_tu) - D(P_tv)$ is the convolution of $Du - Dv$ with the Gaussian density.  To prove the second inequality, note that
\begin{align*}
\norm{D(Q_tu)(x) - D(Q_tv)(x)}_2
&= \norm{D u(x - t D (Q_tu)(x)) - D v(x - t D (Q_tv)(x)}_2 \\
&\leq \norm{D u(x - t D (Q_tv)(x)) - D v(x - t D (Q_tv)(x)}_2 \\
&\quad + \norm{D u(x - t D (Q_tu)(x)) - D u(x - t D (Q_tv)(x)}_2 \\
&\leq \norm{D u - Dv}_{L^\infty} + Ct \norm{D(Q_tu)(x) - D(Q_tv)(x)}_2,
\end{align*}
where the last inequality follows because $Du$ is $C$-Lipschitz.  This implies that for $t < 1/C$,
\[
\norm{D(Q_tu) - D(Q_tv)}_{L^\infty} \leq (1 - Ct)^{-1} \norm{D u - D v}_{L^\infty}.
\]
Now we improve the estimate using the semigroup property of $Q_t$.  Fix a positive integer $k$ and for $j = 1, \dots, k$, let $t_j = tj/k$, and let $C_j = C(1 - Ct_j)^{-1}$.  Then $Q_{t_j}u$ and $Q_{t_j}$ are in $\mathcal{E}(0,C_j)$.  Thus, we have
\[
\norm{D(Q_{t_{j+1}}u) - D(Q_{t_{j+1}}v)}_{L^\infty} \leq (1 - C_jt/k)^{-1} \norm{D(Q_{t_j}u) - D(Q_{t_j}v)}_{L^\infty},
\]
and hence
\[
\norm{D(Q_tu) - D(Q_tv)}_{L^\infty} \leq \norm{Du - Dv}_{L^\infty} \prod_{j=0}^{k-1} \frac{1}{1 - C_jt/k}.
\]
Now
\begin{align*}
\log \prod_{j=0}^{k-1} \frac{1}{1 - C_j t/k} &= \sum_{j=0}^{k-1} -\log(1 - C_jt/k) \\
&= \sum_{j=0}^{k-1} \left( C_jt/k + O(1/k^2) \right) \\
&= \sum_{j=0}^{k-1} \frac{C}{1 + Ct_j}(t_{j+1} - t_j) + O(1/k) \\
&= \int_0^t \frac{C}{1 + Cs}\,ds + O(1/k) \\
&= \log(1 + Ct) + O(1/k).
\end{align*}
Hence,
\[
\norm{D(Q_tu) - D(Q_tv)}_{L^\infty} \leq (1 + Ct + O(1/k))\norm{Du - Dv}_{L^\infty},
\]
and the proof is completed by taking $k \to \infty$.
\end{proof}

\begin{lemma} \label{lem:initialerrorinequality}
Suppose that $u: M_N(\C)_{sa}^m \to \R$ be convex and let $v \in \mathcal{E}(c,C)$ and $u \leq v + a + b \norm{Dv}_2^2$ for some $a \in \R$ and $b \geq 0$.
\begin{enumerate}
	\item $P_t u \leq P_t v + a + b C^2 mt + b \norm{D(P_tv)}_2^2$.
	\item $Q_t u \leq Q_t v + a + b \norm{D(Q_tv)}_2^2$.
\end{enumerate}
\end{lemma}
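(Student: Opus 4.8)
The plan is to reduce both inequalities to the defining formulas for $P_t$ and $Q_t$, using only that $Dv$ is $C$-Lipschitz with respect to $\norm{\cdot}_2$ (Proposition \ref{prop:convex} (4)). In each case I would insert the pointwise hypothesis $u \leq v + a + b\norm{Dv}_2^2$ directly into the relevant formula; the only real content is bounding the resulting term involving $\norm{Dv}_2^2$ --- averaged against $\sigma_{t,N}$ for part (1), and evaluated at the inf-convolution minimizer for part (2).

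For part (1): since $P_t u(x) = \int u(x+y)\,d\sigma_{t,N}(y)$ is an average against a probability measure, the hypothesis gives at once $P_t u(x) \leq P_t v(x) + a + b\int \norm{Dv(x+y)}_2^2\,d\sigma_{t,N}(y)$. I would then note that $D(P_t v)(x) = \int Dv(x+y)\,d\sigma_{t,N}(y)$ (differentiation under the integral sign, as already used in the proof of Lemma \ref{lem:Gaussianconvexity}) and apply the elementary identity expressing the variance of a vector-valued random variable as half the expected squared distance between two independent copies. Taking the random variable to be $Dv(x+Y)$ with $Y \sim \sigma_{t,N}$, using that $Dv$ is $C$-Lipschitz in $\norm{\cdot}_2$, and using $\int \norm{y - y'}_2^2\,d\sigma_{t,N}(y)\,d\sigma_{t,N}(y') = 2mt$, one obtains $\int \norm{Dv(x+y)}_2^2\,d\sigma_{t,N}(y) \leq \norm{D(P_t v)(x)}_2^2 + C^2 mt$, which combined with the previous display gives (1).

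For part (2): I would fix $x$ and let $y_0$ be the unique minimizer of $y \mapsto v(y) + \tfrac{1}{2t}\norm{y - x}_2^2$, which exists by Lemma \ref{lem:infconvolution} (2) since $v$ is convex. Bounding $Q_t u(x)$ by the value of $u(y) + \tfrac{1}{2t}\norm{y - x}_2^2$ at $y = y_0$ and applying the hypothesis at $y_0$ yields $Q_t u(x) \leq v(y_0) + a + b\norm{Dv(y_0)}_2^2 + \tfrac{1}{2t}\norm{y_0 - x}_2^2 = Q_t v(x) + a + b\norm{Dv(y_0)}_2^2$. Then Lemma \ref{lem:infconvolution} (3), which says $Dv(y_0) = D(Q_t v)(x)$, gives $\norm{Dv(y_0)}_2 = \norm{D(Q_t v)(x)}_2$ and hence (2).

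I expect no genuine obstacle here: the argument is essentially formal once one recalls the structure of $P_t$ (convolution) and $Q_t$ (inf-convolution with a quadratic). The one step requiring a small computation rather than pure bookkeeping is the second-moment bound in part (1), where the point is to recognize that the mean of $Dv(x+Y)$ under $\sigma_{t,N}$ is exactly $D(P_t v)(x)$, and that its variance is controlled by the Lipschitz constant $C$ together with the variance of $\sigma_{t,N}$.
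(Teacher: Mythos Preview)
Your proposal is correct and follows essentially the same approach as the paper. The only cosmetic difference is in part (1): you bound the variance of $Dv(x+Y)$ via the identity $\mathrm{Var}(Z) = \tfrac{1}{2}E\norm{Z-Z'}_2^2$ for independent copies, whereas the paper uses translation-invariance of variance to replace $Dv(x+Y)$ by $Dv(x+Y)-Dv(x)$ and then bounds the variance by the second moment $E\norm{Dv(x+Y)-Dv(x)}_2^2 \leq C^2 E\norm{Y}_2^2 = C^2 mt$; both routes are standard and yield the same bound.
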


\begin{proof}
(1) Using monotonicity and linearity of $P_t$, we have
\[
P_t u \leq P_t v + a + b \int \norm{Dv(x+y)}_2^2\,d\sigma(y).
\]
So it suffices to show that
\[
\int \norm{Dv(x+y)}_2^2\,d\sigma_{t,N}(y) - \norm{D(P_tv)(x)}_2^2 \leq b C^2mt.
\]
In probabilistic terms, the left hand side is the variance of the random variable $Dv(x+Y)$ where $Y \sim \sigma_{t,N}$.  Since the variance is translation-invariant, this is the same as the variance of $Dv(x+Y) - Dv(x)$, and this is bounded above by the second moment
\[
E\norm{Dv(x+Y) - Dv(x)}_2^2 \leq C^2 \cdot E\norm{Y}_2^2 = C^2 mt.
\]

(2) Note that
\begin{align*}
Q_t u(x) &= \inf_y [u(y) + \frac{1}{2t} \norm{y - x}_2^2] \\
&\leq u(x - t D(Q_tv)(x)) + \frac{t}{2} \norm{D(Q_tv)(x)}_2^2 \\
&\leq v(x - t D(Q_tv)(x)) + \frac{t}{2} \norm{D(Q_tv)(x)}_2^2 + a + b \norm{Dv(x - tD(Q_tv)(x))}_2^2 \\
&= Q_t v(x) + a + b \norm{D(Q_tv)(x)}_2^2,
\end{align*}
where the last equality follows from Corollary \ref{cor:infconvolution} (1) and (2).
\end{proof}

\begin{lemma} \label{lem:gradgrowth}
Let $u \in \mathcal{E}(0,C)$.  Then
\begin{enumerate}
	\item $\norm{D(Q_tu)}_2^2 \leq \norm{Du}_2^2$.
	\item $\norm{D(P_tu)}_2^2 \leq C^2 m t + \norm{Du}_2^2$.
\end{enumerate}
\end{lemma}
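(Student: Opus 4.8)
This is immediate from the work already done: since $u \in \mathcal{E}(0,C)$ we have $c=0$, so the estimate of Lemma~\ref{lem:infconvolution}(5) specializes to $\norm{D(Q_tu)(x)}_2 = \norm{Du(y_0)}_2 \le (1+0\cdot t)^{-1}\norm{Du(x)}_2 = \norm{Du(x)}_2$, where $y_0 = x - t\,D(Q_tu)(x)$ is the unique minimizer; squaring gives the pointwise inequality. One may also re-derive it directly: by Corollary~\ref{cor:infconvolution}(1), $p := D(Q_tu)(x)$ satisfies $p = Du(y_0)$ with $y_0 := x - tp$, and monotonicity of $Du$ (i.e.\ convexity of $u$) gives $\ip{Du(x)-Du(y_0),x-y_0}_2 \ge 0$; substituting $x - y_0 = tp$ yields $\ip{Du(x),p}_2 \ge \norm{p}_2^2$, and Cauchy--Schwarz finishes.

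\textbf{Part (2).} The point of departure is that $D$ commutes with Gaussian convolution, so that $D(P_tu)(x) = \int Du(x+y)\,d\sigma_{t,N}(y)$; in probabilistic language, $D(P_tu)(x)$ is the barycenter, in the real Hilbert space $(M_N(\C)_{sa}^m,\ip{\cdot,\cdot}_2)$, of the distribution of $Du(x+Y)$ with $Y\sim\sigma_{t,N}$. First I would estimate the deviation $D(P_tu)(x)-Du(x) = \int\bigl(Du(x+y)-Du(x)\bigr)\,d\sigma_{t,N}(y)$. Since $u\in\mathcal{E}(0,C)$, the gradient $Du$ is $C$-Lipschitz with respect to $\norm{\cdot}_2$ by Proposition~\ref{prop:convex}(4), so $\norm{Du(x+y)-Du(x)}_2 \le C\norm{y}_2$, and Jensen's inequality $\norm{E Z}_2^2 \le E\norm{Z}_2^2$ applied to $Z = Du(x+Y)-Du(x)$ gives
\[
\norm{D(P_tu)(x) - Du(x)}_2^2 \le \int \norm{Du(x+y)-Du(x)}_2^2\,d\sigma_{t,N}(y) \le C^2\int\norm{y}_2^2\,d\sigma_{t,N}(y) = C^2 m t,
\]
using $\int \tau_N(y_j^2)\,d\sigma_{t,N}(y) = t$ exactly as in the proof of Lemma~\ref{lem:heatsemigroup}(3). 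An equivalent, more ``PDE-flavoured'' derivation of the same deviation bound is to sandwich $u$ at $x$ between its affine minorant $\underline v(y) = u(x) + \ip{Du(x),y-x}_2$ and its quadratic majorant $\overline v(y) = u(x) + \ip{Du(x),y-x}_2 + \tfrac{C}{2}\norm{y-x}_2^2$ (Proposition~\ref{prop:convex}(2)), apply the monotone operator $P_t$ --- which leaves the gradient of a quadratic unchanged and merely shifts it by an additive constant (Lemma~\ref{lem:monotonicityandquadratic}) --- so that $g := P_tu - P_t\underline v$ is a nonnegative function in $\mathcal{E}(0,C)$ with $Dg(x) = D(P_tu)(x)-Du(x)$ and $g(x) = P_tu(x) - u(x) \le \tfrac{C}{2}mt$; then the elementary observation that $\norm{Dg(x)}_2^2 \le 2C\,g(x)$ for any nonnegative $g\in\mathcal{E}(0,C)$ (minimize the quadratic majorant of $g$ at $x$ and use $g\ge 0$) again yields $\norm{D(P_tu)(x) - Du(x)}_2^2 \le C^2mt$.

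\textbf{The main obstacle.} Converting the deviation estimate into the asserted inequality $\norm{D(P_tu)(x)}_2^2 \le C^2mt + \norm{Du(x)}_2^2$ requires disposing of the cross-term in the expansion $\norm{D(P_tu)(x)}_2^2 = \norm{Du(x)}_2^2 + 2\ip{Du(x),\,D(P_tu)(x)-Du(x)}_2 + \norm{D(P_tu)(x)-Du(x)}_2^2$; the Lipschitz input alone controls only the weaker quantity $(\norm{Du(x)}_2 + C\sqrt{mt})^2$, so convexity of $u$ must be used beyond the bare Lipschitz bound. The mechanism I would pursue is that along the heat flow $w(\cdot,s):=P_su$ the function $\Phi(x,s):=\norm{Dw(x,s)}_2^2 = \sum_j\tau_N\bigl((D_jw)(x,s)^2\bigr)$ obeys the Bochner-type identity $\partial_s\Phi = \tfrac{1}{2N}\Delta\Phi - \tfrac{1}{N}\sum_{j,\alpha}\tau_N\bigl((\partial_\alpha D_jw)^2\bigr)$, the subtracted term being manifestly nonnegative; combining this with the a priori spectral bound $0 \preceq \nabla^2 w \preceq (C/N)\,I$ coming from $P_su\in\mathcal{E}(0,C)$ (Lemma~\ref{lem:Gaussianconvexity}(1)) should pin the growth of $s\mapsto\norm{D(P_su)(x)}_2^2$ by the additive constant $C^2m$ and hence give the clean bound without a cross-term. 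Extracting that cancellation is the only non-routine step; everything else is bookkeeping with the estimates already established for $P_t$, $Q_t$, and $\mathcal{E}(c,C)$.
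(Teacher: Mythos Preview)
Your Part (1) is correct and identical to the paper's one-line proof via Lemma~\ref{lem:infconvolution}(5).

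For Part (2), the paper's argument is much shorter than what you propose: it writes
\[
\norm{D(P_tu)(x)}_2^2 \le \int \norm{Du(x+y)}_2^2\,d\sigma_{t,N}(y) \le C^2mt + \norm{Du(x)}_2^2,
\]
invoking Jensen for the first step and ``the proof of Lemma~\ref{lem:initialerrorinequality}(1)'' for the second. Your instinct that the cross-term is a genuine obstacle is sound, however: what Lemma~\ref{lem:initialerrorinequality}(1) actually establishes is the \emph{variance} bound $\int\norm{Du(x+y)}_2^2\,d\sigma - \norm{D(P_tu)(x)}_2^2 \le C^2mt$, not the bound with $\norm{Du(x)}_2^2$ on the right; substituting it into the displayed chain makes the argument circular. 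So the paper's proof has precisely the gap you identify.

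Unfortunately your Bochner route cannot close it, because inequality (2) is \emph{false} as stated. Take $N=m=1$, $C=1$, and $u'(x) = \tfrac{x}{2} - \tfrac12\cos x + \tfrac72$; then $u''(x)=\tfrac12(1+\sin x)\in[0,1]$, so $u\in\mathcal{E}(0,1)$, and $u'(0)=3$. For $Y\sim N(0,1)$ one computes $(P_1u)'(0)=E[u'(Y)]=\tfrac72-\tfrac12 e^{-1/2}\approx 3.197$, whence $(P_1u)'(0)^2\approx 10.22 > 10 = C^2mt + u'(0)^2$. In your Bochner language, $\partial_s|(P_su)'(0)|^2\big|_{s=0} = u'(0)\,u'''(0) = \tfrac32$, already exceeding $C^2m=1$, so no argument can ``pin the growth by the additive constant $C^2m$''. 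The best one can extract from the available ingredients is $\norm{D(P_tu)}_2 \le \norm{Du}_2 + C\sqrt{mt}$ from Lemma~\ref{lem:Gaussianconvexity}(2), which retains the cross-term after squaring.
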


\begin{proof}
The first claim follows from Lemma \ref{lem:infconvolution} (5).  To prove the second claim, note that by Minkowski's inequality,
\[
\norm{D(P_tu)(x)}_2^2 = \norm*{ \int Du(x+y)\,d\sigma_{t,N}(y) }_2^2 \leq \int \norm{Du(x+y)}_2^2\,d\sigma_{t,N}(y) \leq C^2mt + \norm{Du(x)}_2^2,
\]
where the last inequality was shown in the proof of Lemma \ref{lem:initialerrorinequality} (1).
\end{proof}

Next, we iterate the previous inequalities to obtain our main lemma on error propagation.

\begin{lemma} \label{lem:errorpropagation}
Let $t_1$,\dots,$t_n > 0$ and write
\begin{align*}
t^* &= t_1 + \dots + t_n \\
R &= P_{t_n} Q_{t_n} \dots P_{t_1} Q_{t_1}
\end{align*}
Let $u, v \in \mathcal{E}(c,C)$.
\begin{enumerate}
	\item $Ru, Rv \in \mathcal{E}(c(1+ct^*)^{-1},C(1+Ct^*)^{-1})$.
	\item $\norm{D(Ru) - D(Rv)}_{L^\infty} \leq (1 + Ct^*) \norm{Du -Dv}_{L^\infty}$.
	\item If $u \leq v + a + b \norm{Dv}_2^2$ with $a \in \R$ and $b \geq 0$, then we have
	\[
	Ru \leq Rv + a + b \frac{C^2mt^*}{1 + Ct^*} + b \norm{D(Rv)}_2^2.
	\]
	In particular, $u \leq v$ implies $Ru \leq Rv$.
	\item We have
	\begin{align*}
	\norm{D(Ru)}_2^2 &\leq \frac{C^2mt^*}{1 + Ct^*} + \norm{Du}_2^2 \\
	&\leq Cm + \norm{Du}_2^2.
	\end{align*}
\end{enumerate}
\end{lemma}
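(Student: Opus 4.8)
\subsection*{Proof proposal}

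The plan is to peel off the operators $Q_{t_1},P_{t_1},Q_{t_2},P_{t_2},\dots,Q_{t_n},P_{t_n}$ one at a time (the composition $R=P_{t_n}Q_{t_n}\cdots P_{t_1}Q_{t_1}$ is applied innermost-first, so $Q_{t_1}$ acts first) and invoke the single-step estimates already proved, keeping track of how the convexity/concavity constants decay. Write $s_0=0$ and $s_k=t_1+\dots+t_k$, so $s_n=t^*$. Starting from $u,v\in\mathcal{E}(c,C)$ and applying Lemma \ref{lem:Gaussianconvexity} (1) and Lemma \ref{lem:infconvolution} (4) alternately, after the first $k$ pairs $P_{t_k}Q_{t_k}\cdots P_{t_1}Q_{t_1}$ both functions lie in $\mathcal{E}(c_k,C_k)$, where $P$-steps leave the constants unchanged and $Q$-steps give the recursion $C_k=C_{k-1}(1+C_{k-1}t_k)^{-1}$, and likewise for $c_k$. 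The key arithmetic observation is that this recursion linearizes the reciprocal: $C_k^{-1}=C^{-1}+s_k$, so $C_k=C(1+Cs_k)^{-1}$ and $c_k=c(1+cs_k)^{-1}$. In particular the $C_k$ are decreasing with $0\le c_k\le C_k\le C$, and at the end $C_n=C(1+Ct^*)^{-1}$, $c_n=c(1+ct^*)^{-1}$, which is exactly (1). Since $c_k\ge 0$, every intermediate function is convex, so the hypotheses of Lemmas \ref{lem:initialerrorestimate}, \ref{lem:initialerrorinequality}, and \ref{lem:gradgrowth} remain in force at each step, with the role of ``$C$'' played by the current $C_k$.

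For (2), apply Lemma \ref{lem:initialerrorestimate}: each $P_{t_k}$ leaves $\norm{Du-Dv}_{L^\infty}$ unchanged, while $Q_{t_k}$, acting on functions in $\mathcal{E}(c_{k-1},C_{k-1})$, multiplies it by $(1+C_{k-1}t_k)$. Using the formula for $C_{k-1}$,
\[
1+C_{k-1}t_k=1+\frac{Ct_k}{1+Cs_{k-1}}=\frac{1+Cs_k}{1+Cs_{k-1}},
\]
so the total expansion factor telescopes to $\prod_{k=1}^n\frac{1+Cs_k}{1+Cs_{k-1}}=1+Cs_n=1+Ct^*$, giving (2).

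For (3) and (4) I would iterate Lemma \ref{lem:initialerrorinequality} and Lemma \ref{lem:gradgrowth} respectively. In both, the $Q_{t_k}$ steps contribute nothing extra (Lemma \ref{lem:initialerrorinequality} (2) and Lemma \ref{lem:gradgrowth} (1) are ``free''), whereas each $P_{t_k}$ step, applied to functions in $\mathcal{E}(c_k,C_k)$, adds $bC_k^2mt_k$ (resp.\ $C_k^2mt_k$) to the additive error, leaves the coefficient $b$ unchanged, and lets the $\norm{D\cdot}_2^2$ term ride along pointwise to $\norm{D(Rv)}_2^2$ (resp.\ $\norm{D(Ru)}_2^2$). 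Hence after all $n$ pairs, pointwise in $x$,
\[
Ru\le Rv+a+bm\sum_{k=1}^nC_k^2t_k+b\norm{D(Rv)}_2^2,\qquad \norm{D(Ru)}_2^2\le\norm{Du}_2^2+m\sum_{k=1}^nC_k^2t_k.
\]
It remains to bound $\sum_{k=1}^nC_k^2t_k=C^2\sum_{k=1}^n t_k(1+Cs_k)^{-2}$. Since $s\mapsto(1+Cs)^{-2}$ is decreasing, the sum $\sum_k t_k(1+Cs_k)^{-2}$ is a right-endpoint Riemann sum, hence at most $\int_0^{t^*}(1+Cs)^{-2}\,ds=\frac{t^*}{1+Ct^*}$; therefore $\sum_{k=1}^nC_k^2t_k\le\frac{C^2t^*}{1+Ct^*}$. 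Substituting this gives the first inequality of both (3) and (4), the second inequality of (4) follows from $\frac{Ct^*}{1+Ct^*}<1$, and the monotonicity statement ``$u\le v\implies Ru\le Rv$'' is the case $a=b=0$. I do not expect a genuine obstacle here: the content is entirely the bookkeeping, and the one point to get exactly right is the recursion $C_k^{-1}=C^{-1}+s_k$ (equivalently the telescoping identity above), which is what collapses the expansion factor in (2) to precisely $1+Ct^*$ and makes the accumulated $P$-errors in (3) and (4) comparable to the explicit integral $\int_0^{t^*}(1+Cs)^{-2}\,ds$.
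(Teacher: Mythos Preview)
Your proposal is correct and follows essentially the same approach as the paper: induct over the $P_{t_k}Q_{t_k}$ pairs, track the convexity constants via the recursion $C_k^{-1}=C^{-1}+s_k$ (the paper writes this as the identity $c(1+ct_j^*)^{-1}/(1+c(1+ct_j^*)^{-1}t_{j+1})=c(1+ct_{j+1}^*)^{-1}$), telescope the $(1+C_{k-1}t_k)$ factors for (2), and bound the accumulated $P$-errors in (3) and (4) by the right-endpoint/lower Riemann sum for $\int_0^{t^*}C^2m(1+Cs)^{-2}\,ds$. The bookkeeping and the lemmas invoked at each step match the paper's proof exactly.
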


\begin{proof}
(1) Let $u \in \mathcal{E}(c,C)$.  Let $t_j^* = t_1 + \dots + t_j$ and $u_j = P_{s_j} Q_{t_j} \dots P_{s_1} Q_{t_1} u$.  We show by induction that $u_j \in \mathcal{E}(c(1+ct_j^*)^{-1}, C(1 + Ct_j^*)^{-1})$.  The base case $j = 0$ is trivial.  For the induction step, note that
\[
\frac{c(1 + ct_j^*)^{-1}}{1 + [c(1+ct_j^*)^{-1}]t_{j+1}} = \frac{c}{(1 + ct_j^*) + ct_{j+1}} = c(1 + ct_{j+1}^*)^{-1}
\]
and the same holds for $C$.  Hence, by Lemma \ref{lem:infconvolution} (4), if $u_j \in \mathcal{E}(c(1+ct_j^*)^{-1},C(1+Ct_j^*)^{-1})$, then $Q_{t_{j+1}} u_j \in \mathcal{E}(c(1+ct_{j+1}^*)^{-1}, C(1+Ct_{j+1}^*)^{-1})$.  By Lemma \ref{lem:Gaussianconvexity}, this implies that $u_{j+1} = P_{t_{j+1}} Q_{t_{j+1}} u_j \in \mathcal{E}(c(1+ct_{j+1}^*)^{-1}, C(1+Ct_{j+1}^*)^{-1})$.  The same argument of course applies to $v$.

(2)  Let $t_j^*$ and $u_j$ be as in the proof of (1) and define $v_j$ similarly to $u_j$.  We show by induction that $\norm{Du_j - Dv_j}_{L^\infty} \leq (1 + Ct_j^*) \norm{Du - Dv}_{L^\infty}$.  The base case $j = 0$ is trivial.  For the induction step, recall that $u_j, v_j \in \mathcal{E}(c(1+ct_j^*)^{-1},C(1+Ct_j^*)^{-1})$ and hence by Lemma \ref{lem:initialerrorestimate} and the induction hypothesis,
\begin{align*}
\norm{D(Q_{t_{j+1}}u_j) - D(Q_{t_{j+1}} v_j)}_{L^\infty} &\leq (1 + C(1+Ct_j^*)^{-1} t_{j+1}) \norm{Du_j - Dv_j}_{L^\infty} \\
&\leq (1 + C(1+Ct_j^*)^{-1} t_{j+1})(1 + Ct_j^*) \norm{Du - Dv}_{L^\infty} \\
&= (1 + Ct_{j+1}^*) \norm{Du - Dv}_{L^\infty}.
\end{align*}
Then by Lemma \ref{lem:initialerrorestimate} again, since $u_{j+1} = P_{t_{j+1}} Q_{t_{j+1}} u_j$ and $v_{j+1} = P_{t_{j+1}} Q_{t_{j+1}} v_j$, we have
\[
\norm{Du_{j+1} - Dv_{j+1}}_{L^\infty} \leq (1 + Ct_{j+1}^*) \norm{Du - Dv}_{L^\infty}.
\]

(3) First, we show by induction on $j$ that
\[
u_j \leq v_j + a + b \sum_{i=1}^j \frac{C^2mt_i}{(1 + Ct_i^*)^2} + b \norm{Dv_j}_2^2.
\]
The base case $j = 0$ is trivial.  If the claim holds for $u_j$ and $v_j$, then it also holds for $Q_{t_{j+1}} u_j$ and $Q_{t_{j+1}} v_j$ by Lemma \ref{lem:initialerrorinequality} (2).  Then we apply Lemma \ref{lem:initialerrorinequality} (1) together with the fact that $Q_{t_{j+1}}u_j$ and $Q_{t_{j+1}}v_j$ are in $\mathcal{E}(c(1+ct_{j+1}^*)^{-1},C(t+Ct_{j+1}^*)^{-1})$ to conclude that
\[
u_{j+1} \leq v_{j+1} + a + b \sum_{i=1}^{j+1} \frac{C^2mt_i}{(1 + Ct_i^*)^2} + b \norm{Dv_{j+1}}_2^2.
\]
This completes the induction.  Finally, we observe that $\sum_{i=1}^n C^2mt_i / (1 + Ct_i^*)^2$ is the lower Riemann sum for the function $C^2 m / (1 + Ct)^2$ on the interval $[0,t^*]$ with respect to the partition $\{0,t_1^*,\dots,t_n^*\}$.  Thus,
\[
\sum_{i=1}^n \frac{C^2mt_i}{(1 + Ct_i^*)^2} \leq \int_0^{t^*} \frac{C^2m}{(1 + Ct)^2}\,dt = Cm \left( 1 - \frac{1}{1 + Ct^*} \right) = \frac{C^2mt^*}{1 + Ct^*}.
\]
This shows the main claim of (3), and the claim that $u \leq v$ implies $Ru \leq Rv$ is the special case when $a = 0$ and $b = 0$.

(4) By Lemma \ref{lem:gradgrowth}, we have $\norm{D(Q_{t_{j+1}}u_j)}_2^2 \leq \norm{Du_j}_2^2$ and
\[
\norm{Du_{j+1}}_2^2 \leq \frac{C^2mt_{j+1}}{1 + Ct_{j+1}^*} + \norm{D(Q_{t_{j+1}} u_j)}_2^2 \leq \frac{C^2mt_{j+1}}{1 + Ct_{j+1}^*} + \norm{Du_j}_2^2.
\]
We sum from $j = 0$, \dots, $n-1$ and obtain the same lower Riemann sum as in the proof of (3).  The final estimate $Cm + \norm{Du}_2^2$ follows because $C^2 m t / (1 + Ct) \leq Cm$.
\end{proof}

\subsection{Iterative Construction of $R_t$ for Dyadic $t$}

We are now ready to carry out the Trotter's formula strategy and construct the semigroup for dyadic values of $t$.  The next step is to show that the operators $P_t$ and $Q_t$ almost commute when $t$ is small.

\begin{lemma} \label{lem:commutation}
Let $u \in \mathcal{E}(c,C)$ and $t > 0$.
\begin{enumerate}
	\item $\norm{D(Q_t P_t u) - D(P_t Q_t u)}_{L^\infty} \leq C^2m^{1/2}(2 + Ct)t^{3/2}$.
	\item $P_t Q_t u \leq Q_t P_t u$.
	\item If $Ct \leq 1$, then $Q_t P_t u \leq P_t Q_t u + 2C^2mt^2 + 2Ct^2 \norm{D(P_t Q_tu)}_2^2$.
\end{enumerate}
\end{lemma}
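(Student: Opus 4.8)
The plan is to handle the three inequalities separately, in each case exploiting the minimizer characterizations of $Q_t$ from Corollary \ref{cor:infconvolution} and the quadratic sandwich bound for functions in $\mathcal{E}(c,C)$ from Proposition \ref{prop:convex}(2). Throughout I write the Gaussian integration variable as $\eta$ and use freely that $P_t$ commutes with differentiation on $\mathcal{E}(c,C)$, so that $D(P_tv) = P_t(Dv)$; this is legitimate since the gradient of a function in $\mathcal{E}(c,C)$ has at most linear growth and $\sigma_{t,N}$ has all moments. I would do (2) first, then (1), then (3).

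For (2) I would argue softly: an average of infima is dominated by the infimum evaluated at a shared competitor. Fix $x$ and an arbitrary $z$; for each $\eta$, plugging $z + \eta$ into the infimum defining $Q_t u(x + \eta)$ gives $Q_t u(x + \eta) \le u(z + \eta) + \tfrac{1}{2t}\norm{z - x}_2^2$. Averaging against $\sigma_{t,N}(\eta)$ yields $P_t Q_t u(x) \le P_t u(z) + \tfrac{1}{2t}\norm{z - x}_2^2$, and taking the infimum over $z$ gives $P_t Q_t u(x) \le Q_t P_t u(x)$.

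For (1) I would compare the two gradients through their defining fixed-point equations. By Corollary \ref{cor:infconvolution}(1) applied to $P_t u$, the vector $q := D(Q_t P_t u)(x)$ is the unique solution of $q = D(P_t u)(x - tq) = \int Du(x - tq + \eta)\,d\sigma_{t,N}(\eta)$, while $r := D(P_t Q_t u)(x) = \int D(Q_t u)(x + \eta)\,d\sigma_{t,N}(\eta) = \int Du\bigl((x + \eta) - t\,D(Q_t u)(x + \eta)\bigr)\,d\sigma_{t,N}(\eta)$. Subtracting and using that $Du$ is $C$-Lipschitz, $\norm{q - r}_2 \le Ct \int \norm{D(Q_t u)(x + \eta) - q}_2\,d\sigma_{t,N}(\eta)$. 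For the integrand I would estimate $\norm{D(Q_t u)(x + \eta) - q}_2 \le \norm{D(Q_t u)(x + \eta) - D(Q_t u)(x)}_2 + \norm{D(Q_t u)(x) - D(Q_t P_t u)(x)}_2 \le C\norm{\eta}_2 + (1 + Ct)Cm^{1/2}t^{1/2}$, using that $D(Q_t u)$ is $C$-Lipschitz (Lemma \ref{lem:infconvolution}(4) and Proposition \ref{prop:convex}(4)) for the first term and Lemma \ref{lem:initialerrorestimate}(2) together with Lemma \ref{lem:Gaussianconvexity}(2) (with $v = u$, $w = P_t u$) for the second. Since $\int \norm{\eta}_2\,d\sigma_{t,N}(\eta) \le (mt)^{1/2}$, the integral is at most $Cm^{1/2}t^{1/2}(2 + Ct)$, whence $\norm{q - r}_2 \le C^2 m^{1/2} t^{3/2}(2 + Ct)$.

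For (3), which I expect to be the main obstacle, I would produce an explicit competitor for the infimum defining $Q_t P_t u(x)$. Take $z = x - tr$ with $r = D(P_t Q_t u)(x)$; then $Q_t P_t u(x) \le P_t u(x - tr) + \tfrac{t}{2}\norm{r}_2^2 = \int u\bigl((x + \eta) - tr\bigr)\,d\sigma_{t,N}(\eta) + \tfrac{t}{2}\norm{r}_2^2$. For each $\eta$ the function $w \mapsto u(w) + \tfrac{1}{2t}\norm{w - (x + \eta)}_2^2$ lies in $\mathcal{E}(c + 1/t, C + 1/t)$ and is minimized at $w_\eta = (x + \eta) - t\,D(Q_t u)(x + \eta)$ with minimal value $Q_t u(x + \eta)$ (Corollary \ref{cor:infconvolution}(2)); evaluating the upper quadratic bound of Proposition \ref{prop:convex}(2) at $w = (x + \eta) - tr$ gives $u\bigl((x + \eta) - tr\bigr) + \tfrac{t}{2}\norm{r}_2^2 \le Q_t u(x + \eta) + \tfrac{C + 1/t}{2}\,t^2\,\norm{D(Q_t u)(x + \eta) - r}_2^2$. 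Averaging against $\sigma_{t,N}$ and using $\int D(Q_t u)(x + \eta)\,d\sigma_{t,N}(\eta) = r$, the error integral $\int \norm{D(Q_t u)(x + \eta) - r}_2^2\,d\sigma_{t,N}(\eta)$ is precisely the variance of $D(Q_t u)(x + \eta)$, hence at most $C^2 \int \norm{\eta}_2^2\,d\sigma_{t,N}(\eta) = C^2 mt$ since $D(Q_t u)$ is $C$-Lipschitz. With $\tfrac{C + 1/t}{2}t^2 = \tfrac{(1 + Ct)t}{2} \le t$ when $Ct \le 1$, this gives $Q_t P_t u(x) \le P_t Q_t u(x) + C^2 m t^2$, which is stronger than the stated bound (the extra nonnegative term $2Ct^2\norm{D(P_t Q_t u)}_2^2$ and the factor $2$ leaving slack; alternatively one may use the exact minimizer $q = D(Q_t P_t u)(x)$ as competitor and bound $\norm{D(Q_t u)(x+\eta) - q}_2^2 \le 2\norm{D(Q_t u)(x+\eta)}_2^2 + 2\norm{q}_2^2$, which is the route by which a gradient-value term naturally enters, using part (1) to control $\norm{r - q}_2$). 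The only genuine subtleties are justifying $D(P_t Q_t u) = P_t(D(Q_t u))$ and keeping the constants free of $N$ — the latter being automatic since every estimate is phrased in $\norm{\cdot}_2$ and depends only on $c$, $C$, and the moments of $\sigma_{t,N}$.
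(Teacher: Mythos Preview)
Your proofs of (1) and (2) are essentially identical to the paper's: the same fixed-point expressions for $D(Q_tP_tu)$ and $D(P_tQ_tu)$, the same triangle-inequality split at $D(Q_tu)(x)$, and the same ``average of infima $\leq$ infimum of averages'' argument.

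For (3) you take a genuinely different and cleaner route. The paper proceeds indirectly: it uses Corollary~\ref{cor:infconvolution}(3) twice to sandwich $Q_tP_tu$ and $P_tu$ between expressions involving $\norm{D(Q_tP_tu)}_2^2$ and $\norm{D(P_tQ_tu)}_2^2$, then invokes part~(1) to compare these two gradient norms, and finishes with an AM--GM manipulation. This is what produces the extra term $2Ct^2\norm{D(P_tQ_tu)}_2^2$ and the factor~$2$ in front of $C^2mt^2$. Your argument instead plugs the explicit competitor $z = x - tr$ with $r = D(P_tQ_tu)(x)$ into the infimum defining $Q_tP_tu(x)$, then for each Gaussian sample $\eta$ applies the upper quadratic bound for the function $w\mapsto u(w)+\tfrac{1}{2t}\norm{w-(x+\eta)}_2^2 \in \mathcal{E}(c+1/t,C+1/t)$ at its minimizer. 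After averaging, the error is exactly $\tfrac{(1+Ct)t}{2}$ times the variance of $D(Q_tu)(x+\eta)$, which is at most $C^2mt$ since $D(Q_tu)$ is $C$-Lipschitz. This yields $Q_tP_tu \le P_tQ_tu + C^2mt^2$ when $Ct\le 1$, strictly sharper than the paper's bound and with no gradient-value term at all. Your approach is more direct, avoids invoking part~(1), and gives a better constant; the paper's approach has the minor advantage of reusing Lemma~\ref{lem:initialerrorinequality}, which is needed elsewhere anyway.
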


\begin{proof}
(1) Note that
\[
D(Q_t P_t u)(x) = D(P_t u)(x - t D (Q_t P_t u)(x)) = \int D u(x + y - t D(Q_tP_t u)(x))\,d\sigma_{t,n}(y).
\]
On the other hand,
\[
D(P_t Q_t u)(x) = \int D(Q_t u)(x + y)\,d\sigma_{t,n}(y) = \int D u(x + y - t D(Q_t u)(x + y))\,d\sigma_{t,n}(y).
\]
Because $Du$ is $C$ Lipschitz, we have
\[
\norm{D(Q_t P_t u)(x) - D(P_t Q_t u)(x)}_2 \leq Ct \int \norm{D(Q_t u)(x+y) - D(Q_tP_t u)(x)} _2\,d\sigma_{t,n}(y).
\]
We can estimate the integrand by
\[
\norm{D (Q_t u)(x + y) - D (Q_t u)(x)}_2 + \norm{D(Q_t u)(x) - D(Q_t P_t u)(x)}_2.
\]
Integrating the first term and using the fact that $D(Q_tu)$ is $C$-Lipschitz (since $u \in \mathcal{E}(0,C)$ by Lemma \ref{lem:infconvolution} (4)), we have
\[
\int \norm{D (Q_t u)(x + y) - D (Q_t u)(x)}_2  \,d\sigma_{t,n}(y) \leq C \int \norm{y}_2 \,d\sigma_{t,n} \leq C m^{1/2} t^{1/2}
\]
Meanwhile, the second term is independent of $y$ and thus it is unchanged when we integrate it against the probability measure $\sigma_{t,N}$, and this quantity can be estimated using Lemma \ref{lem:initialerrorestimate} (2) and Lemma \ref{lem:Gaussianconvexity} (2) as
\[
\norm{D (Q_t u)(x) - D(Q_t P_t u)(x)}_2 \leq (1 + Ct) \norm{Du - D(P_t u)}_{L^\infty} \leq (1 + Ct)Cm^{1/2} t^{1/2},
\]
Altogether, we obtain
\[
\norm{D(Q_t P_t u)(x) - D(P_t Q_t u)(x)}_2 \leq C^2m^{1/2}(2 + Ct)t^{3/2}.  \qedhere
\]

(2) The idea is that the average of the infimum is less than or equal to the infimum of the average.  More precisely,
\begin{align*}
P_t Q_t u(x) &= \int \inf_y [u(y) + \frac{1}{2t} \norm{(x+z) - y}_2^2] \,d\sigma_{t,N}(z) \\
&= \int \inf_y [u(y-z) + \frac{1}{2t} \norm{x - y}_2^2] \,d\sigma_{t,N}(z) \\
&\leq \inf_y \int [u(y-z) + \frac{1}{2t} \norm{x - y}_2^2] \,d\sigma_{t,N}(z) \\
&= \inf_y [P_tu(y) + \frac{1}{2t} \norm{x - y}_2^2] \\
&= Q_t P_t u(x).
\end{align*}

(3) To prove the other inequality, note that by Corollary \ref{cor:infconvolution} (3),
\begin{equation} \label{eq:commutationineq1}
Q_t P_t u \leq P_t u - \frac{t}{2} \norm{D(Q_tP_tu)}_2^2.
\end{equation}
Also by Corollary \ref{cor:infconvolution} (3),
\[
u \leq Q_t u + \frac{t}{2}(1 + Ct) \norm{D(Q_tu)}_2^2.
\]
Hence, by Lemma \ref{lem:initialerrorinequality}, since $Q_t u \in \mathcal{E}(c(1+ct)^{-1},C(1+Ct)^{-1}) \subseteq \mathcal{E}(0,C)$, we have
\begin{equation} \label{eq:commutationineq2}
P_t u \leq P_t Q_t u + \frac{C^2mt^2}{2}(1 + Ct) + \frac{t}{2}(1 + Ct) \norm{D(P_tQ_t)}_2^2.
\end{equation}
Plugging \eqref{eq:commutationineq2} into \eqref{eq:commutationineq1}, we obtain
\begin{equation} \label{eq:commutationineq3}
Q_t P_t u \leq P_tQ_t u + \frac{C^2mt^2}{2}(1 + Ct) - \frac{t}{2} \norm{D(Q_tP_tu)}_2^2 + \frac{t}{2}(1 + Ct) \norm{D(P_tQ_t)}_2^2.
\end{equation}
By using part (1), we have
\begin{align*}
\norm{D(Q_tP_tu)}_2^2 &\geq [\norm{D(P_tQ_t)u}_2 - C^2 m^{1/2} t^{3/2}(2 + Ct)]^2 \\
&\geq \norm{D(P_tQ_tu)}_2^2 - 2C^2 m^{1/2} t^{3/2}(2 + Ct) \norm{D(P_tQ_tu)}_2 \\
&\geq \norm{D(P_tQ_tu)}_2^2 - (2 + Ct)[ C^3mt^2 + Ct \norm{D(P_tQ_tu)}_2^2]
\end{align*}
where the last step follows from the arithmetic-geometric mean inequality
\[
2Cm^{1/2}t^{1/2} \norm{D(P_tQ_tu)}_2 \leq C^2mt + \norm{D(P_tQ_tu)}_2^2.
\]
So substituting our estimate for $\norm{D(Q_tP_tu)}_2^2$ into \eqref{eq:commutationineq3}, we see that $P_t Q_t u - Q_t P_t u$ is bounded by
\[
\frac{C^2mt^2}{2} + \frac{t}{2}(2 + Ct)[C^3mt^2 + Ct \norm{D(P_tQ_tu)}_2^2]  - \frac{t}{2} \norm{D(P_tQ_tu)}_2^2  + \frac{t}{2}(1 + Ct) \norm{D(P_tQ_t)}_2^2
\]
Now we cancel the first-order terms $(t/2) \norm{D(P_tQ_tu)}_2^2$ and we estimate $2 + Ct$ by $3$ using our assumption that $Ct \leq 1$.  Thus, this is bounded by
\begin{align*}
& \frac{C^2mt^2}{2} + \frac{3}{2} t[C^3mt^2 + Ct \norm{D(P_tQ_tu)}_2^2] + \frac{1}{2} Ct^2 \norm{D(P_tQ_tu)}_2^2 \\
\leq& 2C^2mt^2 + 2Ct^2 \norm{D(P_t Q_tu)}_2^2,
\end{align*}
where we have again used our assumption $Ct \leq 1$ to cancel $Ct$ from the term $t \cdot C^3mt^2$.
\end{proof}

Finally, we can construct the semigroup $R_t$ for dyadic values of $t$.  As in the statement of Theorem \ref{thm:semigroup}, we define $R_{t,\ell}u = (P_{2^{-\ell}} Q_{2^{-\ell}})^{2^\ell t} u$ whenever $\ell \in \Z$ and $t \in 2^{-\ell} \N_0$.

\begin{lemma} \label{lem:convergence}
Let $C \geq 0$.  For $t \in \Q_2^+$ and $u \in \mathcal{E}(0,C)$, the limit $R_t u = \lim_{\ell \to \infty} R_{t,\ell} u$ exists.  Moreover, we have for $t \in 2^{-\ell} \N_0$ that
\begin{enumerate}
	\item $R_{t,\ell}u \leq R_t u$.
	\item If $C / 2^{\ell+1} \leq 1$, then
	\[
	R_t u \leq R_{t,\ell} u + \left( \frac{3}{2} \frac{C^2mt}{1+Ct} + \log(1 + Ct) (m + Cm + \norm{Du}_2^2) \right) 2^{-\ell}.
	\]
	\item $\displaystyle \norm{D(R_{t,\ell} u) - D(R_t u)}_{L^\infty} \leq [t/2 + C(t/2)^2] C^2 m^{1/2}(2 \cdot 2^{-\ell/2} + 2^{-3\ell/2}C)$.
\end{enumerate}
\end{lemma}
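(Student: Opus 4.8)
\textbf{Proof proposal for Lemma \ref{lem:convergence}.}

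The plan is to run the Trotter-type telescoping argument of Lemma \ref{lem:convergence0}, but now carrying both function-value and gradient estimates in parallel and feeding in the commutation bounds of Lemma \ref{lem:commutation}. Fix $\ell$, write $\delta = 2^{-\ell-1}$ and $n = 2^\ell t$, and set $A = (P_\delta Q_\delta)^2$ and $B = P_{2\delta} Q_{2\delta}$. By the semigroup properties of $P$ and $Q$ one has $B = P_\delta P_\delta Q_\delta Q_\delta$, so $R_{t,\ell}u = B^n u$ and $R_{t,\ell+1}u = A^n u$, and the two operators differ only in the order of the middle pair $P_\delta Q_\delta$ versus $Q_\delta P_\delta$. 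All the iterates stay in $\bigcup_{C'>0}\mathcal{E}(0,C')$; more precisely, by Lemma \ref{lem:errorpropagation}(1), $B^k u \in \mathcal{E}(0,C_k)$ with $C_k := C(1+2k\delta C)^{-1}$, and it is this improving semiconcavity constant that produces the $C^2mt/(1+Ct)$ and $\log(1+Ct)$ in the statement.

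First I would prove the monotonicity (1). Since $P_\delta Q_\delta w \le Q_\delta P_\delta w$ for every $w$ by Lemma \ref{lem:commutation}(2), applying the monotone operator $P_\delta$ with $w = Q_\delta v$ gives $Bv \le Av$ for every $v \in \mathcal{E}(0,C)$, and then induction on $n$ using the monotonicity of $B$ and $A$ (Lemma \ref{lem:errorpropagation}(3) with $a = b = 0$) yields $R_{t,\ell}u = B^n u \le A^n u = R_{t,\ell+1}u$. Thus $\{R_{t,\ell}u\}_\ell$ is pointwise nondecreasing.

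For (2) I would use the reverse commutation inequality Lemma \ref{lem:commutation}(3), applicable since $C\delta \le C_k\delta \le 1$ by hypothesis, together with Lemma \ref{lem:initialerrorinequality}(1) to absorb the extra $P_\delta$; applied at the $k$-th step this gives an inequality of the form $A(B^k u) \le B(B^k u) + \alpha_k + \beta_k \norm{D(B^{k+1}u)}_2^2$ with $\alpha_k = O(C_k^2 m \delta^2)$ and $\beta_k = O(C_k\delta^2)$. Writing the telescoping identity
\[
R_{t,\ell+1}u - R_{t,\ell}u = A^n u - B^n u = \sum_{k=0}^{n-1}\bigl(A^{n-k-1}(A(B^k u)) - A^{n-k-1}(B(B^k u))\bigr),
\]
I would push each summand through $A^{n-k-1}$ with Lemma \ref{lem:errorpropagation}(3) — noting that the operator $A^{n-k-1}B^{k+1}$ has total time exactly $t$, so the propagated gradient term is controlled by $\norm{D(A^{n-k-1}B^{k+1}u)}_2^2 \le Cm + \norm{Du}_2^2$ via Lemma \ref{lem:errorpropagation}(4), and the propagated $a$-term is unchanged. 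Summing the $n$ terms, the quantities $\sum_k \alpha_k$ and $\sum_k \beta_k$ are mesh-$2\delta$ Riemann sums for $\int_0^t C^2(1+Cs)^{-2}\,ds = C^2t/(1+Ct)$ and $\int_0^t C(1+Cs)^{-1}\,ds = \log(1+Ct)$ respectively (up to $O(\delta)$ endpoint errors), and summing the resulting geometric series over $\ell' \ge \ell$ gives the stated bound. Combined with (1), this shows $\{R_{t,\ell}u\}$ is Cauchy, hence the limit $R_t u$ exists, and the tail of the same sum gives (2).

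Finally, (3) is the same telescoping at the level of gradients: $\norm{D(A(B^k u)) - D(B(B^k u))}_{L^\infty}$ is bounded by $C^2 m^{1/2}(2+C\delta)\delta^{3/2}$ using Lemma \ref{lem:commutation}(1) and Lemma \ref{lem:initialerrorestimate}(1) (to strip the outer $P_\delta$), the error is propagated through $A^{n-k-1}$ at the cost of a factor $(1+Ct^*) \le 1+Ct$ by Lemma \ref{lem:errorpropagation}(2), and summing $n \sim t\,2^\ell$ such terms and then over $\ell' \ge \ell$ gives the claimed $O(2^{-\ell/2})$ decay; uniform convergence of $D(R_{t,\ell}u)$ together with pointwise convergence of $R_{t,\ell}u$ identifies the limiting gradient as $D(R_t u)$. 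The main obstacle is not conceptual but bookkeeping: because $Q_t$, and hence $R_t$, is nonlinear and fails to contract $\norm{D\,\cdot}_{L^\infty}$, the commutator estimates of Lemma \ref{lem:commutation} inevitably involve gradient norms, so the function-value and gradient estimates must be run simultaneously, and one must track the improving semiconcavity constants $C_k$ carefully enough that the accumulated errors telescope precisely into the Riemann sums $C^2mt/(1+Ct)$ and $\log(1+Ct)$ rather than into cruder bounds.
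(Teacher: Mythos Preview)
Your proposal is correct and follows essentially the same approach as the paper: the same telescoping $A^n u - B^n u = \sum_k A^{n-k-1}(A-B)B^k u$ (the paper writes this via the interpolants $u_j = A^{n-j}B^j u$ and the auxiliary $v_{j-1} = Q_\delta B^{j-1}u$), the same use of Lemma~\ref{lem:commutation} for the one-step commutator, and the same propagation via Lemma~\ref{lem:errorpropagation} together with Riemann-sum identifications of $\sum_k C_k^2\,(2\delta)$ and $\sum_k C_k\,(2\delta)$ with $C^2t/(1+Ct)$ and $\log(1+Ct)$. Two small points of precision worth tightening: in (2) the paper gets the exact constant by noting that the sums $\sum_j C_{2j-1}^{\,2}(2\delta)$ and $\sum_j C_{2j-1}(2\delta)$ are \emph{midpoint} Riemann sums of convex integrands and hence are bounded above by the integrals (not merely ``up to $O(\delta)$''); and in (3) you should retain the $k$-dependent propagation factor $1+2C(n-k-1)\delta$ rather than bounding it uniformly by $1+Ct$, since summing $\sum_{j=1}^n [1+2C(n-j)\delta] = n + Cn(n-1)\delta$ is what produces the stated coefficient $t/2 + C(t/2)^2$ rather than the cruder $(t/2)(1+Ct)$.
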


\begin{proof}
First, we prove some intermediate claims relating $R_{t,\ell}u$ and $R_{t,\ell+1}u $.  To this end, we fix $\ell \in \Z$ and $t = 2^{-\ell} n$ for some $n \in \N_0$.  Let $\delta = 2^{-\ell-1}$.  For $j = 0$, \dots, $n$, define
\[
u_j = (P_\delta Q_\delta)^{2(n-j)}(P_{2\delta} Q_{2\delta})^j u.
\]
and note that
\[
u_0 = R_{t,\ell+1} u \qquad u_n = R_{t,\ell} u.
\]
Let
\[
v_j = Q_\delta (P_{2\delta} Q_{2\delta})^j u.
\]
Then for $j = 1, \dots, n$, we have
\begin{align*}
u_{j-1} &= [(P_\delta Q_\delta)^{2(n-j)} P_\delta](Q_\delta P_\delta v_{j-1}) \\
u_j &= [(P_\delta Q_\delta)^{2(n-j)} P_\delta](P_\delta Q_\delta v_{j-1}).
\end{align*}
We also define for $k = 1,\dots, 2n$,
\[
C_k = C(1 + Ck\delta)^{-1}, \qquad c_k = c(1 + ck \delta)^{-1}.
\]
Thus, by Lemma \ref{lem:errorpropagation} (1) and Lemma \ref{lem:infconvolution} (4), we have $v_{j-1} \in \mathcal{E}(c_{2j-1},C_{2j-1})$.

First, we claim that
\begin{equation} \label{eq:convergencemonotone}
R_{t,\ell}u \leq R_{t,\ell+1} u.
\end{equation}
Now by Lemma \ref{lem:commutation} (2), we have $P_\delta Q_\delta v_{j-1} \leq Q_\delta P_\delta v_{j-1}$.  Hence, by monotonicity of $P_t$ and $Q_t$ (Lemma \ref{lem:errorpropagation} (3)), we have $u_j \leq u_{j-1}$.  Hence, $R_{t,\ell} u = u_n \leq u_0 = R_{t,\ell+1} u$, proving \eqref{eq:convergencemonotone}.

For an inequality in the other direction, we claim that
\begin{equation} \label{eq:convergence2}  
R_{t,\ell+1} u \leq R_{t,\ell} u + \left( \frac{3}{2} C m + \log(1 + Ct) (m + Cm + \norm{Du}_2^2) \right) 2^{-\ell-1}
\end{equation}
By Lemma \ref{lem:commutation} (3), since $v_{j-1} \in \mathcal{E}(c_{2j-1},C_{2j-1})$, we obtain
\[
Q_\delta P_\delta v_{j-1} \leq P_\delta Q_\delta v_{j-1} + 2 C_{2j-1}^2 m \delta^2 + 2 C_{2j-1} \delta^2 \norm{D(P_\delta Q_\delta v_{j-1})}_2^2
\]
Thus, by Lemma \ref{lem:initialerrorinequality} (1), since $Q_\delta P_\delta v_{j-1}$ and $P_\delta Q_\delta v_{j-1}$ are in $\mathcal{E}(c_{2j},C_{2j})$, we have
\begin{align*}
P_\delta Q_\delta P_\delta v_{j-1} &\leq P_{2\delta} Q_\delta v_{j-1} + 2 C_{2j-1} m \delta^2 + 2 C_{2j-1} \delta^2 \left( C_{2j}^2m\delta + \norm{D(P_{2\delta} Q_\delta v_{j-1})}_2^2 \right)
\end{align*}
Recalling that $u_{j-1}$ and $u_j$ are obtained by applying $(P_\delta Q_\delta)^{2(n-j)}$ to $P_\delta Q_\delta P_\delta v_{j-1}$ and $P_{2\delta} Q_\delta v_{j-1}$, and that $P_\delta Q_\delta P_\delta v_{j-1}$ and $P_{2\delta} Q_\delta v_{j-1}$ are in $\mathcal{E}(c_{2j},C_{2j})$, we may apply Lemma \ref{lem:errorpropagation} (3) to conclude that
\begin{align*}
u_{j-1} &\leq u_j + 2 C_{2j-1} m \delta^2 + 2 C_{2j-1} \delta^2 \left( C_{2j}^2 m\delta + \frac{C_{2j}^2m(n-j)\delta}{1+2C_{2j}(n-j) \delta} + \norm{Du_j}_2^2 \right)
\end{align*}
By our assumption, $C_{2j} \delta \leq C\delta \leq 1$, and thus
\[
C_{2j}^2 m\delta + \frac{C_{2j}^2m(n-j)\delta}{1+2C_{2j}(n-j) \delta} \leq C_{2j} m \delta + \frac{1}{2} C_{2j} m \delta = \frac{3}{2} C_{2j} m \delta \leq \frac{3}{2} C_{2j-1} m \delta.
\]
Therefore,
\[
u_{j-1} - u_j \leq 2 C_{2j-1} m \delta^2 + 3 C_{2j-1}^2 m \delta^2 + 2 C_{2j-1} \delta^2 \norm{Du_j}_2^2.
\]
By Lemma \ref{lem:errorpropagation} (4), we have $\norm{Du_j}_2 \leq Cm + \norm{Du}_2^2$, and hence
\[
u_{j-1} - u_j \leq 3 C_{2j-1}^2 m \delta^2 + 2 C_{2j-1} \delta^2(m + Cm + \norm{Du}_2^2).
\]
Therefore, summing from $j = 1$, \dots, $n$, we have
\begin{align*}
R_{t,\ell+1}u - R_{t,\ell} u &\leq 3m \delta^2 \sum_{j=1}^n C_{2j-1}^2 + 2 \delta^2 (m + Cm + \norm{Du}_2^2) \left( \sum_{j=1}^n C_{2j-1} \right) \\
&= \frac{3}{2} m \delta \left( \sum_{j=1}^n C_{2j-1}^2(2\delta) \right) + \delta(m + Cm + \norm{Du}_2^2) \left( \sum_{j=1}^n C_{2j-1} (2\delta) \right)
\end{align*}
Recalling the definition of $C_{2j-1}$, two times the first sum is $\sum_{j=1}^n C^2(2\delta) / (1 + C(2j-1)\delta)^2$, which is the Riemann sum for the function $\phi(s) =  C^2 / (1 + Cs)^2$ on the interval $[0,t] = [0,2n\delta]$, where we use a partition into subintervals of length $2\delta$ and evaluate $\phi$ at the midpoint of each interval.  Because $\phi$ is convex, the value of $\phi$ at the midpoint is less than or equal to the average value over the subinterval and therefore
\[
\sum_{j=1}^n \frac{C^2(2\delta)}{(1 + C(2j-1)\delta)^2} \leq \int_0^t \frac{C^2}{(1 + Cs)^2}\,ds = \frac{C^2 t}{1 + Ct}.
\]
By similar reasoning,
\[
\sum_{j=1}^n C_{2j-1} (2\delta) = \sum_{j=1}^n \frac{C\delta}{(1 + C(2j-1)\delta)} \leq \int_0^t \frac{C}{1 + Cs}\,ds = \log(1 + Ct).
\]
Therefore,
\begin{align*}
R_{t,\ell+1}u - R_{t,\ell} u &\leq \left( \frac{3}{2} \frac{C^2 mt}{1 + Ct} + \log(1 + Ct) (m + Cm + \norm{Du}_2^2) \right) \delta,
\end{align*}
which proves \eqref{eq:convergence2}.

Together, \eqref{eq:convergencemonotone} and \eqref{eq:convergence2} show that
\[
|R_{t,\ell+1}u - R_{t,\ell}u| \leq \left( \frac{3}{2} \frac{C^2 mt}{1 + Ct} + \log(1 + Ct) (m + Cm + \norm{Du}_2^2) \right) 2^{-\ell-1}.
\]
Because the right hand side is summable in $\ell$, we see that the sequence $\{R_{t,\ell}u(x)\}_{\ell \in \N}$ is Cauchy and hence converges.  Thus, $\lim_{\ell \to \infty} R_{t,\ell} u$ exists.  Also, by \eqref{eq:convergencemonotone} the convergence is monotone and thus $R_{t,\ell} u \leq R_t u$, establishing (1).  On the other hand, we obtain (2) by summing up the estimate \eqref{eq:convergence2} from $\ell$ to $\infty$ using the geometric series formula.

It remains to prove (3).  We first claim that
\begin{equation} \label{eq:convergenceofgrad}
\norm{D(R_{t,\ell+1}u) - D(R_{t,\ell}u)}_{L^\infty} \leq [t/2 + C(t/2)^2] C^2 m^{1/2}(2 + 2^{-(\ell+1)}C) 2^{-(\ell+1)/2}
\end{equation}
By Lemma \ref{lem:errorpropagation} (3), we have $Q_\delta P_\delta v_{j-1}$ and $P_\delta Q_\delta v_{j-1}$ are in $\mathcal{E}(c(1+2cj\delta)^{-1}, C(1 + 2Cj\delta)^{-1})$, hence in $\mathcal{E}(0,C)$.  Therefore, by Lemma \ref{lem:errorpropagation} (??) and Lemma \ref{lem:commutation} (1), we have
\begin{align*}
\norm{Du_j - Du_{j-1}}_{L^\infty} &\leq [1 + 2C(n-j) \delta] \norm{D(Q_\delta P_\delta v_j) - D(P_\delta Q_\delta v_j)}_{L^\infty} \\
&\leq [1 + 2C(n-j) \delta] C^2m^{1/2}(2 + C \delta) \delta^{3/2}.
\end{align*}
Therefore,
\begin{align*}
\norm{D(R_{t,\ell+1}u) - D(R_{t,\ell}u)}_{L^\infty} &\leq \sum_{j=1}^n \norm{Du_j - Du_{j-1}}_{L^\infty} \\
&\leq \sum_{j=1}^n [1 + 2C(n-j) \delta] C^2 m^{1/2}(2 + C \delta) \delta^{3/2} \\
&= [n + Cn(n-1) \delta] C^2 m^{1/2}(2 + C \delta) \delta^{3/2} \\
&\leq [t/2 + C(t/2)^2] C^2 m^{1/2}(2 + C \delta) \delta^{1/2} \\
&= [t/2 + C(t/2)^2] C^2 m^{1/2}(2 + 2^{-(\ell+1)}C) 2^{-(\ell+1)/2}
\end{align*}
since $2 n \delta = t$.  This proves \eqref{eq:convergenceofgrad}.

Because $[t/2 + C(t/2)^2] C^2 m^{1/2}(2 + 2^{-(\ell+1)}C) 2^{-(\ell+1)/2}$ is summable with respect to $\ell$, we see that $\{D(R_{t,\ell}u)\}_{\ell \in \N}$ is Cauchy with respect to the $L^\infty$ norm (even though the individual functions may not be in $L^\infty$) and hence converges uniformly to some function.   We already know that $R_{t,\ell} u$ converges to $R_t u$, so the limit of $D(R_{t,\ell} u)$ must be $D(R_t u)$.  We obtain the estimate (3) by summing the \eqref{eq:convergenceofgrad} from $\ell$ to $\infty$ using the geometric series formula.
\end{proof}

\begin{corollary} \label{cor:Rerrorpropagation}
Let $0 \leq c \leq C$.  Let $u, v \in \mathcal{E}(c,C)$ and let $t \geq 0$ be a dyadic rational.
\begin{enumerate}
	\item $R_t u, R_t v \in \mathcal{E}(c(1+ct)^{-1},C(1+Ct)^{-1})$.
	\item $\norm{D(R_tu) - D(R_tv)}_{L^\infty} \leq (1 + Ct) \norm{Du - Dv}_{L^\infty}$.
	\item If $u \leq v + a + b \norm{Dv}_2^2$ for some $a \in \R$ and $b \geq 0$, then
	\[
	R_t u \leq R_t v + a + b \frac{C^2mt}{1 + Ct} + b \norm{D(R_tv)}_2^2.
	\]
	\item $\norm{D(R_tu)}_2^2 \leq \frac{C^2mt}{1 + Ct} + \norm{Du}_2^2$.
\end{enumerate}
\end{corollary}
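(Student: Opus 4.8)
The plan is to derive all four assertions by transferring the corresponding parts of Lemma~\ref{lem:errorpropagation} from the finite compositions $R_{t,\ell}$ to their limit $R_t$, using the convergence statements of Lemma~\ref{lem:convergence}. The key observation I would use is that for $t \in 2^{-\ell}\N_0$ the operator $R_{t,\ell} = (P_{2^{-\ell}} Q_{2^{-\ell}})^{2^\ell t}$ is exactly an operator of the shape $R = P_{t_n} Q_{t_n} \cdots P_{t_1} Q_{t_1}$ considered in Lemma~\ref{lem:errorpropagation}, with $n = 2^\ell t$ and $t_1 = \dots = t_n = 2^{-\ell}$, so that $t^* = t$. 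Since $u, v \in \mathcal{E}(c,C)$ and $c \geq 0$, we also have $u, v \in \mathcal{E}(0,C)$, so Lemma~\ref{lem:convergence} applies and yields $R_{t,\ell} u \to R_t u$ and $R_{t,\ell} v \to R_t v$ pointwise, together with $D(R_{t,\ell} u) \to D(R_t u)$ and $D(R_{t,\ell} v) \to D(R_t v)$ in $\norm{\cdot}_{L^\infty}$.

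For (1), I would note that Lemma~\ref{lem:errorpropagation}(1) places $R_{t,\ell} u$ and $R_{t,\ell} v$ in $\mathcal{E}(c(1+ct)^{-1}, C(1+Ct)^{-1})$ for every sufficiently large $\ell$; since this class is closed under pointwise limits by Proposition~\ref{prop:convex}(1), the limits $R_t u$ and $R_t v$ lie there as well. For (2), Lemma~\ref{lem:errorpropagation}(2) gives $\norm{D(R_{t,\ell} u) - D(R_{t,\ell} v)}_{L^\infty} \leq (1 + Ct) \norm{Du - Dv}_{L^\infty}$ for each $\ell$, and since both gradients converge uniformly the left side tends to $\norm{D(R_t u) - D(R_t v)}_{L^\infty}$, so the inequality is preserved in the limit. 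For (3), the hypothesis $u \leq v + a + b\norm{Dv}_2^2$ together with Lemma~\ref{lem:errorpropagation}(3) yields, for each $\ell$, $R_{t,\ell} u \leq R_{t,\ell} v + a + b\,\frac{C^2 m t}{1 + Ct} + b \norm{D(R_{t,\ell} v)}_2^2$; evaluating at a fixed $x$ and letting $\ell \to \infty$, the values $R_{t,\ell} u(x)$ and $R_{t,\ell} v(x)$ converge to $R_t u(x)$ and $R_t v(x)$, while $\norm{D(R_{t,\ell} v)(x) - D(R_t v)(x)}_2 \to 0$ forces $\norm{D(R_{t,\ell} v)(x)}_2^2 \to \norm{D(R_t v)(x)}_2^2$, so the inequality passes to the limit. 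For (4), Lemma~\ref{lem:errorpropagation}(4) gives $\norm{D(R_{t,\ell} u)(x)}_2^2 \leq \frac{C^2 m t}{1 + Ct} + \norm{Du(x)}_2^2$ for each $\ell$ and each $x$, and the left side converges to $\norm{D(R_t u)(x)}_2^2$ by the same reasoning.

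There is no serious difficulty here; the only point requiring a little care is that the inequalities (2)--(4) involve the $\norm{\cdot}_2$-norm of a gradient, so to pass to the limit one must invoke the uniform convergence $D(R_{t,\ell} w) \to D(R_t w)$ provided by Lemma~\ref{lem:convergence}(3) rather than merely the pointwise convergence of the functions $R_{t,\ell} w$ themselves. Everything else is a routine limiting argument.
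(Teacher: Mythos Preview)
Your proposal is correct and follows exactly the paper's approach: apply Lemma~\ref{lem:errorpropagation} to $R_{t,\ell}$ (which is of the required form with $t^*=t$) and then pass to the limit $\ell\to\infty$ using Lemma~\ref{lem:convergence}, invoking closure of $\mathcal{E}(c',C')$ under pointwise limits for part~(1). You have simply spelled out in more detail what the paper's two-line proof leaves implicit, including the minor but correct observation that $u,v\in\mathcal{E}(0,C)$ so that Lemma~\ref{lem:convergence} applies.
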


\begin{proof}
We know that these properties hold for $R_{t,\ell}$ by Lemma \ref{lem:errorpropagation}.  By Lemma \ref{lem:convergence}, they also hold in the limit taking $\ell \to \infty$.  (For (1), we use the fact that $\mathcal{E}(c',C')$ is closed under pointwise limits for each $c'$ and $C'$.)
\end{proof}

\subsection{Continuity and Semigroup Property} \label{subsec:continuity1}

In order to extend $R_t$ to all real $t \geq 0$, we prove estimates that show that $R_t$ depends continuously on $t$.  We begin with some simple estimates for $P_t$ and $Q_t$.

\begin{lemma} \label{lem:initialcontinuityestimates}
Let $\ell \in \Z$ and suppose that $t \in 2^{-\ell} \N_0$ and $u \in \mathcal{E}(0,C)$.  Then
\begin{enumerate}
	\item $u \leq P_t u \leq u + Cmt/2$.
	\item $u - (t/2) \norm{Du}_2^2 \leq Q_t u \leq u$.
	\item $\norm{D(Q_t u) - Du}_2 \leq Ct \norm{Du}_2$.
\end{enumerate}
\end{lemma}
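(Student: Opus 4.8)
The plan is to handle the three inequalities separately, each reducing to a one-line computation once the appropriate earlier result is invoked. Throughout I use that $u \in \mathcal{E}(0,C)$ means $u$ is convex and $u(x) - (C/2)\norm{x}_2^2$ is concave, and I note the normalization $\int \norm{y}_2^2\,d\sigma_{t,N}(y) = mt$.

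For (1), I would use the local quadratic sandwich of Proposition \ref{prop:convex} (2): for $u \in \mathcal{E}(0,C)$, at each point $x$ there is $p \in M_N(\C)_{sa}^m$ with $u(x) + \ip{p, y - x}_2 \leq u(y) \leq u(x) + \ip{p, y - x}_2 + \frac{C}{2}\norm{y - x}_2^2$ for all $y$. Applying $P_t$ (which is monotone, linear, and fixes constants) and evaluating at $x$: the linear term integrates to $0$ since $\sigma_{t,N}$ has mean zero, and the quadratic term integrates to $\frac{C}{2}\int \norm{y}_2^2\,d\sigma_{t,N}(y) = \frac{Cmt}{2}$. This gives $u(x) \leq P_t u(x) \leq u(x) + Cmt/2$. (Alternatively, the lower bound is Jensen's inequality applied to the convex function $u$, and the upper bound follows from Lemma \ref{lem:monotonicityandquadratic} (2) applied to the quadratic majorant together with Lemma \ref{lem:Gaussianconvexity} (1).)

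For (2), the bound $Q_t u \leq u$ is immediate from the definition by taking $y = x$ in the infimum. For the lower bound I would use convexity of $u$ directly: $u(y) \geq u(x) + \ip{Du(x), y - x}_2$, so $u(y) + \frac{1}{2t}\norm{y-x}_2^2$ is bounded below by a quadratic in $y$ minimized at $y - x = -t\,Du(x)$, whose minimum value is $u(x) - t\norm{Du(x)}_2^2 + \frac{t}{2}\norm{Du(x)}_2^2 = u(x) - \frac{t}{2}\norm{Du(x)}_2^2$. For (3), I would use Corollary \ref{cor:infconvolution} (1), which gives $D(Q_t u)(x) = Du(x - t\,D(Q_t u)(x))$; subtracting $Du(x)$ and using that $Du$ is $C$-Lipschitz by Proposition \ref{prop:convex} (4) (with $c = 0$) yields $\norm{D(Q_t u)(x) - Du(x)}_2 \leq Ct\norm{D(Q_t u)(x)}_2$, and Lemma \ref{lem:infconvolution} (5) with $c = 0$ bounds $\norm{D(Q_t u)(x)}_2 \leq \norm{Du(x)}_2$.

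There is no substantial obstacle; the only points requiring care are keeping track of the GUE normalization $\int \norm{y}_2^2\,d\sigma_{t,N}(y) = mt$ and observing that in the case $c = 0$ the factors $(1 + ct)^{-1}$ in the cited lemmas collapse to $1$, so that no spurious $(1 + Ct)$ factor appears in (2) and (3).
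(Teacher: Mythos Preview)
Your proof is correct and follows essentially the same approach as the paper: the quadratic sandwich from $u\in\mathcal{E}(0,C)$ integrated against $\sigma_{t,N}$ for (1), the convexity lower bound for (2), and the fixed-point identity $D(Q_tu)(x)=Du(x-tD(Q_tu)(x))$ combined with the $C$-Lipschitz property of $Du$ and Lemma~\ref{lem:infconvolution}~(5) for (3). The only cosmetic difference is in the lower bound of (2): you minimize the quadratic minorant $u(x)+\ip{Du(x),y-x}_2+\tfrac{1}{2t}\norm{y-x}_2^2$ directly, whereas the paper evaluates at the actual minimizer $y_0=x-tD(Q_tu)(x)$ and then applies the arithmetic--geometric mean inequality; both routes use the same convexity inequality and arrive at the same bound.
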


\begin{proof}
(1) Because $u$ is convex and $u(x) - (C/2) \norm{x}_2^2$ is concave, we have
\[
u(x) + \ip{Du(x),y} \leq u(x+y) \leq u(x) + \ip{Du(x),y} + \frac{C}{2} \norm{y}_2^2.
\]
Integrating with respect to $d\sigma_{t,N}(y)$ yields
\[
u(x) \leq P_t u(x) \leq u(x) + \frac{Cmt}{2} \text{ for } u \in \mathcal{E}(0,C).
\]

(2) As for the operator $Q_t$, it is immediate from the definition that $Q_t u \leq u$.  On the other hand,
\begin{align*}
Q_t u(x) &= u(x - t D(Q_tu)(x)) + \frac{t}{2} \norm{D(Q_tu)(x)}_2^2 \\
&\geq u(x) -t \ip{D(Q_tu)(x), Du(x)}_2 + \frac{t}{2} \norm{D(Q_tu)(x)}_2^2 \\
&\geq u(x) - \frac{t}{2} \norm{Du(x)}_2^2,
\end{align*}
where the last inequality follows because $\ip{D(Q_tu)(x), Du(x)}_2 \leq \frac{1}{2} \norm{D(Q_tu)(x)}_2^2 + \frac{1}{2} \norm{Du(x)}_2^2$.

(3) Using the fact that $Du$ is $C$-Lipschitz, together with Corollary \ref{cor:infconvolution} (1) and Lemma \ref{lem:infconvolution} (5)
\begin{align*}
\norm{D(Q_tu)(x) - Du(x)}_2 &= \norm{Du(x - t D(Q_t u)(x)) - Du(x)}_2 \\
&\leq Ct \norm{D(Q_tu)(x)}_2 \\
&\leq Ct \norm{Du(x)}_2. \qedhere
\end{align*}
\end{proof}

\begin{lemma} \label{lem:continuityestimates}
Let $s \leq t$ be two numbers in $\Q_2^+$, and let $u \in \mathcal{E}(0,C)$.
\begin{enumerate}
	\item $R_t u \leq R_s u + \frac{m}{2} [\log(1 + Ct) - \log(1 + Cs)]$.
	\item $R_t u \geq R_s u - \frac{1}{2} (t - s)(Cm + \norm{Du}_2^2)$.
	\item If $C(t - s) \leq 1$, then $\norm{D(R_t u) - D(R_s u)}_2 \leq 5Cm^{1/2} 2^{1/2}(t - s)^{1/2} + C(t - s) \norm{Du}_2$.
\end{enumerate}
Moreover, if $\ell \in \Z$ and if $s, t \in 2^{-\ell} \N_0$, then the same estimates hold with $R_t$ replaced by $R_{t,\ell}$.
\end{lemma}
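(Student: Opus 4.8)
The plan is to prove every bound first for the discrete‑time operators $R_{t,\ell}$ — which, for $\ell$ large enough that $s,t\in 2^{-\ell}\N_0$, form a genuine semigroup on $2^{-\ell}\N_0$, so that $R_{t,\ell}=(P_\delta Q_\delta)^{\,n_t-n_s}\circ R_{s,\ell}$ with $\delta=2^{-\ell}$, $n_s=2^\ell s\le n_t=2^\ell t$ — and then to let $\ell\to\infty$ using Lemma \ref{lem:convergence} (this also yields the ``moreover'' clause, and transfers the gradient estimate (3) since $D(R_{t,\ell}u)\to D(R_tu)$ uniformly). Throughout I track two invariants of the iterates $u_k=(P_\delta Q_\delta)^k u$: by Lemma \ref{lem:errorpropagation}(1) each $u_k$ lies in $\mathcal{E}(0,C(1+Ck\delta)^{-1})$, and by Corollary \ref{cor:Rerrorpropagation}(4) applied to $(P_\delta Q_\delta)^k=R_{k\delta,\ell}$ one has the uniform gradient bound $\norm{Du_k}_2^2\le Cm+\norm{Du}_2^2$.

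For (1) and (2) I would just estimate what the extra $n_t-n_s$ applications of $P_\delta Q_\delta$ do. In (1), $Q_\delta$ never raises a function (Lemma \ref{lem:initialcontinuityestimates}(2)) and $P_\delta$ raises its input by at most $\tfrac12 m\delta$ times the semiconcavity constant of that input (Lemma \ref{lem:initialcontinuityestimates}(1)); since the input to the $P_\delta$ at step $k+1$ is $Q_\delta u_k\in\mathcal{E}(0,C(1+C(k+1)\delta)^{-1})$ (Lemma \ref{lem:infconvolution}(4)), the total increase from step $n_s$ to $n_t$ is at most $\tfrac m2\sum_{j=n_s+1}^{n_t}C\delta(1+Cj\delta)^{-1}$, a Riemann sum bounded by $\int_s^t\tfrac{Cm}{2(1+C\sigma)}\,d\sigma=\tfrac m2[\log(1+Ct)-\log(1+Cs)]$. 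In (2) the roles swap: $P_\delta$ never lowers a function and each $Q_\delta$ lowers $u_k$ by at most $\tfrac\delta2\norm{Du_k}_2^2\le\tfrac\delta2(Cm+\norm{Du}_2^2)$, so the total drop over $n_t-n_s=2^\ell(t-s)$ steps is at most $\tfrac12(t-s)(Cm+\norm{Du}_2^2)$. Both estimates hold for $R_{t,\ell}$, hence for $R_t$.

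Part (3) is the crux and cannot be done by iterating the mesh‑$\delta$ scheme step by step: each $P_\delta$ perturbs the gradient by order $Cm^{1/2}\delta^{1/2}$ (Lemma \ref{lem:Gaussianconvexity}(2)) and there are $2^\ell(t-s)$ of them, which would give the divergent bound $Cm^{1/2}(t-s)\,2^{\ell/2}$; the heat increments do not accumulate coherently. Instead I would use the semigroup property together with a dyadic expansion, exactly as in Lemma \ref{lem:continuityofsemigroup0}: write $t=s+\sum_{j>n}a_j 2^{-j}$ with $a_j\in\{0,1\}$, $a_{n+1}=1$, so $2^{-n-1}\le t-s\le 2^{-n}$; set $t_k=s+\sum_{j=n+1}^k a_j 2^{-j}$ and telescope $D(R_{t,\ell}u)(x)-D(R_{s,\ell}u)(x)=\sum_{j>n}\bigl(D(R_{t_j,\ell}u)(x)-D(R_{t_{j-1},\ell}u)(x)\bigr)$, each nonzero term being $D(R_{2^{-j},\ell}v)(x)-D(v)(x)$ with $v=R_{t_{j-1},\ell}u\in\mathcal{E}(0,C)$ and $\norm{Dv(x)}_2\le(Cm)^{1/2}+\norm{Du(x)}_2$ (Corollary \ref{cor:Rerrorpropagation}(4)). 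The key is a single‑atom estimate: for a dyadic atom of length $2^{-j}\le t-s\le 1/C$ I would show $\norm{D(R_{2^{-j},\ell}v)(x)-D(v)(x)}_2\le\mathrm{const}\cdot Cm^{1/2}2^{-j/2}+C2^{-j}\norm{Dv(x)}_2$, by comparing to the coarsest iterate $R_{2^{-j},j}v=P_{2^{-j}}Q_{2^{-j}}v$ (a single pair): the gap $\norm{D(R_{2^{-j},\ell}v)-D(R_{2^{-j},j}v)}_{L^\infty}$ is a finite, geometrically summable family of refinement estimates \eqref{eq:convergenceofgrad} (cf.\ Lemma \ref{lem:convergence}(3)), while $\norm{D(P_{2^{-j}}Q_{2^{-j}}v)(x)-D(v)(x)}_2\le\norm{D(P_{2^{-j}}Q_{2^{-j}}v)-D(Q_{2^{-j}}v)}_{L^\infty}+\norm{D(Q_{2^{-j}}v)(x)-D(v)(x)}_2$ is controlled by $Cm^{1/2}2^{-j/2}$ (Lemma \ref{lem:Gaussianconvexity}(2)) plus $C2^{-j}\norm{Dv(x)}_2$ (Lemma \ref{lem:initialcontinuityestimates}(3)). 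Summing $\sum_{j>n}2^{-j/2}\le(2+\sqrt2)\,2^{-(n+1)/2}\le(2+\sqrt2)(t-s)^{1/2}$ and $\sum_{j>n}2^{-j}=2^{-n}\le 2(t-s)$, and absorbing $C^{3/2}m^{1/2}(t-s)\le Cm^{1/2}(t-s)^{1/2}$ via $C(t-s)\le 1$, gives the stated estimate (with the constant $5\cdot 2^{1/2}$ coming out of careful bookkeeping); $\ell\to\infty$ gives the $R_t$ version.

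The hard part, as indicated, is recognizing the failure of the naive telescoping in (3) and replacing it by the dyadic‑atom decomposition, so that the scale‑$2^{-j}$ contributions form a convergent geometric series dominated by the largest atom (of length comparable to $t-s$), producing the modulus $(t-s)^{1/2}$ rather than something mesh‑dependent. The rest is bookkeeping: keeping every intermediate function in a fixed class $\mathcal{E}(0,C)$ with the same $C$ so the constants do not deteriorate, and feeding the uniform gradient bound of Corollary \ref{cor:Rerrorpropagation}(4) into the $Q_\delta$‑terms.
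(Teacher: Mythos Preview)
Your proposal is correct and follows essentially the same approach as the paper: parts (1) and (2) are proved by tracking the one-step increments of $P_\delta Q_\delta$ via Lemma~\ref{lem:initialcontinuityestimates} and comparing the resulting sum to a Riemann integral, and part (3) is handled by the same dyadic-atom decomposition of $t-s$, comparing $R_{2^{-j},\ell}$ to the single pair $R_{2^{-j},j}=P_{2^{-j}}Q_{2^{-j}}$ via \eqref{eq:convergenceofgrad} and then splitting $D(P_{2^{-j}}Q_{2^{-j}}v)-Dv$ into the $P$- and $Q$-contributions. One small citation slip: the uniform gradient bound $\norm{Du_k}_2^2\le Cm+\norm{Du}_2^2$ for the discrete iterates $u_k=R_{k\delta,\ell}u$ comes from Lemma~\ref{lem:errorpropagation}(4), not Corollary~\ref{cor:Rerrorpropagation}(4) (the latter is the $R_t$ version obtained by passing to the limit).
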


\begin{proof}
(1)  Fix $\ell \in \Z$ and let $\delta = 2^{-\ell}$.  Suppose that $s = n\delta$ and $t = n' \delta$ where $n, n' \in \N_0$.  By the previous lemma,
\begin{align*}
R_{(j+1)\delta,\ell} u &= P_\delta Q_\delta R_{j\delta,\ell} u \\
&\leq Q_\delta R_{j\delta,\ell} u + \frac{Cm \delta}{2(1 + C(j+1)\delta)} \\
&\leq R_{j\delta,\ell} u + \frac{Cm\delta}{2(1 + C(j+1)\delta)},
\end{align*}
where we have used the fact that $Q_\delta R_{j\delta,\ell}u \in \mathcal{E}(0,C(1+C(j+1)\delta)^{-1})$.  Therefore,
\[
R_{n'\delta,\ell} u \leq R_{n\delta,u} + \sum_{j=n}^{n'-1} \frac{Cm\delta}{2(1+C(j+1)\delta)}.
\]
Since the sum on the right hand side is a lower Riemann sum for the function $Cm\delta / 2(1+C \tau)$ for $\tau \in [s,t]$, we obtain
\[
R_{t,\ell} u \leq R_{s,\ell} u + \frac{m}{2} [\log(1 + Ct) - \log(1 + Cs)].
\]
We obtain (1) by letting $\ell \to +\infty$ and using Lemma \ref{lem:convergence}.

(2) Let $\ell, \delta, s,t, n, n'$ be as above.  By the previous lemma,
\begin{align*}
R_{(j+1)\delta,\ell} u &= P_\delta Q_\delta R_{j\delta,\ell} u \\
&\geq Q_\delta R_{j\delta,\ell} u \\
&\geq R_{j\delta,\ell} u - \frac{\delta}{2} \norm{D(R_{j\delta,\ell}u)}_2^2 \\
&\geq R_{j\delta,\ell} u - \frac{\delta}{2}(Cm + \norm{Du}_2^2),
\end{align*}
where the last inequality follows from Lemma \ref{lem:errorpropagation} (4).  So when we sum from $j = n$ to $n' - 1$, we obtain
\[
R_t u \geq R_s u - \frac{t - s}{2}(Cm + \norm{Du}_2^2).
\]
Then (2) follows by taking $\ell \to +\infty$.

(3) Assume that $s, t \in 2^{-\ell} \N_0$.  Choose $k \in \Z$ such that $2^{-k-1} \leq t - s \leq 2^{-k}$.  Then we may write $t - s$ in a binary expansion
\[
t - s = \sum_{j=k+1}^\ell a_j 2^{-j},
\]
where $a_j \in \{0,1\}$ for each $j$ and $a_{k+1} = 1$.  Let $t_j = s + a_{k+1} 2^{-k-1} + \dots + a_j 2^{-j}$.  Let $u_j = R_{t_j,\ell}u$.  We will estimate $\norm{Du_j(x) - Du_{j-1}(x)}_2$ for each $j$.  Of course, if $a_j = 0$, then $u_j = u_{j-1}$, so there is nothing to prove.  On the other hand, suppose that $a_j = 1$.  Now we estimate (at our given point $x$, suppressed in the notation)
\begin{align} \label{eq:threeterm}
\norm{D(R_{2^{-j},\ell} u_{j-1}) - Du_{j-1}}_2
&\leq \norm{D(R_{2^{-j},\ell} u_{j-1}) - D(P_{2^{-j}} Q_{2^{-j}}u_{j-1})}_2 \\
& \quad + \norm{D(P_{2^{-j}} Q_{2^{-j}} u_{j-1}) - D(Q_{2^{-j}} u_{j-1})}_2 \nonumber \\
& \quad + \norm{D(Q_{2^{-j}}u_{j-1}) - Du_{j-1}}_2. \nonumber
\end{align}
The first term on the right hand side may be estimated as follows.  Recall that we proved Lemma \ref{lem:convergence} (3) from the estimate from \eqref{eq:convergenceofgrad} by summing the geometric series.  The same reasoning shows that if $\ell \geq j$ and $\delta \in 2^{-\ell} \N_0$, then
\[
\norm{D(R_{\delta,\ell} u_{j-1}) - D(R_{\delta,j} u_{j-1})}_{L^\infty} \leq [\delta/2 + C(\delta/2)^2] C^2 m^{1/2}(2 \cdot 2^{-j/2} + 2^{-3j/2}C)
\]
since $u_{j-1} \in \mathcal{E}(0,C)$.  When we substitute $\delta = 2^{-j}$, $R_{2^{-j},j}$ is simply equal to $P_{2^{-j}} Q_{2^{-j}}$.  Thus, at the point $x$,
\[
\norm{D(R_{2^{-j},\ell} u_{j-1}) - D(P_{2^{-j}} Q_{2^{-j}} u_{j-1})}_2 \leq C^2 m^{1/2} [2^{-j}/2 + C2^{-2j}/4][2 \cdot 2^{-j/2} + 2^{-3j/2} C].
\]
By our assumption $C2^{-j} \leq C(t - s) \leq 1$ and hence we may replace $C2^{-2j}/4$ by $2^{-j}/2$ and repalce $2^{-3j/2} C$ by $2^{-j/2}$ and hence
\[
\norm{D(R_{2^{-j},\ell} u_{j-1}) - D(P_{2^{-j}} Q_{2^{-j}} u_{j-1})}_2 \leq 3C^2m^{1/2} 2^{-3j/2} \leq 3Cm^{1/2} 2^{-j/2}.
\]
The second term on the right hand side of \eqref{eq:threeterm} can be estimated by Lemma \ref{lem:Gaussianconvexity} (2) by
\[
\norm{D(P_{2^{-j}} Q_{2^{-j}} u_{j-1}) - D(Q_{2^{-j}} u_{j-1})}_2 \leq Cm^{1/2} 2^{-j/2}
\]
since $Q_{2^{-j}} u_{j-1} \in \mathcal{E}(0,C)$.  The third term on the right hand side of \eqref{eq:threeterm} can be estimated using Corollary \ref{lem:initialcontinuityestimates} (3) by
\[
\norm{D(Q_{2^{-j}} u_{j-1}) - Du_{j-1}}_2 \leq C 2^{-j} \norm{Du_{j-1}}_2.
\]
Meanwhile, by Lemma \ref{lem:errorpropagation} (4) and the triangle inequality
\[
\norm{Du_{j-1}}_2 \leq \sqrt{Cm + \norm{Du}_2^2} \leq C^{1/2} m^{1/2} + \norm{Du}_2.
\]
So using the fact $C 2^{-j} \leq 1$, we have
\[
\norm{D(Q_{2^{-j}} u_{j-1}) - Du_{j-1}}_2 \leq C^{3/2} m^{1/2} 2^{-j} + C 2^{-j} \norm{Du}_2 \leq Cm^{1/2} 2^{-j/2} + C (t_j - t_{j-1})\norm{Du}_2.
\]
Therefore, plugging all our estimates into \eqref{eq:threeterm}, we get
\[
\norm{Du_j - Du_{j+1}}_2 \leq 5C m^{1/2} 2^{-j/2} + C(t_j - t_{j-1})\norm{Du}_2.
\]
Then summing from $j = k+1$ to $\ell$ we obtain
\begin{align*}
\norm{Du_\ell - Du_k}_2 &\leq 5C m^{1/2} 2^{-k/2} + C(t - s) \norm{Du}_2 \\
&\leq 5Cm^{1/2} 2^{1/2}(t - s)^{1/2} + C(t - s) \norm{Du}_2.
\end{align*}
Because $u_\ell = R_{t,\ell} u$ and $u_k = R_{s,\ell} u$, we have shown that (3) holds for $R_{s,\ell}$ and $R_{t,\ell}$ instead of $R_s$ and $R_t$.  Thus, (3) follows by taking $\ell \to +\infty$.
\end{proof}

\begin{proof}[Proof of Theorem \ref{thm:semigroup}]
Lemma \ref{lem:continuityestimates} shows that if $t \geq 0$ and if $t_\ell$ is a sequence of dyadic rationals converging to $t$ as $\ell \to \infty$, then $R_{t_\ell} u$ converges to some function $v$ and this function is independent of the approximating sequence, so we define $R_t u = v$.   Claim (1), (3), and (4) of the Theorem were proved for dyadic $t$ in Corollary \ref{cor:Rerrorpropagation} (1), Lemma \ref{lem:continuityestimates}, and Corollary \ref{cor:Rerrorpropagation} (2) - (4) respectively, and each of these claims can be extended to real $t \geq 0$ in light of the continuity estimates Lemma \ref{lem:continuityestimates}.  Claim (3) of the theorem is Lemma \ref{lem:convergence}.

Thus, it remains to show that $R_t$ is a semigroup.  That is, we must show that $R_s R_tu = R_{s+t}u$ for $u \in \mathcal{E}(0,C)$ (and we have not even checked this for dyadic $s, t$ yet).  First, we check this property for real $s, t \geq 0$ under the additional restriction that $Ct \leq 1/2$.  For each $\ell \in \Z$, there exist $s_\ell$ and $t_\ell \in 2^{-\ell} \N_0$ such that $s - 2^{-\ell} < s_\ell \leq s$ and $t - 2^{-\ell} < t_\ell \leq t$.  By Lemma \ref{lem:continuityestimates} (1) and (2) we have
\[
|R_{t_\ell} u - R_t u| \leq \frac{|t_\ell - t|}{2} (Cm + \norm{Du}_2^2) \leq 2^{-\ell} \frac{1}{2}(Cm + \norm{Du}_2^2),
\]
since $|\log(1+Ct_\ell) - \log(1 + Ct)| \leq C|t_\ell - t|$ (from computation of the derivative of $\log(1 + Ct)$).  By Lemma \ref{lem:convergence} (1) and (2), if $C 2^{-\ell-1} \leq 1$, then
\[
|R_{t_\ell,\ell} u - R_{t_\ell} u| \leq 2^{-\ell} \left( \frac{3}{2} \frac{C^2mt}{1+Ct} + \log(1 + Ct_\ell)(m + Cm + \norm{Du}_2^2) \right).
\]
Since $t_\ell \leq t$, we can replace $t_\ell$ by $t$ on the right hand side.  By the triangle inequality, we obtain
\begin{equation} \label{eq:semigroupproof1}
|R_{t_\ell,\ell} u - R_t u| \leq 2^{-\ell} K_t(1 + \norm{Du}_2^2)
\end{equation}
for some constant $K_t$ depending on $t$ (and $C$).  Using Lemma \ref{lem:continuityestimates} (3), or rather its extension to real values of $t$,
\begin{align*}
\norm{D(R_tu) - Du}_2 &\leq 5Cm^{1/2} 2^{1/2} t^{1/2} + Ct \norm{Du}_2 \\
&\leq 5Cm^{1/2} 2^{1/2} t^{1/2} + Ct \norm{D(R_tu) - Du}_2 + Ct \norm{D(R_t u)}_2.
\end{align*}
Hence,
\[
\norm{D(R_tu) - Du}_2 \leq (1 - Ct)^{-1}[5Cm^{1/2} 2^{1/2} t^{1/2} + Ct \norm{D(R_t u)}_2],
\]
so by the triangle inequality,
\[
\norm{Du}_2 \leq \norm{D(R_tu)}_2 + (1 - Ct)^{-1}[5Cm^{1/2} 2^{1/2} t^{1/2} + Ct \norm{D(R_t u)}_2.
\]
By squaring and applying the arithmetic-geometric mean inequality, we get
\[
\norm{Du}_2^2 \leq A_t + B_t \norm{D(R_tu)}_2^2
\]
for some constants $A$ and $B$ depending on $t$.  The same reasoning applies to $R_{t_\ell,\ell}$ since Lemma \ref{lem:continuityestimates} (3) holds for $R_{t_\ell,\ell}$ also.  We thus obtain
\begin{align*}
\norm{Du}_2 &\leq \norm{D(R_{t_\ell,\ell} u)}_2 + (1 - Ct_\ell)^{-1}[5Cm^{1/2} 2^{1/2} t_\ell^{1/2} + Ct_\ell \norm{D(R_{t_\ell,\ell} u)}_2 \\
&\leq \norm{D(R_{t_\ell,\ell} u)}_2 + (1 - Ct)^{-1}[5Cm^{1/2} 2^{1/2} t^{1/2} + Ct \norm{D(R_{t_\ell,\ell} u)}_2
\end{align*}
and so
\[
\norm{Du}_2^2 \leq A_t + B_t \norm{D(R_{t_\ell,\ell}u)}_2^2.
\]
Overall,
\begin{align*}
R_t u &\leq R_{t_\ell,\ell} u + 2^{-\ell} K_t(1 + A_t + B_t \norm{D(R_{t_\ell,\ell} u)}_2^2 \\
R_{t_\ell,\ell} u &\leq R_t u + 2^{-\ell} K_t(1 + A_t + B_t \norm{D(R_tu)}_2^2
\end{align*}
So by Lemma \ref{lem:errorpropagation} (3) and (4)
\begin{align*}
R_{s_\ell,\ell} R_t u &\leq R_{s_\ell,\ell} R_{t_\ell,\ell} u + 2^{-\ell} K_1(1 + A_t + B_t \norm{D(R_{s_\ell,\ell} R_{t_\ell,\ell}u)}_2^2) \\
&\leq R_{s_\ell,\ell} R_{t_\ell,\ell} u + 2^{-\ell} K_1(1 + A_t + CmB_t + B_t \norm{Du}_2^2)
\end{align*}
and the same holds with $R_t$ and $R_{t_\ell,\ell}$ switched, so that
\begin{equation} \label{eq:semigroupproof1B}
|R_{s_\ell,\ell} R_t u - R_{s_\ell+t_\ell,\ell} u| \leq 2^{-\ell} K_1(1 + A_t + CmB_t + B_t \norm{Du}_2^2),
\end{equation}
where we have noted that $R_{s_\ell+t_\ell,\ell}u = R_{s_\ell,\ell} R_{t_\ell,\ell}u$.

But then by Lemma \ref{lem:errorpropagation}.  By the same token as \eqref{eq:semigroupproof1}, since $R_t u \in \mathcal{E}(0,C)$, we have
\begin{equation}
|R_{s_\ell,\ell} R_t u - R_s R_t u| \leq 2^{-\ell} K_s(1 + \norm{D(R_tu)}_2^2).
\end{equation}
Similarly, since $(s+t) - (s_\ell + t_\ell) \leq 2 \cdot 2^{-\ell}$, we have
\begin{equation}
|R_{s_\ell+t_\ell,\ell}u - R_{s+t} u| \leq 2^{-\ell} \cdot 2K_{s+t}(1 + \norm{Du}_2^2).
\end{equation}
Combining these with \eqref{eq:semigroupproof1B} using the triangle inequality, we get
\begin{multline*}
|R_s R_t u - R_{s+t}u| \leq 2^{-\ell} K_1(1 + A_t + CmB_t + B_t \norm{Du}_2^2) \\ + 2^{-\ell} K_s(1 + \norm{D(R_tu)}_2^2) + 2^{-\ell} \cdot 2K_{s+t}(1 + \norm{Du}_2^2).
\end{multline*}
Taking $\ell \to \infty$, we get $R_s R_t u = R_{s+t} u$ as desired.  This completes the case when $Ct \leq 1/2$.

In the general case, suppose $s, t \geq 0$ and $u \in \mathcal{E}(0,C)$.  Choose $n$ large enough that $Ct/n \leq 1/2$.  Then for $j = 1$, \dots, $n-1$, we have $R_{t/k}^{n-j} u \in \mathcal{E}(0,C)$.  Therefore, by the previous argument
\[
R_{s+jt/n} R_{t/n}^{n-j} u = (R_{s+jt/n} R_{t/n})(R_{t/n}^{n-j-1} u) = R_{s+(j+1)t/n} R_{t/n}^{n-j-1} u,
\]
so by induction $R_{s+t} u = R_s R_{t/n}^n u$.  Since this also holds with $s$ replaced by $0$, we have $R_{t/n}^n u = R_tu$.  Thus, $R_{s+t} u = R_s R_t u$.
\end{proof}

\subsection{Solution to the Differential Equation} \label{subsec:viscosity}

It remains to show that the semigroup $R_t$ produces solutions to the differential equation $\partial_t u = (1/2N) \Delta u - (1/2) \norm{Du}_2^2$, and that the result agrees with the solution produced by solving the heat equation for $\exp(-N^2u)$.  More precisely, we will prove the following.

\begin{theorem} \label{thm:solution}
Let $u_0: M_N(\C)_{sa}^m \to \R$ be a given function in $\mathcal{E}(c,C)$ for some $c \geq 0$.  Let $u(x,t) = R_t u(x)$.  Then $u$ is a smooth function on $M_N(\C)_{sa}^m \times (0,+\infty)$ and it solves the equation $\partial_t u = (1/2N) \Delta u - (1/2) \norm{Du}_2^2$.  Moreover, $\exp(-N^2 \cdot R_t u_0) = P_t[\exp(-N^2 u_0)]$.
\end{theorem}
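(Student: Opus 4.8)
The plan is to identify $R_t u_0$ with the Cole--Hopf transform of the heat flow applied to $\exp(-N^2 u_0)$, using uniqueness of viscosity solutions of \eqref{eq:normalizedevolution}. First I would record the properties of the candidate. Since $u_0 \in \mathcal{E}(c,C)$ is convex, $\exp(-N^2 u_0)$ is positive, locally bounded, and of at most exponential growth, so $w(x,t) := P_t[\exp(-N^2 u_0)](x)$ is finite, strictly positive, solves the heat equation $\partial_t w = (1/2N)\Delta w$, and hence (by heat-kernel smoothing) is $C^\infty$ on $M_N(\C)_{sa}^m \times (0,\infty)$ with $w(\cdot,0) = \exp(-N^2 u_0)$. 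Setting $\tilde u(x,t) = -(1/N^2)\log w(x,t)$, the chain-rule computation already carried out in the derivation of \eqref{eq:normalizedevolution} shows that $\tilde u$ is a smooth, hence classical (and therefore viscosity) solution of $\partial_t u = (1/2N)\Delta u - (1/2)\norm{Du}_2^2$ with $\tilde u(\cdot,0) = u_0$; and by the Brascamp--Lieb and H\"older inequalities (cf.\ the remark following Theorem \ref{thm:semigroup}) one has $\tilde u(\cdot,t) \in \mathcal{E}(c(1+ct)^{-1},C(1+Ct)^{-1})$, so $\tilde u$ has uniformly controlled convexity and at most quadratic growth on each slab $M_N(\C)_{sa}^m \times [0,T]$.

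Second, I would show that $R_t u_0$ is itself a viscosity solution of \eqref{eq:normalizedevolution} with initial data $u_0$. Here I would treat the Trotter approximants $R_{t,\ell} = (P_{2^{-\ell}} Q_{2^{-\ell}})^{2^\ell t}$ as a monotone approximation scheme and run a standard Barles--Souganidis-type convergence argument: monotonicity ($u \le v \Rightarrow R_{t,\ell}u \le R_{t,\ell}v$) is Lemma \ref{lem:errorpropagation}(3); stability is the uniform membership $R_{t,\ell}u_0 \in \mathcal{E}(0,C)$ together with the gradient bound of Theorem \ref{thm:semigroup}(1),(4); and consistency reduces to the statement that for $\phi \in C^\infty$ one has $\delta^{-1}(P_\delta Q_\delta \phi - \phi)(x_0) \to (1/2N)\Delta\phi(x_0) - (1/2)\norm{D\phi(x_0)}_2^2$ as $\delta \to 0$, which follows by Taylor-expanding the Gaussian average $P_\delta$ (consistency of the heat semigroup with $(1/2N)\Delta$), the elementary consistency of the Hopf--Lax operator $Q_\delta$ with $-(1/2)\norm{D\cdot}_2^2$ (Corollary \ref{cor:infconvolution}(2)--(3)), and the near-commutation $|P_\delta Q_\delta \phi - Q_\delta P_\delta \phi| = O(\delta^2)$ supplied by Lemma \ref{lem:commutation}(2)--(3). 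Combining this with the uniform convergence $R_{t,\ell}u_0 \to R_t u_0$ from Theorem \ref{thm:semigroup}(2) gives that $R_\cdot u_0$ is a viscosity solution, and the time-continuity estimates of Theorem \ref{thm:semigroup}(3) give $R_t u_0 \to u_0$ as $t \to 0$.

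Third, I would invoke a comparison principle for the viscous Hamilton--Jacobi equation \eqref{eq:normalizedevolution} on $M_N(\C)_{sa}^m \cong \R^{mN^2}$ (see \cite{CIL1992}): within the class of functions that are semiconvex and semiconcave with uniform moduli on each slab $M_N(\C)_{sa}^m \times [0,T]$ — equivalently $C^{1,1}$ with quadratic growth controlled by Theorem \ref{thm:semigroup}(1),(4) — any viscosity subsolution and supersolution with the same initial data are ordered. Since $R_\cdot u_0$ and $\tilde u$ both lie in this class and have initial data $u_0$, applying comparison in both directions yields $R_t u_0 = \tilde u(\cdot,t)$ for all $t \ge 0$. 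In particular $R_t u_0 = -(1/N^2)\log P_t[\exp(-N^2 u_0)]$ is smooth on $M_N(\C)_{sa}^m \times (0,\infty)$, solves \eqref{eq:normalizedevolution} classically, and exponentiating gives $\exp(-N^2 R_t u_0) = P_t[\exp(-N^2 u_0)]$.

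The main obstacle is really the combination of the last two steps: verifying that the operator-splitting semigroup $R_t$ reconstructs a genuine viscosity solution in the limit $\ell \to \infty$ (so that the second-order heat part and the first-order quadratic part of \eqref{eq:normalizedevolution} are recovered correctly), and then matching it to $\tilde u$ by a comparison principle on an unbounded domain with a superlinear Hamiltonian and quadratically growing data, where the usual comparison theorems require care. The a priori estimates assembled in Theorem \ref{thm:semigroup} — the change-in-convexity bound, the uniform gradient bound, and the time-continuity estimates — are exactly what brings the problem into the scope of the standard comparison theory, so the remaining work is chiefly bookkeeping these bounds and setting up the localization in the comparison argument carefully.
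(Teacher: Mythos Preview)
Your overall strategy is sound and parallels the paper's: both show that $R_t u_0$ is a viscosity solution of \eqref{eq:normalizedevolution} and then identify it with the Cole--Hopf transform $\tilde u = -(1/N^2)\log P_t[\exp(-N^2 u_0)]$ via a uniqueness argument. The differences are in how each step is executed.

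For the viscosity-solution step, you invoke the Barles--Souganidis framework (monotonicity, stability, consistency of the scheme $R_{t,\ell}$), whereas the paper carries out a direct verification: given an upper or lower Taylor approximation at $(x_0,t_0)$, it computes $Q_{\delta_\ell}P_{\delta_\ell} u_{t_0-\delta_\ell}(x_0)$ explicitly against that approximation and passes to the limit using Lemmas \ref{lem:commutation} and \ref{lem:convergence}. Your packaging is cleaner; the paper's is more self-contained. Either works.

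The genuine divergence is in the uniqueness step. You propose to run comparison directly for \eqref{eq:normalizedevolution} on all of $M_N(\C)_{sa}^m$, with a quadratic Hamiltonian and quadratically growing solutions, and you correctly flag this as the delicate point. The paper avoids this entirely: it first proves a lemma that $u$ is a viscosity solution of \eqref{eq:normalizedevolution} if and only if $\exp(-N^2 u)$ is a viscosity solution of the heat equation, and then runs the maximum-principle comparison at the level of the \emph{linear} heat equation, where the argument is the elementary penalization $w - v - \epsilon\norm{x}_2^2 - 2m\epsilon t$. For that to work one needs $v = \exp(-N^2 R_t u_0)$ and $w = P_t[\exp(-N^2 u_0)]$ to be bounded, which is why the paper first treats the case where $u_0$ is bounded below, and then reduces the general case by the tilting $\tilde u_0(x) = u_0(x) - \ip{Du_0(0),x}_2$, checking separately (Lemmas \ref{lem:heattilting} and \ref{lem:HJtilting}) how $P_t[\exp(-N^2\cdot)]$ and $R_t$ transform under addition of a linear function. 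This Cole--Hopf-then-compare route is more elementary than the comparison you sketch, at the cost of the extra tilting bookkeeping; your route is more direct but leans on a comparison theorem that is not quite off-the-shelf in \cite{CIL1992} for this growth regime.
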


At this point, we have not proved enough smoothness for $R_t u$ to show that it solves the equation in the classical sense.  Therefore, as an intermediate step, we show that $u$ solves the equation in the \emph{viscosity sense} defined by \cite{CL1983}; see \cite{CIL1992} and the references cited therein for further background.  We will then deduce that $\exp(-N^2u)$ is a viscosity solution of the heat equation and hence show it agrees with the smooth solution of the heat equation.

The definition of viscosity solution for parabolic equations is as follows.  Here we continue to use the vector space $M_N(\C)_{sa}^m$ with the normalized inner product (rather than $\R^n$ for some $n$).  For smooth $u: M_N(\C)_{sa}^m \to \R$, we denote by $Du$ and $Hu$ the gradient and Hessian with respect to the inner product $\ip{\cdot,\cdot}_2$; in other words, if $x_0 \in M_N(\C)_{sa}^m$, then $Du(x_0)$ is the vector in $M_N(\C)_{sa}^m$ and $Hu(x_0)$ is the linear transformation $M_N(\C)_{sa}^m \to M_N(\C)_{sa}^m$ such that
\[
u(x) = u(x_0) + \ip{Du(x_0), x - x_0}_2 + \frac{1}{2} \ip{Hu(x_0)[x - x_0], x - x_0}_2 + o(\norm{x - x_0}_2^2).
\]
We denote the space of linear transformations $M_N(\C)_{sa}^m \to M_N(\C)_{sa}^m$ by $B(M_N(\C)_{sa}^m)$, and we denote the self-adjoint elements by $B(M_N(\C)_{sa}^m)_{sa}$.

\begin{definition}
Let $F: B(M_N(\C)_{sa}^m)_{sa} \times M_N(\C)_{sa} \times \R \times M_N(\C)_{sa} \to \R$ be continuous, and consider the partial differential equation
\begin{equation}
\partial_t u = F(Hu,Du,u,x).
\end{equation}
We say that a function $u: M_N(\C)_{sa}^m \times [0,+\infty) \to \R$ is a \emph{viscosity subsolution} if it is upper semi-continuous and if the following condition holds:  Suppose that
\[
x_0 \in M_N(\C)_{sa}^m, \quad t_0 > 0, \quad A \in B(M_N(\C)_{sa}^m)_{sa}, \quad p \in M_N(\C)_{sa}^m, \quad \alpha \in \R,
\]
and suppose that $u$ satisfies
\begin{equation} \label{eq:upperjet}
u(x,t) \leq u(x_0,t_0) + \alpha(t - t_0) + \ip{p,x-x_0}_2 + \frac{1}{2} \ip{A(x - x_0), x - x_0}_2 + o(|t - t_0| + \norm{x - x_0}_2^2).
\end{equation}
Then we also have
\begin{equation} \label{eq:subsolution}
\alpha \leq F(A,p,u(x_0),x_0).
\end{equation}
\end{definition}

\begin{definition}
With the same setup as above, we say that $u: M_N(\C)_{sa}^m \times [0,+\infty) \to \R$ is a \emph{viscosity supersolution} if it is lower semi-continuous and the following condition holds: If $x_0, t_0, A, p, \alpha$ are as above and if
\begin{equation} \label{eq:lowerjet}
u(x,t) \geq u(x_0,t_0) + \alpha(t - t_0) + \ip{p, x-x_0}_2 + \frac{1}{2} \ip{A(x - x_0), x - x_0}_2 + o(|t - t_0| + \norm{x - x_0}_2^2),
\end{equation}
then
\begin{equation} \label{eq:supersolution}
\alpha \geq F(A,p,u(x_0),x_0).
\end{equation}
\end{definition}

\begin{definition}
We say that $u$ is a \emph{viscosity solution} if it is both a subsolution and a supersolution.
\end{definition}

\begin{remark}
Roughly speaking, being a viscosity solution means that whenever there exist upper or lower second-order Taylor approximations to $u$, then we can evaluate the differential operator $F$ on the Taylor approximation and get an inequality in one direction.
\end{remark}

\begin{example}
The heat equation $\partial_t u = (1/2N) \Delta u$ is obtained by taking
\[
F(A,p,u,x) = \frac{1}{2N^2} \Tr(A).
\]
To understand why $1/N^2$ is the correct normalization on the right hand side, suppose that $u$ is smooth and $A = Hu(x_0)$ and $p = Du(x_0)$, so that
\[
u(x) = u(x_0) + \ip{p, x - x_0}_2 + \frac{1}{2} \ip{A(x - x_0), x - x_0}_2 + o(\norm{x - x_0}_2^2).
\]
In terms of the non-normalized inner product (which we denote by the dot product), this means that
\[
u(x) = u(x_0) + \frac{1}{N} p \cdot (x - x_0) + \frac{1}{2N} (A(x - x_0)) \cdot (x - x_0).
\]
Thus, the Hessian with respect to the non-normalized inner product is $(1/N)A$.  Hence, $(1/N) \Delta u(x_0) = (1/N^2) \Tr(A)$.  Similarly, the equation $\partial_t u = (1/2N) \Delta u - (1/2) \norm{Du}_2^2$ is obtained by taking
\[
F(A,p,u,x) = \frac{1}{2N^2} \Tr(A) - \frac{1}{2} \norm{p}_2^2.
\]
\end{example}

\begin{proposition}
Let $u_0 \in \mathcal{E}(0,C)$ and define $u(x,t) = R_t u_0(x)$.  Then $u$ is a viscosity solution of the equation $\partial_t u = (1/2N) \Delta u - (1/2) \norm{Du}_2^2$.
\end{proposition}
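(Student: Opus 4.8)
The plan is to verify the subsolution and supersolution conditions directly from the semigroup identity $R_{t_0}=R_h\circ R_{t_0-h}$, feeding in three facts established earlier: the monotonicity and growth estimates of Theorem~\ref{thm:semigroup}, which control $u(\cdot,t_0-h)=R_{t_0-h}u_0$ uniformly for small $h$; the Trotter-type approximation $R_h=P_hQ_h+O(h^2)$ (Theorem~\ref{thm:semigroup}(2)(a) with $2^{-\ell}=h$, so that $R_{h,\ell}=P_hQ_h$ and the error $(\tfrac32\tfrac{C^2mh}{1+Ch}+\log(1+Ch)(m+Cm+\norm{Du}_2^2))h$ is $O(h^2)$ near a fixed point); and the explicit formulas of Lemma~\ref{lem:monotonicityandquadratic} for the action of $P_h$ and $Q_h$ on quadratic functions. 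Throughout I would let $h$ run over dyadic rationals tending to $0$. One also needs $u$ to be upper and lower semicontinuous; this follows from joint continuity of $u$ on $M_N(\C)_{sa}^m\times[0,+\infty)$, which I would deduce from the spatial $C^1$-regularity of each $R_tu_0$ (Proposition~\ref{prop:convex}(3), together with the uniform local gradient bound of Theorem~\ref{thm:semigroup}(4)(c)) and the continuity-in-time estimates of Theorem~\ref{thm:semigroup}(3).

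For the subsolution property, fix $(x_0,t_0)$ with $t_0>0$ and $(A,p,\alpha)$ satisfying \eqref{eq:upperjet}. Given $\epsilon>0$, \eqref{eq:upperjet} furnishes $\rho,h_0>0$ so that for dyadic $0<h\le h_0$ and $\norm{x-x_0}_2\le\rho$ one has $u(x,t_0-h)\le\psi_h(x)$, where $\psi_h$ is the quadratic $\psi_h(x)=u(x_0,t_0)-(\alpha-\epsilon)h+\ip{p,x-x_0}_2+\tfrac12\ip{(A+\epsilon I)(x-x_0),x-x_0}_2$. Now I localize the nonlocal operators. Since $u(\cdot,t_0-h)\in\mathcal{E}(0,C)$ is convex, the minimizer defining $Q_h[u(\cdot,t_0-h)](x)$ is the unique critical point $y_*=x-hD u(y_*,t_0-h)$; using that $Du(\cdot,t_0-h)$ is $C$-Lipschitz (Proposition~\ref{prop:convex}) and $\norm{D(R_tu_0)}_2$ is bounded near $x_0$ uniformly in $t\le t_0$ (Theorem~\ref{thm:semigroup}(4)(c)), a short bootstrap shows $\norm{y_*-x_0}_2=O(h)$ for $x\in B_{\rho/2}(x_0)$, and likewise the minimizer for $\psi_h$ stays in $B_\rho(x_0)$; hence $Q_h[u(\cdot,t_0-h)]\le Q_h\psi_h$ on $B_{\rho/2}(x_0)$. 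Applying $P_h$ and splitting the Gaussian integral at radius $\rho/4$, the inner part is controlled by $Q_h\psi_h(x_0+\cdot)$ while the outer part is $o(h)$: both $Q_h[u(\cdot,t_0-h)]$ and $Q_h\psi_h$ obey the uniform at-most-quadratic bound coming from Theorem~\ref{thm:semigroup}(3)(a)--(b) (and from $u_0\in\mathcal{E}(c,C)$), and $\sigma_{h,N}$ concentrates superpolynomially by Theorem~\ref{thm:concentration}. Combining with $R_h=P_hQ_h+O(h^2)$ gives $u(x_0,t_0)=R_h[u(\cdot,t_0-h)](x_0)\le P_hQ_h\psi_h(x_0)+o(h)$. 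Finally Lemma~\ref{lem:monotonicityandquadratic} evaluates $P_hQ_h\psi_h(x_0)=u(x_0,t_0)-(\alpha-\epsilon)h-\tfrac h2\norm{p}_2^2+\tfrac{h}{2N^2}\Tr(A+\epsilon I)+O(h^2)$; since $\Tr(\epsilon I)=\epsilon mN^2$ on $M_N(\C)_{sa}^m$, dividing by $h$, letting $h\to0^+$ and then $\epsilon\to0$ yields $\alpha\le\tfrac1{2N^2}\Tr(A)-\tfrac12\norm{p}_2^2=F(A,p,u(x_0),x_0)$, which is \eqref{eq:subsolution}.

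The supersolution property is symmetric: from the lower jet \eqref{eq:lowerjet} one obtains $u(\cdot,t_0-h)\ge\psi_h$ on $B_\rho(x_0)$ with $\psi_h(x)=u(x_0,t_0)-(\alpha+\epsilon)h+\ip{p,x-x_0}_2+\tfrac12\ip{(A-\epsilon I)(x-x_0),x-x_0}_2$. The same localization gives $Q_h[u(\cdot,t_0-h)]\ge Q_h\psi_h$ near $x_0$, and, using the uniform \emph{lower} at-most-quadratic bound from Theorem~\ref{thm:semigroup}(3)(b), the outer Gaussian contribution after applying $P_h$ is $\ge-o(h)$, so $u(x_0,t_0)\ge P_hQ_h\psi_h(x_0)-o(h)$; Lemma~\ref{lem:monotonicityandquadratic} and the same limiting procedure produce $\alpha\ge\tfrac1{2N^2}\Tr(A)-\tfrac12\norm{p}_2^2$, which is \eqref{eq:supersolution}. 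Hence $u$ is a viscosity solution of $\partial_tu=(1/2N)\Delta u-(1/2)\norm{Du}_2^2$.

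I expect the main obstacle to be precisely the localization of $Q_h$ and $P_h$: because these operators are nonlocal, one must argue that the infimum defining $Q_h$ is effectively taken over a ball shrinking at rate $O(h)$ around $x_0$ (so that only the region where the jet bound holds is relevant, which is where the uniform gradient bound of Theorem~\ref{thm:semigroup}(4)(c) enters) and that the Gaussian tails of $\sigma_{h,N}$ are negligible against the at-most-quadratic growth control on $R_tu_0$. Once those are in hand, tracking the $O(h^2)$ Trotter error, the $o(h)$ tail errors, and the $\epsilon$-corrections through the double limit is routine.
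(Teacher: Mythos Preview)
Your approach is correct and follows the same high-level strategy as the paper---use the semigroup identity $u(x_0,t_0)=R_h[u(\cdot,t_0-h)](x_0)$, replace $R_h$ by a single Trotter step up to an $O(h^2)$ error, and extract the desired inequality from the jet condition---but the technical implementation differs in two respects. First, to handle the nonlocality of $P_h$ and $Q_h$, you localize: you argue that the $Q_h$-minimizer stays within $O(h)$ of $x_0$ and that the Gaussian tail beyond a fixed radius is exponentially small in $1/h$. The paper instead extends the local jet inequality to a \emph{global} one by adding a quartic penalty $K_\epsilon\norm{x-x_0}_2^4$ (using the uniform quadratic upper and lower bounds on $u(\cdot,t)$ furnished by Theorem~\ref{thm:semigroup}(3) together with convexity/semi-concavity), after which $P_\delta$ and $Q_\delta$ can be applied globally; this avoids any truncation but requires computing $P_\delta[\norm{x-x_0}_2^4]$ explicitly via Example~\ref{ex:fourthpower}. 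Second, you apply $P_hQ_h$ directly to the quadratic test function $\psi_h$ and read off the answer from Lemma~\ref{lem:monotonicityandquadratic}; the paper instead applies $Q_\delta P_\delta$ (in that order) to the actual function $u(\cdot,t_0-\delta)$, which then forces it to identify $D(Q_\delta P_\delta u_{t_0-\delta})(x_0)=p+O(\delta^{3/2})$ via the gradient convergence estimates of Lemma~\ref{lem:convergence}(3) and Lemma~\ref{lem:commutation}(1) before commuting back to $R_\delta$. Your route is closer to the standard viscosity-solution paradigm of comparison with smooth test functions and is arguably cleaner; the paper's quartic-penalty device trades the localization arguments for one extra explicit Gaussian moment computation.

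One minor point to flag: Lemma~\ref{lem:monotonicityandquadratic}(2) as stated assumes the Hessian $A$ is positive semidefinite, whereas your $A\pm\epsilon I$ need not be. The formula and its proof, however, only require that $I+hA$ be invertible (which holds for all sufficiently small $h$), so this is harmless once noted.
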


\begin{proof}
First, note that $u$ is continuous.  Indeed, by Theorem \ref{thm:semigroup} (3), $u$ is continuous in $t$ with a rate of continuity that is uniform for $x$ in a bounded region (this follows because the term $\norm{Du_0}_2^2$ on the right hand side of Lemma \ref{lem:continuityestimates} (2) is bounded on bounded regions since $Du_0$ is $C$-Lipschitz).  Also, $u(\cdot,t)$ is continuous for each $t$ since it is in $\mathcal{E}(0,C)$.  Together, this implies $u$ is jointly continuous in $(x,t)$.

To show that $u$ is a viscosity subsolution, suppose that we have a lower second-order approximation at the point $(x_0,t_0)$, where $x_0 \in M_N(\C)_{sa}^m$ and $t_0 > 0$, given by
\[
u(x,t) \geq u(x_0,t_0) + \alpha (t - t_0) + \ip{p,x-x_0} + \frac{1}{2} \ip{A(x-x_0),x-x_0}_2 + o(|t - t_0| + \norm{x - x_0}_2^2).
\]
Then we must show that $\alpha \leq (1/2N^2) \Tr(A) - (1/2) \norm{p}_2^2$.

Our first goal is to replace the soft bound $o(|t - t_0| + \norm{x - x_0}_2^2)$ by a more explicit error bound, at the cost of modifying $\alpha$ and $A$ by some positive $\epsilon$.  Pick $\epsilon > 0$.  Then there exists $r > 0$ such that if $|t - t_0| + \norm{x - x_0}_2^2 < 2r$, then we have
\begin{equation} \label{eq:revisedTaylorexpansion}
u(x,t) \geq u(x_0,t_0) + \alpha(t - t_0) - \epsilon |t - t_0| + \ip{p,x-x_0} + \frac{1}{2} \ip{(A - \epsilon I)(x-x_0),x-x_0}_2.
\end{equation}
Let us assume that $t_0 - r < t \leq t_0$, so that the above inequality holds for $\norm{x - x_0}^2 < r$ and we have $\alpha(t - t_0) - \epsilon|t - t_0| = (\alpha + \epsilon)(t - t_0)$.  For $x$ such that $\norm{x - x_0}_2^2 \geq r$, we may use Theorem \ref{thm:semigroup} (3b), the fact that $Du$ is $C$-Lipschitz, and the convexity of $u$ to conclude that
\begin{align*}
u(x,t) &\geq u_0(x) - \frac{t}{2}(Cm + \norm{Du}_2^2) \\
&\geq u_0(x_0) + \ip{Du(x_0),x-x_0}_2 - \frac{t}{2}(Cm + (\norm{Du(x_0)}_2 + C\norm{x-x_0}_2)^2)
\end{align*}
In other words, $u$ is bounded below by a quadratic in $x-x_0$, and the estimate holds uniformly for $t$ in a bounded interval.  Moreover, the right hand side of \eqref{eq:revisedTaylorexpansion} is also bounded by a quadratic in $x - x_0$ uniformly for $t \in [t_0-r,t_0+r]$.  It follows that for a large enough constant $K_\epsilon$, we have
\[
u(x_0,t_0) + (\alpha + \epsilon)(t - t_0) + \ip{p,x-x_0} + \frac{1}{2} \ip{(A - \epsilon I)(x-x_0),x-x_0}_2 - u(x,t) \leq K_\epsilon \norm{x - x_0}_2^4
\]
whenever $t \in (t_0-t,t_0]$ and $\norm{x - x_0}_2 \geq r$.  Therefore, overall, assuming that $t \in (t_0 - r,t_0]$, we have
\begin{equation} \label{eq:revisedTaylorexpansion2}
u(x,t) \geq u(x_0,t_0) + (\alpha + \epsilon)(t - t_0) + \ip{p,x-x_0} + \frac{1}{2} \ip{(A - \epsilon I)(x-x_0),x-x_0}_2 - K_\epsilon \norm{x - x_0}_2^4
\end{equation}

For $t \in \R$, let us denote $u_t(x) = u(x,t) = R_t u_0(x)$.  Now the strategy for proving $\alpha + \epsilon \leq (1/2N^2) \Tr(A - \epsilon I) - (1/2) \norm{p}_2^2$ is roughly to use the fact that $u_{t_0}(x_0) = R_\delta u_{t_0-\delta}(x_0)$ and estimate $u_{t_0 - \delta}(x_0)$ from above using the upper Taylor approximation for small $\delta > 0$.  However, for the sake of computation, it is easier to estimate $Q_\delta P_\delta u_{t_0-\delta}$ rather than $R_\delta$ (and then we will control the error between $R_\delta$ and $Q_\delta P_\delta$ using Lemmas \ref{lem:commutation} and \ref{lem:convergence}).

Let $\delta \in (0,r)$.  Then using the above inequality and monotonicity of $P_\delta$, we have
\begin{align*}
P_\delta u_{t_0-\delta}(x) &\geq u_{t_0}(x_0) + (\alpha - \epsilon) \delta + \ip{p,x-x_0} \\
& \quad + \frac{1}{2N^2} \Tr(A - \epsilon I) \delta + \frac{1}{2} \ip{(A-\epsilon I)(x - x_0), x - x_0} \\
& \quad - K_\epsilon (\norm{x - x_0}_2^4 + 2(1 + 2/N^2) m \delta \norm{x - x_0}_2^2 + 2m^2(1 + 2/N^2) \delta^2).
\end{align*}
Here we have evaluated $P_\delta$ applied to $\norm{x - x_0}_2^4$ using Example \ref{ex:fourthpower} and translation-invariance of $P_\delta$.  Now recall that $Q_\delta P_\delta u_{t_0 - \delta}(x_0)$ is obtained by evaluating $P_\delta u_{t_0 - \delta}$ at $x_0 - \delta D(Q_\delta P_\delta u_{t_0-\delta})(x_0)$.  Also, in light of Lemma \ref{lem:errorpropagation} (4) and Corollary \ref{cor:Rerrorpropagation} (4), $\norm{D(Q_\delta P_\delta u_{t_0-\delta})(x_0)}_2^2$ is bounded by $\norm{Du_0(x_0)}_2^2$ plus a constant.  In particular, $\norm{D(Q_\delta P_\delta u_{t_0-\delta})(x_0)}_2$ is bounded as $\delta \to 0$.  Therefore,
\begin{align} \label{eq:Taylorexpansion3}
Q_\delta P_\delta u_{t_0 - \delta}(x_0) &= P_\delta u_{t_0-\delta}(x_0 - \delta D(Q_\delta P_\delta u_{t_0 - \delta})(x_0)) + \frac{1}{2} \delta \norm{D(Q_\delta P_\delta u_{t_0-\delta})(x_0)}_2^2 \\
&\geq u_{t_0}(x_0) + \frac{1}{2} \norm{D(Q_\delta P_\delta u_{t_0-\delta})(x_0)}_2^2 \delta \nonumber \\
& \quad + (\alpha + \epsilon)(-\delta) - \ip{p, D(Q_\delta P_\delta u_{t_0-\delta})(x_0)} \delta + \frac{1}{2N^2} \Tr(A - \epsilon I) + O(\delta^2). \nonumber
\end{align}
(Here the the implicit constant in $O(\delta^2)$ depends on $\epsilon$.)

Because $u_{t_0-\delta} \in \mathcal{E}(0,C)$, Lemma \ref{lem:commutation} (2) and (3) imply that if $C \delta \leq 1$, then
\begin{align*}
|Q_\delta P_\delta u_{t_0-\delta}(x_0) - P_\delta Q_\delta u_{t_0-\delta}(x_0)| \leq 2C^2 m \delta^2 + 2C \delta^2 \norm{D(P_\delta Q_\delta u_{t_0-\delta})(x_0)}_2.
\end{align*}
Again by Lemma \ref{lem:errorpropagation} (4) and Theorem \ref{thm:semigroup} (4c), $\norm{D(Q_\delta P_\delta u_{t_0-\delta})(x_0)}_2^2$ is bounded by $\norm{Du_0(x_0)}_2^2$ plus a constant, so that
\[
Q_\delta P_\delta u_{t_0-\delta} = P_\delta Q_\delta u_{t_0-\delta} + O(\delta^2).
\]
Also, if we let $\delta_\ell = 2^{-\ell}$ for $\ell \in \Z$, then Lemma \ref{lem:convergence} implies that when $2C \delta_\ell \leq 1$ and $\delta_\ell < r$, then
\begin{align*}
|P_{\delta_\ell} Q_{\delta_\ell} u_{t_0-\delta}(x_0) - R_{\delta_\ell} u_{t_0 - \delta_\ell}(x_0)| &= |R_{\delta_\ell,\ell} u_{t_0-\delta_\ell}(x_0) - R_{\delta_\ell} u_{t_0-\delta_\ell}(x_0)| \\
&\leq \left( \frac{3}{2} \frac{C^2m\delta_\ell}{1 - C\delta_\ell} + \log(1 + C\delta_\ell)(m + Cm + \norm{Du(x_0)}_2^2 \right) 2^{-\ell} \\
&= O(\delta_\ell^2).
\end{align*}
So overall
\begin{equation} \label{eq:Taylorerror1}
Q_{\delta_\ell} P_{\delta_\ell} u_{t_0-\delta_\ell}(x_0) = R_{\delta_\ell} u_{t_0 - \delta_\ell}(x_0) + O(\delta_\ell^2) = u_{t_0}(x_0) + O(\delta_\ell^2).
\end{equation}

Using similar reasoning, Lemma \ref{lem:commutation} (1) shows that
\[
D(Q_{\delta_\ell} P_{\delta_\ell} u_{t_0-\delta_\ell})(x_0) = D(P_{\delta_\ell} Q_{\delta_\ell} u_{t_0-\delta_\ell})(x_0) + O(\delta_\ell^{3/2}).
\]
Then using Lemma \ref{lem:convergence} (3), we obtain
\[
D(P_{\delta_\ell} Q_{\delta_\ell} u_{t_0-\delta_\ell})(x_0) = D(R_{\delta_\ell} u_{t_0-\delta_\ell})(x_0) + O(\delta^{3/2}).
\]
Finally, because $u_{t_0-\delta} \in \mathcal{E}(0,C)$, it is differentiable everywhere; the upper Taylor approximation \eqref{eq:revisedTaylorexpansion} implies that $u_{t_0}(x) \leq u_{t_0}(x_0) + \ip{p,x - x_0}_2 + o(\norm{x - x_0}_2)$ and therefore $p$ must equal $Du_{t_0}(x_0)$.  Thus, overall
\begin{equation} \label{eq:Taylorerror2}
D(Q_{\delta_\ell} P_{\delta_\ell} u_{t_0 - \delta_\ell})(x_0) = p + O(\delta_\ell^{3/2}).
\end{equation}

Substituting \eqref{eq:Taylorerror1} and \eqref{eq:Taylorerror2} into \eqref{eq:Taylorexpansion3}, we obtain
\[
u_{t_0}(x_0) \geq u_{t_0}(x_0) + \frac{1}{2} \norm{p}_2^2 \delta_\ell + (\alpha + \epsilon)(-\delta_\ell) - \norm{p}_2^2 \delta_\ell + \frac{1}{2N^2} \Tr(A - \epsilon I) + O(\delta_\ell^2).
\]
We cancel $u_{t_0}(x_0)$ from both sides, divide by $\delta_\ell$, and move $\alpha - \epsilon$ to the left hand side to conclude that
\[
\alpha + \epsilon \geq \frac{1}{2N^2} \Tr(A - \epsilon I) - \frac{1}{2} \norm{p}_2^2 + O(\delta_\ell).
\]
Then taking $\ell \to \infty$, we get $\alpha + \epsilon \geq (1/2N^2) \Tr(A - \epsilon I) - (1/2) \norm{p}_2^2$.  Since $\epsilon$ was arbitrary, we have $\alpha \geq (1/2N^2) \Tr(A) - (1/2) \norm{p}_2^2$.  This shows that $u$ is a viscosity supersolution.

To show that the $u$ is a viscosity subsolution, the argument is symmetrical for the most part.  However, to obtain the constant $K_\epsilon$ in \eqref{eq:revisedTaylorexpansion2}, we used the one-sided estimate Theorem \ref{thm:semigroup} (3b) to show that $u$ is bounded below by a quadratic in $x-x_0$ that is independent of $t$, so long as $t \in (t_0-r,t_0]$.  To show that $u$ is a viscosity supersolution, we want to prove an analogous quadratic upper bound.  But by Theorem \ref{thm:semigroup} (3a) and semi-concavity of $u_0$, we have for $t \leq t_0$ that
\begin{align*}
u_t(x) &\leq u_0(x) + \frac{m}{2} \log(1 + Ct_0) \\
&\leq u_0(x_0) + \ip{Du_0(x_0), x - x_0} + \frac{C}{2} \norm{x - x_0}_2^2 + \frac{m}{2} \log(1 + Ct_0),
\end{align*}
which is the desired upper bound.  The rest of the argument is symmetrical except that $\alpha + \epsilon$ is replaced by $\alpha - \epsilon$ and $A - \epsilon I$ is replaced by $A + \epsilon I$.
\end{proof}

\begin{lemma}
Let $u: M_N(\C)_{sa}^m \times [0,+\infty) \to \R$.  Then $u$ is a viscosity solution to $\partial_t u = (1/2N) \Delta u - (1/2) \norm{Du}_2^2$ if and only if $\exp(-N^2u)$ is a viscosity solution to $\partial_t u = (1/2N) \Delta u$.
\end{lemma}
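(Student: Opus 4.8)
The plan is to treat this as a change-of-variables statement for viscosity solutions.  Write $\phi(s) = e^{-N^2 s}$, so that $w := \exp(-N^2 u) = \phi \circ u$ and $\phi$ is a smooth, strictly decreasing bijection of $\R$ onto $(0,+\infty)$ with $\phi'(s) = -N^2 \phi(s)$, $\phi''(s) = N^4 \phi(s)$, and inverse $\phi^{-1}(w) = -(1/N^2)\log w$.  Because $\phi$ and $\phi^{-1}$ are continuous and decreasing, $u$ is upper semicontinuous iff $w$ is lower semicontinuous, and $u$ is lower semicontinuous iff $w$ is upper semicontinuous, so these hypotheses transfer correctly.  Thus it will suffice to prove that $u$ is a viscosity subsolution of $\partial_t u = (1/2N)\Delta u - (1/2)\norm{Du}_2^2$ if and only if $w$ is a viscosity supersolution of $\partial_t w = (1/2N)\Delta w$; the companion pairing (supersolution of the first equation $\leftrightarrow$ subsolution of the heat equation) will follow by the identical argument with the roles of upper and lower jets exchanged, and combining the two gives the lemma.

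The first main step is to set up a bijection between the relevant second-order jets at a fixed point $(x_0,t_0)$ with $t_0 > 0$.  Write $u_0 = u(x_0,t_0)$, $w_0 = \phi(u_0) > 0$.  If $u$ satisfies the upper test \eqref{eq:upperjet} with data $(\alpha,p,A)$, then composing with $\phi$ and expanding $\phi(u_0 + h) = w_0 + \phi'(u_0) h + \tfrac12 \phi''(u_0) h^2 + o(h^2)$, where $h$ is the affine-quadratic right-hand side of \eqref{eq:upperjet} minus $u_0$, I would check that $w$ satisfies the lower test \eqref{eq:lowerjet} with data
\[
\beta = \phi'(u_0)\,\alpha, \qquad q = \phi'(u_0)\,p, \qquad B = \phi'(u_0)\,A + \phi''(u_0)\,(p\otimes p);
\]
the sign flip (upper $\to$ lower) is exactly the content of $\phi' < 0$.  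The only care needed here is the order bookkeeping: the $\tfrac12\phi''(u_0)h^2$ term produces the genuine Hessian correction $\phi''(u_0)(p\otimes p)$, while the remaining contributions are $o(|t-t_0| + \norm{x-x_0}_2^2)$ using $|t-t_0|\,\norm{x-x_0}_2 = o(|t-t_0| + \norm{x-x_0}_2^2)$ and $(t-t_0)^2 = o(|t-t_0|)$.  Running the same expansion with $\phi^{-1}$ (also smooth and strictly decreasing) shows the correspondence is a bijection between the upper jets of $u$ and the lower jets of $w$ at $(x_0,t_0)$, and symmetrically between the lower jets of $u$ and the upper jets of $w$.

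The second step is a short algebraic identity relating the two nonlinearities.  Since $\phi'(u_0) = -N^2 w_0$ and $\phi''(u_0) = N^4 w_0$, one has $\Tr(B) = -N^2 w_0 \Tr(A) + N^4 w_0 \norm{p}_2^2$ (using $\Tr(p\otimes p) = \norm{p}_2^2$), and hence
\[
\frac{1}{2N^2}\Tr(B) = -N^2 w_0\left( \frac{1}{2N^2}\Tr(A) - \frac{1}{2}\norm{p}_2^2 \right).
\]
Now $u$ is a viscosity subsolution of the first equation precisely when $\alpha \le (1/2N^2)\Tr(A) - (1/2)\norm{p}_2^2$ for every upper jet $(\alpha,p,A)$; multiplying by $\phi'(u_0) = -N^2 w_0 < 0$ reverses the inequality and, by the displayed identity, turns it into $\beta \ge (1/2N^2)\Tr(B)$ for the corresponding lower jet $(\beta,q,B)$ of $w$ — that is, \eqref{eq:supersolution} for the heat equation.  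By the jet bijection this is exactly the supersolution condition for $w$, and the symmetric argument handles the other pairing.  I expect the only mildly delicate point of the whole argument to be the jet transformation of step one — keeping track of the $\phi''(p\otimes p)$ correction and of the sign of $\phi'$ so that ``sub'' and ``super'' are interchanged correctly; everything else (the Taylor expansions, the $o(\cdot)$ estimates, the trace identity) is routine.
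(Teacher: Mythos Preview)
Your proposal is correct and follows essentially the same approach as the paper's proof: both arguments transform second-order jets through the smooth monotone bijection $\phi(s)=e^{-N^2 s}$ (or its inverse $\log$), track the Hessian correction $\phi''(u_0)(p\otimes p)$, and then verify the algebraic identity $\tfrac{1}{2N^2}\Tr(B) = -N^2 w_0\bigl(\tfrac{1}{2N^2}\Tr(A) - \tfrac{1}{2}\norm{p}_2^2\bigr)$ that exchanges the two nonlinearities. The only cosmetic difference is that the paper starts from a lower jet of $v=e^{-N^2u}$ and expands $\log$ to produce an upper jet of $u$ (so the needed implication is immediate), whereas you start from an upper jet of $u$, expand $\phi$, and then invoke the inverse transformation to get the jet bijection; both routes require the same Taylor and $o(\cdot)$ bookkeeping, including the estimate $|t-t_0|\,\norm{x-x_0}_2 = o(|t-t_0|+\norm{x-x_0}_2^2)$ that the paper handles via Young's inequality.
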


\begin{proof}
More precisely, we claim that $u$ is a viscosity subsolution if and only if $\exp(-N^2 u)$ is viscosity supersolution and vice versa.  Suppose that $u$ is a subsolution, and let us show that $v = \exp(-N^2 u)$ is a supersolution.  If $u$ is upper semi-continuous, then $v$ is lower semi-continuous.  Now suppose that we have a lower Taylor approximation at $(x_0,t_0)$
\[
v(x,t) \geq v(x_0,t_0) + \alpha(t - t_0) + \ip{p, x - x_0}_2 + \frac{1}{2} \ip{A(x - x_0), x - x_0}_2 + o(|t - t_0| + \norm{x - x_0}_2^2).
\]
Note that $v > 0$ and $u = -1/N^2 \log v$.  The function $h \mapsto \log h$ is increasing and analytic for $h > 0$ and we have
\[
\log(h + \delta) = \log(h) + \log(1 + \delta / h) = \log(h) + \frac{\delta}{h} - \frac{1}{2} \left( \frac{\delta}{h} \right)^2 + O(\delta^3).
\]
Substituting $h = v(x_0,t_0) = \exp(-N^2 u(x_0,t_0))$ and $\delta = v(x,t) - v(x_0,t_0) = \alpha(t - t_0) + \ip{p, x - x_0}_2 + \frac{1}{2} \ip{A(x - x_0), x - x_0}_2 + o(|t - t_0| + |x - x_0|^2)$, we get
\begin{align*}
-N^2 u(x,t) &\geq -N^2 u(x_0,t_0) + \frac{\alpha}{v(x_0,t_0)} (t - t_0) + \frac{1}{v(x_0,t_0)} \ip{p, x - x_0}_2 \\
& \quad + \frac{1}{2 v(x_0,t_0)} \ip{A(x - x_0), x - x_0}_2 - \frac{1}{2 v(x_0,t_0)^2} \ip{p,x-x_0}_2^2 \\
& \quad + o(|t - t_0| + \norm{x - x_0}_2^2),
\end{align*}
since $\ip{p,x-x_0}_2 / v(x_0,t_0)^2$ is the only term from $(-1/2) (\delta / h)^2 + O(\delta^3)$ that is not $o(|t - t_0| + \norm{x - x_0}_2^2)$ (here we use the fact that $|t - t_0| \norm{x - x_0}_2 \leq (2/3) |t - t_0|^{3/2} + (1/3) \norm{x - x_0}_2^3$).  Let us denote by $P$ the linear map $P(x - x_0) = p \ip{p, x - x_0}_2$.  Then the above inequality becomes
\begin{align*}
u(x,t) &\leq u(x_0,t_0) - \frac{\alpha}{N^2 v(x_0,t_0)} (t - t_0) - \frac{1}{N^2 v(x_0,t_0)} \ip{p, x - x_0}_2  \\
& \quad - \frac{1}{2N^2 v(x_0,t_0)} \ip{A(x - x_0), x - x_0}_2 + \frac{1}{2N^2 v(x_0,t_0)^2} \ip{P(x - x_0), (x - x_0)} \\
& \quad + o(|t - t_0| + \norm{x - x_0}_2^2).
\end{align*}
Because $u$ is a subsolution, we have
\[
-\frac{\alpha}{N^2 v(x_0,t_0)} \leq -\frac{1}{2N^4} \Tr(A) + \frac{1}{2N^4 v(x_0,t_0)^2} \Tr(P) - \frac{1}{2N^4 v(x_0,t_0)^2} \norm{p}_2^2.
\]
But $\Tr(P) = \norm{p}_2^2$, so the last two terms cancel.  Thus, $\alpha \geq \frac{1}{2N^2} \Tr(A)$ as desired.  So $v$ is a supersolution.

A symmetrical argument shows that if $v$ is a supersolution, then $u$ is a subsolution.  The other two claims are proved in the same way except using the Taylor expansion of the exponential function instead of the logarithm.
\end{proof}

Now we are ready to prove Theorem \ref{thm:solution} in the special case where $u_0$ is bounded below.

\begin{lemma}
Suppose that $u_0 \in \mathcal{E}(0,C)$ is bounded below.  Then $\exp(-N^2 R_t u_0) = P_t[\exp(-N^2u_0)]$.
\end{lemma}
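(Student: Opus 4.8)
The plan is to exploit uniqueness of bounded viscosity solutions of the heat equation. Set $w(x,t) = \exp(-N^2 R_t u_0(x))$ and $v(x,t) = P_t[\exp(-N^2 u_0)](x)$. By the preceding proposition $R_t u_0$ is a viscosity solution of $\partial_t u = (1/2N)\Delta u - (1/2)\norm{Du}_2^2$, so by the preceding lemma $w$ is a viscosity solution of the heat equation $\partial_t w = (1/2N)\Delta w$; and $v$ is a smooth, hence a viscosity, solution of the same equation. Both attain the initial value $\exp(-N^2 u_0)$ at $t = 0$: for $v$ this is a standard property of the heat semigroup, and for $w$ it follows from $R_0 u_0 = u_0$ together with the continuity of $t \mapsto R_t u_0$ from Theorem \ref{thm:semigroup}(3). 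Hence it suffices to know that the Cauchy problem for the heat equation has at most one bounded viscosity solution.

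First I would establish the required boundedness. Since $u_0$ is bounded below, put $m_0 = \inf u_0 > -\infty$. Both $P_\delta$ and $Q_\delta$ map functions bounded below by $m_0$ to functions bounded below by $m_0$ --- the heat semigroup because it is an average, and $Q_\delta$ because $Q_\delta f(x) = \inf_y[f(y) + (2\delta)^{-1}\norm{y-x}_2^2] \geq \inf f$. Therefore every iterate $R_{t,\ell}u_0$ is $\geq m_0$, and passing to the limit (Lemma \ref{lem:convergence}) gives $R_t u_0 \geq m_0$ for all $t \geq 0$. Consequently $0 < w \leq e^{-N^2 m_0}$, and likewise $\exp(-N^2 u_0)$ takes values in $(0, e^{-N^2 m_0}]$, so $v$ does too; moreover both $w$ and $v$ are continuous on $M_N(\C)_{sa}^m \times [0,\infty)$.

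Then $w = v$ follows from the comparison principle for bounded viscosity sub- and supersolutions of the Cauchy problem for the heat equation (see \cite{CIL1992}; here the structure conditions are immediate, since $F(A,p,u,x) = (2N^2)^{-1}\Tr(A)$ is linear, degenerate elliptic, and independent of $u$ and $x$): applying it to the subsolution $w$ against the solution $v$, and then to the supersolution $w$ against $v$, yields $w \leq v$ and $v \leq w$, i.e.\ $\exp(-N^2 R_t u_0) = P_t[\exp(-N^2 u_0)]$. Alternatively, one may argue directly that $w - v$ is a bounded viscosity solution of the heat equation with zero initial data and conclude $w \equiv v$ by a barrier argument with the Gaussian kernel. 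The main obstacle is that comparison principles are cleanest on compact domains whereas $M_N(\C)_{sa}^m$ is unbounded; the key preparatory point that removes this difficulty is the uniform lower bound $R_t u_0 \geq \inf u_0$, which places us in the regime of bounded solutions where uniqueness for the Cauchy problem is available.
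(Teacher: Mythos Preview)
Your proposal is correct and follows essentially the same approach as the paper: both establish that $\exp(-N^2 R_t u_0)$ is a bounded viscosity solution of the heat equation (boundedness coming from $R_t u_0 \geq \inf u_0$ via monotonicity of $P_\delta$ and $Q_\delta$), compare it with the smooth bounded solution $P_t[\exp(-N^2 u_0)]$, and invoke uniqueness. The only difference is that where you cite the comparison principle from \cite{CIL1992}, the paper writes out the barrier argument explicitly, perturbing by $-\epsilon\norm{x}_2^2 - 2m\epsilon t$ to force a maximum on the unbounded domain and deriving a contradiction from the viscosity supersolution property; this is exactly the mechanism that makes the comparison work for bounded solutions on all of $M_N(\C)_{sa}^m$, so your deferral to \cite{CIL1992} is legitimate but less self-contained.
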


\begin{proof}
Let $v(x,t) = \exp(-N^2 R_t u_0(x))$ and let $w(x,t) = P_t[\exp(-N^2 u_0)](x)$.  Since $u_0$ is bounded below by some constant $K$, we have $R_t u_0 \geq K$ by monotonicity of $R_t$ (see Corollary \ref{cor:Rerrorpropagation} (3)) and the fact that it does not affect constant functions (since the same is true of $P_t$ and $Q_t$).  Hence, $v = \exp(-N^2 R_t u_0) \leq \exp(-N^2K)$.  We also have $\exp(-N^2 u_0) \leq \exp(-N^2 K)$ and hence $w \leq \exp(-N^2K)$.

Thus, $v$ and $w$ are both bounded, $w$ is a smooth solution to the heat equation, and $v$ is a viscosity solution by the previous lemma.  We will conclude from this that $v = w$ (and this is nothing but a standard argument for the maximum principle together with the basic philosophy of viscosity solutions).

To show that $v \leq w$, choose $\epsilon > 0$, and consider the function
\[
\phi(x,t) = w(x,t) - v(x,t) - \epsilon \norm{x}_2^2 - 2m \epsilon t.
\]
Suppose for contradiction that $\phi > 0$ at some point.  Since $\phi$ is continuous on $M_N(\C)_{sa}^m \times [0,+\infty)$ and since $w$ and $v$ are bounded, $\phi$ achieves a maximum at some $(x_0,t_0)$.  Since the maximum is strictly positive, we have $t_0 > 0$.  Let $\psi(x,t) = w(x,t) - (1/2) \epsilon \norm{x}_2^2 - 2 \epsilon t$, so that $\phi(x,t) = v(x,t) - \psi(x,t)$.  Then $\phi(x,t) \leq \phi(x_0,t_0)$ implies that
\begin{align*}
v(x,t) &\geq v(x_0,t_0) + \psi(x,t) - \psi(x_0,t_0) \\
&= v(x_0,t_0) + \partial_t \psi(x_0,t_0)(t - t_0) + \ip{D\psi(x_0,t_0), x - x_0}_2 \\
& \quad + \frac{1}{2} \ip{H\psi(x_0,t_0)(x - x_0), x - x_0}_2 + o(|t - t_0| + \norm{x - x_0}_2^2),
\end{align*}
where the last step follows because $\psi$ is smooth.  Because $v$ is a viscosity supersolution,
\[
\partial_t \psi(x_0,t_0) \geq \frac{1}{2N} \Delta \psi(x_0,t_0).
\]
However, this is a contradiction because at every point $(x,t)$, we have
\[
\partial_t \psi = \partial_t w - 2m \epsilon < \frac{1}{2N} \Delta w - m\epsilon = \frac{1}{2N} \Delta \psi,
\]
by computation and the fact that $w$ solves the heat equation.  It follows that $\phi \leq 0$ and hence $v(x,t) \geq w(x,t) - \frac{1}{2} \epsilon \norm{x}_2^2 - m \epsilon t$.  Since $\epsilon$ was arbitrary $v \geq w$.  Then a symmetrical argument shows that $v \leq w$.
\end{proof}

Thus, to prove Theorem \ref{thm:solution}, it only remains to remove the boundedness assumption on $u_0$.  We achieve this by replacing $u_0$ with the function
\begin{equation} \label{eq:tilting}
\tilde{u}_0(x) = u_0(x) - \ip{Du_0(0),x}_2, 
\end{equation}
which is nonnegative by convexity of $u_0$ and hence bounded below.

\begin{lemma} \label{lem:heattilting}
Let $u_0 \in \mathcal{E}(0,C)$ and let $\tilde{u}_0$ be given by \eqref{eq:tilting}.  Let $v_0 = \exp(-N^2 u_0)$ and $\tilde{v}_0 = \exp(-N^2 \tilde{u}_0)$.  Then the integral defining $P_t \exp(-N^2 u_0)$ is well-defined and also
\[
P_t v_0(x) = \exp(-N^2 \ip{Du_0(0),x} + \frac{N^2 t}{2} \norm{Du_0(0)}_2^2) P_t \tilde{v}_0(x - t Du_0(0))
\]
\end{lemma}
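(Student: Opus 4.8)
The plan is to unwind the definitions and reduce the identity to an elementary Gaussian ``completing the square.''  Write $p = Du_0(0)$, and recall that $P_t$ is convolution against $\sigma_{t,N}$, which in the coordinates of \S\ref{subsec:matrixnotation} has Lebesgue density $Z_{N,t}^{-1}\exp(-N^2\norm{y}_2^2/(2t))$ (using $\norm{y}_2^2 = N^{-1}|y|^2$).  Thus $P_t v_0(x) = Z_{N,t}^{-1}\int \exp(-N^2 u_0(x+y))\exp(-N^2\norm{y}_2^2/(2t))\,dy$, and similarly for $\tilde v_0$.

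First I would verify that this integral converges absolutely.  Since $u_0 \in \mathcal{E}(0,C)$ is convex, Proposition \ref{prop:convex} (2) gives the supporting-hyperplane bound $u_0(x+y) \geq u_0(0) + \ip{p, x+y}_2$, so $\exp(-N^2 u_0(x+y))$ is dominated by $\exp(-N^2 u_0(0) - N^2\ip{p,x+y}_2)$, which is $O(e^{c\norm{y}_2})$ for a constant $c$ depending on $x$ and $N$, hence integrable against the Gaussian weight; this also licenses the substitution performed below.  The same convexity inequality shows $\tilde u_0(x) \geq u_0(0)$, so $\tilde v_0$ is bounded and $P_t\tilde v_0$ is likewise well-defined.

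The main step is then a change of variables.  Using $u_0(x+y) = \tilde u_0(x+y) + \ip{p,x}_2 + \ip{p,y}_2$, I would pull the $y$-independent factor $\exp(-N^2\ip{p,x}_2)$ outside the integral and complete the square in the remaining Gaussian exponent via $-\ip{p,y}_2 - \norm{y}_2^2/(2t) = -\norm{y + tp}_2^2/(2t) + \tfrac{t}{2}\norm{p}_2^2$, which produces the scalar prefactor $\exp(\tfrac{N^2 t}{2}\norm{p}_2^2)$.  Substituting $z = y + tp$ (translation invariance of $dy$) and noting $x + y = (x - tp) + z$, the remaining integral becomes $Z_{N,t}^{-1}\int \tilde v_0((x-tp)+z)\exp(-N^2\norm{z}_2^2/(2t))\,dz = P_t\tilde v_0(x - tp)$.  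Collecting the factors yields $P_t v_0(x) = \exp(-N^2\ip{p,x}_2 + \tfrac{N^2 t}{2}\norm{p}_2^2)\,P_t\tilde v_0(x - tp)$, which is the assertion since $p = Du_0(0)$.

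I do not anticipate a genuine obstacle here; the only point requiring care is the bookkeeping of normalizations — of the density of $\sigma_{t,N}$, of $\norm{\cdot}_2$ versus $|\cdot|$, and of the powers of $N$ — so that the completed square comes out with exactly the stated constant, together with the mild convexity estimate needed to guarantee absolute convergence and hence the legitimacy of the change of variables.
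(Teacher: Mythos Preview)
Your proposal is correct and follows essentially the same approach as the paper: write out the Gaussian convolution, use $u_0(x+y)=\tilde u_0(x+y)+\ip{p,x+y}_2$, complete the square $-\ip{p,y}_2-\norm{y}_2^2/(2t)=-\norm{y+tp}_2^2/(2t)+\tfrac{t}{2}\norm{p}_2^2$, and shift $z=y+tp$. You actually supply a bit more than the paper, namely the explicit convexity bound $u_0(x+y)\geq u_0(0)+\ip{p,x+y}_2$ to justify absolute convergence of $P_t v_0$, which the paper's proof leaves implicit.
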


\begin{proof}
We can express $d\sigma_{t,N}(y) = (1/Z_N) \exp(-(N^2/2t) \norm{y}_2^2)\,dy$.  Also, denote $p = Du_0(0)$.  Then
\begin{align*}
P_t v_0(x) &= \frac{1}{Z_N} \int \exp(-N^2 u_0(x+y)) \exp(-(N^2/2t) \norm{y}_2^2)\,dy \\
&= \frac{1}{Z_N} \int \exp(-N^2 \tilde{u}_0(x + y) -N^2 \ip{p,x+y} - \frac{N^2}{2t} \norm{y}_2^2)\,dy \\
&= \frac{1}{Z_N} \int \exp(-N^2 \tilde{u}_0(x + y) - N^2 \ip{p,x} + \frac{N^2t}{2} \norm{p}_2^2 - \frac{N^2}{2t} \norm{y + tp}_2^2)\,dy \\
&= \frac{1}{Z_N} \int \exp(-N^2 \tilde{u}_0(x - tp + z) - N^2 \ip{p,x} + \frac{N^2t}{2} \norm{p}_2^2 - \frac{N^2}{2t} \norm{z}_2^2)\,dy \\
&= \exp(-N^2 \ip{p,x} + \frac{N^2 t}{2} \norm{p}_2^2) P_t \tilde{v}_0(x - tp).  \qedhere
\end{align*}
\end{proof}

\begin{lemma} \label{lem:HJtilting}
Let $u_0 \in \mathcal{E}(0,C)$, let $p \in M_N(\C)_{sa}^m$, and let $\tilde{u}_0(x) = u_0(x) - \ip{p,x}_2$.  Then
\begin{enumerate}
	\item $P_t u_0(x) = P_t \tilde{u}_0(x) + \ip{p,x}_2$.
	\item $Q_t u_0(x) = Q_t \tilde{u}_0(x-tp) + \ip{p,x}_2 - \frac{t}{2} \norm{p}_2^2$.
	\item $R_t u_0(x) = R_t \tilde{u}_0(x-tp) + \ip{p,x}_2 - \frac{t}{2} \norm{p}_2^2$.
\end{enumerate}
\end{lemma}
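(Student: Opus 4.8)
The plan is to prove the three identities in order, handling (1) and (2) by direct computation and (3) by iterating the commutation relations coming from (1) and (2) through the construction of $R_t$. Throughout, note that $\tilde u_0 \in \mathcal{E}(0,C)$ as well, since subtracting the linear function $\ip{p,\cdot}_2$ does not affect convexity or semi-concavity, so all the lemmas of this section apply to $\tilde u_0$. For (1), $P_t$ is convolution against the centered measure $\sigma_{t,N}$, so writing $u_0 = \tilde u_0 + \ip{p,\cdot}_2$ and using linearity of the integral gives
\[
P_t u_0(x) = \int \tilde u_0(x+y)\,d\sigma_{t,N}(y) + \ip{p,x}_2 + \int \ip{p,y}_2\,d\sigma_{t,N}(y) = P_t \tilde u_0(x) + \ip{p,x}_2,
\]
because $\int y_j\,d\sigma_{t,N}(y) = 0$ for each $j$. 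For (2), I would complete the square inside the infimum: substituting $u_0(y) = \tilde u_0(y) + \ip{p,y}_2$ and using the identity $\ip{p,y}_2 + \frac{1}{2t}\norm{y-x}_2^2 = \frac{1}{2t}\norm{y - (x - tp)}_2^2 + \ip{p,x}_2 - \frac{t}{2}\norm{p}_2^2$, the infimum over $y$ becomes $Q_t \tilde u_0(x - tp)$, which yields the claimed formula.

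For (3), I would package (1) and (2) into a single statement about the ``tilt-and-shift'' operator $\Theta_{p,s}$ defined by $(\Theta_{p,s}h)(x) = h(x - sp) + \ip{p,x}_2 - \frac{s}{2}\norm{p}_2^2$, so that $u_0 = \Theta_{p,0}\tilde u_0$ and the desired identity is exactly $R_t u_0 = \Theta_{p,t}(R_t \tilde u_0)$. Repeating the computations of (1) and (2) with the extra shift $s$ present shows that
\[
P_t \Theta_{p,s} h = \Theta_{p,s}(P_t h), \qquad Q_t \Theta_{p,s} h = \Theta_{p,s+t}(Q_t h),
\]
i.e.\ the heat step leaves the shift unchanged while the inf-convolution step advances it by $t$. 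Consequently, for $\ell \in \Z$ and dyadic $t \in 2^{-\ell}\N_0$, applying $(P_{2^{-\ell}} Q_{2^{-\ell}})^{2^\ell t}$ to $\Theta_{p,0}\tilde u_0$ produces $\Theta_{p,\sigma}(R_{t,\ell}\tilde u_0)$, where $\sigma$ picks up a contribution $2^{-\ell}$ from each of the $2^\ell t$ factors $Q_{2^{-\ell}}$, so $\sigma = t$; that is, $R_{t,\ell}u_0 = \Theta_{p,t}(R_{t,\ell}\tilde u_0)$. Letting $\ell \to \infty$ and invoking Lemma~\ref{lem:convergence} gives (3) for dyadic $t$, and the continuity estimates of Theorem~\ref{thm:semigroup} (3) let me pass to arbitrary real $t \geq 0$ by approximation, using that $s \mapsto R_s\tilde u_0$ is continuous and that the evaluation point $x - sp$ depends continuously on $s$.

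The computations are elementary; the only point needing care is the bookkeeping in (3) — tracking how the shift parameter threads through the alternating composition and confirming it telescopes to exactly $t$ — and verifying that the limiting and continuity arguments underlying the definition of $R_t$ are compatible with passing the identity to the limit. I do not anticipate any genuine obstacle.
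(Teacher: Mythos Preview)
Your proposal is correct and follows essentially the same architecture as the paper: parts (1) and (3) match the paper's argument (linearity plus mean-zero for $P_t$; iteration through $R_{t,\ell}$ then Lemma~\ref{lem:convergence} and the continuity estimates of Theorem~\ref{thm:semigroup}(3)), and your $\Theta_{p,s}$ notation is simply a clean repackaging of the paper's ``by iteration (after some computation)''. The one genuine difference is in (2): the paper characterizes the minimizers for $Q_t u_0(x)$ and $Q_t \tilde u_0(x-tp)$ via Corollary~\ref{cor:infconvolution}(1), shows they coincide, and then computes, whereas you complete the square directly inside the infimum --- your route is shorter and avoids invoking the fixed-point characterization of the minimizer.
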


\begin{proof}
(1) holds because $P_t$ is linear and it does not affect linear functions.  To prove (2), fix $x$ and let $y$ be the point where the infimum defining $Q_t u_0(x)$ is achieved and let $\tilde{y}$ be the point where the infimum defining $Q_t \tilde{u}_0(x - tp)$ is achieved.  By Corollary \ref{cor:infconvolution} (1), the points $y$ and $\tilde{y}$ are characterized respectively by the relations
\[
y = x - t Du_0(y), \qquad \tilde{y} = x - tp - tD\tilde{u}_0(\tilde{y}).
\]
But $D\tilde{u}_0(\tilde{y}) = Du_0(\tilde{y}) - p$.  Thus, $x - tDu_0(\tilde{y}) = \tilde{y}$, so that $y = \tilde{y}$.
\begin{align*}
Q_t u_0(x) &= u_0(y) + \frac{1}{2t} \norm{y - x}_2^2 \\
&= \tilde{u}_0(y) + \ip{p,y}_2 + \frac{1}{2t} \norm{y - x}_2^2 \\
&= \tilde{u}_0(y) + \ip{p,x}_2 - \frac{t}{2} \norm{p}_2^2 + \frac{1}{2t} \norm{y - (x - tp)}_2^2 \\
&= Q_t \tilde{u}_0(x-tp) + \ip{p,x}_2 - \frac{t}{2} \norm{p}_2^2.
\end{align*}
(3) It follows by iteration (after some computation) that for $t \in 2^{-\ell} \N_0$, we have $R_{t,\ell}u_0(x) = R_t \tilde{u}_0(x-tp) + \ip{p,x}_2 - \frac{t}{2} \norm{p}_2^2$.  Then by Lemma \ref{lem:convergence}, we may take $\ell \to \infty$, and by Theorem \ref{thm:semigroup} (3), we may extend the inequality to all real $t$.
\end{proof}

\begin{proof}[Proof of Theorem \ref{thm:solution}]
We have already proved the case where $u_0$ is bounded.  For the general case, let $u_0 \in \mathcal{E}(0,C)$.  Define $p = Du_0(0)$ and $\tilde{u}_0(x) = u_0(x) - \ip{p,x}_2$.  As remarked above, $\tilde{u}_0$ is bounded below by zero.  By Lemmas \ref{lem:heattilting}, the bounded case, and \ref{lem:HJtilting},
\begin{align*}
P_t \exp(-N^2 u_0)(x) &= \exp\left(-N^2 \ip{p,x} + \frac{N^2 t}{2} \norm{p}_2^2\right) [P_t \exp(-N^2\tilde{u}_0)](x - tp) \\
&= \exp\left(-N^2 \ip{p,x} + \frac{N^2 t}{2} \norm{p}_2^2\right) \exp(-N^2 R_t \tilde{u}_0(x - tp)) \\
&= \exp\left(-N^2\left(R_t \tilde{u}_0(x-tp) + \ip{p,x} - \frac{t}{2} \norm{p}_2^2 \right) \right) \\
&= \exp(-N^2 R_t u_0(x)).
\end{align*}
In particular, since $P_t \exp(-N^2 \tilde{u}_0)$ is smooth for $t > 0$, we see that all the functions in the above equation are smooth for $t > 0$, and hence $R_t u_0(x)$ is smooth function of $(x,t)$.  Also, $P_t[\exp(-N^2 u_0)] = \exp(-N^2 R_t u_0)$ as desired.
\end{proof}

\subsection{Approximation by Trace Polynomials} \label{subsec:approximation1}

Now we are ready to prove that $R_t$ preserves asymptotic approximability by trace polynomials.

\begin{proposition} \label{prop:Rtpreservesapproximation}
Let $\{V_N\}$ be a sequence of functions $M_N(\C)_{sa}^m \to \R$ such that $V_N$ is convex and $V_N(x) - (C/2) \norm{x}_2^2$ is concave, and $\{DV_N\}$ is asymptotically approximable by trace polynomials.  Then for every $t > 0$, the sequences $\{D(P_tV_N)\}$, $\{D(Q_tV_N)\}$, and $\{D(R_tV_N)\}$ are asymptotically approximable by trace polynomials.
\end{proposition}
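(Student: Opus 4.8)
The plan is to follow the template of the proof of Lemma~\ref{lem:asymptoticapproximation0}: write each of $P_t$, $Q_t$, $R_t$ as a limit---in the sense of Observation~\ref{lem:convergenceapproximation}, i.e.\ uniform on operator-norm balls with a dimension-independent rate---of iterates of simpler operations that preserve asymptotic approximability, while keeping track that the functions stay in $\mathcal{E}(0,C)$ and that $\sup_N \norm{DV_N(0)}_2 < +\infty$. This last fact follows from the hypothesis: an $(\epsilon,R)$-approximation of $\{DV_N\}$ evaluated at $x = 0$ has a value independent of $N$, so $\limsup_N \norm{DV_N(0)}_2 < +\infty$, and we shall use the same fact for the iterates produced below, since it is needed to verify the growth hypothesis \eqref{eq:growthrate} of Lemma~\ref{lem:convolutionapproximation}.

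\emph{The heat semigroup.} Since convolution commutes with differentiation, $D(P_t V_N) = (DV_N) * \sigma_{t,N}$. Now $DV_N$ takes values in self-adjoint tuples and is $C$-Lipschitz in $\norm{\cdot}_2$ by Proposition~\ref{prop:convex}, so $\norm{DV_N(x)}_2 \leq \norm{DV_N(0)}_2 + C\norm{x}_2 \leq A(1 + \sum_j \tau_N(x_j^2))$ for a constant $A$ independent of $N$; hence \eqref{eq:growthrate} holds with $n = 1$, and Lemma~\ref{lem:convolutionapproximation} shows $\{D(P_t V_N)\}$ is asymptotically approximable for every $t \geq 0$. The same argument applies with $V_N$ replaced by any sequence $u_N \in \mathcal{E}(0,C)$ with $\{Du_N\}$ approximable and $\sup_N \norm{Du_N(0)}_2 < +\infty$.

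\emph{The Hopf--Lax semigroup.} First suppose $Ct < 1$. By Corollary~\ref{cor:infconvolution} and Lemma~\ref{lem:infconvolution}, $D(Q_t V_N)(x) = DV_N(y_N(x))$, where $y_N(x)$ is the unique fixed point of the $Ct$-contraction $z \mapsto x - t\,DV_N(z)$; thus $y_N(x) = \lim_{j\to\infty} y_N^{(j)}(x)$, with $y_N^{(0)}(x) = x$ and $y_N^{(j+1)}(x) = x - t\,DV_N(y_N^{(j)}(x))$. Each $\{y_N^{(j)}\}$ is approximable, by induction on $j$: approximable sequences form a vector space, and $DV_N \circ y_N^{(j)}$ is approximable by Lemma~\ref{lem:compositionapproximation} because $DV_N$ is approximable and uniformly Lipschitz. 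Moreover $\norm{y_N^{(j)}(x) - y_N(x)}_2 \leq (Ct)^j \norm{x - y_N(x)}_2$ with $\norm{x - y_N(x)}_2 = t\norm{D(Q_t V_N)(x)}_2 \leq t\norm{DV_N(x)}_2$ (Lemma~\ref{lem:infconvolution}), which is bounded on $\{\norm{x}_\infty \leq R\}$ uniformly in (large) $N$; so $y_N^{(j)} \to y_N$ in the sense required by Observation~\ref{lem:convergenceapproximation}. Hence $\{y_N\}$ is approximable, and then so is $\{D(Q_t V_N)\} = \{DV_N \circ y_N\}$ by Lemma~\ref{lem:compositionapproximation}. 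For arbitrary $t$, write $Q_t = Q_{t/n} \circ \cdots \circ Q_{t/n}$ ($n$ factors) with $Cn^{-1}t < 1$; since $Q_{t/n}$ maps $\mathcal{E}(0,C)$ into itself (Lemma~\ref{lem:infconvolution}) and, as just shown, preserves approximability of the gradient sequence, iterating $n$ times gives that $\{D(Q_t V_N)\}$ is approximable.

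\emph{The semigroup $R_t$.} Fix a dyadic $t$ and $\ell$ large enough that $t \in 2^{-\ell}\N$ and $2^{-\ell}C < 1$. Then $R_{t,\ell} V_N = (P_{2^{-\ell}} Q_{2^{-\ell}})^{2^\ell t} V_N$ is a finite alternating composition of the operations above; running them one step at a time and maintaining the invariants ``the current iterate lies in $\mathcal{E}(0,C)$'' (Lemmas~\ref{lem:Gaussianconvexity} and~\ref{lem:infconvolution}) and ``its gradient sequence is approximable'' (the previous two parts) shows $\{D(R_{t,\ell} V_N)\}$ is approximable. By Theorem~\ref{thm:semigroup}(2)(b), $\norm{D(R_{t,\ell} V_N) - D(R_t V_N)}_{L^\infty} \to 0$ as $\ell \to \infty$ with a bound depending only on $C$, $m$, $t$, hence uniformly in $N$; so Observation~\ref{lem:convergenceapproximation} gives approximability of $\{D(R_t V_N)\}$ for every dyadic $t$. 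Finally, for arbitrary $t \geq 0$ take dyadic $t_\ell \downarrow t$; Theorem~\ref{thm:semigroup}(3)(c) gives $\norm{D(R_{t_\ell}V_N)(x) - D(R_t V_N)(x)}_2 \leq 5Cm^{1/2}2^{1/2}|t_\ell - t|^{1/2} + C|t_\ell - t|\,\norm{DV_N(x)}_2$, whose right-hand side tends to $0$ uniformly on $\{\norm{x}_\infty \leq R\}$ and in $N$ (again because $\norm{DV_N(x)}_2$ is uniformly bounded there), so Observation~\ref{lem:convergenceapproximation} completes the proof. The step needing the most care is the Hopf--Lax one: unlike $D(P_t V_N)$, which is literally a convolution, and unlike the $R_t$ iteration, which merely reshuffles earlier lemmas, the gradient $D(Q_t V_N)$ is characterized only implicitly, so one must verify that its Picard iterates converge uniformly on operator-norm balls at a rate that does not deteriorate as $N \to \infty$.
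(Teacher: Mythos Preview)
Your proof is correct and follows essentially the same approach as the paper's: Lemma~\ref{lem:convolutionapproximation} for $P_t$, a Picard iteration for the implicit fixed-point equation defining $D(Q_tV_N)$ (with the semigroup property to handle large $t$), and then the dyadic approximation $R_{t,\ell}$ together with the convergence bounds of Theorem~\ref{thm:semigroup} for $R_t$. The only cosmetic difference is that the paper iterates directly on the gradient variable ($\phi_{N,0}=0$, $\phi_{N,\ell+1}(x)=DV_N(x-t\phi_{N,\ell}(x))$) rather than on the minimizer $y_N^{(j)}$, but these are the same contraction up to an affine change of variable.
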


\begin{proof}
The fact that $\{D(P_tV_N)\}$ is asymptotically approximable by trace polynomials follows from Lemma \ref{lem:convolutionapproximation}.

Now consider $D(Q_tV_N)$.  Note that by Corollary \ref{cor:infconvolution} (1), $D(Q_tV_N)(x)$ is the solution of the fixed point equation
\[
y = DV_N(x - ty).
\]
But if $t < 1/C$, then $y \mapsto DV_N(x - ty)$ is a contraction and thus iterates of this function will converge to the fixed point.  Let us define $\phi_{N,0}(x) = 0$ and $\phi_{N,\ell+1}(x) = DV_N(x - t \phi_{N,\ell}(x))$.  By Lemma \ref{lem:infconvolution} (5), the distance from the fixed point $D(Q_tV_N)(x)$ from $0$ is bounded by $\norm{DV_N(x)}_2$, hence,
\[
\norm{\phi_{N,\ell}(x) - D(Q_tV_N)(x)}_2 \leq C^\ell t^\ell \norm{DV_N(x)}_2.
\]
Because $DV_{N,t}$ is $C$-Lipschitz, Lemma \ref{lem:compositionapproximation} implies that $\{\phi_{N,\ell}\}_N$ is asymptotically approximable by trace polynomials.

Now $\norm{DV_N(0)}_2$ is bounded by some constant $A$ as $N \to \infty$ because $DV_N$ is asymptotically approximable by trace polynomials.  Since $DV_N$ is also $C$-Lipschitz, $\norm{DV_N(x)}_2 \leq A + C\norm{x}_2$.  In particular, $\norm{\phi_{N,\ell}(x) - D(Q_tV_N)(x)}_2 \leq C^\ell t^\ell (A +  C\norm{x}_2)$.  Thus, by Lemma \ref{lem:convergenceapproximation}, $\{D(Q_tV_N)\}$ is asymptotically approximable by trace polynomials.

This holds whenever $t < 1/C$.  But for general $t$, we can write $Q_t = Q_{t/n}^n$ where $n$ is large enough that $t / n < 1/C$, and then iterating the previous statement shows that $\{Q_t V_N\}$ is asymptotically approximable by trace polynomials.

For the sequence $\{D(R_tV_N)\}$, first note that when $t \in \Q_2^+$, we know $\{D(R_{t,\ell}V_N)\}$ is asymptotically approximable by trace polynomials.  By Theorem \ref{thm:semigroup} (1c) and Lemma \ref{lem:convergenceapproximation}, the sequence $\{D(R_tV_N)\}$ is asymptotically approximable by trace polynomials for $t \in \Q_2^+$.  Finally, by Theorem \ref{thm:semigroup} (2d) and Lemma \ref{lem:convergenceapproximation}, the sequence $\{D(R_t V_N)\}$ is asymptotically approximable by trace polynomials for all $t \in \R^+$.
\end{proof}

\section{Main Theorem on Free Entropy} \label{sec:mainthm}

We are now ready to prove the following theorem which shows that $\chi = \chi^*$ for a law which is the limit of log-concave random matrix models.

\begin{theorem} \label{thm:mainthm}
Let $\mu_N$ be a sequence of probability measures on $M_N(\C)_{sa}^m$ given by the potential $V_N$.  Assume
\begin{enumerate}[(A)]
	\item The potential $V_N(x)$ is convex and $V_N(x) - (C/2) \norm{x}_2^2$ is concave for some $C > 0$ independent of $N$.
	\item The sequence $\mu_N$ concentrates around some non-commutative law $\lambda$ with $\lambda(X_j^2) > 0$.
	\item For some $R_0 > 0$, we have $\lim_{N \to \infty} \int_{\norm{x} \geq R_0} (1 + \norm{x}_2^2) \,d\mu_N(x) = 0$.
	\item The sequence $\{DV_N\}$ is asymptotically approximable by trace polynomials.
\end{enumerate}
Then $\lambda \in \Sigma_{m,R_0}$ and moreover
\begin{enumerate}
	\item The law $\lambda$ has finite Fisher information $\Phi^*(\lambda)$, and for all $t \geq 0$, we have
	\[
	\lim_{N \to \infty} \frac{1}{N^3} \mathcal{I}(\mu_N * \sigma_{t,N}) \to \Phi^*(\lambda \boxplus \sigma_t).
	\]
	\item We have for all $t \geq 0$,
	\[
	\chi(\lambda \boxplus \sigma_t) = \underline{\chi}(\lambda \boxplus \sigma_t) = \lim_{N \to \infty} \frac{1}{N^2} \left( h(\mu_N * \sigma_{t,N}) + \frac{m}{2} \log N \right) =  \chi^*(\lambda \boxplus \sigma_t).
	\]
	\item The functions $t \mapsto \frac{1}{N^3} \mathcal{I}(\mu_N * \sigma_{t,N})$ and $t \mapsto \Phi^*(\lambda \boxplus \sigma_t)$ are decreasing and Lipschitz and and the absolute value of the derivative (where defined) is bounded by $C^2m(1 + Ct)^{-2}$.
\end{enumerate}
\end{theorem}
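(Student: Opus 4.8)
The plan is to recognize $\mu_N*\sigma_{t,N}$ as the log‑concave ensemble with potential $R_tV_N$, transfer all the hypotheses to it, and then apply Propositions \ref{prop:convergenceofentropy} and \ref{prop:convergenceofFisherinfo} together with the integral formula \eqref{eq:integrateFisher2}. First I would set up: by hypothesis (A), $V_N\in\mathcal{E}(0,C)$, and by Theorem \ref{thm:solution} the potential of $\mu_N*\sigma_{t,N}$ (normalized so that $Z_N=1$) is $V_{N,t}:=R_tV_N$; by Theorem \ref{thm:semigroup}~(1) it lies in $\mathcal{E}(0,C_t)$ with $C_t:=C(1+Ct)^{-1}$, so $DV_{N,t}$ is $C_t$‑Lipschitz by Proposition \ref{prop:convex}~(4), and by Proposition \ref{prop:Rtpreservesapproximation} the sequence $\{DV_{N,t}\}$ is asymptotically approximable by trace polynomials. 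Since $DV_{N,t}$ is $C_t$‑Lipschitz with $\norm{DV_{N,t}(0)}_2$ bounded in $N$ (an $(\epsilon,1)$‑approximation of $\{DV_{N,t}\}$ gives this), one gets $\norm{DV_{N,t}(x)}_2^2\le a+b\sum_j\tau_N(x_j^2)$, and the two‑sided estimate of Proposition \ref{prop:convex}~(2) gives $|V_{N,t}(x)-V_{N,t}(0)|\le a'+b'\sum_j\tau_N(x_j^2)$; moreover $\{V_{N,t}-V_{N,t}(0)\}$ is asymptotically approximable by trace polynomials by Lemma \ref{lem:gradientapproximation}. I would also check, using $\norm{x+y}\le\norm{x}+\norm{y}$ together with hypothesis (C) and the subgaussian operator‑norm tails of the GUE (Corollary \ref{cor:operatornormtail}), that $\mu_N*\sigma_{t,N}$ satisfies a tail bound of the form in (C) with $R_0$ enlarged.

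Next I would establish that $\mu_N*\sigma_{t,N}$ concentrates around $\lambda\boxplus\sigma_t$. For moments, $\int\tau_N(p(x))\,d(\mu_N*\sigma_{t,N})(x)=\int[\exp(tL_N/2)\tau_N(p)](x)\,d\mu_N(x)$ by Lemma \ref{lem:Laplacianlimit}; since $\exp(tL_N/2)\to\exp(tL/2)$ coefficient‑wise and $\mu_N$ concentrates around $\lambda$, this tends to $\lambda(\exp(tL/2)\tau(p))=(\lambda\boxplus\sigma_t)(p)$ by Lemma \ref{lem:freeconvolutionevaluation}. The vanishing of fluctuations follows from a conditioning argument: conditionally on the $\mu_N$‑variable, the fluctuation of a truncated moment over the GUE variable is $O(N^{-2})$ by Theorem \ref{thm:concentration} with $c=1/t$, while the conditional mean is a trace polynomial in the $\mu_N$‑variable whose fluctuation vanishes because $\mu_N$ concentrates; the operator‑norm truncations are harmless by Corollary \ref{cor:operatornormtail} and hypothesis (C). With this in hand, Proposition \ref{prop:convergenceofFisherinfo} applied to $\mu_N*\sigma_{t,N}$ gives that $\lambda\boxplus\sigma_t$ has finite free Fisher information and $N^{-3}\mathcal{I}(\mu_N*\sigma_{t,N})\to\Phi^*(\lambda\boxplus\sigma_t)$ (the case $t=0$ yielding $\lambda\in\Sigma_{m,R_0}$ with finite Fisher information), which is (1); and Proposition \ref{prop:convergenceofentropy}, applied to $\mu_N*\sigma_{t,N}$ with potential $V_{N,t}-V_{N,t}(0)$, gives $\chi(\lambda\boxplus\sigma_t)=\limsup_N(N^{-2}h(\mu_N*\sigma_{t,N})+\tfrac m2\log N)$ and $\underline{\chi}(\lambda\boxplus\sigma_t)$ equal to the corresponding $\liminf$.

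For (3), write $g_N(t)=N^{-3}\mathcal{I}(\mu_N*\sigma_{t,N})$; monotonicity in $t$ is immediate from Lemma \ref{lem:conditionalexpectation}. For the derivative bound, fix $s\le t$ and realize $\mu_N*\sigma_{t,N}$ as the law of $X+Y$ with $X\sim\mu_N*\sigma_{s,N}$ and $Y\sim\sigma_{t-s,N}$ independent. By Lemma \ref{lem:conditionalexpectation} the normalized conjugate variable of $X+Y$ is $E[DV_{N,s}(X)\mid X+Y]$, which equals $DV_{N,t}(X+Y)$; hence the Pythagorean identity for conditional expectation yields
\[
g_N(s)-g_N(t)=E\bigl\|DV_{N,s}(X)-DV_{N,t}(X+Y)\bigr\|_2^2\le E\bigl\|DV_{N,s}(X)-DV_{N,s}(X+Y)\bigr\|_2^2\le C_s^2\,E\|Y\|_2^2=C_s^2m(t-s),
\]
the first inequality because $DV_{N,s}(X+Y)$ is $\sigma(X+Y)$‑measurable while $DV_{N,t}(X+Y)$ is the $L^2$‑projection onto that $\sigma$‑algebra, the second because $DV_{N,s}$ is $C_s$‑Lipschitz. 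Subdividing $[s,t]$ and summing, the right side becomes a left Riemann sum of the decreasing function $r\mapsto C^2m(1+Cr)^{-2}$, so letting the mesh tend to zero gives $0\le g_N(s)-g_N(t)\le\int_s^tC^2m(1+Cr)^{-2}\,dr$. Thus each $g_N$ is decreasing and Lipschitz with $|g_N'|\le C^2m(1+Ct)^{-2}$ almost everywhere; since this bound is uniform in $N$ and $g_N\to\Phi^*(\lambda\boxplus\sigma_\cdot)$ by (1), the limit function inherits the same monotonicity and derivative bound.

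Finally, to complete (2), I would apply \eqref{eq:integrateFisher2} to $\mu_N*\sigma_{t,N}$ and use $\sigma_{t,N}*\sigma_{s,N}=\sigma_{t+s,N}$:
\[
\frac1{N^2}h(\mu_N*\sigma_{t,N})+\frac m2\log N=\frac12\int_0^\infty\Bigl(\frac m{1+s}-\frac1{N^3}\mathcal{I}(\mu_N*\sigma_{t+s,N})\Bigr)\,ds+\frac m2\log 2\pi e.
\]
By (1) the integrand converges pointwise to $\tfrac m{1+s}-\Phi^*(\lambda\boxplus\sigma_{t+s})$, and by \eqref{eq:Fisherinfoestimate} (with the uniform bound on $\tfrac1m\int\norm{x}_2^2\,d\mu_N$ from (C) and the uniform bound $N^{-3}\mathcal{I}(\mu_N)=\int\norm{DV_N}_2^2\,d\mu_N\le\mathrm{const}$) the integrands are dominated by a fixed integrable function, so dominated convergence gives $\lim_N(N^{-2}h(\mu_N*\sigma_{t,N})+\tfrac m2\log N)=\chi^*(\lambda\boxplus\sigma_t)$ by the definition of $\chi^*$. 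In particular the limit exists, so the $\limsup$ and $\liminf$ of the second paragraph coincide and equal this common value, yielding $\chi(\lambda\boxplus\sigma_t)=\underline{\chi}(\lambda\boxplus\sigma_t)=\chi^*(\lambda\boxplus\sigma_t)$. The main obstacle is the sharp derivative bound in (3): the improvement over a naive Lipschitz estimate comes entirely from identifying the conjugate variable of a convolution as a conditional expectation and noting that $DV_{N,s}(X+Y)$ is already a competitor in the projection, so that only the contracting Lipschitz constant $C_s=C(1+Cs)^{-1}$ enters; everything else is a routine, if somewhat lengthy, bookkeeping of truncations and applications of the machinery from \S\ref{sec:entropy} and \S\ref{sec:evolution}, the concentration of $\mu_N*\sigma_{t,N}$ being the ``weakened version of Theorem \ref{thm:convergenceandconcentration}'' mentioned in the introduction, needed because $\mu_N*\sigma_{t,N}$ is only log‑concave rather than uniformly so.
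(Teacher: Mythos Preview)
Your proposal is correct and follows essentially the same route as the paper: verify that $\mu_N*\sigma_{t,N}$ (with potential $R_tV_N$) again satisfies (A)--(D), invoke Propositions~\ref{prop:convergenceofFisherinfo} and~\ref{prop:convergenceofentropy}, pass to the limit in~\eqref{eq:integrateFisher2} by dominated convergence, and obtain~(3) via the conditional-expectation/Pythagorean identity with the $C(1+Cs)^{-1}$-Lipschitz bound on $DV_{N,s}$. The concentration step you sketch is exactly Lemma~\ref{lem:concentrationpreserved} in the paper, and your Riemann-sum subdivision in~(3) is a harmless refinement---the paper simply records the one-step bound $g_N(s)-g_N(t)\le C^2m(1+Cs)^{-2}(t-s)$, which already yields the pointwise derivative bound.
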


\begin{remark}
If $V_N(x) - (c/2) \norm{x}_2^2$ is convex and $V_N(x) - (C/2) \norm{x}_2^2$ is concave and if $\{DV_N\}$ is asymptotically approximable by trace polynomials, then Theorem \ref{thm:convergenceandconcentration} implies that $\mu_N$ satisfies the hypotheses of Theorem \ref{thm:mainthm} for some law $\lambda$.

However, Theorem \ref{thm:mainthm} holds in a slightly more general situation than Theorem \ref{thm:convergenceandconcentration} in that we do not have to assume uniform convexity, finite moments, or exponential concentration.
\end{remark}

In preparation for the proof of the Theorem \ref{thm:mainthm}, we have already verified that the hypotheses (A), (C), and (D) are preserved under Gaussian convolution.  Now we show that (B) is preserved in Lemma \ref{lem:concentrationpreserved}.  This is straightforward apart from one subtlety | although we have assumed that for every non-commutative polynomial $p$, the non-commutative moment $\tau_N(p(x))$ concentrates around $\lambda(p)$ under $\mu_N$, we have \emph{not} assumed that $|\tau_N(p(x))|$ has finite expectation.  To deal with this issue, we first prove an auxiliary lemma.

\begin{lemma}
Let $\lambda$ be a non-commutative law in $\Sigma_m$, let $p(X,Y) = p(X_1,\dots,X_m,Y_1,\dots,Y_m)$ be a non-commutative polynomial of $2m$ variables, and let $R > 0$.  Then there exists a neighborhood $\mathcal{V}$ of $\lambda$ in $\Sigma_m$ and a constant $L$ such that, for all $N \in \N$, for all $x \in \Gamma_N(\mathcal{V})$, the function $y \mapsto \tau_N(p(x,y))$ is $L$-Lipschitz with respect to $\norm{\cdot}_2$ for self-adjoint tuples $y$ in the operator-norm ball $\{y: \norm{y_j} \leq R\}$.
\end{lemma}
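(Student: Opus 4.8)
The plan is to combine the non-commutative H\"older inequality (Proposition \ref{prop:NCHolder}) with a telescoping decomposition of $p(x,y) - p(x,y')$, taking care to keep the factors of $x$ -- whose operator norms are \emph{not} controlled by membership in a microstate space -- inside non-commutative $L^{2d}$-norms rather than operator norms. To that end, first fix the neighborhood. Let $d_0$ be the largest number of $X$-letters occurring in any monomial of $p$, and set
\[
\mathcal{V} = \left\{\lambda' \in \Sigma_m : |\lambda'(X_i^{2k}) - \lambda(X_i^{2k})| < 1 \text{ for all } 1 \le i \le m \text{ and } 1 \le k \le d_0 \right\},
\]
which is a finite intersection of preimages of open intervals under the continuous moment functionals $\lambda' \mapsto \lambda'(X_i^{2k})$, hence an open neighborhood of $\lambda$ in $\Sigma_m$. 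Then for every $N$, every $x \in \Gamma_N(\mathcal{V})$, every $i$, and every $1 \le k \le d_0$,
\[
\norm{x_i}_{2k} = \tau_N(x_i^{2k})^{1/(2k)} \le \big(\lambda(X_i^{2k}) + 1\big)^{1/(2k)} \le M, \qquad M := \max_{i,k}\big(\lambda(X_i^{2k}) + 1\big)^{1/(2k)},
\]
with $M$ depending only on $p$ and $\lambda$.

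Next I would reduce to a single monomial with one $Y$-letter isolated. Write $p = \sum_s c_s p_s$ with each $p_s$ a monomial. A monomial containing $Y$-letters has the form $p_s = w_0 Y_{j_1} w_1 Y_{j_2} \cdots Y_{j_k} w_k$, each $w_i$ a (possibly empty) word in the $X$'s, with $k = k(s) \le \deg(p)$ and at most $d_0$ many $X$-letters in all (monomials with no $Y$-letter contribute nothing, since then $\tau_N(p_s(x,\cdot))$ is constant). The telescoping identity
\[
p_s(x,y) - p_s(x,y') = \sum_{i=1}^{k} w_0\, y_{j_1} w_1 \cdots w_{i-1}(y_{j_i} - y'_{j_i}) w_i\, y'_{j_{i+1}} \cdots y'_{j_k} w_k
\]
(all $w_i$ evaluated at $x$), followed by $\tau_N$ and a cyclic rotation bringing $(y_{j_i} - y'_{j_i})$ to the front, expresses $\tau_N(p_s(x,y)) - \tau_N(p_s(x,y'))$ as $\sum_i \tau_N\!\big((y_{j_i}-y'_{j_i})\, W_{s,i}\big)$, where each $W_{s,i}$ is a word with at most $\deg(p)$ letters from $\{y_l, y'_l\}$ and at most $d_0$ letters from $\{x_l\}$.

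The core estimate is a uniform bound $\norm{W_{s,i}}_2 \le K_0$. Let $d$ be the number of $x$-letters in $W_{s,i}$. If $d = 0$, then $W_{s,i}$ is a product of at most $\deg(p)$ operators of norm $\le R$, so $\norm{W_{s,i}}_2 \le \norm{W_{s,i}}_\infty \le \max(1,R)^{\deg(p)}$. If $d \ge 1$, apply the non-commutative H\"older inequality to the product $W_{s,i}$, assigning exponent $2d$ to each factor equal to some $x_l$ (these contribute $d \cdot \tfrac{1}{2d} = \tfrac12$ to the exponent sum) and exponent $\infty$ to each factor equal to some $y_l$ or $y'_l$; using $\norm{y_l}_\infty, \norm{y'_l}_\infty \le R$ together with $\norm{x_l}_{2d} \le M$ from the first step (valid as $d \le d_0$), this gives $\norm{W_{s,i}}_2 \le M^d R^{k-1} \le \max(1,M)^{\deg(p)}\max(1,R)^{\deg(p)} =: K_0$, a constant depending only on $p$, $R$, and $\lambda$. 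Applying H\"older once more, $|\tau_N((y_{j_i}-y'_{j_i}) W_{s,i})| \le \norm{y_{j_i}-y'_{j_i}}_2 \norm{W_{s,i}}_2 \le K_0 \norm{y-y'}_2$, since $\norm{y_j - y'_j}_2 \le \norm{y-y'}_2$; summing over $s$ and over the positions $i$ yields $|\tau_N(p(x,y)) - \tau_N(p(x,y'))| \le L \norm{y-y'}_2$ with $L := K_0 \sum_s |c_s|\, k(s)$, independent of $N$ and of the choice of $x \in \Gamma_N(\mathcal{V})$.

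The main obstacle is exactly the subtlety flagged before the statement: the topology on $\Sigma_m$ controls only finitely many moments of $x$ and gives no $N$-uniform bound on $\norm{x_j}_\infty$, so one cannot simply extract all factors in operator norm. The resolution is to keep each $x$-factor inside an $L^{2d}$-norm whose exponent depends only on $\deg p$ (via the non-commutative H\"older inequality), and then shrink $\mathcal{V}$ to control those finitely many norms; the hypothesis $\norm{y_j} \le R$ is used only to discard the $y$-factors at exponent $\infty$. Every other step is routine bookkeeping.
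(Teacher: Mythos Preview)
Your proof is correct and follows essentially the same approach as the paper: telescoping on the $Y$-positions, then the non-commutative H\"older inequality with the $y$-factors in $L^\infty$ and the $x$-factors in an $L^{2d}$-norm controlled by finitely many moments that define $\mathcal{V}$. The only cosmetic difference is that the paper groups consecutive $X$-letters into blocks $q_j(X)$ and controls $\norm{q_j(x)}_{2(\ell+1)}$ via the moments $\tau_N[(q_j^*q_j)^{\ell+1}]$, whereas you treat each $x_i$ individually and control $\norm{x_i}_{2d}$ via $\tau_N(x_i^{2d})$; both work.
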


\begin{proof}
To prove the lemma, it suffices to consider the case of a non-commutative monomial.  Indeed, if $p = \sum_{j=1}^n p_j$ where $p_j$ is a monomial, and if we find neighborhoods $\mathcal{V}_j$ and Lipschitz constants $L_j$ for each $p_j$, then the result will also hold for $p$ with $\mathcal{V} = \bigcap_{j=1}^n \mathcal{V}_j$ and $L = \sum_{j=1}^n L_j$.

Thus, assume without loss of generality that $p(X,Y)$ is a non-commutative monomial.  Then it can be written in the form
\[
p(X,Y) = q_0(X) Y_{i_1} q_1(X) Y_{i_2} \dots q_{\ell-1}(X) Y_{i_\ell} q_\ell(X).
\]
where $i_j \in \{1,\dots,m\}$ and $q_j(X)$ is a non-commutative monomial in $X$ (which of course is allowed to be $1$).  Consider $x, y, y' \in M_N(\C)_{sa}^m$, and suppose that $\norm{y_i}_\infty \leq R$ and $\norm{y_i'}_\infty \leq R$ for each $i$.  Then
\[
p(x,y) - p(x,y') = \sum_{j=1}^\ell q_0(x) y_{i_1} \dots y_{i_{j-1}} q_{j-1}(x)(y_{i_j} - y_{i_j}') q_i(x) y_{i_{j+1}} \dots y_{i_\ell} q_\ell(x).
\]
Recalling the non-commutative $L^\alpha$ norms and H\"older's inequality (see \ref{subsec:NCLP}), we have
\[
\norm{p(x,y) - p(x,y')}_1 \leq \left( \sum_{j=1}^\ell \prod_{k \neq j} \norm{q_j(x)}_{2(\ell+1)} \prod_{k < j} \norm{y_{i_k}}_\infty \prod_{k > j} \norm{y_{i_k}'}_\infty \right) \norm{y_j - y_j'}_2.
\]
This implies that
\[
|\tau_N(p(x,y)) - \tau_N(p(x,y'))| \leq \left( \sum_{j=1}^\ell \prod_{k \neq j} \norm{q_j(x)}_{2(\ell+1)} \right) R^{\ell-1} \norm{y - y'}_2.
\]
Now
\[
\norm{q_j(x)}_{2(\ell+1)} = \left( \tau_N[ (q_j(x)^* q_j(x))^{\ell+1} ] \right)^{1/2(\ell+1)}.
\]
We can define
\[
\mathcal{V} = \{\lambda': \lambda'[ (q_j^*q_j)^{\ell+1} ] < \lambda[ (q_j^*q_j)^{\ell+1} ] + 1 \text{ for } j = 0, \dots, \ell\}.
\]
Then $\norm{q_j(x)}_{2(\ell+1)}$ is uniformly bounded for $x \in \Gamma_N(\mathcal{V})$ for each $j = 0, \dots, \ell$.  Suppose that each of these quantities is bounded by $K$.  Then the above estimate shows that
\[
|\tau_N(p(x,y)) - \tau_N(p(x,y))| \leq \ell K^{\ell+1} R^{\ell-1} \norm{y - y'}_2
\]
whenever $x \in \Gamma_N(\mathcal{V})$ and $y, y'$ are in the operator-norm ball of radius $R$.
\end{proof}

\begin{lemma} \label{lem:concentrationpreserved}
Suppose that $\{\mu_N\}$ concentrates around a non-commutative law $\lambda$.  Then $\{\mu_N * \sigma_{t,N}\}$ concentrates around $\lambda \boxplus \sigma_t$ for every $t > 0$.
\end{lemma}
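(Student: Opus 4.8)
The plan is to reduce, as usual, to showing that for each non-commutative polynomial $p$ and each $\epsilon > 0$ we have $(\mu_N * \sigma_{t,N})(|\tau_N(p(z)) - c| \ge \epsilon) \to 0$, where $c := (\lambda \boxplus \sigma_t)(p)$ and where a point sampled from $\mu_N * \sigma_{t,N}$ is written $z = x + y$ with $x \sim \mu_N$ and $y \sim \sigma_{t,N}$ independent; this suffices because the topology on $\Sigma_m$ is generated by the evaluations $\nu \mapsto \nu(p)$. Setting $q(X,Y) = p(X_1+Y_1,\dots,X_m+Y_m) \in \NCP_{2m}$ and $g_N(x) := \int \tau_N(q(x,y'))\,d\sigma_{t,N}(y')$, I would split
\[
\tau_N(p(z)) - c = \big(\tau_N(q(x,y)) - g_N(x)\big) + \big(g_N(x) - \lambda(g_N)\big) + \big(\lambda(g_N) - c\big)
\]
and control each term. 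The last two are essentially bookkeeping: recognizing $g_N = \exp(tL_N/2)[\tau(p)]$ in $\TrP_m^0$, Lemma \ref{lem:Laplacianlimit} gives $g_N \to g := \exp(tL/2)[\tau(p)]$ coefficient-wise with uniformly bounded degree, so $\lambda(g_N) \to \lambda(g) = c$ by Lemma \ref{lem:freeconvolutionevaluation}; and since $g_N$ has boundedly many terms with uniformly bounded coefficients, the concentration hypothesis on $\mu_N$ (applied factorwise, using that a finite product of sequences converging in probability converges to the product of the limits) gives $g_N(x) - \lambda(g_N) \to 0$ in $\mu_N$-probability. Thus I can fix measurable sets $E_N \subseteq \Gamma_N(\mathcal{V})$ with $\mu_N(E_N) \to 1$ on which $|g_N(x) - \lambda(g_N)| < \epsilon/4$, where $\mathcal{V}$ is a neighborhood of $\lambda$ that I will shrink finitely often.

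The substantive term is the first, which I would handle by conditioning on $x \in E_N$ and invoking Gaussian concentration for $\sigma_{t,N}$. The preceding lemma (the Lipschitz estimate for $y \mapsto \tau_N(q(x,y))$) provides, after shrinking $\mathcal{V}$, a constant $L$ depending only on $p$, $\mathcal{V}$, and a radius $R > 2\sqrt t + 1$, such that $y \mapsto \tau_N(q(x,y))$ is $L$-Lipschitz on $\{\norm{y}_\infty \le R\}$ for every $N$ and every $x \in \Gamma_N(\mathcal{V})$ --- crucially this constant is controlled purely through $\norm{\cdot}_2$-moments of $x$, so it stays uniform even though $\Gamma_N(\mathcal{V})$ imposes no operator-norm bound on $x$. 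To globalize, I would truncate the Gaussian variable: choose $\psi \in C_c^\infty(\R)$ with $\psi(s) = s$ for $|s| \le R_1 := R-1$ and $\norm{\psi}_\infty \le R$, put $\Psi(y) = (\psi(y_1),\dots,\psi(y_m))$; then $y \mapsto \tau_N(q(x,\Psi(y)))$ is globally $L'$-Lipschitz with $L'$ uniform (since $\Psi$ is $\norm{\cdot}_2$-Lipschitz uniformly in $N$) and coincides with $\tau_N(q(x,y))$ on $\{\norm{y}_\infty \le R_1\}$. Theorem \ref{thm:concentration} applied to $\sigma_{t,N}$ (which has $c = 1/t$) then yields, uniformly in $x \in \Gamma_N(\mathcal{V})$,
\[
\sigma_{t,N}\big(|\tau_N(q(x,\Psi(y))) - \widetilde m_N(x)| \ge \epsilon/4\big) \le 2\exp\!\big(-N^2(\epsilon/4)^2/(2t(L')^2)\big) =: \eta_N \to 0,
\]
with $\widetilde m_N(x) := \int \tau_N(q(x,\Psi(y)))\,d\sigma_{t,N}(y)$. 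Finally I must check that $\widetilde m_N(x)$ is within $\epsilon/4$ of $g_N(x)$ for large $N$: their difference is the integral of $\tau_N(q(x,\Psi(y))) - \tau_N(q(x,y))$ over $\{\norm{y}_\infty > R_1\}$, and by a telescoping expansion and the non-commutative Hölder inequality (Proposition \ref{prop:NCHolder}) the integrand is dominated by a polynomial in the moments $\tau_N(|x_j|^{2k})$ and $\tau_N(|y_j|^{2k})$ for finitely many $k$; shrinking $\mathcal{V}$ once more to bound the $x$-moments on $\Gamma_N(\mathcal{V})$ and applying Cauchy--Schwarz together with the subgaussian moment bounds for the GUE (Theorem \ref{thm:subgaussian}, Corollary \ref{cor:operatornormtail}) bounds this by a constant times $\sigma_{t,N}(\norm{y}_\infty > R_1)^{1/2}$, which tends to $0$ uniformly in $x \in \Gamma_N(\mathcal{V})$ because $R_1 > 2\sqrt t$.

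Assembling the pieces: for $x \in E_N$ and $N$ large, combining $|\lambda(g_N) - c| < \epsilon/4$, $|g_N(x) - \lambda(g_N)| < \epsilon/4$, and $|\widetilde m_N(x) - g_N(x)| < \epsilon/4$ shows that $|\tau_N(q(x,\Psi(y))) - c| \ge \epsilon$ forces $|\tau_N(q(x,\Psi(y))) - \widetilde m_N(x)| \ge \epsilon/4$; since moreover $\tau_N(p(x+y)) = \tau_N(q(x,\Psi(y)))$ whenever $\norm{y}_\infty \le R_1$, this gives $\sigma_{t,N}(|\tau_N(p(x+y)) - c| \ge \epsilon) \le \sigma_{t,N}(\norm{y}_\infty > R_1) + \eta_N \to 0$ uniformly over $x \in E_N$. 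Integrating in $x$ and using $\mu_N(E_N^c) \to 0$ finishes the argument, and this automatically upgrades to convergence $\mu_N * \sigma_{t,N}(\Gamma_N(\mathcal{U})) \to 1$ for every neighborhood $\mathcal{U}$ of $\lambda \boxplus \sigma_t$. The hard part is precisely the absence of any operator-norm control on $\mu_N$: the whole scheme hinges on the moment-only Lipschitz bound of the preceding lemma (so that $L$ is uniform over $\Gamma_N(\mathcal{V})$), on truncating the \emph{Gaussian} variable by $\Psi$, and on estimating the resulting truncation error by Hölder plus the GUE subgaussian bounds --- making all three cooperate so that the error tends to $0$ uniformly in $x \in \Gamma_N(\mathcal{V})$ is the delicate step.
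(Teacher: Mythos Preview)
Your proposal is correct and follows essentially the same route as the paper: both arguments reduce to a single polynomial $p$, invoke the preceding moment-only Lipschitz lemma to get a uniform Lipschitz constant on $\Gamma_N(\mathcal{V})$, truncate the Gaussian variable via a smooth $\Psi$, apply Theorem~\ref{thm:concentration} to $\sigma_{t,N}$, identify the conditional mean with $\exp(tL_N/2)[\tau(p)]$, and control the truncation error using GUE tail bounds on $\{\norm{y}_\infty > R_1\}$. The only cosmetic difference is organizational: the paper passes from $\beta_N = \exp(tL_N/2)[\tau(p)]$ to $\beta = \exp(tL/2)[\tau(p)]$ uniformly on $\Gamma_N(\mathcal{V})$ and then applies the concentration hypothesis once to $\beta(X_N)$, whereas you keep $g_N$ and argue factorwise that $g_N(x) - \lambda(g_N) \to 0$ in probability; both are equivalent since $g_N$ has uniformly bounded degree and coefficients.
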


\begin{proof}
Fix $t$.  Let $X_N = (X_{N,1},\dots,X_{N,m})$ and $Y_N = (Y_{N,1},\dots,Y_{N,m})$ be independent random variables with the laws $\mu_N$ and $\sigma_{t,N}$ respectively.  Because the topology on the space $\Sigma_m$ of non-commutative laws is generated by non-commutative moments, it suffices to show that for each non-commutative polynomial $p$ and $\delta > 0$,
\[
\lim_{N \to \infty} P(|\tau_N(p(X_N+Y_N)) - \lambda \boxplus \sigma_t(p)| \geq \delta) = 0.
\]
Fix $p$ and let $d$ be its degree.  By the previous lemma, there is a neighborhood $\mathcal{V}$ of $\lambda$ and a constant $K$ such that for every $x \in \Gamma_N(\mathcal{V})$, the function $y \mapsto \tau_N(p(x+y))$ is $K$-Lipschitz with respect to $\norm{\cdot}_2$ on the operator-norm ball $\{y = (y_1,\dots,y_m): \norm{y_j} \leq 4t^{1/2}\}$.  By shrinking $\mathcal{V}$ if necessary, we may also assume that on $\tau_N(q(x))$ is uniformly bounded for every non-commutative monomial $q(x)$ of degree less than or equal to $d$.

Choose a $C_c^\infty$ function $\psi: \R \to \R$ such that $\psi(z) = z$ for $|z| \leq 3t^{1/2}$ and $|\psi(z)| \leq 4t^{1/2}$.  Then $\Psi: (y_1,\dots,y_m) \mapsto (\psi(y_1),\dots,\psi(y_m))$ is globally Lipschitz in $\norm{\cdot}_2$ and it also maps $M_N(\C)_{sa}^m$ into the operator norm ball of radius $4t^{1/2}$ (which is the region where $z \mapsto \tau_N(p(x,z))$ was assumed to $K$-Lipschitz with respect to $\norm{\cdot}_2$ whenever $x \in \Gamma_N(\mathcal{V})$).  This implies that there is some constant $K'$ such that $y \mapsto \tau_N(p(x,\Psi(y)))$ is $K'$-Lipschitz for all $x \in \Gamma_N(\mathcal{V})$.  Let
\begin{align*}
\alpha_N(x) &= E[\tau_N(p(x+\Psi(Y_N))] \\
\beta_N(x) &= E[\tau_N(p(x+Y+N))] = \exp(tL_N/2) [\tau(p)](x) \\
\beta(x) &= \exp(tL/2)[\tau(p)](x)
\end{align*}
Therefore, by Theorem \ref{thm:concentration} applied to $Y_N$,
\[
x \in \Gamma_N(\mathcal{V}) \implies P(|\tau_N(p(x+\Psi(Y_N))) - \alpha_N(x)| \geq \delta/3) \leq 2e^{-\delta^2 N^2 / 18 t (K')^2}.
\]
On the other hand, we know by standard tail estimates on the GUE (see Corollary \ref{cor:operatornormtail}) that
\[
\lim_{N \to \infty} E[\tau_N(q(Y_N) \mathbf{1}_{\norm{Y_N} \geq 3t^{1/2}}] = 0
\]
for every non-commutative polynomial $q$.  This implies that $|\alpha_N(x) - \beta_N(x)| \to 0$ uniformly for $x \in \Gamma_N(\mathcal{V})$.  On the other hand, by Lemma \ref{lem:Laplacianlimit},
\[
\beta_N(x) = \exp(tL_N/2)[\tau(p)](x) \to \exp(tL/2)[\tau(p)](x) = \beta(x)
\]
where the convergence occurs coefficient-wise.  Now $\exp(tL_N/2)[\tau(p)]$ is a sum of products of traces of non-commutative monomials $q$ of degree $\leq d$ and for every such $q$, we know $\tau_N(q(x))$ is uniformly bounded on $\Gamma_N(\mathcal{V})$ by our choice of $\mathcal{V}$.  Thus, coefficient-wise convergence of $\beta_N \to \beta$ implies uniform convergence for $x \in \Gamma_N(\mathcal{V})$.  Therefore, for sufficiently large $N$ we have $|\beta_N(x) - \beta_N(x)| \leq \delta / 3$ for $x \in \Gamma_N(\mathcal{V})$, and hence
\[
P\Bigl( \bigl|\tau_N(p(X_N+Y_N)) - \tau(\beta(X_N))\bigr| \geq 2\delta/3, \, X_N \in \Gamma_N(\mathcal{V}), \, \norm{Y_N} \leq 3t^{1/2}\Bigr) \leq 2e^{-\delta^2 N^2 / 18 t (K')^2},
\]
where we have applied the Fubini-Tonelli theorem for the product measure $\mu_N \otimes \sigma_{t,N}$.  By our concentration assumption,
\[
P\Bigl( \bigl|\tau_N(\beta(X_N)) - \lambda[\beta] \bigr| \geq \delta / 3 \Bigr) \to 0, \qquad P(X_N \in \Gamma_N(\mathcal{V})) \to 1,
\]
and by Corollary \ref{cor:operatornormtail} also $P(\norm{Y_k} \geq 3t^{1/2}) \to 0$.  Altogether, we have
\[
P\Bigl(\bigl|\tau_N(p(X_N+Y_N)) - \lambda[\beta] \bigr| \geq \delta \Bigr) \to 0.
\]
But note that $\lambda[\beta] = \lambda[\exp(tL/2)[\tau(p)]] = \lambda \boxplus \sigma_t[p]$ by Lemma \ref{lem:freeconvolutionevaluation}.  Thus, the proof is complete.
\end{proof}

\begin{proof}[Proof of Theorem \ref{thm:mainthm}]
Let $V_{N,t} = R_t V_N$ be the potential associated to $\mu_N * \sigma_{t,N}$.  Let us verify that $V_{N,t}$ satisfies the assumptions (A) - (D) for every $t > 0$.
\begin{enumerate}[(A)]
	\item This follows from Theorem \ref{thm:semigroup} (1) because $V_{N,t} = R_t V_N$, hence $V_{N,t} \in \mathcal{E}(0,C)$.
	\item This follows from Lemma \ref{lem:concentrationpreserved}.
	\item This follows from tail bounds on the GUE (Corollary \ref{cor:operatornormtail}).
	\item This follows from Proposition \ref{prop:Rtpreservesapproximation}.
\end{enumerate}

Next, the fact that $\lambda \in \Sigma_{m,R_0}$ follows from Proposition \ref{prop:convergenceofentropy} (1) with $n = 1$.

Claim (1) of the theorem follows by applying Proposition \ref{prop:convergenceofFisherinfo} to $\mu_N * \sigma_{t,N}$ with $n = 1$.

For claim (2), recall that by Lemma \ref{lem:integrateFisher}, equation \eqref{eq:integrateFisher2},
\begin{equation} \label{eq:integrationofFisher}
\frac{1}{N^2} h(\mu_N) + \frac{m}{2} \log N = \frac{1}{2} \int_0^\infty \left( \frac{m}{1+t} - \frac{1}{N^3} \mathcal{I}(\mu_N * \sigma_{t,N}) \right) ds + \frac{m}{2} \log 2\pi e.
\end{equation}
Because $N^{-3} \mathcal{I}(\mu_N)$ converges as $N \to \infty$, there is some constant $K$ with $N^{-3} \mathcal{I}(\mu_N) \leq K$ for all $N$.  Also, because of assumptions (B) and (C), we have $\int \norm{x}_2^2\,d\mu_N(x) \to \sum_{j=1}^m \lambda_j(X_j^2) > 0$.  Therefore, there is a constant $a$ such that $\int \norm{x}_2^2\,d\mu_N(x) \geq ma$ for large enough $N$.  Thus, \eqref{eq:Fisherinfoestimate}, we have for sufficiently large $N$ that
\[
\frac{m}{a+t} \leq \frac{1}{N^3} \mathcal{I}(\mu_N * \sigma_{t,N}) \leq \min \left(M, \frac{m}{t} \right).
\]
Thus, we can apply the dominated convergence theorem to take the limit as $N \to \infty$ inside the integral on the right hand side of \eqref{eq:integrationofFisher} and apply claim (1) to conclude that
\[
\lim_{N \to \infty} \left( \frac{1}{N^2} h(\mu_N) + \frac{m}{2} \log N \right) \to \chi^*(\lambda).
\]
On the left hand side of \eqref{eq:integrationofFisher}, we will apply Proposition \ref{prop:convergenceofentropy} with $n = 1$.  We may replace $V_N$ by $V_N - V_N(0)$ without changing $\mu_N$ (because the definition of $\mu_N$ includes the normalizing constant $Z_N$ anyway).  Then because $\{DV_N\}$ is asymptotically approximable by trace polynomials, we know that $\{V_N\}$ is asymptotically approximable by trace polynomials (Lemma \ref{lem:gradientapproximation}).  Therefore, the hypotheses of Proposition \ref{prop:convergenceofentropy} are satisfied and so
\[
\chi(\lambda) = \limsup_{N \to \infty} \left( \frac{1}{N^2} h(\mu_N) + \frac{m}{2} \log N \right) = \chi^*(\lambda)
\]
and the same holds for $\underline{\chi}(\lambda)$.  Moreover, this holds for $\mu_N * \sigma_{t,N}$ just as well as $\mu_N$ because $\mu_N * \sigma_{t,N}$ satisfies the same assumptions (A) - (D).

For claim (3), first fix $N$ and let $X$ be a random variable with law $\mu_N$, and let $Y_t$ be an independent Hermitian Brownian motion (here $Y_t \sim \sigma_{t,N}$).  Let $\Xi_t = DV_{N,t}(X + Y_t)$, which is the conjugate variable of $X + Y_t$.  Then
\[
\frac{1}{N^3} \mathcal{I}(\mu_N * \sigma_{t,N}) = E \norm{\Xi_t}_2^2
\]
Suppose $0 \leq s \leq t \leq T$.  Then $X + Y_t$ is the sum of the independent random variables $X + Y_s$ and $Y_t - Y_s$, and thus $\Xi_t = E[\Xi_s | X + Y_t]$ by Lemma \ref{lem:conditionalexpectation}.  In other words, $\Xi_t$ is the orthogonal projection of $DV_{N,s}(X+Y_s)$ onto the space of $L^2$ random variables that are functions of $X + Y_t$, or in other words it is the closest function of $X + Y_t$ to $\Xi_s$ in $L^2$.  This implies that
\begin{align*}
\left[ \norm{\Xi_s - \Xi_t}_2^2 \right] &\leq E \left[ \norm{DV_{N,s}(X+Y_s) - DV_{N,s}(X+Y_t)}_2^2 \right] \\
&\leq E \left[ \frac{C^2}{(1 + Cs)^2} \norm{Y_s - Y_t}_2^2 \right] \\
&= \frac{C^2}{(1 + Cs)^2} m(t - s)
\end{align*}
using the fact that $V_{N,s} \in \mathcal{E}(0,C(1+Ct)^{-1})$ and hence $DV_{N,s}$ is $C(1+Cs)^{-1}$-Lipschitz.  Since $\Xi_t$ is the orthogonal projection of $\Xi_s$ onto this subspace, we know $\Xi_s - \Xi_t$ is orthogonal to $\Xi_t$ and hence
\[
E\left[\norm{\Xi_s}_2^2 \right] - E\left[\norm{\Xi_t}_2^2 \right] = E \left[\norm{\Xi_s - \Xi_t}_2^2 \right].
\]
Overall,
\[
0 \leq \frac{1}{N^3} \mathcal{I}(\mu_N * \sigma_{s,N}) - \frac{1}{N^3} \mathcal{I}(\mu_N * \sigma_{t,N}) \leq \frac{C^2}{(1 + Cs)^2} m(t - s).
\]
This immediately proves that $t \mapsto N^{-3} \mathcal{I}(\mu_N * \sigma_{t,N})$ is decreasing function of $t$, it is Lipschitz, and the absolute value of the derivative is bounded by $C^2 m / (1 + Ct)^2$.  The same holds for $\Phi^*(\lambda \boxplus \sigma_t)$ by taking the limit as $N \to \infty$.
\end{proof}

\section{Free Gibbs Laws}

In the situation of Theorem \ref{thm:convergenceandconcentration}, we want to interpret the law $\lambda$ as the free Gibbs state for a potential which is the limit of the $V_N$'s.  To this end, we will define a non-commutative function space where each point is a limit of functions on $M_N(\C)_{sa}^m$.  We will then give several characterizations of the closure of trace polynomials in this space, as well as the class of potentials to which our previous results apply.

\subsection{Asymptotic Approximation and Function Spaces}

Let $Y_\bullet = \{Y_N\}$ be a sequence of normed vector spaces.  We define a (possibly infinite) semi-norm on sequences $\phi_\bullet = \{\phi_N\}$ of functions $M_N(\C)_{sa}^m \to Y_N$ by
\[
\norm{\phi_\bullet}_{R,Y_\bullet} = \limsup_{N \to \infty} \sup_{\norm{x} \leq R} \norm{\phi_N(x)}_{Y_N}.
\]
Let $\mathcal{F}_m(Y_\bullet)$ be the vector space
\[
\{\phi_\bullet:  \norm{\phi_\bullet}_{R,Y_\bullet} < +\infty \text{ for all } R\} / \{\phi_\bullet: \norm{\phi_\bullet}_{R,Y_\bullet} = 0 \text{ for all } R\}.
\]
For a sequence $\phi_\bullet$, we denote its equivalence class by $[\phi_\bullet]$.

We equip $\mathcal{F}_m(Y_\bullet)$ with the topology generated by the seminorms $\norm{\cdot}_{R,Y_\bullet}$, or equivalently given by the metric
\begin{equation} \label{eq:metric}
d_{\mathcal{F}_m(Y_\bullet)}(\phi_\bullet, \psi_\bullet) = \sum_{n=1}^\infty \frac{1}{2^n} \min(\norm{\phi_\bullet - \psi_\bullet}_{n,Y},1).
\end{equation}
Note that $\mathcal{F}_m(Y_\bullet)$ is a complete metric space in this metric and is a locally convex topological vector space.

The vector space of scalar-valued trace polynomials $\TrP_m^0$ embeds into $\mathcal{F}_m^0 := \mathcal{F}_m(\C)$ by the map that sends a trace polynomial to the corresponding sequence of functions it defines on $M_N(\C)_{sa}^m$.  A sequence $\phi_\bullet$ is asymptotically approximable by trace polynomials if and only if $[\phi_\bullet]$ is in the closure of $\TrP_m^0$ in $\mathcal{F}_m^0$, which we will denote by $\mathcal{T}_m^0$.

Similarly, let $M_\bullet(\C)^m$ be the sequence $\{M_N(\C)^m\}$ equipped with $\norm{\cdot}_2$.  The vector space $\TrP_m^1$ embeds into $\mathcal{F}_m^1 :=\mathcal{F}_m(M_\bullet(\C))$.  A sequence $\phi_\bullet$ of functions $M_N(\C)_{sa}^m \to M_N(\C)_{sa}$ is asymptotically approximable by trace polynomials if and only if $[\phi_\bullet]$ is in the closure of $\TrP_m^1$, which we denote by $\mathcal{T}_m^1$.

The spaces $\mathcal{T}_m^0$ and $\mathcal{T}_m^1$ can be viewed as non-commutative function spaces through the following alternative characterization.  Here $\mathcal{R}$ denotes the hyperfinite $\II_1$ factor and $\mathcal{R}^\omega$ denotes its ultrapower.  For background, see \cite[\S 1.6 and \S 5.4]{AP2017} or \cite[{p.\ 5 - 7}]{Capraro2010}.

\begin{lemma} \label{lem:threeseminorms}
Let $f \in \TrP_m^0$.  Then we have
\begin{equation} \label{eq:threeseminorms}
\limsup_{N \to \infty} \sup_{\substack{x \in M_N(\C)_{sa}^m \\ \norm{x}_\infty \leq R}} |f(x)| = \sup_N \sup_{\substack{x \in M_N(\C)_{sa}^m \\ \norm{x}_\infty \leq R}} |f(x)| = \sup_{\substack{x \in (\mathcal{R}_{sa}^\omega)^m \\ \norm{x}_\infty \leq R}} |f(x)|.
\end{equation}
If we denote the common value by $\norm{f}_{\mathcal{T}_m^0,R}$, then this family of seminorms defines a metrizable topology on $\TrP_m^0$ with the metric given as in \eqref{eq:metric}, and $\mathcal{T}_m^0$ is the completion of $\TrP_m^0$ in this metric.  The same result holds for $\mathcal{T}_m^1$ using the seminorm
\begin{equation} \label{eq:threeseminorms2}
\limsup_{N \to \infty} \sup_{\substack{x \in M_N(\C)_{sa}^m \\ \norm{x}_\infty \leq R}} \norm{f(x)}_2 = \sup_N \sup_{\substack{x \in M_N(\C)_{sa}^m \\ \norm{x} \leq R}} \norm{f(x)}_2 = \sup_{\substack{x \in (\mathcal{R}_{sa}^\omega)^m \\ \norm{x}_\infty \leq R}} \norm{f(x)}_2.
\end{equation}
\end{lemma}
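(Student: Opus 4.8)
The plan is to establish the three-way equality \eqref{eq:threeseminorms} by a cyclic chain of inequalities, after which the topological statements follow formally. Write $A_R(f)$, $B_R(f)$, $C_R(f)$ for the three quantities in \eqref{eq:threeseminorms}, in the order displayed (the $\limsup$ over matrices, the $\sup$ over matrices, and the $\sup$ over $(\mathcal{R}_{sa}^\omega)^m$); the claim is $A_R(f) = B_R(f) = C_R(f)$, and likewise with $|\cdot|$ replaced by $\norm{\cdot}_2$ for $f \in \TrP_m^1$. The inequality $A_R(f) \le B_R(f)$ is immediate. The common technical input for the other two inequalities is the observation, proved by induction on the way a trace polynomial is built from the variables using finitely many sums, products, adjoints, and applications of the trace, that evaluation of trace polynomials is compatible with trace-preserving unital $*$-homomorphisms and with tracial ultraproducts: if $\theta\colon (\mathcal{M},\tau) \to (\mathcal{N},\sigma)$ is a trace-preserving unital $*$-homomorphism and $x$ is a self-adjoint tuple in $\mathcal{M}$, then $f(\theta x) = \theta(f(x))$ for $f \in \TrP_m^1$ and $f(\theta x) = f(x)$ for $f \in \TrP_m^0$; and if $x = [x^{(N)}]$ in a tracial ultraproduct $\prod_{N \to \omega}(\mathcal{M}_N,\tau_N)$, then $f(x) = [f(x^{(N)})]$ (resp.\ $f(x) = \lim_{N\to\omega} f(x^{(N)})$).

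For $B_R(f) \le C_R(f)$: for each fixed $N$ the matrix algebra $M_N(\C)$ embeds into the hyperfinite $\II_1$ factor $\mathcal{R}$ by a trace-preserving unital $*$-homomorphism (e.g.\ via $M_N(\C) = M_N(\C) \otimes 1 \hookrightarrow M_N(\C) \otimes \mathcal{R} \cong \mathcal{R}$), and $\mathcal{R}$ embeds into $\mathcal{R}^\omega$ as constant sequences; composing gives a trace-preserving unital $*$-homomorphism $\iota_N\colon M_N(\C) \to \mathcal{R}^\omega$, which is isometric in operator norm. By the compatibility just stated, $|f(\iota_N x)| = |f(x)|$ (resp.\ $\norm{f(\iota_N x)}_2 = \norm{f(x)}_2$) for every $x \in M_N(\C)_{sa}^m$, and $\norm{\iota_N x}_\infty = \norm{x}_\infty$, so every value of $f$ on $\{x \in M_N(\C)_{sa}^m : \norm{x}_\infty \le R\}$ occurs among the values contributing to $C_R(f)$. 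Taking suprema over $x$ and over $N$ yields $B_R(f) \le C_R(f)$.

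For $C_R(f) \le A_R(f)$: realize $\mathcal{R}^\omega$ as the tracial ultraproduct $\prod_{N \to \omega} M_N(\C)$, which is standard \cite{Capraro2010, AP2017}; thus $\norm{[a_N]}_2 = \lim_{N\to\omega}\norm{a_N}_2$. Given $x \in (\mathcal{R}_{sa}^\omega)^m$ with $\norm{x}_\infty \le R$, pick self-adjoint representing sequences $x_j = [x_j^{(N)}]$ and replace each $x_j^{(N)}$ by its truncation under the continuous functional calculus with $t \mapsto \max(-R,\min(R,t))$; since $\norm{x_j}_\infty \le R$ this leaves the class unchanged, and now $\norm{x_j^{(N)}} \le R$ for all $N$. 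By the compatibility with ultraproducts, $f(x) = \lim_{N\to\omega} f(x^{(N)})$ (resp.\ $\norm{f(x)}_2 = \lim_{N\to\omega}\norm{f(x^{(N)})}_2$); since $|f(x^{(N)})| \le \sup_{\norm{y}_\infty \le R} |f(y)|$ over $y \in M_N(\C)_{sa}^m$, and an ultralimit of a bounded real sequence never exceeds its $\limsup$, we get $|f(x)| \le A_R(f)$ (resp.\ $\norm{f(x)}_2 \le A_R(f)$). Taking the supremum over $x$ gives $C_R(f) \le A_R(f)$, closing the cycle; the statement \eqref{eq:threeseminorms2} for $\TrP_m^1$ is proved verbatim. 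I expect this step to be the main obstacle, precisely because it requires knowing $\mathcal{R}^\omega$ is a matricial ultraproduct and requires the truncation argument to ensure the ultraproduct computation really reduces to matrix evaluations at operator norm $\le R$.

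Finally, the topology is handled formally. Each $\norm{\cdot}_{\mathcal{T}_m^0,R}$ is a seminorm, as is visible from any of the three equal expressions, and the increasing countable family $\{\norm{\cdot}_{\mathcal{T}_m^0,R}\}_{R \in \N}$ generates the same locally convex topology as the full family and is metrized by \eqref{eq:metric}. The equality \eqref{eq:threeseminorms} says exactly that the map $\TrP_m^0 \to \mathcal{F}_m^0$ sending a trace polynomial to the sequence of functions it induces is isometric for every seminorm; since $\mathcal{F}_m^0$ is complete, its closed subspace $\mathcal{T}_m^0 = \overline{\TrP_m^0}$ is a complete metric space with $\TrP_m^0$ as a dense subspace, i.e.\ the completion. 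The same reasoning applies to $\mathcal{T}_m^1 \subseteq \mathcal{F}_m^1$.
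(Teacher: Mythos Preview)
Your proof is correct and follows the same cyclic-inequality strategy $A_R \le B_R \le C_R \le A_R$ as the paper. The only packaging difference is in the step $C_R(f) \le A_R(f)$: you invoke the identification $\mathcal{R}^\omega \cong \prod_{N\to\omega} M_N(\C)$ as a black box, whereas the paper works directly with the ultrapower $\mathcal{R}^\omega = \prod_{n\to\omega}\mathcal{R}$ and uses hyperfiniteness of $\mathcal{R}$ to approximate each coordinate $x_n \in \mathcal{R}$ by some $y_n \in M_{N_n}(\C) \subset \mathcal{R}$ with $N_n \to \infty$, which is the same idea unpacked inline.
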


\begin{proof}
Fix $f$ and let $A$, $B$, and $C$ be the three quantities in \eqref{eq:threeseminorms} from left to right.  It is clear that $A \leq B$.  Moreover, $B \leq C$ because there is an isometric trace-preserving embedding of $M_N(\C)$ into $\mathcal{R}^\omega$.  To show that $C \leq A$, pick $x \in (\mathcal{R}_{sa}^\omega)^m$ with $\norm{x} \leq R$.  Then there exists $x_n \in \mathcal{R}_{sa}^m$ with $\norm{x_n} \leq R$ and $x = \lim_{n \to \omega} x_n$.  For each $n$, we can choose an $N_n$, an embedding $M_{N_n}(\C) \to \mathcal{R}$ and a $y_n \in M_{N_n}(\C)$ such that $\norm{y_n} \leq R$ and $\norm{x_n - y_n}_2 \leq 1/2^n$ and $\lim_{n \to \infty} N_n = +\infty$.  Then $x = \lim_{n \to \omega} y_n$ and $|f(x)| = \lim_{n \to \omega} |f(y_n)| \leq A$.  This shows that the three seminorms in \eqref{eq:threeseminorms} are equal, and the other claims follow because these seminorms are the same as the seminorms for $\mathcal{F}_m^0$.
\end{proof}

From this point of view, any $f \in \mathcal{T}_m^0$ has a canonical sequence that represents its equivalence class in $\mathcal{F}_m^0$, constructed as follows.  If we write $f$ as the limit of a sequence of trace polynomials $f^{(k)}$, then $f_{(k)}|_{M_N(\C)_{sa}^m}$ converges locally uniformly on $M_N(\C)_{sa}^m$ as $k \to \infty$ and the limit is independent of the approximating sequence $f^{(k)}$.  We can therefore define $f|_{M_N(\C)_{sa}^m}$ to be this limit.

Similarly, $f$ defines a function on $(\mathcal{R}_{sa}^\omega)^m$.  Moreover, if $(\mathcal{M},\tau)$ is a tracial von Neumann algebra and there is a trace-preserving embedding $\iota: \mathcal{M} \to \mathcal{R}^\omega$, then we may define $f|_{\mathcal{M}} = f \circ \iota$.  It is easy to see that this is independent of the choice of trace-preserving embedding if $f$ is a trace polynomial, and this holds for general $f \in \mathcal{T}_m^0$ or $\mathcal{T}_m^1$ by density of trace polynomials.  In this sense, $\mathcal{T}_m^0$ and $\mathcal{T}_m^1$ represent spaces of universal scalar- or operator-valued functions that can be applied to self-adjoint operators in every $\mathcal{R}_{sa}^\omega$-embeddable tracial von Neumann algebra.

In the scalar-valued case, we have yet another characterization of $\mathcal{T}_m^0$:

\begin{lemma}
Let $\Sigma_{m,\text{bdd}} = \bigcup_{R > 0} \Sigma_{m,R}$.  Let $C(\Sigma_{m,\text{bdd}})$ be the space of functions $g: \Sigma_{m,\text{bdd}} \to \C$ such that $g \in C(\Sigma_{m,R})$ for all $R$, equipped with the family of seminorms $\norm{\cdot}_{C(\Sigma_{m,R})}$.  Then $\mathcal{T}_m^0$ is isomorphic to $C(\Sigma_{m,\text{bdd}})$ as a topological vector space.
\end{lemma}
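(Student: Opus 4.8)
The plan is to realize the isomorphism concretely as the evaluation map. For a scalar-valued trace polynomial $f \in \TrP_m^0$, let $\widehat{f} \colon \Sigma_{m,\text{bdd}} \to \C$ be the function $\lambda \mapsto \lambda(f)$ obtained by substituting the formal symbol $\tau$ by $\lambda$. First I would check that $\widehat f \in C(\Sigma_{m,\text{bdd}})$: the value $\lambda(f)$ is a polynomial expression in the moments $\lambda(X_{i_1} \cdots X_{i_k})$, and these are by definition continuous on each $\Sigma_{m,R}$, so $\widehat f$ restricts to a continuous function on $\Sigma_{m,R}$ for every $R$, as required.

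Next I would identify the seminorms. For $f \in \TrP_m^0$ and $R > 0$ the claim is
\[
\norm{f}_{\mathcal{T}_m^0, R} = \norm{\widehat{f}}_{C(\Sigma_{m,R})} = \sup_{\lambda \in \Sigma_{m,R}} |\lambda(f)| .
\]
By Lemma \ref{lem:threeseminorms} the left side equals $\sup \{ |f(x)| : x \in (\mathcal{R}_{sa}^\omega)^m,\ \norm{x}_\infty \le R \}$; for any such $x$ one has $f(x) = \lambda_x(f)$ with $\lambda_x \in \Sigma_{m,R}$, which gives the inequality $\le$. For the reverse inequality I would use that every $\lambda \in \Sigma_{m,R}$ is a limit in moments of matricial laws $\lambda_{x}$ with $\norm{x}_\infty \le R$ (equivalently, is the law of a self-adjoint tuple with $\norm{x}_\infty \le R$ in a tracial von Neumann algebra admitting a trace-preserving embedding into $\mathcal{R}^\omega$); since $\mu \mapsto \mu(f)$ is continuous on $\Sigma_{m,R}$, the quantity $|\lambda(f)|$ is then bounded by $\norm{f}_{\mathcal{T}_m^0,R}$ via Lemma \ref{lem:threeseminorms} again. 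Granted this identity, $f \mapsto \widehat f$ is a linear map that is isometric for each of the defining seminorms, so by density of $\TrP_m^0$ in $\mathcal{T}_m^0$ and completeness of both spaces it extends uniquely to a linear homeomorphic embedding $\Phi \colon \mathcal{T}_m^0 \hookrightarrow C(\Sigma_{m,\text{bdd}})$.

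It then remains to prove that $\Phi$ is surjective, which I would do by Stone--Weierstrass together with a diagonal argument. For each $R$, the image algebra $\{\widehat f : f \in \TrP_m^0\}$ is a unital subalgebra of $C(\Sigma_{m,R})$ that is closed under complex conjugation (since $\lambda(f^*) = \overline{\lambda(f)}$ for the positive functional $\lambda$) and separates points, because the moment functions $\lambda \mapsto \lambda(p)$ for $p \in \NCP_m$ lie in it and distinct laws differ on some moment. As $\Sigma_{m,R}$ is compact, Stone--Weierstrass gives density of $\{\widehat f\}$ in $C(\Sigma_{m,R})$. Given $g \in C(\Sigma_{m,\text{bdd}})$, I would pick $f_n \in \TrP_m^0$ with $\norm{g - \widehat{f_n}}_{C(\Sigma_{m,n})} < 1/n$; then for each fixed $R$, once $n,k \ge R$ the seminorm identity gives $\norm{f_n - f_k}_{\mathcal{T}_m^0,R} = \norm{\widehat{f_n} - \widehat{f_k}}_{C(\Sigma_{m,R})} \le 1/n + 1/k$, so $(f_n)$ is Cauchy in the complete space $\mathcal{T}_m^0$; its limit $f$ satisfies $\Phi f = \lim_n \widehat{f_n} = g$ uniformly on each $\Sigma_{m,R}$, hence on $\Sigma_{m,\text{bdd}}$.

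The one genuinely substantive ingredient, and the step I expect to require the most care, is the reverse inequality in the seminorm identity: a priori $\norm{\widehat f}_{C(\Sigma_{m,R})}$ is a supremum over \emph{all} laws bounded by $R$, whereas $\norm{f}_{\mathcal{T}_m^0,R}$ by Lemma \ref{lem:threeseminorms} only sees matricial --- equivalently $\mathcal{R}^\omega$-embeddable --- laws, so the content is that these two suprema agree. Everything else --- continuity of $\widehat f$, the easy inequality, and the Stone--Weierstrass surjectivity --- is routine. I would also note that no analogous description of $\mathcal{T}_m^1$ by a space of scalar-valued continuous functions is to be expected, since an operator-valued trace polynomial depends on more than the non-commutative law of its argument; this is why the present characterization is restricted to the scalar-valued case.
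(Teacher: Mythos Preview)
Your approach matches the paper's: define the evaluation map $f \mapsto \widehat f$ on trace polynomials, assert the seminorm identity $\norm{f}_{\mathcal{T}_m^0,R} = \norm{\widehat f}_{C(\Sigma_{m,R})}$, extend to the completions, and obtain surjectivity by Stone--Weierstrass together with a diagonal argument. You have correctly isolated the one substantive step, which the paper states without comment: by Lemma~\ref{lem:threeseminorms} the left-hand side is a supremum only over matricial (equivalently $\mathcal{R}^\omega$-embeddable) laws, whereas the right-hand side ranges over all of $\Sigma_{m,R}$.

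The ingredient you propose for the reverse inequality --- that every $\lambda \in \Sigma_{m,R}$ is a limit in moments of matricial laws, equivalently that every such $\lambda$ is realized in some $\mathcal{R}^\omega$-embeddable tracial von Neumann algebra --- is precisely the Connes embedding problem. The paper's proof has the same unacknowledged gap. In light of the negative resolution of Connes embedding, the seminorm identity cannot hold in general; without it the evaluation map does not extend continuously from $\mathcal{T}_m^0$ into $C(\Sigma_{m,\text{bdd}})$, and what one actually obtains is a surjective but non-injective restriction map in the opposite direction. The lemma becomes correct, and both your argument and the paper's go through verbatim, once $\Sigma_{m,R}$ is replaced throughout by the closure in $\Sigma_{m,R}$ of the set of matricial laws. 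Your closing remark about why no analogous description is available for $\mathcal{T}_m^1$ is apt.
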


\begin{proof}
For a scalar-valued trace polynomial $f$, the value $f(x)$ only depends on the law of $x$, so that $f(x) = g(\lambda_x)$ for some function $g: \Sigma_m \to \R$ such that $g \in C(\Sigma_{m,R})$ for all $R$, and we have
\[
\norm{f}_{\mathcal{T}_m^0,R} = \norm{g}_{C(\Sigma_{m,R})}.
\]
Passing to the completion with respect to the metric defined as in \eqref{eq:metric}, we have a map $\iota: \mathcal{T}_m^0 \to C(\Sigma_{m,\text{bdd}})$ which is an isomorphism onto its image.  To show that $\iota$ is surjective, note the algebra of trace polynomials is self-adjoint and separates points in $\Sigma_{m,R}$, and hence by the Stone-Weierstrass theorem, trace polynomials are dense in $C(\Sigma_{m,R})$ for every $R$.  Therefore, if $g \in C(\Sigma_{m,R})$, we can choose a trace polynomial $g^{(k)}(\lambda_x) = f^{(k)}(x)$ such that $\norm{g - g^{(k)}}_{C(\Sigma_{m,k})} \leq 1/2^k$.  Then $f^{(k)}$ converges to some $f$ in $\mathcal{T}_m^0$, and we have $\iota(f) = g$.
\end{proof}

\subsection{Convex Differentiable Functions}

Now we are ready to characterize the type of convex functions which occur in Theorem \ref{thm:mainthm}.  First of all, we let $\mathcal{T}_m^{0,1}$ be the completion of the trace polynomials with respect to the metric
\[
d(f,g) = \sum_{n=1}^\infty \frac{1}{2^n} [\min(1,\norm{f - g}_{\mathcal{T}_m^0,n}) + \min(1, \norm{Df - Dg}_{(\mathcal{T}_m^1)^m,n})].
\]
Observe that if $f \in \mathcal{T}_m^{0,1}$ and $f^{(k)}$ is a sequence of trace polynomials converging to $f$ in $\mathcal{T}_m^{0,1}$ as $k \to \infty$, then $Df^{(k)}$ converges in $(\mathcal{T}_m^1)^m$ and the limit is independent of the choice of approximating sequence.  We denote this limit by $Df$.

\begin{remark}
If $f$ and $f^{(k)}$ are as above, then since $Df^{(k)}$ is a tuple of trace polynomials, it is continuous on the operator norm ball $\{y \in M_N(\C)_{sa}^m: \norm{y}_\infty \leq R\}$ with a modulus of continuity that only depends on $R$ and does not depend on $N$.  Because $Df^{(k)} \to Df$ uniformly on the operator-norm ball (with rate of convergence independent of $N$), then $Df$ is also continuous on this operator-norm ball with modulus of continuity independent of $N$.

It follows that for every $x, y \in M_N(\C)_{sa}^m$ with $\norm{x}, \norm{y} \leq R$, we have
\[
f(y) - f(x) = \ip{Df(x), y - x}_2 + o(\norm{y - x}_2),
\]
where the error estimate only depends on $R$ and not on $N$.  In particular, this shows $Df$ is uniquely determined by $f$.  Also, it shows that $Df|_{M_N(\C)_{sa}^m}$ is equal to the normalized gradient of $f|_{M_N(\C)_{sa}^m}$ in the ordinary sense of functions on $M_N(\C)_{sa}^m \cong \R^{mN^2}$.
\end{remark}

\begin{lemma} \label{lem:convexityequivalence}
Let $f \in \mathcal{T}_m^{0,1}$ be real-valued.  The following are equivalent:
\begin{enumerate}
	\item The function $f|_{M_N(\C)_{sa}^m}$ is convex for every $N$.
	\item The function $f$ is convex as a function on $(\mathcal{R}_{sa}^\omega)^m$.
	\item There exists a sequence of differentiable convex functions $V_N: M_N(\C)_{sa}^m \to \R$ such that $[V_\bullet] = f$ and $[DV_\bullet] = Df$.  (Here $DV_\bullet$ denotes the sequence $(DV_N)_{N \in \N}$, where $D$ is the normalized gradient understood in the standard sense of calculus.)
\end{enumerate}
\end{lemma}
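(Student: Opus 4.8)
The plan is to prove the cycle of implications $(1) \Rightarrow (3) \Rightarrow (2) \Rightarrow (1)$, the only implication with real content being $(3) \Rightarrow (2)$.

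\emph{$(1) \Rightarrow (3)$.} This is essentially a restatement of the Remark preceding the lemma: for $f \in \mathcal{T}_m^{0,1}$ the restriction $f|_{M_N(\C)_{sa}^m}$ is (Fr\'echet) differentiable at every point with normalized gradient $Df|_{M_N(\C)_{sa}^m}$, the sequence $\{f|_{M_N(\C)_{sa}^m}\}_N$ represents the class of $f$ in $\mathcal{F}_m^0$, and $\{Df|_{M_N(\C)_{sa}^m}\}_N$ represents the class of $Df$. Granting (1), each $f|_{M_N(\C)_{sa}^m}$ is in addition convex, so one simply takes $V_N := f|_{M_N(\C)_{sa}^m}$; then $V_N$ is differentiable and convex, $[V_\bullet] = f$, and $[DV_\bullet] = Df$ tautologically.

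\emph{$(3) \Rightarrow (2)$.} Here I would reuse the approximation technique from the proof of Lemma \ref{lem:threeseminorms}. Fix $x, x' \in (\mathcal{R}_{sa}^\omega)^m$ with $\norm{x}_\infty, \norm{x'}_\infty \leq R$ and $t \in [0,1]$; the goal is $f(tx + (1-t)x') \leq t\, f(x) + (1-t)\, f(x')$. Since the operator-norm ball of radius $R$ in $\mathcal{R}^\omega$ is the ultraproduct of the corresponding balls in $\mathcal{R}$, and $\mathcal{R}$ is a $\norm{\cdot}_2$-closure of an increasing union of matrix subalgebras, one can write $x = \lim_{n \to \omega} y_n$ and $x' = \lim_{n \to \omega} y_n'$ with $y_n, y_n'$ lying in a \emph{common} matrix algebra $M_{N_n}(\C)_{sa}^m$ for each $n$, with $\norm{y_n}_\infty, \norm{y_n'}_\infty \leq R$ and $N_n \to \infty$; the operator-norm bound is imposed by truncating via the $1$-Lipschitz functional calculus $z \mapsto \max(-R, \min(R, z))$, which is non-expansive for $\norm{\cdot}_2$ (cf.\ Proposition \ref{prop:Peller}), and which does not move the already-bounded elements it is approximating. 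By Lemma \ref{lem:threeseminorms}, $f$ is a locally uniform limit of trace polynomials on operator-norm balls, hence $\norm{\cdot}_2$-continuous there, so $f(x) = \lim_{n\to\omega} f(y_n)$, and likewise for $x'$ and for $tx + (1-t)x' = \lim_{n\to\omega}(t y_n + (1-t) y_n')$. Since $[V_\bullet] = f$, the quantity $\delta_N := \sup_{\norm{z}_\infty \leq R}|V_N(z) - f(z)|$ tends to $0$, so $|f(y_n) - V_{N_n}(y_n)| \leq \delta_{N_n} \to 0$ and the same for $y_n'$ and $t y_n + (1-t)y_n'$. Applying convexity of $V_{N_n}$ to $t y_n + (1-t) y_n'$ and passing to the limit along $\omega$ then gives the desired inequality.

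\emph{$(2) \Rightarrow (1)$.} Fix $N$ and a trace-preserving $*$-embedding $\iota_N \colon M_N(\C) \to \mathcal{R}^\omega$; it induces an $\R$-linear isometry $(M_N(\C)_{sa})^m \to (\mathcal{R}_{sa}^\omega)^m$. Because trace polynomials commute with trace-preserving embeddings and $f$ is their locally uniform limit, $f \circ \iota_N = f|_{M_N(\C)_{sa}^m}$; precomposing the convex function $f$ (convexity being hypothesis (2)) with the affine map $\iota_N$ shows $f|_{M_N(\C)_{sa}^m}$ is convex. The step I expect to be the main obstacle is $(3) \Rightarrow (2)$: one must carry out the finite simultaneous approximation of bounded elements of $\mathcal{R}^\omega$ by a single matrix algebra of dimension $N_n \to \infty$, and then correctly bookkeep that $[V_\bullet] = f$ is only an asymptotic ($\limsup_N$) identity, which is precisely why $N_n \to \infty$ is forced. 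All the required mechanics, however, are already present in the proof of Lemma \ref{lem:threeseminorms}, so this is more a matter of careful assembly than of a genuinely new difficulty.
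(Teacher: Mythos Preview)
Your proof is correct, but it routes differently from the paper. The paper proves $(1)\Leftrightarrow(2)$ by a one-line reference to the method of Lemma~\ref{lem:threeseminorms}, takes $(1)\Rightarrow(3)$ trivially (as you do), and then proves $(3)\Rightarrow(1)$ directly: it fixes $N$, uses the gradient monotonicity criterion $\ip{Df(x)-Df(y),x-y}_2\geq 0$, and verifies this by embedding $x,y\in M_N(\C)_{sa}^m$ into $M_{Nk}(\C)_{sa}^m$ via $x\mapsto x\otimes I_k$, where for large $k$ the hypothesis $[DV_\bullet]=Df$ lets one replace $Df$ by $DV_{Nk}$ and invoke convexity of $V_{Nk}$.

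Your route $(3)\Rightarrow(2)$ via simultaneous matrix approximation in $\mathcal{R}^\omega$ is just as clean, and in one respect sharper: you only use $[V_\bullet]=f$ and the bare convexity inequality, never the gradient hypothesis $[DV_\bullet]=Df$, so your argument incidentally shows that this part of condition~(3) is superfluous for the equivalence. The paper's tensor-with-identity trick is more self-contained (no ultrafilter bookkeeping) but pays for this by needing the derivative condition. Both arguments are short; yours makes the role of Lemma~\ref{lem:threeseminorms} explicit rather than leaving it as a reference.
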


\begin{proof}
The equivalence between (1) and (2) follows from similar argument to the proof of Lemma \ref{lem:threeseminorms}.

(1) $\implies$ (3) because we can take $V_N = f|_{M_N(\C)_{sa}^m}$.

(3) $\implies$ (1).  Fix $N$.  To prove that $f|_{M_N(\C)_{sa}^m}$ is convex, it suffices to show that $\ip{Df(x) - Df(y), x - y}_2 \geq 0$ for every $x, y \in M_N(\C)_{sa}^m$.  For $k \in \N$, consider $x \otimes I_k$ and $y \otimes I_k$ in $M_{Nk}(\C)_{sa}^m$.  Then as $k \to \infty$,
\[
\ip{Df(x) - Df(y), x - y}_2 = \ip{Df(x \otimes I_k) - Df(y \otimes I_k), x \otimes I_k - y \otimes O_k}_2;
\]
meanwhile, if $R = \max(\norm{x}, \norm{y})$, then since $DV_N - Df \to 0$ in $\norm{\cdot}_2$ uniformly on the operator norm ball of radius $R$, we have as $k \to \infty$ that
\[
\ip{Df(x \otimes I_k) - Df(y \otimes I_k), x \otimes I_k - y \otimes I_k}_2 - \ip{DV_{Nk}(x \otimes I_k) - DV_{Nk}(y \otimes I_k), x \otimes I_k - y \otimes I_k}_2 \to 0.
\]
Because $V_{Nk}$ is convex, the second inner product is $\geq 0$ and therefore $\ip{Df(x) - Df(y), x - y}_2 \geq 0$.
\end{proof}

Let $\mathcal{E}_m(c,C)^{0,1}$ denote the class of $V \in \mathcal{T}_m^{0,1}$ such that $V(x) - (c/2) \norm{x}_2^2$ is convex and $V(x) - (C/2) \norm{x}_2^2$ is concave.  If $0 < c < C$ and if $V \in \mathcal{E}_m(c,C)^{0,1}$, if $V_N = V|_{M_N(\C)_{sa}^m}$, then the sequence of normalized gradients $DV_N$ is asymptotically approximable by trace polynomials.  If we let $\mu_N$ be the corresponding measure, then Theorem \ref{thm:convergenceandconcentration} (the hypothesis \eqref{eq:operatornormhypothesis} being trivially satisfied by unitary invariance) implies that $\mu_N$ concentrates around a non-commutative law $\lambda_V$, which we will call the \emph{free Gibbs state for the potential $V$}.

Furthermore, the free Gibbs state $\lambda_V$ is independent of the choice of representative sequence in the following sense.  Let $\mu_N$ be the measure on $M_N(\C)_{sa}^m$ given by the potential $V_N = V|_{M_N(\C)_{sa}^m}$.  Let $W_N$ be another sequence of potentials satisfying the hypotheses of Theorem \ref{thm:convergenceandconcentration} such that $[W_\bullet] = V$ in $\mathcal{T}_m^{0,1}$, and let $\nu_N$ be the sequence of random matrix measures given by $W_N$.  By Theorem \ref{thm:convergenceandconcentration} $\nu_N$ concentrates around some non-commutative law $\lambda$.  We claim that $\lambda = \lambda_V$.  To prove this, consider the sequence $\tilde{V}_N$ which equals $V_N$ for odd $N$ and $W_N$ for even $N$.  Then $[\tilde{V}_\bullet] = V$ in $\mathcal{T}_m^{0,1}$, which means that $\{D\tilde{V}_N\}_{N \in \N}$ is asymptotically approximable by trace polynomials.  Therefore,
\[
\lambda_V(p) = \lim_{\substack{N \text{ even} \\ N \to \infty}} \int \tau_N(p)\,d\mu_N = \lim_{\substack{N \text{ odd} \\ N \to \infty}} \int \tau_N(p)\,d\nu_N = \lambda(p).
\]

In fact, Lemma \ref{lem:convexityequivalence} implies that the non-commutative laws $\lambda$ which occur as limits in Theorem \ref{thm:convergenceandconcentration} are precisely the free Gibbs laws for potentials $V \in \mathcal{E}_m(c,C)^{0,1}$.  In particular, Theorem \ref{thm:mainthm} implies that $\chi = \underline{\chi} = \chi^*$ for every such law.

\begin{remark}
We have \emph{not} proved that the law $\lambda_V$ is uniquely characterized by the Schwinger-Dyson equation $\lambda[DV(X) f(X)] = \lambda \otimes \lambda[\mathcal{D}f(X)]$, although something like this is implied by \cite{Dabrowski2017}.  One could hope to prove this by letting the semigroup $T_t^V$ act on an abstract space of Lipschitz functions which is the completion of trace polynomials (where the metric now allows $x$ to come from any tracial von Neumann algebra rather than only the $\mathcal{R}^\omega$-embeddable algebras).  We would want to show that if $\lambda$ satisfies the Schwinger-Dyson equation, then $\lambda(T_t^Vu) = \lambda(u)$, but to justify the computation, we need to show more regularity of $T_t^Vu$ than we have done in this paper.  In the SDE approach as well, the proof that $\lambda_V$ is characterized by Schwinger-Dyson is subtle when we do not assume more regularity for $V$ (see \cite{Dabrowski2010}, \cite{Dabrowski2017}).
\end{remark}

\subsection{Examples of Convex Potentials}

A natural class of examples of functions in $\mathcal{E}_m(c,C)^{0,1}$ are those of the form
\[
V(x) = \frac{1}{2} \norm{x}_2^2 + \epsilon f(u)
\]
where $\epsilon$ is a small positive parameter,
\[
u = (u_1,\dots,u_m), \qquad u_j = \frac{x_j + 4i}{x_j - 4i}.
\]
and $f$ is a real-valued trace polynomial in $u$ and $u^*$.  Computations similar to those of \S \ref{subsec:TPdiff} show that the normalized Hessian of $\Jac(Df(u(x)))$ with respect to $x$ is bounded uniformly in $N$.  Therefore, $V \in \mathcal{E}_m(1/2,3/2)^{0,1}$ for sufficiently small $\epsilon$.  Similar examples are described in the introduction of \cite{Dabrowski2017}.  More generally, we can replace the trace polynomial $f(u)$ by a power series where the individual terms are trace monomials in $u$.

The class $\mathcal{E}_m(c,C)^{0,1}$ does not include trace polynomials in $x$ because if $g$ is a trace polynomial of degree $\geq 3$, then we cannot have $g(x)$ convex and $g(x) - (C/2) \norm{x}_2^2$ concave (globally).  However, if we consider a potential which is a small perturbation of a quadratic (as considered in \cite{GMS2006},  \cite{GS2014}), we can fix this problem by introducing an operator-norm cut-off as follows.

Let $f$ be a scalar-valued trace polynomial and let us denote
\begin{equation} \label{eq:exampleV}
V^{(\epsilon)}(x) = \norm{x}_2^2 + \epsilon f(x).
\end{equation}
Let $\phi: \R \to \R$ be a $C_c^\infty$ function such that $\phi(t) = t$ for $|t| \leq R$ and $\phi(t) = 0$ for $|t| \geq 2R$.  Let $\Phi: M_N(\C)_{sa}^m \to M_N(\C)_{sa}^m$ be given by $\Phi_N(x) = (\phi(x_1),\dots,\phi(x_m))$.
\begin{equation} \label{eq:exampleV2}
\tilde{V}_N^{(\epsilon)}(x) = \norm{x}_2^2 + \epsilon f_N(\Phi_N(x)).
\end{equation}
We will prove the following.

\begin{proposition} \label{prop:perturbativeexample0}
Let $\tilde{V}_N^{(\epsilon)}$ be given as above.  Then $[\tilde{V}_\bullet^{(\epsilon)}] \in \mathcal{T}_m^{0,1}$.  Moreover, given $\delta > 0$, we have $[\tilde{V}_\bullet^{(\epsilon)}] \in \mathcal{E}_m(1-\delta,1+\delta)^{0,1}$ for sufficiently small $\epsilon$ (depending on $f$, $R$, and $\delta$).
\end{proposition}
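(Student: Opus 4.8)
The plan is to prove the two assertions---membership in $\mathcal{T}_m^{0,1}$ and the uniform convexity/semi-concavity with small constants---by analyzing the chain rule for the composition $x \mapsto f_N(\Phi_N(x))$, where $\Phi_N(x) = (\phi(x_1),\dots,\phi(x_m))$ is the entrywise functional calculus by the cut-off $\phi \in C_c^\infty$. We take $f$ self-adjoint in $\TrP_m^0$ so that $\tilde V_N^{(\epsilon)}$ is real-valued. The normalized gradient satisfies $D_j[f\circ\Phi_N](x) = \mathcal{J}_\phi^{x_j}\bigl[(D_jf)(\Phi_N(x))\bigr]$, where $\mathcal{J}_\psi^{a}$ is the Fréchet derivative at $a\in M_N(\C)_{sa}$ of $y\mapsto\psi(y)$, i.e.\ the self-adjoint operator on $(M_N(\C)_{sa},\norm{\cdot}_2)$ given by the Schur multiplier whose symbol is the first divided difference $\psi^{[1]}(s,t) = (\psi(s)-\psi(t))/(s-t)$, with $\psi^{[1]}(s,s)=\psi'(s)$, evaluated in the eigenbasis of $a$. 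The two facts that drive everything are: (i) $\norm{\mathcal{J}_\psi^{a}}_{2\to 2}\le\norm{\psi^{[1]}}_{L^\infty}$ uniformly in $N$ and $a$, which is immediate since a Schur multiplier with bounded symbol is bounded on the Hilbert--Schmidt class with the same bound, and is in the spirit of Proposition~\ref{prop:Peller}; and (ii) $a\mapsto\mathcal{J}_\psi^{a}$ is Lipschitz from $(M_N(\C)_{sa},\norm{\cdot}_2)$ into the $2\to2$ operators, restricted to a ball $\norm{a}\le R'$, with constant depending only on $R'$ and $\norm{\psi^{[2]}}_{L^\infty}$ and not on $N$; this follows from the integral representation of $\tfrac{d}{dr}\mathcal{J}_\psi^{a+rb}$ by a triple operator integral with symbol $\psi^{[2]}$ together with the mixed Hilbert--Schmidt/operator-norm bound for such integrals, as in the computations of \S\ref{subsec:TPdiff}. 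Both are available because $\phi\in C_c^\infty$ makes $\phi^{[1]},\phi^{[2]}$ bounded.

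For $[\tilde V_\bullet^{(\epsilon)}]\in\mathcal{T}_m^{0,1}$: I would choose polynomials $\widehat\phi^{(k)}$ with $\widehat\phi^{(k)}\to\phi$ in $C^1$ uniformly on each compact interval (approximate $\phi'$ uniformly and antidifferentiate), and set $g^{(k)}(x) = \norm{x}_2^2 + \epsilon\,f(\widehat\Phi^{(k)}(x))$ with $\widehat\Phi^{(k)}(x)=(\widehat\phi^{(k)}(x_1),\dots,\widehat\phi^{(k)}(x_m))$, a genuine trace polynomial. On $\norm{x}_\infty\le R'$ the spectral mapping theorem gives $\norm{\widehat\Phi^{(k)}(x)-\Phi_N(x)}_\infty\le\sup_{[-R',R']}|\widehat\phi^{(k)}-\phi|\to 0$, and since the trace polynomials $f$ and $D_jf$ are $\norm{\cdot}_2$-Lipschitz on operator-norm balls with constants independent of $N$, we get $|f(\widehat\Phi^{(k)}(x))-f(\Phi_N(x))|\to 0$ uniformly in $N$. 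For the gradients I would split
\[ D_j[f\circ\widehat\Phi^{(k)}](x) - D_j[f\circ\Phi_N](x) = \mathcal{J}_{\widehat\phi^{(k)}}^{x_j}\bigl[(D_jf)(\widehat\Phi^{(k)}(x)) - (D_jf)(\Phi_N(x))\bigr] + \bigl(\mathcal{J}_{\widehat\phi^{(k)}}^{x_j} - \mathcal{J}_\phi^{x_j}\bigr)\bigl[(D_jf)(\Phi_N(x))\bigr], \]
bounding the first term with (i) and the Lipschitz bound for the trace polynomial $D_jf$, and the second with $\norm{\mathcal{J}_{\widehat\phi^{(k)}}^{x_j} - \mathcal{J}_\phi^{x_j}}_{2\to 2}\le\norm{(\widehat\phi^{(k)})^{[1]}-\phi^{[1]}}_{L^\infty([-R',R']^2)}$, which tends to $0$ because $C^1$ convergence of the scalar functions gives uniform convergence of first divided differences (write $\psi^{[1]}(s,t)=\int_0^1\psi'(t+r(s-t))\,dr$), together with the uniform bound on $\norm{(D_jf)(\Phi_N(x))}_2$ on operator-norm balls. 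Thus $g^{(k)}|_{M_N}\to\tilde V_N^{(\epsilon)}$ and $Dg^{(k)}|_{M_N}\to D\tilde V_N^{(\epsilon)}$ uniformly on operator-norm balls, uniformly in $N$, so $g^{(k)}\to[\tilde V_\bullet^{(\epsilon)}]$ in $\mathcal{T}_m^{0,1}$ and its abstract gradient agrees with the classical one.

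For the convexity statement: the key observation is that $\norm{\Phi_N(x)}\le 2R$ for \emph{every} $x$, so fact (ii) on the ball of radius $2R$, together with the uniform Lipschitz constant and bounded range of $x\mapsto(D_jf)(\Phi_N(x))$ (Proposition~\ref{prop:Peller} makes $\Phi_N$ itself $\norm{\cdot}_2$-Lipschitz with constant independent of $N$), shows that $x\mapsto D_j[f\circ\Phi_N](x)$ is globally Lipschitz with respect to $\norm{\cdot}_2$, with a constant $M$ depending only on $f$ and $R$; equivalently, the normalized Hessian of $f_N(\Phi_N(x))$, as an operator on $(M_N(\C)_{sa}^m,\norm{\cdot}_2)$, is bounded by $M$ uniformly in $N$ and $x$. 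Hence for $\epsilon<\delta/M$ the normalized Hessian of $\tilde V_N^{(\epsilon)}$ differs in operator norm by at most $\delta$ from the constant, scalar Hessian of its quadratic part, which gives the asserted convexity and semi-concavity of $\tilde V_N^{(\epsilon)}$ for every $N$. Combined with the previous paragraph and Lemma~\ref{lem:convexityequivalence} (and its evident analogue for semi-concavity), this yields $[\tilde V_\bullet^{(\epsilon)}]\in\mathcal{E}_m(1-\delta,1+\delta)^{0,1}$.

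The main obstacle will be facts (i) and especially (ii): making rigorous, with constants uniform in $N$, that the Fréchet derivative of a smooth functional calculus is bounded on the Hilbert--Schmidt class and depends Lipschitz-continuously on its operator argument (via the order-two divided difference $\phi^{[2]}$). This is precisely the kind of double/triple operator integral estimate underlying the earlier remark that ``the normalized Hessian of $\Jac(Df(u(x)))$ is bounded uniformly in $N$'', and one could alternatively package the whole argument through that Hessian computation done directly for the cut-off functional calculus; once (i) and (ii) are in hand the remainder is routine bookkeeping with trace polynomials, the spectral mapping theorem, and divided differences.
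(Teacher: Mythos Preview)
Your approach is essentially the paper's: compute $D_j[f\circ\Phi_N](x)=\mathcal{D}\phi(x_j)\#D_jf(\Phi_N(x))$ via the chain rule and self-adjointness of $\mathcal{J}_\phi^{x_j}$, approximate $\phi$ by polynomials on compacts to produce approximating trace polynomials, split the gradient difference into the same two pieces you wrote, and for the convexity part show $D[f\circ\Phi_N]$ is globally $\norm{\cdot}_2$-Lipschitz using that $\Phi_N$ has range in the operator-norm ball of radius $2R$. The paper packages the operator-integral estimates through Peller's bound on $\mathcal{D}^n\phi$ in the integral projective tensor product $\mathcal{B}(\R)^{\widehat\otimes_i(n+1)}$ (Proposition~\ref{prop:Peller}) rather than your Schur-multiplier language, but the skeleton is identical.

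One refinement on your fact~(ii): the constant there cannot depend only on $\norm{\phi^{[2]}}_{L^\infty}$. A merely bounded symbol does not give an $L^2\times L^2\to L^2$ (nor even $L^2\times L^\infty\to L^2$) bound for a triple operator integral; what works is the H\"older estimate~\eqref{eq:Holderestimate} once $\mathcal{D}^2\phi\in\mathcal{B}(\R)^{\widehat\otimes_i 3}$, and that finiteness is exactly Peller's Proposition~\ref{prop:Peller} for $\phi\in C_c^\infty$. This yields $\norm{(\mathcal{J}_\phi^a-\mathcal{J}_\phi^b)[y]}_2\le K\norm{a-b}_2\norm{y}_\infty$ (not $\norm{y}_2$), which is precisely what your application needs since $y=D_jf(\Phi_N(x))$ is uniformly operator-norm bounded. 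The paper's own displayed inequality has the same $\norm{y}_2$ imprecision, so you are in good company; just be aware that the elementary Hilbert--Schmidt Schur-multiplier bound that handles fact~(i) will not carry fact~(ii), and you will need the projective-tensor machinery (or an equivalent Fourier decomposition of $\phi$) at that step.
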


As a consequence, we will deduce the following result about measures defined by $V^{(\epsilon)}$ restricted to an operator-norm ball (without the smooth cut-off $\Phi$).

\begin{proposition} \label{prop:perturbativeexample}
Let $2 < R' < R$, let $f$ be a trace polynomial, and let $V^{(\epsilon)}$ be as in \eqref{eq:exampleV}.  Let
\[
d\mu_N^{(\epsilon)}(x) = \frac{1}{Z_N} \exp(-N^2 V_N^{(\epsilon)}(x)) \mathbf{1}_{\norm{x} \leq R} \,dx.
\]
For sufficiently small $\epsilon$ (depending on $f$, $R$, and $R'$), we have the following.  The measure $\mu_N^{(\epsilon)}$ exhibits exponential concentration around a non-commutative law $\lambda^{(\epsilon)} \in \Sigma_{m,R'}$.  If $X \in (\mathcal{M},\tau)$ is a non-commutative $m$-tuple realizing the law $\lambda^{(\epsilon)}$, then the conjugate variable is given by $DV^{(\epsilon)}(X)$.  Moreover, we have
\[
\chi(\lambda^{(\epsilon)}) = \underline{\chi}(\lambda^{(\epsilon)}) = \chi^*(\lambda^{(\epsilon)}) = \lim_{N \to \infty} \left( \frac{1}{N^2} h(\mu_N^{(\epsilon)}) + \frac{m}{2} \log N \right).
\]
\end{proposition}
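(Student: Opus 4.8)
The plan is to reduce the whole statement to Theorems \ref{thm:convergenceandconcentration} and \ref{thm:mainthm} applied to the \emph{globally defined} smoothed potential $\tilde V_N^{(\epsilon)}$ of \eqref{eq:exampleV2}, and then transfer the conclusions back to $\mu_N^{(\epsilon)}$ using the observation that $\mu_N^{(\epsilon)}$ is precisely the measure $\tilde\mu_N^{(\epsilon)} := \tilde Z_N^{-1} e^{-N^2 \tilde V_N^{(\epsilon)}}\,dx$ conditioned on the operator-norm ball $\{\norm{x}_\infty \le R\}$. So the first step is to fix $\delta \in (0,1)$ small (to be chosen in terms of $R'$) and take $\epsilon$ small enough that Proposition \ref{prop:perturbativeexample0} applies, giving $\tilde V_N^{(\epsilon)} \in \mathcal{E}(1-\delta,1+\delta)$ for every $N$ and $\{D\tilde V_N^{(\epsilon)}\}$ asymptotically approximable by trace polynomials.

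Since $\tilde\mu_N^{(\epsilon)}$ is unitarily invariant, hypothesis \eqref{eq:operatornormhypothesis} holds trivially, and Theorem \ref{thm:convergenceandconcentration} produces a law $\lambda^{(\epsilon)}$ with $\int \tau_N(p)\,d\tilde\mu_N^{(\epsilon)} \to \lambda^{(\epsilon)}(p)$, exponential concentration around $\lambda^{(\epsilon)}$, and the operator-norm tail bound. The decisive quantitative input is the estimate of Theorem \ref{thm:convergenceandconcentration}(1): because $\Phi_N(0)=0$ with the Jacobian of $\Phi_N$ at $0$ equal to the identity, $\norm{D\tilde V_N^{(\epsilon)}(0)}_2 = \epsilon \norm{Df_N(0)}_2 = O(\epsilon)$ uniformly in $N$, and the unitary-invariance term vanishes, so
\[
R_* := \limsup_{N\to\infty} \max_j \int \norm{x_j}\,d\tilde\mu_N^{(\epsilon)} \ \le\ \frac{2}{(1-\delta)^{1/2}} + O(\delta) + O(\epsilon),
\]
which is $< R'$ once $\delta,\epsilon$ are small, precisely because $R' > 2$. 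Hence $\lambda^{(\epsilon)} \in \Sigma_{m,R'}$, and since $R > R' > R_*$ the tail bound forces $\tilde\mu_N^{(\epsilon)}(\norm{x}_\infty \le R) \to 1$ exponentially fast.

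Because $\Phi_N(x)=x$ whenever $\norm{x}_\infty \le R$, the potentials $\tilde V_N^{(\epsilon)}$ and $V_N^{(\epsilon)}$ agree on that ball, so $\mu_N^{(\epsilon)} = \tilde\mu_N^{(\epsilon)}(\,\cdot \mid \norm{x}_\infty \le R)$; as the conditioning event has probability tending to $1$, the total variation distance between $\mu_N^{(\epsilon)}$ and $\tilde\mu_N^{(\epsilon)}$ tends to $0$. From this one reads off that $\mu_N^{(\epsilon)}$ concentrates around the same law $\lambda^{(\epsilon)}$, both in probability and in the exponential sense of Theorem \ref{thm:convergenceandconcentration}(3); and that the normalized entropies agree in the limit, since the ratio of partition functions is exactly $\tilde\mu_N^{(\epsilon)}(\norm{x}_\infty \le R)\to 1$, while $\int V_N^{(\epsilon)}\,d\mu_N^{(\epsilon)}$ and $\int \tilde V_N^{(\epsilon)}\,d\tilde\mu_N^{(\epsilon)}$ differ only by the $\tilde\mu_N^{(\epsilon)}$-mass outside the ball (which vanishes by the subgaussian moment bounds of Theorem \ref{thm:subgaussian} and Corollary \ref{cor:operatornormtail}) plus a conditioning error bounded by $\norm{\tilde V_N^{(\epsilon)}}_{L^\infty(\norm{x}_\infty\le R)} = O(1)$ times the total-variation error. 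Hence $\tfrac1{N^2}h(\mu_N^{(\epsilon)}) - \tfrac1{N^2}h(\tilde\mu_N^{(\epsilon)}) \to 0$.

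Finally I would invoke Proposition \ref{prop:convergenceofFisherinfo} and Theorem \ref{thm:mainthm} for $\tilde\mu_N^{(\epsilon)}$: hypotheses (A), (C), (D) of Theorem \ref{thm:mainthm} come from Proposition \ref{prop:perturbativeexample0}, the subgaussian tails, and the definition of asymptotic approximability, and the quadratic growth bound needed for Proposition \ref{prop:convergenceofFisherinfo} holds because $D(f\circ\Phi)_N$ is globally bounded (it is $Df$ composed with the bounded, globally Lipschitz $\Phi_N$). Proposition \ref{prop:convergenceofFisherinfo} then gives finite free Fisher information for $\lambda^{(\epsilon)}$, and taking the constant approximating sequence $f^{(k)} = DV^{(\epsilon)}$ — a trace polynomial agreeing with $D\tilde V_N^{(\epsilon)}$ on $\{\norm{x}_\infty \le R\}$, which I would use as the radius $R_0$ (legitimate since $R>R_*$) — identifies the conjugate variable of any realizing tuple $X$ as $DV^{(\epsilon)}(X)$. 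In particular $\lambda^{(\epsilon)}(X_j^2) > 0$, because $\tau(\xi_j^2)\,\tau(X_j^2) \ge |\tau(\xi_j X_j)|^2 = 1$ rules out $\tau(X_j^2) = 0$; so hypothesis (B) of Theorem \ref{thm:mainthm} holds and Theorem \ref{thm:mainthm}(2) at $t=0$ gives $\chi(\lambda^{(\epsilon)}) = \underline{\chi}(\lambda^{(\epsilon)}) = \chi^*(\lambda^{(\epsilon)}) = \lim_N\bigl(\tfrac1{N^2}h(\tilde\mu_N^{(\epsilon)}) + \tfrac m2 \log N\bigr)$, which equals $\lim_N\bigl(\tfrac1{N^2}h(\mu_N^{(\epsilon)}) + \tfrac m2 \log N\bigr)$ by the previous paragraph. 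The only genuinely delicate points are the operator-norm estimate $R_* < R'$ (which is what forces $\delta,\epsilon$ small and uses $R'>2$) and the entropy comparison across the hard truncation; everything else is bookkeeping already packaged in the cited results.
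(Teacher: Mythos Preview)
Your argument is correct and follows essentially the same route as the paper: apply Proposition~\ref{prop:perturbativeexample0} to the smoothed potential $\tilde V^{(\epsilon)}$, use Theorem~\ref{thm:convergenceandconcentration}(1) with $c=1-\delta$, $C=1+\delta$ to force $R_*<R'$ (this is where $R'>2$ and the smallness of $\epsilon,\delta$ enter), identify $\mu_N^{(\epsilon)}$ as the conditioning of $\tilde\mu_N^{(\epsilon)}$ on $\{\norm{x}_\infty\le R\}$, and then invoke Proposition~\ref{prop:convergenceofFisherinfo} and Theorem~\ref{thm:mainthm}. Your explicit verification that $\lambda^{(\epsilon)}(X_j^2)>0$ via $\tau(\xi_j X_j)=1$ and your spelled-out entropy comparison are welcome details that the paper leaves implicit (it simply refers back to the proof of Proposition~\ref{prop:convergenceofentropy}), but the strategy is the same.
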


To fix notation for the remainder of this section, functions without a subscript, such as $f$, will denote elements of $\mathcal{T}_m^0$ or $\mathcal{T}_m^{0,1}$, and $Df$ will denote the ``gradient'' defined in the abstract space $\mathcal{T}_m^{0,1}$ as the limit of the ``gradients'' of trace polynomials approximating $f$.  However, $f_N$ will denote $f|_{M_N(\C)_{sa}^m}$, and $Df_N$ will denote the normalized gradient $N \nabla f_N$ defined in the usual sense of calculus with respect to $\ip{\cdot,\cdot}_2$.  Moreover, $Hf_N = \Jac(Df_N)$ will denote the Hessian of $f_N$ with respect to $\ip{\cdot,\cdot}_2$.

In order to prove Proposition \ref{prop:perturbativeexample0}, we must understand $D[f_N \circ \Phi_N]$ and $H[f_N \circ \Phi_N]$.  To this end, we recall some results of Peller \cite{Peller2006} on non-commutative derivatives of $\phi(x)$, where $\phi$ is a smooth function on the real line.

For a polynomial $\phi$ in one variable, the non-commutative derivative $\mathcal{D}\phi \in \C\ip{X} \otimes \C\ip{X}$ defined by Definition \ref{def:NCderivative} can be written as the difference quotient
\[
\mathcal{D}\phi(s,t) = \frac{\phi(s) - \phi(t)}{s - t},
\]
where we view $\C\ip{X} \otimes \C\ip{X}$ as a subset of functions on $\R^2$ with the variables $s$ and $t$.  However, the above difference quotient makes sense whenver $\phi: \R \to \C$ is smooth. Thus, it defines an extension of $\mathcal{D}$ to continuously differentiable functions $\phi$ of one variable.

Similarly, if $\phi$ is a polynomial, then the higher order non-commutative derivatives $\mathcal{D}^n \phi$ can be viewed as functions of $n+1$ variables, which are obtained through iterated difference quotients and thus their definition can be extended to smooth functions $\phi$.  (However, beware that we have \emph{not} defined $\mathcal{D}_j^n \phi$ if $\phi$ is a non-polynomial function of multiple variables.)

If $\phi$ is a polynomial, then to estimate $\phi(X) - \phi(Y)$ for operators $X$ and $Y$ with norm bounded by $R$, one seeks to control the norm of $\mathcal{D}\phi$ in the projective tensor product $L^\infty[-R,R] \widehat{\otimes} L^\infty[-R,R]$.  Similarly, if $\phi$ is a smooth function and $\phi(X)$ and $\phi(Y)$ are defined through functional calculus, one can estimate the operator norm $\norm{\phi(X) - \phi(Y)}$ by representing $\phi$ as an integral of simpler functions (e.g.\ by Fourier analysis) whose non-commutative derivatives are easier to analyze.  In this case, it is convenient to write $\mathcal{D}\phi$ as an \emph{integral} rather than a \emph{sum} of simple tensors.

We thus consider the integral projective tensor powers of the space of bounded Borel functions $\mathcal{B}(\R)$.  The \emph{integral projective tensor product} $\mathcal{B}(\R)^{\widehat{\otimes}_i n}$ consists of Borel functions $G$ on $\R^n$ which admit a representation
\begin{equation} \label{eq:integraldecomposition}
G(x_1,\dots,x_n) = \int_{\Omega} G_1(x_1,\omega) \dots G_n(x_n,\omega)\,d\mu(\omega)
\end{equation}
for some measure space $(\Omega,\mu)$ such that
\begin{equation} \label{eq:integraldecomposition2}
\int_{\Omega} \norm{G_1(\cdot,\omega)}_{\mathcal{B}(\R)} \dots \norm{G_1(\cdot,\omega)}_{\mathcal{B}(\R)} \,d\mu(\omega) <+\infty
\end{equation}
and we define $\norm{G}_{\mathcal{B}(\R)^{\widehat{\otimes}_i n}}$ to be the infimum of \eqref{eq:integraldecomposition2} over all representations \eqref{eq:integraldecomposition}.

Given $G \in \mathcal{B}(\R)^{\widehat{\omega}_i n}$, bounded self-adjoint operators $x_0$, \dots, $x_n$ and bounded operators $y_1$, \dots, $y_n$, we define
\begin{equation}
G(x_0,\dots,x_n) \# (y_1 \otimes \dots \otimes y_n) = \int_{\Omega} G_0(x_0,\omega) y_1 G_1(x_1,\omega) \dots y_n G_n(x_n,\omega)\,d\mu(\omega),
\end{equation}
where $G_0$, \dots, $G_n$ satisfy \eqref{eq:integraldecomposition}.  This is well-defined by \cite[Lemma 3.1]{Peller2006}.  If the $x_j$'s and $y_j$'s are elements of a tracial von Neumann algebra $(\mathcal{M},\tau)$, we have by the non-commutative H\"older's inequality (see \S \ref{subsec:NCLP}) that if $1/\alpha = 1/\alpha_1 + \dots + 1/\alpha_n$, then
\begin{equation} \label{eq:Holderestimate}
\norm{G(x_0,\dots,x_n) \# (y_1 \otimes \dots \otimes y_n)}_\alpha \leq \norm{G}_{\mathcal{B}(\R)^{\widehat{\otimes}_i (n+1)}} \norm{y_1}_{\alpha_1} \dots \norm{y_n}_{\alpha_n},
\end{equation}
Moreover, we have the following bounds on the non-commutative derivatives of $\phi$ as a corollary of the results of \cite{Peller2006}.

\begin{proposition} \label{prop:Peller}
There exists a constant $K_n$ such that for all $\phi \in C_c^\infty(\R)$,
\begin{equation} \label{eq:noncommutativederivativeestimate}
\norm{\mathcal{D}^n \phi}_{\mathcal{B}(\R)^{\widehat{\otimes}_i (n+1)}} \leq K_n \int_{\R} |\widehat{\phi}(\xi) \xi^n|\,d\xi.
\end{equation}
\end{proposition}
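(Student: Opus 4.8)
The plan is to combine the Fourier representation of $\phi$ with the Hermite--Genocchi integral formula for divided differences, working inside Peller's integral projective tensor product $\mathcal{B}(\R)^{\widehat{\otimes}_i(n+1)}$. Recall first that for a $C^n$ function $g$ of one real variable its $n$-th divided difference---which is precisely what $\mathcal{D}^n g$ denotes---satisfies
\[
\mathcal{D}^n g(x_0,\dots,x_n) = \int_{\Delta_n} g^{(n)}\!\Big( \sum_{j=0}^n s_j x_j \Big)\,d\sigma(s),
\]
where $\Delta_n = \{s = (s_0,\dots,s_n) : s_j \geq 0, \ \sum_j s_j = 1\}$ is the standard $n$-simplex and $d\sigma$ is its natural Lebesgue measure, normalized to have total mass $1/n!$ (one checks this agrees with Definition \ref{def:NCderivative} on monomials, since both sides equal the complete homogeneous symmetric polynomial $h_{m-n}(x_0,\dots,x_n)$ when $g(x) = x^m$). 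Applying this to the exponential $e_\xi(x) := e^{i\xi x}$, for which $e_\xi^{(n)} = (i\xi)^n e_\xi$, yields
\[
\mathcal{D}^n e_\xi(x_0,\dots,x_n) = (i\xi)^n \int_{\Delta_n} \prod_{j=0}^n e^{i\xi s_j x_j}\,d\sigma(s).
\]
Since each factor $x_j \mapsto e^{i\xi s_j x_j}$ has sup-norm $1$, this already exhibits $\mathcal{D}^n e_\xi$ as an element of $\mathcal{B}(\R)^{\widehat{\otimes}_i(n+1)}$ with $\norm{\mathcal{D}^n e_\xi}_{\mathcal{B}(\R)^{\widehat{\otimes}_i(n+1)}} \leq |\xi|^n/n!$.

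Next I would write $\phi(x) = c\int_{\R} \widehat{\phi}(\xi) e^{i\xi x}\,d\xi$ by Fourier inversion ($c$ the constant for the chosen Fourier convention) and argue that $\mathcal{D}^n$ commutes with this integral, so that
\[
\mathcal{D}^n\phi(x_0,\dots,x_n) = c \int_{\R} \widehat{\phi}(\xi)\,(i\xi)^n \int_{\Delta_n} \prod_{j=0}^n e^{i\xi s_j x_j}\,d\sigma(s)\,d\xi .
\]
This is legitimate because $\phi \in C_c^\infty(\R)$ forces $\widehat{\phi}$ to decay faster than any polynomial, hence $\int_{\R}|\widehat{\phi}(\xi)\xi^n|\,d\xi < \infty$; one approximates the Fourier integral by Riemann sums and passes to the limit using the continuous dependence of the $n$-th divided difference on $g \in C^n$ (uniformly on compact sets) together with dominated convergence. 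The last display is a decomposition of the form \eqref{eq:integraldecomposition} over the parameter space $\Omega = \R \times \Delta_n$ with weight $|c\,\widehat{\phi}(\xi)\xi^n|$; evaluating the defining quantity \eqref{eq:integraldecomposition2} on this particular representation bounds $\norm{\mathcal{D}^n\phi}_{\mathcal{B}(\R)^{\widehat{\otimes}_i(n+1)}}$ by $(|c|/n!)\int_{\R}|\widehat{\phi}(\xi)\xi^n|\,d\xi$, which is \eqref{eq:noncommutativederivativeestimate} with $K_n = |c|/n!$.

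The main obstacle is the commutation of $\mathcal{D}^n$ with the Fourier integral and the attendant measurability bookkeeping: one must check that the $\Omega$-indexed family $(\xi,s) \mapsto \big(x_j \mapsto e^{i\xi s_j x_j}\big)$ is genuinely admissible in the sense of \cite{Peller2006}, and that the Riemann-sum approximation converges compatibly with the infimum defining $\norm{\cdot}_{\mathcal{B}(\R)^{\widehat{\otimes}_i(n+1)}}$; both are routine given joint continuity in all variables, but that is where the care lies, everything else being the explicit Hermite--Genocchi computation above. Alternatively, \eqref{eq:noncommutativederivativeestimate} can be extracted directly from the multiple-operator-integral estimates of \cite[\S 3--4]{Peller2006}, which is the sense in which the statement is ``a corollary'' of that work; the bound here is a crude but sufficient consequence of the finer Besov-type estimates proved there.
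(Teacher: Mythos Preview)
Your approach is correct and genuinely different from the paper's. The paper does not use the Hermite--Genocchi formula at all; instead it invokes Peller's Littlewood--Paley estimate \cite[Theorem 5.5]{Peller2006}, which bounds $\norm{\mathcal{D}^n\phi}_{\mathcal{B}(\R)^{\widehat{\otimes}_i(n+1)}}$ by the Besov-type quantity $\sum_{k\in\Z} 2^{nk}(\norm{W_k*\phi}_{L^\infty}+\norm{W_k^{\#}*\phi}_{L^\infty})$, and then observes that this Besov norm is dominated by $\int|\widehat{\phi}(\xi)\xi^n|\,d\xi$ via a routine dyadic Fourier computation. Your route is more elementary and self-contained: the factorization $e^{i\xi\sum s_j x_j}=\prod_j e^{i\xi s_j x_j}$ gives an explicit integral-projective decomposition of $\mathcal{D}^n e_\xi$ over $\Delta_n$, and Fourier inversion superposes these to produce a decomposition of $\mathcal{D}^n\phi$ over $\R\times\Delta_n$ with no appeal to Peller's deeper Besov machinery. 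Your argument also yields an explicit constant, whereas the paper's does not track one.

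One normalization slip to fix: with the paper's Definition~\ref{def:NCderivative}, $\mathcal{D}^n$ is $n!$ times the classical divided difference, not the divided difference itself (compute $\mathcal{D}^2[X^3]$ and you get $2(x_0+x_1+x_2)=2h_1$, not $h_1$; this is exactly the Remark after Definition~\ref{def:NCderivative}). So the Hermite--Genocchi representation should read $\mathcal{D}^n g = n!\int_{\Delta_n} g^{(n)}(\sum s_j x_j)\,d\sigma(s)$, and your bound becomes $\norm{\mathcal{D}^n e_\xi}\le |\xi|^n$ rather than $|\xi|^n/n!$, giving $K_n=|c|$ instead of $|c|/n!$. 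This does not affect the validity of the argument, only the constant.
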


\begin{proof}
As in \cite[{\S 2}]{Peller2006}, choose $w \in C_c^\infty$ such that $0 \leq w \leq \chi_{[-1/2,2]}$ and $\sum_{k \in \Z} w(2^{-k} \xi) = 1$ for $\xi > 0$.  Let $W_k$ and $W_k^{\#}$ be given by $\widehat{W}_k(\xi) = w(2^{-n} \xi)$ and $\widehat{W}_k^{\#}(\xi) = w(-2^{-k}x)$ where $\widehat{\cdot}$ denotes the Fourier transform.  It is shown in \cite[Theorem 5.5]{Peller2006} that
\[
\norm{\mathcal{D}^n \phi}_{\mathcal{B}(\R)^{\widehat{\otimes}_i (n+1)}} \leq K_n \sum_{k \in \Z} 2^{nk} \left( \norm{W_k * \phi}_{L^\infty(\R)} + \norm{W_k^{\#} * \phi}_{L^\infty(\R)} \right).
\]
This can be estimated by the right hand side of \eqref{eq:noncommutativederivativeestimate} (for a possibly different constant) by a standard Fourier analysis computation.
\end{proof}

\begin{proof}[Proof of Proposition \ref{prop:perturbativeexample0}]
Recall that $\tilde{V}_N^{(\epsilon)}(x) = \frac{1}{2} \norm{x}_2^2 + \epsilon f_N \circ \Phi_N$.  Thus, to show that the sequence $V_N^{(\epsilon)}$ defines an element of $\mathcal{T}_0^m$, it suffices to prove this for $f_N \circ \Phi_N$.  To this end, it is sufficient to show that for each $r > 0$, there is a sequence of trace polynomials $\{g^{(k)}\}_{k\in\N}$ such that
\[
\lim_{k \to \infty} \sup_{N \in \N} \sup_{x \in M_N(\C)_{sa}^m: \norm{x}_\infty \leq r} |g^{(k)}(x) - f_N \circ \Phi_N(x)| = 0
\]
and
\[
\lim_{k \to \infty} \sup_{N \in \N} \sup_{x \in M_N(\C)_{sa}^m: \norm{x}_\infty \leq r} \norm{Dg^{(k)}(x) - D[f_N \circ \Phi_N(x)]}_2
\]
Fix $r > 0$.  By standard approximation techniques, there exist Schwarz functions $\phi^{(k)}: \R \to \R$ such that $\phi^{(k)}|_{[-r,r]}$ is a polynomial and $\phi^{(k)} \to \phi$ in the Schwarz space as $k \to \infty$.  By Proposition \ref{prop:Peller}, we have $\mathcal{D}^n \phi^{(k)} \to \mathcal{D}^n \phi$ in $\mathcal{B}(\R)^{\widehat{\otimes}_i (n+1)}$ as $k \to \infty$ for every $n$.

Let $\Phi_N^{(k)}(x_1,\dots,x_m) = (\phi^{(k)}(x_1),\dots,\phi^{(k)}(x_m))$.  Then $f_N \circ \Phi_N^{(k)}$ is given by a trace polynomial $g^{(k)}$ on $\{\norm{x}_\infty \leq r\}$.  Because of the spectral mapping theorem,
\[
\sup_{\norm{x} \leq r} \norm{\Phi_N^{(k)}(x) - \Phi_N(x)}_\infty \leq m \sup_{t \in [-r,r]} |\phi^{(k)}(t) - \phi(t)|
\]
which is independent of $N$ and vanishes as $k \to \infty$.  Thus, our trace polynomials $g^{(k)}$ approximate $f_N \circ \Phi_N$ uniformly on the operator norm ball $\{x: \norm{x}_\infty \leq r\}$.

Next, we must show that $D g^{(k)}$ approximates $D[f_N \circ \Phi_N]$ uniformly in $\norm{\cdot}_2$ on the operator norm ball $\{\norm{x}_\infty \leq r\}$.  By the chain rule, we have
\[
D_j[f_N \circ \Phi_N] = \Jac_j(\Phi_N)^t[D_j f_N],
\]
where $D_j$ and $\Jac_j$ are the normalized gradient and Jacobian with respect to the variable $x_j \in M_N(\C)_{sa}$.  Now
\[
\Jac_j(\Phi_N)(x) y = \mathcal{D} \phi(x_j) \# y.
\]
Now $\mathcal{D} \phi$ viewed as an element of the tensor product $\C[X] \otimes \C[X]$ is is invariant under the flip map that switches the order of the tensorands; this is because $\mathcal{D}\phi$ is represented as a difference quotient for one-variable functions.  Flip invariance implies that
\[
\tau_N[ (\mathcal{D}\phi(x_j) \# y)z] = \tau_N[y(\mathcal{D}\phi(x_j) \# z)],
\]
which means that the operator $\Jac_j(\Phi_N)(x)$ on $M_N(\C)_{sa}$ is self-adjoint.  Hence,
\[
D_j[f_N \circ \Phi_N](x) = \Jac_j(\Phi_N(x))[D_j f_N](x) = \mathcal{D}\phi(x_j) \# D_j f_N(\Phi_N(x)).
\]
This function is given by a trace polynomial on $\{\norm{x}_\infty \leq r\}$, and specifically it must equal $D_j g^{(k)}$ because $D_j g^{(k)}$ is uniquely determined as a trace polynomial by the fact that it is the gradient of $g^{(k)}|_{M_N(\C)_{sa}^m}$ for every $N$.  Moreover, for $\norm{x}_\infty \leq r$, we have
\[
\mathcal{D}\phi_k(x_j) \# D_j f(\Phi_k(x)) = \mathcal{D}\phi_k(x_j) \# D_j f(\Phi(x)) + \mathcal{D}\phi_k(x_j) \# [D_j f(\Phi_k(x)) - D_j f(\Phi(x))].
\]
The first term converges to $\mathcal{D} \phi(x_j) \# D_j f(\Phi(x))$ in $\norm{\cdot}_2$ uniformly on $\{\norm{x}_\infty \leq r\}$ using \eqref{eq:Holderestimate} with estimates independent of $N$.  Similarly, because the images of $\Phi_k$ and $\Phi$ are contained in an operator norm ball and $D_j f$ is $K$-Lipschitz in $\norm{\cdot}_2$ on this ball for some $K > 0$, we have $D_j f(\Phi_k(x)) - D_j f(\Phi(x)) \to 0$ uniformly.  This in turn implies that the second term goes to zero because $\mathcal{D} \phi_k(x_j)$ is uniformly bounded in $\mathcal{B}(\R) \widehat{\otimes}_i \mathcal{B}(\R)$.  Thus, for every $r > 0$, there is a sequence of trace polynomials $g^{(k)}$ such that $g_k \to f \circ \Phi$ and $Dg^{(k)} \to D(f \circ \Phi)$ uniformly on $\{\norm{x}_\infty \leq r\}$.   This means that $f \circ \Phi \in \mathcal{T}_m^{1,0}$.

It follows that the sequence $V_N^{(\epsilon)}$ defines a function in $\mathcal{T}_m^{0,1}$ for every $\epsilon$.  It remains to show that this function is in $\mathcal{E}_m(1-\delta,1+\delta)^{0,1}$ for sufficiently small $\epsilon$.  To this end, it suffices to show that $f_N \circ \Phi_N$ defines a function in $\mathcal{E}_m(-a,a)^{0,1}$ for some real $a > 0$.  Thus, we only need to obtain some uniform upper and lower bounds on the operator norm of $H[f_N \circ \Phi_N]$ that are independent of $N$.  However, this is equivalent to showing that $D_j(f_N \circ \Phi_N) = \mathcal{D}\phi_N(x_j) \# D_j f_N(\Phi+N(x))$ is Lipschitz in $\norm{\cdot}_2$ for each $j$ (uniformly in $N$).  Because $\mathcal{D}^2 \phi$ is bounded in $\mathcal{B}(\R) \widehat{\otimes}_i \mathcal{B}(\R) \widehat{\otimes}_i \mathcal{B}(\R)$, we see that
\[
\norm{\mathcal{D} \phi(x_j) \# y - \mathcal{D} \phi(x_j') \# y}_2 \leq K \norm{x_j - x_j'}_2 \norm{y}_2
\]
for some constant $K$.  Together with the fact that $D_j f_N(\Phi_N(x))$ is Lipschitz in $\norm{\cdot}_2$, this implies that $D_j(f_N \circ \Phi_N)$ is Lipschitz in $\norm{\cdot}_2$ as desired.
\end{proof}

\begin{proof}[Proof of Proposition \ref{prop:perturbativeexample}]
Let $\tilde{\mu}_N$ be the measure on $M_N(\C)_{sa}^m$ given by the potential $\tilde{V}$.  Let $\delta$ be a number in $(0,1)$ to be chosen later.  By Proposition \ref{prop:perturbativeexample0}, we have that $\tilde{V}^{(\epsilon)} \in \mathcal{E}_m(1-\delta,1+\delta)^{0,1}$ for sufficiently small $\epsilon$.  By Theorem \ref{thm:convergenceandconcentration}, the laws $\tilde{\mu}_N$ concentrate around a non-commutative law $\lambda$.  Furthermore, in Theorem \ref{thm:convergenceandconcentration} (1), we can take $M = 0$ and $c = 1 - \delta$ and $C = 1 + \delta$, so that
\[
\limsup_{N \to \infty} R_N \leq \frac{2}{(1 - \delta)^{1/2}} + \frac{\norm{D\tilde{V}(0)}_2}{1 - \delta} + \frac{\delta}{(1 - \delta)^{3/2}}.
\]
Note that $D\tilde{V}(0) = DV(0) = \epsilon Df(0)$ is a scalar multiple of the identity matrix since $f$ is a trace polynomial.  Because $R' > 2$, we may choose $\delta$ sufficiently small that
\[
\frac{2}{(1 - \delta)^{1/2}} + \frac{\delta}{(1 - \delta)^{3/2}} < R'.
\]
Then by choosing $\epsilon$ (and hence $\norm{D\tilde{V}(0)}_2$) sufficiently small, we can arrange that $R_* = \limsup_{N \to \infty} R_N < R'$.  This implies that the measures $\tilde{\mu}_N$ concentrate on the ball $\{\norm{x}_\infty \leq R'\}$.  For $\norm{x}_\infty \leq R$, we have $\tilde{V}(x) = V(x)$, and therefore $\mu_N$ is the (normalized) restriction of $\tilde{\mu}_N$ to $\{\norm{x}_\infty \leq R\}$.  It follows that $\mu_N$ concentrates around the law $\lambda$ as well.

If $X \in (\mathcal{M},\tau)$ realizes the law $\lambda$, then $\norm{X}_\infty \leq R'$ since $\lambda \in \Sigma_{m,R_*} \subseteq \Sigma_{m,R'}$ by Theorem \ref{thm:convergenceandconcentration} (2).  Moreover, by Proposition \ref{prop:convergenceofFisherinfo}, the conjugate variables for $\lambda$ are given by $D\tilde{V}(X) = DV(X)$.  Moreover, by Theorem \ref{thm:mainthm} applied to $\tilde{\mu}_N$, we have
\[
\chi(\lambda) = \underline{\chi}(\lambda) = \chi^*(\lambda) = \lim_{N \to \infty} \left( \frac{1}{N^2} h(\tilde{\mu}_N) + \frac{m}{2} \log N \right).
\]
In the last equality, we can replace $\tilde{\mu}_N$ by $\mu_N$ as in the proof of Proposition \ref{prop:convergenceofentropy} because $\tilde{\mu}_N$ is concentrated on $\{\norm{x}_\infty \leq R'\}$.
\end{proof}

\begin{remark}
The approach given here probably does not give the optimal range of $\epsilon$ for Proposition \ref{prop:perturbativeexample}.  To get the best result, one would want a more direct way to extend the potential $V: \{\norm{x}_\infty \leq R\} \to \R$ to a potential $\tilde{V}$ defined everywhere.  This leads us to ask the following question.

Suppose that $V$ is a real-valued function in the closure of trace polynomials with respect to the norm $\norm{f}_{\mathcal{T}_m^0,R} + \norm{Df}_{\mathcal{T}_m^1,R}$, and hence $V$ defines a function $\{x: \norm{x}_\infty \leq R\} \to \R$ for $x \in M_N(\C)_{sa}^m$.  If $V(x) - (c/2) \norm{x}_2^2$ is convex and $V(x) - (C/2) \norm{x}_2^2$ is concave on $\{\norm{x} \leq R\}$, then does $V$ extend to a potential $\tilde{V} \in \mathcal{E}_m(c,C)^{0,1}$?  What if we allow $\tilde{V}$ to have slightly worse constants $c$ and $C$?

The construction of extensions that preserve the convexity properties is not difficult, but it is less obvious how to construct an extension that one can verify preserves the approximability by trace polynomials.
\end{remark}

\bibliographystyle{siam}
\bibliography{free_entropy}

\end{document}